\DeclareMathOperator{\arccot}{arccot}
 \newtheorem{thm}{Theorem}[section]
 \newtheorem{cor}[thm]{Corollary}
 \newtheorem{lemma}[thm]{Lemma}
 \newtheorem{prop}[thm]{Proposition}
 \theoremstyle{definition}
  \newtheorem{exas}[thm]{Examples}
	\newtheorem{exa}[thm]{Example}
 \theoremstyle{remark}
 \newtheorem{rem}[thm]{Remark}
 \newtheorem{rems}[thm]{Remarks}
\newtheorem{remark}[thm]{Remark}
\numberwithin{equation}{section}
\def\R{\mathbb{R}}
\def\C{\mathbb{C}}
\def\N{\mathbb{N}}
\def\a{\alpha}
\def\b{\beta}
\def\ep{\epsilon}
\def\l{\lambda}
\def\B{L}
\def\Im#1{\operatorname{Im}#1}
\def\Re#1{\operatorname{Re}#1}
\def\Sect#1{\operatorname{Sect}#1}
\def\wt{\widetilde}
\def\Bes{\mathcal{B}}
\def\Bq{{\Bes_0}}
\def\D{\mathcal{D}}
\def\LT{\mathcal{LM}}
\def\LL{\mathcal{L}L^1}
\def\lt{\mathcal{L}}
\def\H{\mathcal{H}}
\def\Hol{\operatorname{Hol}}
\def\embedr{\overset{r}{\hookrightarrow}}
\def\embedi{\overset{i}{\hookrightarrow}}
\begin{document}

\title[Functional calculi for sectorial operators]{Functional calculi for sectorial operators and related function theory}

\author{Charles Batty}
\address{St. John's College\\
University of Oxford\\
Oxford OX1 3JP, UK
}

\email{charles.batty@sjc.ox.ac.uk}

\author{Alexander Gomilko}
\address{Faculty of Mathematics and Computer Science\\
Nicolas Copernicus University\\
Chopin Street 12/18\\
87-100 Toru\'n, Poland
}

\email{alex@gomilko.com}

\author{Yuri Tomilov}
\address{
Institute of Mathematics\\
Polish Academy of Sciences\\
\'Sniadeckich 8\\
00-656 Warsaw, Poland\\
and Faculty of Mathematics and Computer Science\\
Nicolas Copernicus University\\
Chopin Street 12/18\\
87-100 Toru\'n, Poland
}

\email{ytomilov@impan.pl}

\begin{abstract}
We construct two bounded functional calculi for sectorial operators on Banach spaces, which enhance the
functional calculus for analytic Besov functions, by extending the class of functions, generalizing and sharpening estimates, and adapting the calculus to the angle of sectoriality.     
The calculi are based on appropriate reproducing formulas, 
they are compatible with standard functional calculi and they admit appropriate 
convergence lemmas and spectral mapping theorems.
To achieve this, we develop the theory of associated function spaces in ways which are interesting and significant.
As consequences of our calculi, we derive several well-known operator norm-estimates and provide generalizations of some of them.
\end{abstract}

\subjclass[2020]{Primary 47A60, Secondary 30H10 46E15 47B12 47D03}

\keywords{Functional calculus, sectorial operator, bounded holomorphic semigroup, Hardy--Sobolev space}

\thanks{This work was partially supported financially by a Leverhulme Trust Visiting Research Professorship and an NCN grant UMO-2017/27/B/ST1/00078, and inspirationally by the ambience of the Lamb \& Flag, Oxford.}

\date \today

\maketitle
\begin{center}
Dedicated to Vladimir M\"uller on the occasion of his $70$th birthday
\end{center}

\tableofcontents
\section{Introduction}

The theory of functional calculi forms a basis for the
study of sectorial operators and semigroup generators.
In particular, 
there were two functional calculi used extensively in the
research on operator semigroups and sectorial operators through the past
fifty years.
One of them, the Hille-Phillips (HP) functional calculus for semigroup generators, 
 probably stemmed from the foundational monograph \cite{HP},
and it became an indispensable part of semigroup theory. 
The systematic approach to the other one, the holomorphic functional calculus
for sectoral operators,
was initiated by McIntosh and his collaborators in the 1980s.
While the two calculi appeared to be very useful in applications,
the operator norm-estimates within them are often problematic. 
The estimates within the HP-calculus are direct but rather crude,
and the task of getting bounds within the holomorphic functional calculus
is a priori cumbersome since the calculus is not, in general, a bounded Banach algebra homomorphism.

  To circumvent those problems,  a number of additional tools and methods appeared in the literature.
In particular, the advanced notions and techniques related to bounded $H^\infty$-calculus, $R$-boundedness, Fourier multipliers and transference were developed in depth, and one may consult \cite{Hytonen}, \cite{KW} and \cite{HaaseB} 
for many of these function-theoretical developments.
Moreover, various implications of positivity of functions and their derivatives (completely monotone, Bernstein, $\mathcal{NP}_+$-functions) were adjusted to the operator-theoretical set-up.
For clarification of the role of positivity, see  \cite{Schill}, \cite{GT15} and \cite{BGTad}, for example.

Recently, in \cite{BGT}, a new functional calculus was constructed, 
the so-called $\mathcal B$-calculus. First of all, the $\mathcal B$-calculus  offers 
a simple and efficient route to operator norm-estimates for functions of semigroup generators, thus 
unifying a number of estimates in the literature and leading to new ones.
No supplementary arguments are required and the estimates underline the strength of the calculi.
Moreover, the $\mathcal B$-calculus possesses all attributes of classical functional calculi, see \cite{BGT}.
When combined properly they lead to new spectral mapping theorems
and generalizations of fundamentals of semigroup theory, see \cite{BGT2}.
To put our results into a proper context and to use some of the $\mathcal B$-calculus properties
in the sequel, we  briefly recall the set-up for the $\mathcal B$-calculus, see \cite{BGT} for more details.

Let $\mathcal B$ be the algebra of holomorphic functions on the right half-plane $\C_+$ such that
\begin{equation}  \label{bdef0}
 \|f\|_\Bq:=\int_{0}^{\infty} \sup_{\b \in \R }|f'(\a+i\b)| \, d\a <\infty.
\end{equation}
These functions
have been considered in some detail in \cite{BGT} (see also \cite{V1}). 
In particular, every $f \in \mathcal B$ belongs to $H^\infty(\mathbb C_+)\cap C(\overline{\mathbb C}_+)$, and $\mathcal B$ is a Banach algebra with the norm
\begin{equation}\label{b-norm}
\|f\|_{\Bes}:=\|f\|_\infty+ \|f\|_\Bq.
\end{equation}
Moreover, the algebra $\mathcal B$ modulo constants is isomorphic to the holomorphic Besov space $B^{+}_{\infty,1}(\mathbb R),$ see \cite[Proposition 6.2]{BGT}.
In the setting of power bounded operators on Hilbert spaces, the unit disc counterpart of $\mathcal B$ was employed for the study of functional calculi in \cite{Pel}.

Let $A$ be a densely defined closed operator on a Banach space $X$ such that
$\sigma(A) \subseteq \overline{\C}_+$, and
\begin{equation} \label{8.1}
\sup_{\alpha >0} \alpha \int_{\mathbb R} |\langle (\alpha +i\beta + A)^{-2}x, x^* \rangle| \, d\beta <\infty
\end{equation}
for all $x \in X$ and $x^* \in X^*$.  
This class of operators includes two substantial subclasses, namely the negative generators of bounded $C_0$-semigroups on Hilbert spaces $X$  and the negative generators of (sectorially) bounded holomorphic $C_0$-semigroups on Banach spaces $X$.  On the other hand, every operator in the class is the negative generator of a bounded $C_0$-semigroup.

The study of functional calculus based on the algebra $\mathcal B$ was initiated in \cite{White}  for generators of bounded semigroups on Hilbert spaces and in \cite{V1} for generators of holomorphic semigroups.   These works adapted and extended the approach from
\cite{Pel} to a more demanding and involved setting of unbounded operators.  Most researchers were unaware of \cite{White} until it became accessible a few years ago.  Meanwhile the line of research put forward in \cite{White} and  \cite{V1} was continued in  \cite{Haase} and \cite{Sch1} proceeding in two different directions (additional related references can be found in \cite{BGT}).
In  \cite{Haase}, by means of a new transference technique,
 counterparts of the results from \cite{V1} were proved in the framework
of bounded semigroups on Hilbert space and certain substantial subclasses of $\mathcal  B$, while \cite{Sch1} offered a number of generalisations and improvements of estimates from \cite{V1}. 
Only \cite{Sch1} and \cite{V1} considered all functions in
$\mathcal B$, applied to generators of bounded holomorphic semigroups in both papers. In \cite{BGT}, we
introduced a bounded $\mathcal B$ functional calculus for all operators satisfying \eqref{8.1},
and we extended the theory in \cite{BGT2}.

For $f \in \Bes$, set
\begin{align}  \label{fcdef}
\lefteqn{\langle f(A)x, x^* \rangle} \quad\\
&=  f(\infty) \langle x, x^* \rangle - \frac{2}{\pi}  \int_0^\infty  \alpha \int_{\R}  {f'(\alpha +i\beta)} \langle (\alpha -i\beta +A)^{-2}x, x^* \rangle  \, d\beta\,d\alpha  \nonumber 
\end{align}
for all $x \in X$ and $x^* \in X^*$, where $f(\infty) = \lim_{\Re z\to\infty} f(z)$. Using \eqref{8.1} and the definition of $\mathcal B$ (and the Closed Graph Theorem), it is easy to show that  $f(A)$ is a bounded linear mapping from $X$ to $X^{**}$, and that the linear mapping
\[
\wt\Phi_A : \Bes \to \mathcal L(X,X^{**}), \qquad f \mapsto f(A),
\]
is bounded.

It was discovered in \cite{BGT} that much more is true.  If $A$ belongs to any of the classes of semigroup generators mentioned above, then the formula \eqref{fcdef} defines a bounded algebra homomorphism
\[
\Phi_A: \Bes \to  L (X),\qquad \Phi_A (f):=f(A).
\]
It is natural to call the homomorphism $\Phi_A$ the ($\Bes$-)calculus of $A$.
It was proved in \cite{BGT} that $\Phi_A$ possesses  a number of useful properties. 
In particular, it admits the spectral inclusion (spectral mapping, in the case of bounded holomorphic semigroups) theorem and a Convergence Lemma of appropriate form.  The utility of the $\mathcal B$-calculus  depends on the facts that it (strictly) extends the Hille-Phillips (HP-) calculus and it is compatible with the holomorphic functional calculi for sectorial and half-plane type operators.

Moreover, the $\Bes$-calculus $\Phi_A$ is the only functional calculus that one can define for $A$ satisfying \eqref{8.1} and for functions in $\Bes$.  Indeed, let $A$ be an operator on $X$ with dense domain, and assume that $\sigma(A) \subseteq \overline{\C}_+$.   A \emph{(bounded) $\Bes$-calculus} for $A$ is, by definition, a bounded algebra homomorphism  $\Phi : \Bes \to L(X)$ such that  $\Phi((z+\cdot)^{-1}) = (z+A)^{-1}$ for all $z \in \C_+$.   As shown in \cite{BGT2}, if $A$ admits a $\mathcal B$-calculus, then the resolvent assumption \eqref{8.1} holds, and the calculus is $\Phi_A$.

While the $\mathcal B$-calculus is optimal and unique for generators of Hilbert space semigroups,
the situation is far from being so for generators of bounded holomorphic semigroups on Banach spaces
(as this paper will, in particular, show).
Thus using the $\mathcal B$-calculus ideology as a guiding principle, it is natural to try to extend
it beyond the Besov algebra $\mathcal B$ keeping all of its useful properties such as 
availability of good norm-estimates,
spectral mapping theorems, Convergence Lemmas, compatibility with the other calculi, etc.
Moreover, it is desirable to cover all sectorial operators regardless of their sectoriality angle.

In this paper we will construct some functional calculi encompassing wider classes of functions (including some with singularities on $i\mathbb R$) and providing finer estimates for all negative generators of (sectorially) bounded holomorphic semigroups, and eventually for all sectorial operators.  
Functional calculi for generators of some classes of bounded holomorphic semigroups were constructed in \cite{GaPy97}, \cite{GaMiPy02}, \cite{GaMi06},  \cite{GaMi08}, and \cite{KrieglerWeis}. 
However, most of the results in those papers concern sectorial operators of angle zero, and the approaches there are based on fine estimates for the corresponding semigroups rather than fine analytic properties of resolvents. 

Let $A$ be a densely defined sectorial operator of sectorial angle $\theta_A \in [0,\pi)$ on a Banach space $X$.   It is well known that $-A$ is the generator of a (sectorially) bounded holomorphic $C_0$-semigroup on $X$ if and only if $A$ is sectorial and $\theta_A < \pi/2$ (we may write $A \in \Sect(\pi/2-)$ for this class).
In this paper we address the question whether the $\Bes$-calculus for $A$ can be extended to more functions.
Since the resolvent of $A$ satisfies the estimate
\begin{equation*} \label{Aa0_intro}
M_{\psi}(A) := \sup_{z\in\Sigma_{\pi-\psi}} \|z(z+A)^{-1}\| < \infty, \qquad z \in \Sigma_{\pi-\psi}.
\end{equation*}
for all $\psi\in(\theta_A,\pi)$,
a direct way to define an appropriate function algebra would be to introduce a Banach space
of functions $f$ which are holomorphic on sectors $\Sigma_{\psi}:=\{z\in \mathbb C: |\arg(z)| < \psi \}$ such that 
\begin{equation}\label{assump_f}
\|f\|_{\psi}:=\int_{\partial \Sigma_{\psi}} \frac{|f(z)|}{|z|}|dz| <\infty.
\end{equation}
In order to apply this to all $A \in \Sect(\pi/2-)$, $f$ should be holomorphic on $\C_+$ and 
the assumption \eqref{assump_f} should hold for all $\psi \in (0,\pi/2)$,
and in order to provide an estimate which is uniform in $\theta$ it is desirable to have
$\sup_{\psi\in  (0,\pi/2)} \|f\|_{\psi}<\infty$.  To our knowledge, no spaces of this type have been studied systematically in the literature, although they appear naturally in \cite[Appendix H2 and Chapter 10.2]{Hytonen},  \cite[Section 6]{Haase_Sp}  or \cite[Appendix C]{Haak_H}.   This class of functions is strictly included in each of the spaces $\mathcal D_s, s>0$ (see Proposition \ref{weight1} and a discussion following it), which we now define.   

To define a functional calculus for all $A \in \Sect(\pi/2-)$, we let $\mathcal{D}_s, s >-1$,  be the linear space of all holomorphic functions $f$ on $\C_{+}$ such that
\begin{equation}\label{vintro}
\|f\|_{\mathcal D_{s,0}}:= \int_0^\infty \alpha^s\int_{-\infty}^\infty\frac{|f'(\alpha +i\beta)|}{(\alpha^2+\beta^2)^{(s+1)/2}}\,d\beta\,d\alpha <\infty.
\end{equation}
If $f \in \mathcal D_s$,  then  there exists
a finite limit $f(\infty):=\lim_{|z|\to\infty,\,z\in {\Sigma}_\psi}\,f(z)$ for all $\psi\in(0.\pi/2)$.  

For every $s>-1$ the space $\mathcal D_s$ equipped with the norm
\[
\|f\|_{\mathcal D_s}:= |f(\infty)|+\|f\|_{\mathcal D_{s,0}}, \qquad f \in\mathcal D_s,
\]
is a Banach space, but not an algebra.  However, the spaces $\D_s$ increase with $s$, and we prove in Lemma \ref{Alg1} that
\[
\mathcal D_\infty:=\bigcup_{s>-1} \mathcal D_s
\]
 is an algebra. 

Let $f\in \mathcal{D}_\infty$, so $f \in \mathcal{D}_s$ for some $s>-1$, and let $A$ be sectorial with $\theta_A < \pi/2$.
Define
\begin{equation}\label{formulaD_intro}
f_{\mathcal{D}_s}(A):=f(\infty)-
\frac{2^s}{\pi}\int_0^\infty \alpha^s\int_{-\infty}^\infty f'(\alpha+i\beta)(A+\alpha-i\beta)^{-(s+1)}\,d\beta\,d\alpha.
\end{equation}
Then $f_{\D_\sigma}(A) = f_{\D_s}(A)$ whenever $\sigma>s$.   The following result sets out other properties of this functional calculus.   The proof will be given in Section \ref{defD}.

\begin{thm} \label{dcalculus_intro}
Let A be a densely defined, closed operator on a Banach space $X$ such that $\sigma(A) \subset \overline{\C}_+$.   The following are equivalent:
\begin{enumerate}[\rm(i)]
\item $A \in {\rm Sect}(\pi/2-);$
\item There is an algebra homomorphism $\Psi_A : \D_\infty \to L(X)$ such that
\begin{equation*}
\Psi_A((z+\cdot)^{-1}) = (z+A)^{-1}, \qquad z \in \C_+, 
\end{equation*}
and $\Psi_A$ is bounded in the sense that there exist constants $C_s, \, s>-1$, such that, for every $f \in \mathcal D_s$, 
\begin{equation}\label{bounded_intro}
\|\Psi_A(f)\|\le C_s \|f\|_{\mathcal D_s}.
\end{equation}
\end{enumerate}
When these properties hold, $\Psi_A$ is unique, and it is defined by the formula \eqref{formulaD_intro}:
\begin{eqnarray*}
\Psi_A : \mathcal D_{\infty} \mapsto L(X), \qquad \Psi_A(f)=f_{\mathcal{D}_s}(A), \quad f \in \mathcal{D}_s.
\end{eqnarray*}
\end{thm}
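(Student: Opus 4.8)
The plan is to prove the two implications separately; the explicit formula for $\Psi_A$ and its uniqueness will come out of the argument for (i)$\Rightarrow$(ii), so I describe that implication first. Assuming $A\in\Sect(\pi/2-)$, I would fix $\psi\in(\theta_A,\pi/2)$, so that $\overline{\C}_+\setminus\{0\}\subset\Sigma_{\pi-\psi}$ and $M_\psi(A)<\infty$, and begin with the power-resolvent bound
\[
\|(\lambda+A)^{-(s+1)}\|\le c_s(A)\,|\lambda|^{-(s+1)},\qquad \lambda\in\C_+,\ s>-1,
\]
obtained from Cauchy's estimate for $\lambda\mapsto(\lambda+A)^{-1}$ on the disc $B(\lambda,\tfrac12|\lambda|\cos\psi)\subset\Sigma_{\pi-\psi}$ when $s+1\in\N$, and from the integral representation of $(\lambda+A)^{-(s+1)}$ via the bounded holomorphic semigroup generated by $-A$ (on a rotated contour) when $s+1\notin\N$. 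Given this bound, the integral in \eqref{formulaD_intro} converges absolutely in $L(X)$, $\|f_{\D_s}(A)\|\le|f(\infty)|+\tfrac{2^s}{\pi}c_s(A)\|f\|_{\D_{s,0}}\le C_s\|f\|_{\D_s}$, and---using the consistency $f_{\D_\sigma}(A)=f_{\D_s}(A)$ for $\sigma>s$---the prescription $\Psi_A(f):=f_{\D_s}(A)$ defines a bounded linear map on $\D_\infty$ with \eqref{bounded_intro}. For the resolvent identity I would take $f=(z+\cdot)^{-1}\in\D_s$ in \eqref{formulaD_intro} and evaluate directly (residue in $\beta$, then a Beta-function integral in $\alpha$), or equivalently insert the scalar reproducing formula for $\D_s$ (to be established in Section~\ref{defD}) into $\tfrac1{2\pi i}\int_{\partial\Sigma_\nu}(z+\lambda)^{-(s+1)}(z-A)^{-1}\,dz=(\lambda+A)^{-(s+1)}$, obtaining $\Psi_A((z+\cdot)^{-1})=(z+A)^{-1}$ for $z\in\C_+$.

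\smallskip
\noindent\emph{Multiplicativity and uniqueness.} This is the substantive part. I would argue in two steps. First, on the subalgebra $\Bov$ of $f\in\D_\infty$ that decay like $|z|^{\varepsilon}\wedge|z|^{-\varepsilon}$ near $0$ and $\infty$, the same reproducing formula identifies $\Psi_A(f)$ with the value at $f$ of the primary holomorphic calculus of the sectorial operator $A$ (contour $\partial\Sigma_\nu$, $\theta_A<\nu<\pi/2$), so $\Psi_A|_{\Bov}$ is multiplicative. Second, for arbitrary $f,g\in\D_\infty$ I would regularize by $f_n=\psi_n f,\ g_n=\psi_n g\in\Bov$ with $\psi_n(z)=\tfrac{n}{n+z}\cdot\tfrac{nz}{1+nz}$; since $\|\cdot\|_{\D_{s,0}}$ is invariant under the rescalings $w\mapsto tw$ and $\D_\infty$ is an algebra (Lemma~\ref{Alg1}), one has $\sup_n\|\psi_n\|_{\D_{s'}}<\infty$, so $f_n\to f$, $g_n\to g$ and $f_ng_n\to fg$ pointwise on $\C_+$ with norms uniformly bounded in some $\D_{s''}$, and the Convergence Lemma for the $\D_s$-calculus (Section~\ref{defD}) permits passing to the limit in $\Psi_A(f_ng_n)=\Psi_A(f_n)\Psi_A(g_n)$. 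The same approximation gives uniqueness: any bounded $\D_\infty$-calculus $\Phi$ for $A$ agrees with $\Psi_A$ on every $(z+\cdot)^{-1}$, hence on the rational functions in $\D_\infty$, hence on all of $\D_\infty$.

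\smallskip
\noindent\emph{Proof of (ii)$\Rightarrow$(i).} I would test $\Psi_A$ on $g_z(w)=z(z+w)^{-1}$, $z\in\C_+$. One checks $g_z\in\D_s$ for every $s>0$, and, by the rescaling invariance of $\|\cdot\|_{\D_{s,0}}$, $\|g_z\|_{\D_s}=\|g_{z/|z|}\|_{\D_s}$; the key estimate is $\sup_{|\arg\zeta|<\pi/2}\|g_\zeta\|_{\D_s}<\infty$, the only delicate case $\arg\zeta\to\pm\pi/2$ (when the pole of $g_\zeta$ tends to $\mp i$) being handled by dominated convergence, since for $s>0$ the weight $\alpha^s$ tames the near-boundary singularity. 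Hence $\|z(z+A)^{-1}\|=\|\Psi_A(g_z)\|\le C_s\sup_\zeta\|g_\zeta\|_{\D_s}=:M$, i.e. $\|(z+A)^{-1}\|\le M/|z|$ for all $z\in\C_+$. Since $A$ is closed and $\|(z+A)^{-1}-(z'+A)^{-1}\|\le M^2|z-z'|/(|z|\,|z'|)$, letting $z\to i\beta_0$ with $\beta_0\neq0$ gives $-i\beta_0\in\rho(A)$ with $\|(i\beta_0+A)^{-1}\|\le M/|\beta_0|$, and a Neumann series then yields $(z+A)^{-1}$ with $\|(z+A)^{-1}\|\le2M/|\beta_0|$ whenever $|z-i\beta_0|<|\beta_0|/(2M)$. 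Together with $\sigma(A)\subset\overline{\C}_+$ this shows $\sigma(A)\subset\overline{\Sigma_\psi}$ and $M_\psi(A)<\infty$ for some $\psi<\pi/2$, i.e. $A\in\Sect(\pi/2-)$. The formula for $\Psi_A$ and its uniqueness then follow from the first implication.

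\smallskip
\noindent I expect the main obstacle to be the function theory behind multiplicativity and uniqueness---the reproducing formula for $\D_s$, the identification of $\Psi_A$ with the holomorphic calculus on $\Bov$, the McIntosh-type regularization with uniform $\D_s$-bounds, and the associated Convergence Lemma---all of which are developed in Section~\ref{defD}. By contrast, the boundedness estimate \eqref{bounded_intro} and the whole implication (ii)$\Rightarrow$(i) should be comparatively routine once the rescaling invariance of $\|\cdot\|_{\D_{s,0}}$ and the power bound for $(\lambda+A)^{-(s+1)}$ are in hand.
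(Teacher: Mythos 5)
Your overall architecture matches the paper's: the bound $\|(\lambda+A)^{-(s+1)}\|\le c_s(A)\,|\lambda|^{-(s+1)}$ (the paper's Lemma \ref{fractional}, proved via the Stieltjes formula \eqref{Sti} rotated onto a ray), absolute convergence of \eqref{formulaD_intro} together with the estimate \eqref{estimateD}, identification of $\Psi_A$ with the regularized holomorphic calculus by Fubini and Cauchy's theorem, and, for (ii)$\Rightarrow$(i), the uniform bound $\|r_\lambda\|_{\D_{s,0}}\le C_s/|\lambda|$ for $s>0$ (the paper's \eqref{r_l}, a consequence of Lemma \ref{D0012}(a); your appeal to ``dominated convergence'' should really be a uniform integrable majorant as the pole approaches $i\R$, but the mechanism is the same) followed by the Neumann-series argument \eqref{Aaa1}.

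The genuine gap is in your multiplicativity step (and, as phrased, in uniqueness). Operators in $\Sect(\pi/2-)$ need not be injective or have dense range ($A=0$ belongs to the class), and for such $A$ the McIntosh regularization collapses: every $h\in\D_\infty$ with $h(0)=h(\infty)=0$ gives $h(A)=0$ when $A=0$, so the identity $\Psi_A(fg)h(A)=\Psi_A(f)\Psi_A(g)h(A)$ --- which is all that the Convergence Lemma (Theorem \ref{convop_lemma}) delivers in the absence of dense range --- is vacuous and does not yield $\Psi_A(fg)=\Psi_A(f)\Psi_A(g)$. Pointwise convergence of $\psi_nf\to f$ with uniformly bounded $\D_{s'}$-norms does not give $\Psi_A(\psi_nf)\to\Psi_A(f)$ in any topology that lets you cancel the regularizer. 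The paper avoids this by proving the much stronger statement that $\wt{\mathcal R}(\C_+)$, hence $\LT$, is norm-dense in each $\D_s$ (Theorem \ref{D00}, obtained by reading the reproducing formula \eqref{qnrep} as a Bochner integral in $\D_s$); then $f_n\to f$ in $\D_s$ and $g_n\to g$ in $\D_t$ force $f_ng_n\to fg$ in $\D_{s+t+1}$ by Lemma \ref{Alg1}, the HP product rule transfers by norm continuity, and non-injectivity in the compatibility step is handled by replacing $A$ with $A+\epsilon$ and letting $\epsilon\to0+$ (Proposition \ref{bounded_s}). You already invoke norm-density of the rational functions for uniqueness, so the natural repair is to use it for multiplicativity as well; note also that uniqueness additionally requires $\Phi(1)=I$, which the paper extracts from the density of $D(A)$.
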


The homomorphism $\Psi_A$ will be called the {\it $\mathcal D$-calculus} for $A$.  It will be shown in Section \ref{defD} that the $\D$-calculus is compatible with the Hille-Phillips calculus and the holomorphic calculus for sectorial operators, and a spectral mapping theorem is given in Theorem \ref{SMT}.  Corollary \ref{Compat} provides a version of this functional calculus based on the Banach algebra $H^\infty(\mathbb C_+)\cap \mathcal D_s$ for a fixed value of $s$.  

The $\mathcal D$-calculus defined as above does not take into account the  sectoriality angle of $A \in {\rm Sect}(\pi/2-)$. 
However, it can be used to construct a functional calculus which does not have this drawback.  
To achieve this aim we introduce the Hardy-Sobolev spaces $\H_\psi$, on sectors $\Sigma_\psi$. 
First, for any $\psi\in (0,\pi)$, we define the Hardy space $H^1(\Sigma_\psi)$ as the linear space of functions $f\in \Hol(\Sigma_\psi)$ such that
\begin{equation}\label{CC2_intro}
\|f\|_{H^1(\Sigma_{\psi})}:=\sup_{|\varphi|< \psi}\,
\int_0^\infty \bigl(|f(te^{i\varphi})|+
|f(te^{-i\varphi})| \bigr) \,dt <\infty.
\end{equation}
Note that $H^1(\Sigma_{\pi/2})$ coincides with the classical Hardy space $H^1(\mathbb C_+)$ in the right half-plane $\mathbb C_+$.   It is well-known that $(H^1(\Sigma_\psi), \|\cdot\|_{H^1(\Sigma_\psi)})$ is a Banach space, and every $f \in H^1(\Sigma_\psi)$ has a boundary function  on $\partial \Sigma_\psi$.   The boundary function exists as the limit of $f$ in both an $L^1$-sense and a pointwise (a.e.) sense.  Moreover, the norm of $f$ in $H^1(\Sigma_\psi)$ is attained by the $L^1$-norm of its boundary function.   See Section \ref{hardy_rays} for a succinct approach to the Hardy spaces on sectors.

The space $H^1(\Sigma_\psi)$ induces the corresponding Hardy-Sobolev space $\H_\psi$ on $\Sigma_\psi$ as
\[
\H_\psi:=\left\{f \in \Hol(\Sigma_\psi): f' \in H^1(\Sigma_\psi)\right\}.
\]
Any function $f \in \H_\psi$ has a finite limit $f(\infty):=\lim_{t \to \infty}f(t)$,
and moreover $f \in H^\infty(\Sigma_\psi)$.  Then $\H_\psi$ becomes a Banach algebra in the norm
\[
\|f\|_{\mathcal \H_\psi}:=\|f\|_{H^\infty(\Sigma_\psi)} +\|f'\|_{H^1(\Sigma_\psi)}, \quad f \in \H_\psi.
\]
The relationship between these spaces and the spaces $\mathcal D_s$ for all $s>-1$ is set out in Corollary \ref{HHH} and Lemma \ref{Dh1}; in particular, for each $s>-1$, $\H_{\pi/2}$ is contained in $\D_s$, and $\D_s$ is embedded in $\H_\psi$ for $\psi<\pi/2$.

To make use of the angle of sectoriality of $A$, we can adjust the $\mathcal D$-calculus to sectors as follows.
If $f\in \H_\psi$ where $\psi  \in (\theta_A,\pi)$, $\gamma = \pi/(2\psi)$, and $f_{1/\gamma}(z):=f(z^{1/\gamma})$, then 
$f'_{1/\gamma}\in H^1(\C_{+})$ and $f_{1/\gamma}(\infty)=f(\infty)$, and hence $f_{1/\gamma}\in \mathcal{D}_0$.   This observation allows us to extend the $\mathcal D$-calculus to the class of all sectorial operators, and makes the next definition (based on the $\mathcal{D}$-calculus) natural and plausible.

If $A$ is sectorial and $\psi \in (\theta_A,\pi)$, define
\begin{equation}\label{sigma_def_intro}
f_{\mathcal H}(A):= f(\infty)-\frac{1}{\pi}\int_0^\infty\int_{-\infty}^\infty f'_{1/\gamma}(\a+i\b)
(A^{\gamma}+\a-i\b)^{-1}\, d\b\,d\a.
\end{equation}
One can prove (see \eqref{A2Dop}) that
\begin{equation}\label{A2Dop_intro}
\|f_{\mathcal H}(A)\|\le |f(\infty)|+\frac{M_{\pi/2}{(A^\gamma)}}{\pi}\|f_{1/\gamma}\|_{\mathcal{D}_0}
\le  |f(\infty)|+ M_{\pi/2}({A^\gamma}) \|f\|_{\H_\psi}.
\end{equation}

Then \eqref{sigma_def_intro} and \eqref{A2Dop_intro} hold for any $\gamma \in (1,\pi/(2\theta))$, and the definition of $f_\H(A)$ does not depend on the choice of $\psi$.

Now we are able to formalize our extension of the $\mathcal D$-calculus as follows.

\begin{thm}\label{sigma_cal_intro}
Let $A$ be a densely defined operator on a Banach space $X$ such that $\sigma(A) \subset \overline{\Sigma}_\theta$, where $\theta \in [0,\pi)$.   The following are equivalent:
\begin{enumerate}[\rm(i)]
\item $A \in \Sect(\theta);$
\item For each $\psi \in (\theta,\pi)$, there is a bounded Banach algebra homomorphism $\Upsilon_A : \mathcal \H_\psi \mapsto L(X)$ such that 
$\Upsilon_A((z+\cdot)^{-1}) = (z+A)^{-1}, \quad z \in \Sigma_{\pi-\psi}$.
\end{enumerate}
When these properties hold, the homomorphism $\Upsilon_A$ is unique for each value of $\psi$, and it is defined by the formula \eqref{sigma_def_intro}:
\begin{equation*}
\Upsilon_A: \H_\psi \to L(X), \qquad \Upsilon_A(f) =
f_\H(A), \quad f \in \H_\psi.
\end{equation*}
\end{thm}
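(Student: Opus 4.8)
The plan is to prove the two implications of Theorem~\ref{sigma_cal_intro} separately, treating $\psi \in (\theta,\pi)$ as fixed throughout and reducing everything to the already-established $\mathcal{D}$-calculus via the substitution $z \mapsto z^{\gamma}$ with $\gamma = \pi/(2\psi)$.

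\textbf{The implication (i)\,$\Rightarrow$\,(ii).} Assume $A \in \Sect(\theta)$. For $\psi \in (\theta,\pi)$ set $\gamma = \pi/(2\psi) \in (1/2,\infty)$; note that $\gamma > 1$ precisely when $\psi < \pi/2$, but even when $\psi \ge \pi/2$ the operator $A^{\gamma}$ is well defined and sectorial with $\theta_{A^{\gamma}} = \gamma\theta < \gamma\psi = \pi/2$, so $A^{\gamma} \in \Sect(\pi/2-)$ and the $\mathcal{D}$-calculus $\Psi_{A^{\gamma}}$ of Theorem~\ref{dcalculus_intro} applies. The map $f \mapsto f_{1/\gamma}$, $f_{1/\gamma}(z) = f(z^{1/\gamma})$, is an algebra isomorphism from $\H_\psi$ onto a subalgebra of $\mathcal{D}_0$ (this is exactly the observation recorded before \eqref{sigma_def_intro}, using $\H_\psi \hookrightarrow \mathcal{D}_0$ after the change of variables, together with $f_{1/\gamma}(\infty) = f(\infty)$). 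Define $\Upsilon_A(f) := \Psi_{A^{\gamma}}(f_{1/\gamma})$; since $\Psi_{A^{\gamma}}$ is an algebra homomorphism and $f \mapsto f_{1/\gamma}$ is multiplicative, $\Upsilon_A$ is an algebra homomorphism, and unwinding the definition \eqref{formulaD_intro} of $\Psi_{A^{\gamma}}$ at $s = 0$ gives precisely formula \eqref{sigma_def_intro}, hence the bound \eqref{A2Dop_intro} and boundedness of $\Upsilon_A$ on $\H_\psi$. For the resolvent identity, one must check $\Upsilon_A((z+\cdot)^{-1}) = (z+A)^{-1}$ for $z \in \Sigma_{\pi-\psi}$: writing $r_z(w) = (z+w)^{-1}$, we have $(r_z)_{1/\gamma}(w) = (z + w^{1/\gamma})^{-1}$, and one resolves this into the resolvents of $A^{\gamma}$ (for which $\Psi_{A^{\gamma}}$ acts correctly by Theorem~\ref{dcalculus_intro}(ii)) by a partial-fraction / contour argument relating $(z + w^{1/\gamma})^{-1}$ to an integral of $(w + \lambda)^{-1}$ against the resolvent kernel of the sectorial function $w^{1/\gamma}$; alternatively, and more cleanly, one verifies that $\Upsilon_A$ agrees with the holomorphic functional calculus for $A$ on rational functions and then extends by the Convergence Lemma. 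The key point making $z \in \Sigma_{\pi-\psi}$ the right condition is that $-z \notin \overline{\Sigma_\psi}$ is exactly what guarantees $r_z \in \H_\psi$ with no singularity on the sector.

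\textbf{The implication (ii)\,$\Rightarrow$\,(i) and uniqueness.} Suppose such a bounded homomorphism $\Upsilon_A$ exists for every $\psi \in (\theta,\pi)$. Fix $\psi \in (\theta,\pi)$ and $z \in \Sigma_{\pi-\psi}$. From $\Upsilon_A((z+\cdot)^{-1}) = (z+A)^{-1}$ and boundedness, $\|(z+A)^{-1}\| \le C_\psi \|(z+\cdot)^{-1}\|_{\H_\psi}$; a direct computation of the $\H_\psi$-norm of $w \mapsto (z+w)^{-1}$ shows it is comparable to $1/\operatorname{dist}(-z,\Sigma_\psi) \asymp 1/|z|$ uniformly for $z$ in any proper subsector of $\Sigma_{\pi-\psi}$, which yields $\sup_{z \in \Sigma_{\pi-\psi-\varepsilon}} \|z(z+A)^{-1}\| < \infty$; letting $\psi \downarrow \theta$ gives $M_{\psi'}(A) < \infty$ for all $\psi' \in (\theta,\pi)$, i.e.\ $A \in \Sect(\theta)$. (One also needs that $\Upsilon_A$ respects the functional equation for resolvents to identify $(z+A)^{-1}$ as a genuine resolvent, i.e.\ that $A$ has spectrum in $\overline{\Sigma_\theta}$ — this is built into the hypothesis $\sigma(A) \subset \overline{\Sigma_\theta}$ and the homomorphism property applied to $(z+\cdot)^{-1} - (z'+\cdot)^{-1} = (z'-z)(z+\cdot)^{-1}(z'+\cdot)^{-1}$.) Uniqueness for each fixed $\psi$ follows the standard template: two such homomorphisms agree on resolvents $(z+\cdot)^{-1}$, $z \in \Sigma_{\pi-\psi}$, hence on the algebra they generate, which by a Convergence Lemma for $\H_\psi$ (the analogue of the one for $\mathcal{B}$ and $\mathcal{D}_\infty$, obtained by transporting the $\mathcal{D}_0$ Convergence Lemma through $f \mapsto f_{1/\gamma}$) is dense in $\H_\psi$ in the relevant sense, so the two homomorphisms coincide.

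\textbf{Main obstacle.} The routine parts are the change of variables and the norm estimates; the delicate point is verifying the resolvent identity $\Upsilon_A((z+\cdot)^{-1}) = (z+A)^{-1}$ for $z \in \Sigma_{\pi-\psi}$ directly from formula \eqref{sigma_def_intro}, because the substitution $w \mapsto w^{1/\gamma}$ turns the simple pole at $-z$ into a branch-type object and one must carefully justify the contour deformation identifying $\Psi_{A^{\gamma}}((z + (\cdot)^{1/\gamma})^{-1})$ with $(z+A)^{-1}$ — equivalently, showing the $\mathcal{D}$-calculus of $A^{\gamma}$ is compatible with the holomorphic functional calculus of $A$ for the function $w \mapsto (z + w^{1/\gamma})^{-1}$. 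I expect this to be handled by first proving compatibility of $\Upsilon_A$ with the sectorial holomorphic calculus on a dense class (rational functions with poles off $\overline{\Sigma_\psi}$, or functions decaying at $0$ and $\infty$), for which \eqref{sigma_def_intro} can be evaluated by residues/Fubini against the resolvent of $A^{\gamma}$, and then invoking the $\H_\psi$ Convergence Lemma to pass to all of $\H_\psi$; the bound \eqref{A2Dop_intro} guarantees the limiting procedure is legitimate.
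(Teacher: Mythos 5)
Your construction of $\Upsilon_A$ is essentially the paper's: set $\gamma=\pi/(2\psi)$, transport $f$ to $f_{1/\gamma}\in\D_0$ via Lemma \ref{simple} and Lemma \ref{Dh1}, and apply the $\D$-calculus of $A^\gamma$; the homomorphism property and the bound \eqref{A2Dop_intro} then come for free, and the implication (ii)\,$\Rightarrow$\,(i) via the norm estimate $\|r_z\|_{\H_\psi}\lesssim 1/|z|$ (i.e.\ \eqref{res_sigma_cal_intro}) is exactly the paper's remark. Your treatment of the resolvent identity — prove agreement with the holomorphic calculus (for injective $A$, by a Fubini/Cauchy computation against the reproducing formula) and then regularize with $A+\epsilon$ — is also the route the paper takes (Proposition \ref{Hprops}(iv)--(v)), so the part you flag as the main obstacle is handled correctly in outline.

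The genuine gap is in the uniqueness step. You propose to deduce that two bounded homomorphisms agreeing on $\{r_z: z\in\Sigma_{\pi-\psi}\}$ coincide on all of $\H_\psi$ ``by a Convergence Lemma for $\H_\psi$, obtained by transporting the $\D_0$ Convergence Lemma through $f\mapsto f_{1/\gamma}$.'' This does not work: a Convergence Lemma is a continuity statement for the \emph{specific} calculus constructed by the integral formula (and even there it only controls $f_k(A)g(A)$ for a regularizing $g$, giving strong convergence of $f_k(A)$ only when $A$ has dense range), whereas an arbitrary bounded algebra homomorphism $\Upsilon:\H_\psi\to L(X)$ is a priori continuous only for the \emph{norm} topology of $\H_\psi$. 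What uniqueness actually requires is that the rational functions with poles off $\overline{\Sigma_\psi}$ be dense in $\H_{\psi,0}$ in the $\H_\psi$-norm. This is a substantive function-theoretic theorem (Theorem \ref{DM122} in the paper), proved via the Cauchy representation of $H^1(\Sigma_\varphi)$-functions as Bochner integrals of resolvent kernels, the approximate identities $g_\epsilon(z)=2z^\epsilon(1+z^\epsilon)^{-1}(1+\epsilon z)^{-2}$, and the rescaling $f\mapsto f(z^\gamma)$, $\gamma\to1-$; it is not a formal consequence of anything already available on the $\D_0$ side. Without this density result (or an equivalent one), your uniqueness claim — and hence the final assertion of the theorem — is unproved.
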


The homomorphism $\Upsilon_A$ will be called the {\it $\mathcal H$-calculus} for $A$.  

The $\D$-calculus can be given a more succinct form,
by replacing \eqref{sigma_def_intro} with the somewhat more transparent
formula \eqref{formula_a_Intro} below, inspired by results in \cite{Boy}.

\begin{thm}\label{Sovp_Intro}
Let $A \in \Sect(\theta)$, $\theta < \psi < \pi$, and $\gamma=\pi/(2\psi)$. For $f\in \H_\psi$,  
let
\[
f_\psi(s):=\frac{f(e^{i\psi}t)+f(e^{-i\psi}t)}{2}, \quad t \ge 0.
\]
Then
\begin{equation}\label{formula_a_Intro}
f_{\mathcal H}(A)= f(\infty)-\frac{2}{\pi}\int_0^\infty
f_\psi'(t){\arccot}(A^\gamma/t^{\gamma})\,dt
\end{equation}
where the integral converges in the uniform operator topology, and
\begin{equation*}
\|f_{\mathcal H}(A)\|\le |f(\infty)|+ M_\psi(A)\|f_\psi'\|_{L^1(\R_+)} \le M_\psi(A) \|f\|_{\H_\psi}.
\end{equation*}
Thus $\|\Upsilon_A\| \le M_\psi(A)$.
\end{thm}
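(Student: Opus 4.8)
The plan is to derive the succinct formula \eqref{formula_a_Intro} from the defining formula \eqref{sigma_def_intro} by computing the inner integral in \eqref{sigma_def_intro} explicitly. Fix $A \in \Sect(\theta)$, $\theta < \psi < \pi$, and $\gamma = \pi/(2\psi)$, so that $A^\gamma \in \Sect(\gamma\theta)$ with $\gamma\theta < \pi/2$, and $M_{\pi/2}(A^\gamma) = M_\psi(A)$ by the scaling property of sectorial angles under fractional powers. By Theorem \ref{sigma_cal_intro} (or the construction preceding it) we know $f_\H(A)$ is well-defined for $f \in \H_\psi$ via the $\D_0$-calculus applied to $f_{1/\gamma}(z) = f(z^{1/\gamma})$, and the bound \eqref{A2Dop_intro} is already in hand. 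First I would record that, writing $g = f_{1/\gamma}$, the function $g'$ lies in $H^1(\C_+)$, and the boundary values of $g$ on $i\R$ are given in terms of the boundary values $f(e^{\pm i\psi}t)$ of $f$ on $\partial\Sigma_\psi$ via the substitution $z = (it)^{1/\gamma}$; this is where the function $f_\psi(t) = \tfrac12\big(f(e^{i\psi}t) + f(e^{-i\psi}t)\big)$ enters, as the even (symmetrized) part that survives the two-sided boundary contribution.

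Next I would carry out the key computation: in \eqref{sigma_def_intro}, for fixed $A^\gamma$, one integrates $g'(\alpha + i\beta)(A^\gamma + \alpha - i\beta)^{-1}$ over the half-plane $\C_+$. Using that $g'\in H^1(\C_+)$, one can push the $\alpha$-integration to the boundary (a Cauchy/Fubini argument, justified by absolute convergence coming from $g' \in H^1$ and $\|A^\gamma(A^\gamma+\lambda)^{-1}\| \le M_\psi(A) + 1$ on $\C_+$), converting the area integral into a boundary integral of $g$ against a resolvent kernel on $i\R$. The resulting one-dimensional integral over $i\R$ then collapses, after changing variables back to $t$ on $\partial\Sigma_\psi$, to an integral of $f_\psi'(t)$ against the operator obtained by integrating $(A^\gamma + \alpha - i\beta)^{-1}$ along the imaginary direction — and this last scalar integral is precisely $\arccot(A^\gamma/t^\gamma)$ (interpreted via the holomorphic functional calculus for the sectorial operator $A^\gamma$), since for $\lambda \in \Sigma_{\pi/2}$ one has $\int_{\R} \tfrac{dt}{\lambda^2/t^2 + 1}\,\tfrac{dt}{t}$-type identities yielding $\arccot$. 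Concretely, the identity to verify at the scalar level is
\[
\frac{2}{\pi}\int_0^\infty f_\psi'(t)\,\arccot(\lambda/t)\,dt = -\frac{1}{\pi}\int_0^\infty\int_{-\infty}^\infty g'(\alpha+i\beta)(\lambda^{1/\gamma}\cdots)\cdots,
\]
and then one transfers it to $\lambda = A^\gamma$ by the compatibility of the $\D$-calculus with the holomorphic functional calculus (Corollary \ref{Compat} / the construction in Section \ref{defD}). I would be careful to treat the boundary term $f(\infty)$ separately — it is unaffected, since $\arccot(A^\gamma/t^\gamma) \to 0$ as $t \to \infty$ and the constant part of $g$ contributes $f(\infty)$ to both formulas.

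Having established \eqref{formula_a_Intro}, the norm estimate is then essentially immediate: $\|\arccot(A^\gamma/t^\gamma)\| \le M_{\pi/2}(A^\gamma) = M_\psi(A)$ uniformly in $t > 0$ (because $\arccot$ maps $\Sigma_{\pi/2}$ into itself and is bounded there, with the relevant bound controlled by the sectoriality constant via the standard resolvent representation of $\arccot$ of a sectorial operator), so
\[
\Big\|\frac{2}{\pi}\int_0^\infty f_\psi'(t)\,\arccot(A^\gamma/t^\gamma)\,dt\Big\| \le M_\psi(A)\,\|f_\psi'\|_{L^1(\R_+)},
\]
and $\|f_\psi'\|_{L^1(\R_+)} \le \|f'\|_{H^1(\Sigma_\psi)} \le \|f\|_{\H_\psi}$ by definition of the $H^1(\Sigma_\psi)$-norm and the $\H_\psi$-norm. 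Combined with $|f(\infty)| \le \|f\|_{H^\infty(\Sigma_\psi)} \le \|f\|_{\H_\psi}$ this gives $\|f_\H(A)\| \le M_\psi(A)\|f\|_{\H_\psi}$, hence $\|\Upsilon_A\| \le M_\psi(A)$. Convergence of the integral in the uniform operator topology follows from $f_\psi' \in L^1(\R_+)$ together with the uniform bound on $\arccot(A^\gamma/t^\gamma)$ and continuity of $t \mapsto \arccot(A^\gamma/t^\gamma)$ in operator norm on $(0,\infty)$.

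The main obstacle I anticipate is the rigorous interchange of integrations in the key computation and the precise identification of the collapsed scalar kernel with $\arccot$: one must justify Fubini on the (only conditionally nice) area integral using $g' \in H^1(\C_+)$ and uniform resolvent bounds, handle the passage to boundary values of $g$ carefully (the substitution $z \mapsto z^\gamma$ distorts the measure and pairs the two rays of $\partial\Sigma_\psi$), and then confirm that the emerging one-variable integral is exactly the arccotangent — including getting the constant $2/\pi$ and the argument $A^\gamma/t^\gamma$ right — and that the operator $\arccot(A^\gamma/t^\gamma)$ defined this way agrees with the holomorphic functional calculus value, which is where compatibility of the calculi (already available from Section \ref{defD}) does the work.
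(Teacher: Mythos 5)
Your proposal has a genuine gap at the point you declare ``essentially immediate'': the uniform bound $\|\arccot(A^\gamma/t^\gamma)\|\le M_\psi(A)$. First, the identity $M_{\pi/2}(A^\gamma)=M_\psi(A)$ you invoke is false in general; fractional powers rescale the sectoriality \emph{angle}, not the sectoriality \emph{constant}, and the paper treats $M_{A^\gamma}$ and $M_\psi(A)$ as genuinely different quantities (see Remark \ref{rem_h}, where the weaker estimate \eqref{NewEs} in terms of $M_{A^\gamma}$ is offered as an \emph{alternative} to \eqref{NewEs1}). Second, even for a sectorial $B$ of angle $<\pi/2$, the fact that $\arccot$ maps $\C_+$ into a bounded set does not yield $\|\arccot(B)\|\lesssim M_{\pi/2}(B)$ --- that would amount to an $H^\infty$-bound, which is exactly what is unavailable; the paper's generic $\D_0$-estimate \eqref{NormA} only gives $\|\arccot(A^\gamma)\|<3M_{A^\gamma}$, which is far too weak for the stated conclusion $\|\Upsilon_A\|\le M_\psi(A)$. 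The actual source of the sharp constant is the special kernel representation \eqref{rew} of $\arccot(z^\nu)$ (Lemma \ref{log}): a positive weight $\frac{1}{2\pi}\log\bigl|\frac{1+t^\nu}{1-t^\nu}\bigr|\frac{dt}{t}$ of total mass exactly $\psi/2$ (by \eqref{remA}) against resolvent kernels on the rays $te^{\pm i\psi}$, plus an arc integral contributing $(\pi-\psi)/2$. Transferring this to the operator level (Lemmas \ref{arc_bound} and \ref{Ss2}) gives $\|\arccot(A^\gamma/t^\gamma)\|\le \frac{\pi}{2}M_\psi(A)$, and the prefactor $\frac{2}{\pi}$ in \eqref{formula_a_Intro} cancels the $\frac{\pi}{2}$. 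None of this appears in your argument, and without it the theorem's main point --- an estimate in terms of $M_\psi(A)$ with constant $1$ --- is not reached.

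On the formula \eqref{formula_a_Intro} itself, your route (collapsing the area integral in \eqref{sigma_def_intro} directly at the operator level) is workable in principle but the decisive computation is precisely the part you defer as an ``obstacle.'' The paper's route is cleaner: it first proves the \emph{scalar} identity \eqref{arctan} (Proposition \ref{L7.4}) by representing $f'_{1/\gamma}\in H^1(\C_+)$ through its boundary values via the Cauchy formula, which is where $f_\psi'$ emerges, and identifying the resulting kernel with $\arccot$ via \eqref{ArcN}; it then reads \eqref{arctan} as a Bochner integral $f=f(\infty)-\frac{2}{\pi}\int_0^\infty f_\psi'(t)q_t\,dt$ in the Banach algebra $\H_\varphi$ (using continuity of $t\mapsto q_t$, which needs the scale invariance of $\|\cdot\|_{\H_\varphi}$ and Lemma \ref{duren21}) and applies the already-bounded homomorphism $\Upsilon_A$ to both sides. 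This avoids all Fubini issues with operator-valued area integrals. I recommend restructuring along these lines and supplying the kernel computation of Lemma \ref{log} before attempting the norm estimate.
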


See Section \ref{alternative} for details.   The $\mathcal D$-calculus and the $\mathcal H$-calculus possess natural properties of functional calculi such as spectral mapping theorems and Convergence Lemmas. These properties are studied in Section \ref{SMT_sec}.

The strength of the constructed calculi is illustrated by several examples showing
that they lead to sharper estimates than those offered by other calculi (see Section \ref{VS} for one example). 
Moreover, the theory developed in this paper is successfully tested by deriving several significant estimates for functions of sectorial operators from the literature.   In particular, in Section \ref{norm_estimates}, we provide a proof of permanence of the class of sectorial operators under subordination and we revisit a few basic results from semigroup theory.

In developing the $\mathcal D$- and $\mathcal H$-calculi we prove a number of results of independent interest in function theory. Apart from the theory of the spaces $\mathcal D_s$ and $\H_\psi$, their reproducing formulas, and boundedness of the associated operators elaborated in this paper, we emphasize the property \eqref{n_equality} in Corollary \ref{HHH} yielding isometric coincidence of spaces of Hardy type, Theorem \ref{ACM} on Laplace representability of Hardy-Sobolev functions, and Theorem \ref{DM122} on the density of rational functions in Hardy-Sobolev spaces.

{\it Added Note:}  During the preparation of this paper, we have become aware of a paper by Arnold and Le Merdy \cite{ALM} who consider negative generators of bounded $C_0$-semigroups on Hilbert space.  Inspired by ideas in \cite{Pel} for the discrete case, they extend the $\Bes$-calculus for those operators, to a strictly larger Banach algebra $\mathcal{A}$ in which $\Bes$ is continuously embedded.   Their extension is complementary to our extensions to the $\mathcal{D}$- and $\mathcal{H}$-calculi for negative generators of bounded holomorphic $C_0$-semigroups on Banach spaces.   We are grateful to Loris Arnold for pointing out several defects in the original version of this paper.

\section{Preliminaries} \label{prelims}

\subsection*{Notation} 
Throughout the paper, we will use the following notation:
\begin{enumerate}[\null\hskip1pt]
\item $\R_+ :=[0,\infty)$,
\item $\C_+ := \{z \in\C: \Re z>0\}$, $\overline{\C}_+ = \{z \in\C: \Re z\ge0\}$,
\item $\Sigma_\theta := \{z\in\C: z \ne 0, |\arg z|<\theta\}$ for $\theta \in (0,\pi)$.
\item
\end{enumerate}
For $f : \C_+ \to \C$, we say that $f$ has a \emph{sectorial limit at infinity} if
\[
\lim_{|z|\to\infty, z \in \Sigma_\psi} f(z)
\]
exists for every $\psi \in (0,\pi/2)$.   Similarly, $f$ has a \emph{sectorial limit at $0$} if
\[
\lim_{|z|\to0, z \in \Sigma_\psi} f(z)
\]
exists for every $\psi \in (0,\pi/2)$.  We say that $f$ has a \emph{half-plane limit at infinity} if 
\[
\lim_{\Re z \to \infty} f(z)
\]
exists in $\C$.  We say that $f$ has a \emph{full limit at infinity} or \emph{at zero} if 
\[
\lim_{|z| \to \infty, z\in\C_+} f(z) \quad \text{or} \quad  \lim_{|z| \to 0, z\in\C_+} f(z)
\]
exists in $\C$. The notation $f(\infty)$ and $f(0)$ may denote a sectorial limit, a half-plane limit, or a full limit, according to context.  

\noindent
For $a \in \overline{\C}_+$, we define functions on $\C$ by
\[
e_a(z) = e^{-az}; \qquad r_a(z) = (z+a)^{-1}, \, z\ne-a.
\]

\noindent
We use the following notation for spaces of functions or measures, and transforms, on $\R$ or $\R_+$:
\begin{enumerate}[\null\hskip1pt]
\item  $\operatorname{Hol}(\Omega)$ denotes the space of holomorphic functions on an open subset $\Omega$ of $\C$, $H^\infty(\Omega)$ is the space of bounded holomorphic functions on $\Omega$, and $\|f\|_{H^\infty(\Omega)} = \sup_{\Omega} |f(z)| $.
\item $H^p(\C_+), \, 1 \le p \le \infty$,  are the standard Hardy spaces on the (right) half-plane.
\item $M(\R_+)$ denotes the Banach algebra of all bounded Borel measures on $\R_+$ under convolution.   We identify $L^1(\R_+)$ with a subalgebra of $M(\R_+)$ in the usual way.  We write $\mathcal L\mu$ for the Laplace transform of $\mu \in M(\R_+)$.
\item $\LT$ is the Hille-Phillips algebra, $\LT := \{\mathcal{L}\mu : \mu \in M(\R_+)\}$, with the norm $\|\mathcal{L}\mu\|_{\mathrm{HP}}:= |\mu|(\R_+)$, and $\LL := \{\lt f: f \in L^1(\R_+)\}$.
\item $dS$ denotes area measure on $\C_+$.
\end{enumerate}

\noindent
For a Banach space $X$, $X^*$ denotes the dual space of $X$ and $L(X)$ denotes the space of all bounded linear operators on $X$. The domain, spectrum and resolvent set of an (unbounded) operator $A$ on $X$ are denoted by $D(A)$, $\sigma(A)$ and $\rho(A)$, respectively.

If $(\mathcal{X},\|\cdot\|_{\mathcal{X}})$ and $(\mathcal{Y},\|\cdot\|_{\mathcal{Y}})$ are normed spaces of holomorphic functions on domains $\Omega_{\mathcal{X}}$ and $\Omega_{\mathcal{Y}}$, we will use notation as follows:
\begin{enumerate}[\null\hskip1pt]
\item  $\mathcal{Y} \embedi \mathcal{X}$ if $\Omega_{\mathcal{Y}} = \Omega_{\mathcal{X}}$, $\mathcal{Y}$ is a subset of $\mathcal{X}$ and the inclusion map is continuous;
\item $\mathcal{Y} \subset \mathcal{X}$ if $\Omega_{\mathcal{Y}} = \Omega_{\mathcal{X}}$, $\mathcal{Y}$ is a subset of $\mathcal{X}$ and $\mathcal{Y}$ inherits the norm from $\mathcal{X}$;
\item  $\mathcal{Y} \embedr \mathcal{X}$ if $\Omega_{\mathcal{Y}} \supset \Omega_{\mathcal{X}}$, and the restriction map $f \mapsto f|_{\Omega_{\mathcal{X}}}$ is a continuous map from $\mathcal{Y} \to \mathcal{X}$.
\end{enumerate}
Boundaries of all of the sectors appearing in this paper will be oriented from top to bottom.
\subsection*{Elementary inequalities}
We will need the following elementary lemma which gives lower bounds for $|z+\l|$ in terms of $|z|$ and $|\l|$, for $z$, $\lambda \in \C$.

\begin{lemma}  \label{trig}
\begin{enumerate}[\rm(i)]
\item
Let $z=|z|e^{i\psi}$ and $\lambda=|\lambda|e^{i\varphi} \in \C$, where $|\psi-\varphi|\le\pi$.  Then
\begin{equation} \label{Fc}
|z+\lambda|\ge \cos\left(\frac{\psi-\varphi}{2}\right) (|z|+|\lambda|).
\end{equation}
\item Let $z \in \overline{\Sigma}_{\psi}$ and $\l \in \overline{\Sigma}_\varphi$, where $\psi,\varphi>0$ and $\varphi+\psi<\pi$.  Then
\begin{equation} \label{Fd}
|z+\l| \ge \cos\left(\frac{\psi+\varphi}{2}\right) (|z|+|\lambda|).
\end{equation}
\item Let $z=|z|e^{i\psi}$ and $\lambda=|\lambda|e^{i\varphi} \in \C$, where $|\psi|<\pi/2$ and $|\varphi|\le \pi/2$.  Then
\begin{equation}\label{Fa}
|z+\lambda|\ge \cos\psi \, |\lambda|,
\end{equation}
and
\begin{equation}\label{Fb}
|z+\lambda|\ge \cos\psi\, |z|.
\end{equation}
\end{enumerate}
\end{lemma}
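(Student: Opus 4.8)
\textbf{Proof plan for Lemma \ref{trig}.}

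The plan is to prove each inequality by reducing to elementary trigonometry, exploiting rotational invariance of the modulus. For (i), I would write $|z+\lambda|^2 = |z|^2 + |\lambda|^2 + 2|z||\lambda|\cos(\psi-\varphi)$ by the law of cosines, and compare this with $\cos^2\bigl(\tfrac{\psi-\varphi}{2}\bigr)(|z|+|\lambda|)^2$. Using the half-angle identity $\cos^2\bigl(\tfrac{\psi-\varphi}{2}\bigr) = \tfrac{1}{2}(1+\cos(\psi-\varphi))$, the desired inequality
\[
|z|^2 + |\lambda|^2 + 2|z||\lambda|\cos(\psi-\varphi) \ge \tfrac{1}{2}(1+\cos(\psi-\varphi))(|z|+|\lambda|)^2
\]
rearranges, after multiplying out, to $\tfrac{1}{2}(1-\cos(\psi-\varphi))(|z|-|\lambda|)^2 \ge 0$, which holds since $|\psi-\varphi|\le\pi$ forces $\cos(\psi-\varphi)\ge -1$. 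This is the core computation and everything else follows from it.

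For (ii), the hypotheses $z\in\overline{\Sigma}_\psi$, $\lambda\in\overline{\Sigma}_\varphi$ mean $\arg z \in [-\psi,\psi]$ and $\arg\lambda\in[-\varphi,\varphi]$, so $|\arg z - \arg\lambda| \le \psi+\varphi < \pi$. Since $t\mapsto\cos(t/2)$ is decreasing on $[0,\pi]$, part (i) gives $|z+\lambda| \ge \cos\bigl(\tfrac{|\arg z - \arg\lambda|}{2}\bigr)(|z|+|\lambda|) \ge \cos\bigl(\tfrac{\psi+\varphi}{2}\bigr)(|z|+|\lambda|)$. (I note the statement writes $\theta$ where it presumably means $\varphi$; I would write it with $\varphi$.) For (iii), the geometric picture is that $z$ lies strictly inside the right half-plane with $|\arg z| < \pi/2$, while $\lambda$ lies in the closed right half-plane; projecting onto the direction of $z$, or equivalently taking real parts after rotating by $-\psi$, one sees that the component of $\lambda$ along $z$ is nonnegative, so $|z+\lambda| \ge |\lambda|\cos(\text{angle between them})$. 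More carefully: write $w = e^{-i\psi}(z+\lambda) = |z| + e^{-i\psi}\lambda$. Then $\Re(e^{-i\psi}\lambda) = |\lambda|\cos(\psi - \arg\lambda)$, and since $|\psi|<\pi/2$ and $|\arg\lambda|\le\pi/2$ we have $|\psi - \arg\lambda| < \pi$, but that alone is not enough for positivity; instead I would argue directly that $|z+\lambda| \ge \Re(e^{-i\arg\lambda}(z+\lambda)) = |\lambda| + \Re(e^{-i\arg\lambda}z) = |\lambda| + |z|\cos(\arg z - \arg\lambda)$, and bound $|\arg z - \arg\lambda| < \pi$; to get the clean bound \eqref{Fa} instead project onto $\overline{\lambda}/|\lambda|$ is awkward, so I would rather apply part (i) with the roles arranged so that $\cos\bigl(\tfrac{\psi-\varphi}{2}\bigr)\ge\cos\psi$. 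Concretely, \eqref{Fa} and \eqref{Fb} both follow from (i) once one observes that with $|\psi|<\pi/2$ and $|\varphi|\le\pi/2$ one has $\bigl|\tfrac{\psi-\varphi}{2}\bigr| \le \tfrac{|\psi|+|\varphi|}{2} < \tfrac{\pi}{2}$, hmm but that gives $\cos\bigl(\tfrac{\psi-\varphi}{2}\bigr)$ which need not dominate $\cos\psi$; so the honest route for (iii) is the direct projection argument: $|z+\lambda|\ge\Re\bigl(e^{-i\varphi}(z+\lambda)\bigr) = |\lambda| + |z|\cos(\psi-\varphi)$, and since this does not immediately give \eqref{Fa} either, I will instead use the rotation trick: $|z+\lambda| = |e^{-i\psi}z + e^{-i\psi}\lambda| \ge \Re(e^{-i\psi}z + e^{-i\psi}\lambda) = |z|\cos\psi\cdot\tfrac{1}{\cos\psi}$—

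Let me state the clean argument for (iii). For \eqref{Fb}: $|z+\lambda| \ge \bigl|\Im(e^{i\pi/2}\cdot\text{something})\bigr|$ is not it; rather, decompose $\lambda = \lambda_1 + i\lambda_2$ with $\lambda_1 = \Re\lambda \ge 0$ (as $|\arg\lambda|\le\pi/2$) and $z = |z|\cos\psi + i|z|\sin\psi$ with $|z|\cos\psi > 0$. Then $\Re(z+\lambda) = |z|\cos\psi + \lambda_1 \ge |z|\cos\psi$, and since $|z+\lambda|\ge|\Re(z+\lambda)|$, we get \eqref{Fb} immediately. For \eqref{Fa}: this needs $|z+\lambda|\ge|\lambda|\cos\psi$. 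Rotate: $e^{-i\arg\lambda}(z+\lambda) = e^{-i\arg\lambda}z + |\lambda|$, so $\Re\bigl(e^{-i\arg\lambda}(z+\lambda)\bigr) = |z|\cos(\arg z - \arg\lambda) + |\lambda|$. The angle $\arg z - \arg\lambda$ lies in $(-\pi,\pi)$, so its cosine can be negative, and this real-part bound can be less than $|\lambda|\cos\psi$. Therefore \eqref{Fa} genuinely requires part (i): by (i), $|z+\lambda| \ge \cos\bigl(\tfrac{\arg z - \arg\lambda}{2}\bigr)(|z|+|\lambda|) \ge \cos\bigl(\tfrac{\arg z - \arg\lambda}{2}\bigr)|\lambda|$, and since $\bigl|\tfrac{\arg z - \arg\lambda}{2}\bigr| \le \tfrac{|\arg z| + |\arg\lambda|}{2} < \tfrac{\psi + \pi/2}{2}$, which need not be $\le\psi$ when $\psi<\pi/2$... so even this fails when $\psi$ is small. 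Hence the correct proof of \eqref{Fa} must use more than crude angle bounds: I expect one writes $|z+\lambda|^2 = |z|^2 + |\lambda|^2 + 2|z||\lambda|\cos(\arg z - \arg\lambda)$ and minimizes over $|z|\ge 0$, obtaining, when $\cos(\arg z-\arg\lambda) < 0$, the minimum $|\lambda|^2\sin^2(\arg z - \arg\lambda)$ at $|z| = -|\lambda|\cos(\arg z-\arg\lambda)$; then $\sin^2(\arg z - \arg\lambda) \ge \cos^2\psi$ would be needed, i.e. $|\arg z - \arg\lambda|$ close to $\pi/2$... which holds because $\arg\lambda\in[-\pi/2,\pi/2]$ and $\arg z\in(-\psi,\psi)$ means $\arg z - \arg\lambda \in (-\psi-\pi/2, \psi+\pi/2)$, and where $\cos<0$ we have $|\arg z - \arg\lambda|\in(\pi/2, \psi+\pi/2)$, so $\sin^2(\arg z-\arg\lambda) = \cos^2(|\arg z-\arg\lambda| - \pi/2) \ge \cos^2\psi$ since $|\arg z - \arg\lambda| - \pi/2 \in (0,\psi)$ and cosine decreases. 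That closes it. \textbf{The main obstacle} is precisely this last point: \eqref{Fa} is not a soft consequence of (i) but needs the extremal-distance computation just sketched, whereas (i), (ii) and \eqref{Fb} are routine.
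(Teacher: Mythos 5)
Your proposal is correct, and the substance of each part matches or is equivalent to the paper's argument; the differences are in packaging. For (i) you expand $|z+\lambda|^2$ by the law of cosines and reduce the claim to $\tfrac12(1-\cos(\psi-\varphi))(|z|-|\lambda|)^2\ge 0$, whereas the paper rotates the picture so that $z$ and $\lambda$ sit symmetrically about the imaginary axis and bounds $|z+\lambda|$ below by $\Im z+\Im\lambda$; both are one-line verifications and yours is arguably more mechanical. Parts (ii) and \eqref{Fb} coincide with the paper. For \eqref{Fa}, your final argument --- minimize $|z|^2+|\lambda|^2+2|z||\lambda|\cos(\psi-\varphi)$ over $|z|\ge 0$, getting $|\lambda|^2$ when $\cos(\psi-\varphi)\ge 0$ and $|\lambda|^2\sin^2(\psi-\varphi)$ otherwise, then check $\sin^2(\psi-\varphi)\ge\cos^2\psi$ from $|\psi-\varphi|-\pi/2\in[0,|\psi|]$ --- is exactly the completion-of-the-square that the paper writes as a single identity followed by the inequality $\sin\varphi\ge|\cos(\varphi-\psi)|$ in the bad case; you correctly identified that this is the one genuinely non-soft step of the lemma, and that \eqref{Fa} cannot be deduced from (i) by crude angle bounds. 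The only criticism is editorial: the long sequence of abandoned attempts for (iii) should be cut, leaving just the real-part argument for \eqref{Fb} and the extremal computation for \eqref{Fa} (and, as you note, the $\theta$ in \eqref{Fd} is a typo for $\varphi$).
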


\begin{proof}
For \eqref{Fc}, we may assume that $\varphi\ge\psi$ and let $\theta := (\pi-\varphi+\psi)/2 \in[0,\pi/2]$.  By applying a rotation of $\C$ we may further assume that $\varphi=\pi-\theta$ and $\psi = \theta$.   Then
\[
|z+\l| \ge \Im z + \Im\l = \sin\theta (|z|+|\l|) = \cos \left(\frac{\varphi-\psi}{2}\right) (|z|+|\l|).
\]
The inequality \eqref{Fd} follows from \eqref{Fc}, since $\psi+\varphi$ is the maximum value of $|\psi'-\varphi'|$ for $\psi'\in[-\psi,\psi]$ and $\varphi' \in [-\varphi,\varphi]$.

The inequality \eqref{Fb} is obtained by considering $\Re(z+\l)$.  For the inequality \eqref{Fa}, we assume without loss of generality that $\sin\varphi \ge 0$.   Note that
\begin{align} \label{zl}
|\l+z|^2 -|\l|^2\cos^2\psi &= \left(|z|+|\l|\cos(\varphi-\psi)\right)^2 + |\l|^2 \left(\sin^2\varphi - \cos^2(\varphi-\psi)\right) \\
&= |z|^2 + 2|z|\,|\l| \cos(\varphi-\psi) + |\l|^2 \sin^2 \varphi.  \nonumber
\end{align}
 If $\cos(\varphi-\psi)<0$, we have
\[
\sin\varphi - |\cos(\varphi-\psi)| = \sin\varphi \, (1+\sin\psi) + \cos \varphi\cos\psi \ge 0.
\]
Then the expression on the right-hand side of the first line of \eqref{zl} is clearly non-negative.    If $\cos(\varphi-\psi)\ge0$, then the expression in the second line is clearly non-negative.   This completes the proof.
\end{proof}

\subsection*{Beta function}
The Beta function appears in many places in the paper. It is defined for $s,t>0$ by
\[
B(s,t) = B(t,s) := \int_0^1 \tau^{s-1} (1-\tau)^{t-1} \, d\tau = 2 \int_0^{\pi/2} \cos^{2s-1} \psi \sin^{2t-1} \psi \, d\psi.
\]
In particular, for $s>-1$ we will use the relations
\[
B\left(\frac{s+1}{2},\frac{1}{2}\right) = \int_{-\pi/2}^{\pi/2}\cos^s\psi\,d\psi = \int_{-\infty}^\infty \frac{dt}{(1+t^2)^{(s+1)/2}}
=\frac{\sqrt{\pi}\Gamma((s+1)/2)}{\Gamma(s/2+1)},
\] 
see \cite[p.\ 386]{Prud}.  We note also the following limit properties:
\[
\lim_{s\to -1}\,(s+1)\,B\left(\frac{s+1}{2},\frac{1}{2}\right)=2,\quad
\lim_{s\to\infty}\,\sqrt{s}B\left(\frac{s+1}{2},\frac{1}{2}\right)=\sqrt{2\pi}.
\]

\subsection* {Proof conventions}

We will make extensive use of the dominated convergence theorem, often for vector-valued functions.   With a few exceptions, we will not give details of the relevant dominating functions, as they are usually easily identified.

We will also use the following elementary lemma on several occasions. See \cite[p.21, Lemma 1]{Duren1} for a proof.

\begin{lemma} \label{duren21}
Let $(\Omega,\mu)$ be a $\sigma$-finite measure space, and $(f_n)_{n\ge1} \subset L^p(\Omega,\mu)$, where $p\in[1,\infty)$.  If $f_n \to f_0$ a.e., and $\|f_n\|_{L^p(\Omega,\mu)} \to \|f_0\|_{L^p(\Omega,\mu)}$, then $\|f_n-f_0\|_{L^p(\Omega,\mu)} \to 0$ as $n\to\infty$.
\end{lemma}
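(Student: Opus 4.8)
\textbf{Plan for proving Theorem \ref{Sovp_Intro}.}
The strategy is to reduce the formula \eqref{formula_a_Intro} to the already-established definition \eqref{sigma_def_intro} of $f_\H(A)$ by integrating the double integral in \eqref{sigma_def_intro} over the imaginary variable $\b$ first. Fix $A \in \Sect(\theta)$, $\theta < \psi < \pi$, set $\gamma = \pi/(2\psi)$, and write $g := f_{1/\gamma}$, so $g(z) = f(z^{1/\gamma})$ is holomorphic on $\C_+$ with $g' \in H^1(\C_+)$ and $g(\infty) = f(\infty)$, hence $g \in \D_0$. By the defining formula \eqref{sigma_def_intro},
\[
f_\H(A) = f(\infty) - \frac{1}{\pi}\int_0^\infty \int_{-\infty}^\infty g'(\a+i\b)\,(A^\gamma + \a - i\b)^{-1}\,d\b\,d\a .
\]
The first step is to justify computing the inner integral $\int_{-\infty}^\infty g'(\a+i\b)(A^\gamma + \a - i\b)^{-1}\,d\b$ as a contour integral. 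Since $B := A^\gamma \in \Sect(\gamma\theta)$ with $\gamma\theta < \pi/2$, the resolvent $(B + \a - i\b)^{-1}$ extends holomorphically in $\b$ into a neighbourhood of the real axis, and $\|(B+\a-i\b)^{-1}\| \le M_{\pi/2}(B)/|\a - i\b + \text{something}|$ with suitable decay; combined with the $H^1(\C_+)$-membership of $g'$ (so that $g'(\a+i\b)$ is, for a.e.\ $\a$, integrable in $\b$ and $g$ has boundary values on $i\R$), one can push the contour $\{\a + i\b : \b \in \R\}$ for the $g'$-factor down toward the boundary rays $e^{\pm i\pi/2}$… but more directly, I would change variables $z = \a + i\b$ and recognize, via Fubini and Cauchy's theorem applied on the half-plane $\C_+$, that for each fixed $t>0$ the operator-valued kernel integrates against $g'$ to produce $\arccot(B/t^\gamma)$-type primitives. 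Concretely, using the scalar identity $\int_0^\infty \int_{-\infty}^\infty g'(\a+i\b)(\lambda + \a - i\b)^{-1}\,d\b\,d\a = \pi\bigl(g(\infty) - 2\int_0^\infty g'_{\mathrm{sym}}(t)\arccot(\lambda/t)\,dt\bigr)$ valid for $\lambda \in \overline{\C}_+ \setminus i\R$ (where $g'_{\mathrm{sym}}$ is the symmetrization of $g'$ along the two boundary rays of $\C_+$, i.e.\ the average of the boundary traces of $g'$), and then substituting $\lambda = B = A^\gamma$ via the $\mathcal D_0$-functional calculus — which is legitimate since both sides are bounded and depend continuously on the data — gives the operator identity. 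Finally one untwists the substitution $t \mapsto t^\gamma$ and $g = f\circ(\cdot)^{1/\gamma}$: the boundary rays of $\C_+$ correspond under $z \mapsto z^{1/\gamma}$ to the rays $e^{\pm i\psi}$, so $g'_{\mathrm{sym}}(t)\,dt$ transforms into $f_\psi'(t)\,dt$ with $f_\psi(t) = \tfrac12(f(e^{i\psi}t) + f(e^{-i\psi}t))$, and $\arccot(B/t) = \arccot(A^\gamma/t)$ becomes $\arccot(A^\gamma/\tilde t^{\,\gamma})$ after the corresponding change of variable in $t$, yielding exactly \eqref{formula_a_Intro}.

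For the convergence and the norm bound, I would argue as follows. Since $f \in \H_\psi$, the restriction of $f$ to each ray $\{e^{\pm i\psi}t : t > 0\}$ has derivative in $L^1(\R_+)$ (this is precisely $f' \in H^1(\Sigma_\psi)$, whose norm dominates the $L^1$-norms of the boundary traces), so $f_\psi' \in L^1(\R_+)$ with $\|f_\psi'\|_{L^1(\R_+)} \le \|f'\|_{H^1(\Sigma_\psi)} \le \|f\|_{\H_\psi}$. The operator $\arccot(A^\gamma/t^\gamma)$ is defined by the holomorphic (sectorial) functional calculus for $A^\gamma \in \Sect(\pi/2-)$ applied to the bounded function $\lambda \mapsto \arccot(\lambda/t^\gamma)$; the key estimate is $\|\arccot(A^\gamma/t^\gamma)\| \le C\, M_\psi(A)$ uniformly in $t$, which follows because $\lambda \mapsto \arccot(\lambda/t^\gamma)$ is bounded on $\overline{\Sigma}_{\pi-\psi'}$ for $\psi' \in (\theta, \psi)$ with a bound independent of $t$, and because $\|r_z(A^\gamma)\|$ is controlled by $M_\psi(A)$ on the relevant sector. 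Actually, to get the clean constant $M_\psi(A)$ (rather than $M_{\pi/2}(A^\gamma)$) one should revisit: the representation $\arccot(\lambda/t^\gamma) = \int \cdots (\lambda + \cdots)^{-1}$ as a superposition of resolvents $r_s(A)$ with $s$ ranging over the ray $\arg s = \pm(\pi - \psi)$ (coming from the two rays of $\partial\Sigma_\psi$), so that $\|\arccot(A^\gamma/t^\gamma)\| \le M_\psi(A) \times (\text{total mass of the scalar kernel}) = M_\psi(A)$; this is the content of the displayed bound $\|f_\H(A)\| \le |f(\infty)| + M_\psi(A)\|f_\psi'\|_{L^1(\R_+)}$. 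The uniform convergence of $\int_0^\infty f_\psi'(t)\arccot(A^\gamma/t^\gamma)\,dt$ then follows from the Bochner-integrability of $t \mapsto f_\psi'(t)\arccot(A^\gamma/t^\gamma)$, dominated by $M_\psi(A)|f_\psi'(t)|$, together with continuity in $t$ of $\arccot(A^\gamma/t^\gamma)$ in operator norm (which holds since $t \mapsto \arccot(\cdot/t^\gamma)$ is continuous into the relevant $H^\infty$ space and the $\mathcal D_0$- or holomorphic calculus is bounded). Chaining the two inequalities and $\|f_\psi'\|_{L^1} \le \|f\|_{\H_\psi}$, and noting $|f(\infty)| \le \|f\|_{\H_\psi}$, gives $\|f_\H(A)\| \le M_\psi(A)\|f\|_{\H_\psi}$, whence $\|\Upsilon_A\| \le M_\psi(A)$ by Theorem \ref{sigma_cal_intro}.

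The main obstacle I anticipate is the rigorous interchange of integration in the first paragraph — specifically, justifying the passage from the two-dimensional integral $\int_0^\infty\int_{-\infty}^\infty g'(\a+i\b)(A^\gamma+\a-i\b)^{-1}\,d\b\,d\a$ to a one-dimensional boundary integral $\int_0^\infty f_\psi'(t)\arccot(A^\gamma/t^\gamma)\,dt$. This requires (a) a Fubini argument that is not absolutely convergent in the naive sense (the resolvent decays only like $1/|z|$, and $\int |g'|$ over vertical lines need not be uniformly small), so one must first integrate in $\b$ for fixed $\a$ using Cauchy's theorem / contour deformation onto $\partial\C_+$, controlling the arcs at infinity via $g \in H^1(\C_+)$; and (b) the scalar computation of $\int_0^\infty\int_{-\infty}^\infty g'(\a+i\b)(\lambda+\a-i\b)^{-1}\,d\b\,d\a$ for $\lambda \in \overline{\C}_+\setminus i\R$, which is a residue/Cauchy-integral calculation that must then be transferred to the operator $\lambda = A^\gamma$ by the boundedness of the $\mathcal D_0$-calculus together with a density argument (rational functions are dense in $\D_0$, or one approximates $g$ by functions $g_\ep(z) = g(z+\ep)$ and uses the Convergence Lemma). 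The bookkeeping with the two boundary rays of $\C_+$ (signs, orientation, the factor $2$, and the symmetrization producing $f_\psi$) is where errors are most likely, and I would expect to spend most of the effort there; the norm estimate and convergence claims, by contrast, are routine once the identity \eqref{formula_a_Intro} is in hand.
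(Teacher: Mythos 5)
Your proposal does not address the statement at all. The statement to be proved is Lemma \ref{duren21}, an elementary measure-theoretic fact: if $f_n \to f_0$ a.e.\ and $\|f_n\|_{L^p} \to \|f_0\|_{L^p}$, then $\|f_n - f_0\|_{L^p} \to 0$. This is a classical result (sometimes attributed to Riesz, or phrased as a sharpening of Scheff\'e's lemma); the paper simply cites Duren \cite[p.\ 21, Lemma 1]{Duren1} rather than proving it. A direct argument applies Fatou's lemma to the nonnegative sequence $g_n := 2^{p-1}(|f_n|^p + |f_0|^p) - |f_n - f_0|^p \ge 0$, giving
\[
\int 2^p |f_0|^p \,d\mu \le \liminf_n \int g_n \,d\mu = 2^p \int |f_0|^p \,d\mu - \limsup_n \int |f_n-f_0|^p\, d\mu,
\]
and hence $\limsup_n \|f_n - f_0\|_{L^p}^p \le 0$.

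What you have written instead is a proof \emph{plan} for Theorem \ref{Sovp_Intro} (the $\arccot$-representation formula for the $\H$-calculus), a completely unrelated result from a much later section of the paper. You need to re-read the target statement and produce an argument for it; none of the contour-deformation, Fubini, or functional-calculus machinery you discuss is relevant to Lemma \ref{duren21}.
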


We will use Vitali's theorem several times, usually for holomorphic vector-valued functions.   We refer to the version given in \cite[Theorem A.5]{ABHN}.

Let $\mathcal{X}$ be a Banach space of holomorphic functions on a domain $\Omega_{\mathcal{X}}$ such that the point evaluations $\delta_z : f \mapsto f(z), \, z \in \Omega_{\mathcal{X}}$, are continuous on $\mathcal{X}$.   
Let $(\Omega,\mu)$ be either an interval in $\R$ with length measure or an open set in $\C$ with area measure, and $F : \Omega \to \mathcal{X}$ be a continuous function such that $\int_\Omega \|F(t)\|_{\mathcal{X}} \, d\mu(t) < \infty$.  
Then the integral
\[
G := \int_\Omega F(t) \, d\mu(t)
\] 
exists as a Bochner integral in $\mathcal{X}$ and it can be approximated by Riemann sums. It follows that $G$ belongs to the closed linear span of $\{F(t) : t\in \Omega\}$ in $\mathcal{X}$. 

Now assume that $F : \Omega \to \mathcal{X}$ is locally bounded, where $\Omega$ is an open set in $\C$, and that $\l \mapsto F(\l)(z)$ is holomorphic on $\Omega$ for all $z \in \Omega_{\mathcal{X}}$.  
We will use the fact that $F: \Omega \to \mathcal{X}$ is holomorphic in the vector-valued sense, without further comment.  
The result at this level of generality can be seen from \cite[Corollary A.7]{ABHN}, using the point evaluations as separating functionals.  
An alternative is to show that $F$ is continuous, and then apply Morera's theorem.   If the definition of $F$ is by an integral formula, it may also be possible to apply a standard corollary of the dominated convergence theorem which leads to an integral formula for the derivative $F'$.

\section{The Banach spaces $\mathcal{D}_s$ and their reproducing formulas}

In this section we introduce some spaces of holomorphic functions to which we will extend the $\Bes$-calculus of operators in Section \ref{defD} onwards.

\subsection{The spaces $\mathcal{V}_s$}

Let $s>-1$, $z=\alpha +i\beta$, and let $\mathcal{V}_s$ be the  Banach space of (equivalence classes of) measurable functions $g: \C_{+}\to \mathbb C$ such that the norm
\begin{align}\label{norm}
\|g\|_{\mathcal{V}_s}:&=  \int_{\C_+} \frac{(\Re z)^s |g(z)|}{|z|^{s+1}} \, dS(z) \\
&= \int_0^\infty \alpha^s\int_{-\infty}^\infty\frac{|g(\alpha +i\beta)|}{(\alpha^2+\beta^2)^{(s+1)/2}}\,d\beta\,d\alpha \notag\\
&=\int_{-\pi/2}^{\pi/2}\cos^s\varphi\int_0^\infty  |g(\rho e^{i\varphi})|\,d\rho\,d\varphi,\notag
\end{align}
is finite, where $S$ is the area measure on $\C_+$.
Note that
\begin{equation} \label{vst}
\mathcal{V}_s\subset \mathcal{V}_{\sigma} \qquad \text{and} \qquad  \|g\|_{\mathcal{V}_{\sigma}}\le \|g \|_{\mathcal{V}_s},
\qquad g\in \mathcal{V}_s,\; s < \sigma,
\end{equation}
and
\begin{equation} \label{vest}
\int_{\Sigma_\psi} \frac{|g(z)|}{|z|} \, dS(z) \le \max\left\{1,\frac{1}{\cos^s\psi}\right\} \|g\|_{\mathcal{V}_s}, \qquad g \in \mathcal{V}_s, \, \psi \in (0,\pi/2).
\end{equation}

The following property of functions from $\mathcal V_s$ is an essential element in the arguments which lead to the representations for functions in $\mathcal{V}_s$ in Proposition \ref{prim} and for $\D_s$ in Corollary \ref{Repr}, and eventually to the definition of a functional calculus for operators in \eqref{formulaD}.

\begin{lemma}\label{Derin}
Let $g\in \mathcal{V}_s$ be holomorphic, where $s>-1$.  For every $k\ge 1$ and every $\psi \in (0,\pi/2)$,
\begin{equation}\label{der1}
\lim_{|z|\to\infty ,\;z\in \Sigma_{\psi}}\,z^k g^{(k-1)}(z)=0.
\end{equation}
\end{lemma}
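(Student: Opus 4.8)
The plan is to prove \eqref{der1} first for $k=1$ and then bootstrap to general $k$ using Cauchy's integral formula for derivatives. For $k=1$ the goal is to show $zg(z)\to 0$ as $|z|\to\infty$ inside any subsector $\Sigma_\psi$ with $\psi<\pi/2$. Fix such a $\psi$ and pick $\psi'$ with $\psi<\psi'<\pi/2$. For $z\in\Sigma_\psi$, the disc $D(z,\delta|z|)$ with $\delta=\delta(\psi,\psi')>0$ small enough lies entirely inside $\Sigma_{\psi'}$; by the sub-mean-value property (holomorphy) applied on this disc,
\[
|g(z)| \le \frac{1}{\pi (\delta|z|)^2}\int_{D(z,\delta|z|)} |g(w)|\,dS(w).
\]
On this disc one has $\Re w \asymp \Re z$ and $|w|\asymp |z|$, so the weight $(\Re w)^s/|w|^{s+1}$ is comparable to $(\Re z)^s/|z|^{s+1}$, and in $\Sigma_\psi$ the latter is comparable to $|z|^{-1}$ (since $\Re z\asymp|z|$ there, with constant $\cos\psi$). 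Hence
\[
|g(z)| \le \frac{C}{|z|^2}\cdot \frac{|z|^{s+1}}{(\Re z)^s}\int_{D(z,\delta|z|)} \frac{(\Re w)^s|g(w)|}{|w|^{s+1}}\,dS(w) \le \frac{C'}{|z|}\int_{D(z,\delta|z|)} \frac{(\Re w)^s|g(w)|}{|w|^{s+1}}\,dS(w),
\]
so $|z|\,|g(z)|$ is bounded by a constant times the $\mathcal V_s$-mass of $g$ over $D(z,\delta|z|)$. As $|z|\to\infty$ in $\Sigma_\psi$, these discs eventually avoid any fixed bounded region, so that mass tends to $0$ by absolute continuity of the integral $\|g\|_{\mathcal V_s}<\infty$. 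This gives \eqref{der1} for $k=1$.

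For general $k\ge 2$, I would apply the Cauchy estimate for the $(k-1)$-th derivative on the same family of discs: for $z\in\Sigma_\psi$,
\[
|g^{(k-1)}(z)| \le \frac{(k-1)!}{(\delta|z|)^{k-1}}\sup_{w\in D(z,\delta|z|)}|g(w)| \le \frac{C_k}{|z|^{k-1}}\cdot \frac{1}{|z|}\,\epsilon(z),
\]
where $\epsilon(z):=\sup_{w\in D(z,\delta|z|)} |w|\,|g(w)|\to 0$ by the $k=1$ case applied in the slightly larger sector $\Sigma_{\psi'}$ (note $\sup_{w\in D(z,\delta|z|)}|w|\le (1+\delta)|z|$, so the bound $|z|^k|g^{(k-1)}(z)|\le C_k(1+\delta)\,\epsilon(z)$ follows, possibly after adjusting constants). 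Multiplying through by $|z|^k$ yields $|z^k g^{(k-1)}(z)|\le C_k'\,\epsilon(z)\to 0$.

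The main obstacle is the bookkeeping in the first step: one must choose the dilation factor $\delta$ depending on $\psi$ and an auxiliary larger angle $\psi'$ so that $D(z,\delta|z|)\subset\Sigma_{\psi'}\subset\C_+$ uniformly in $z\in\Sigma_\psi$, and then verify that the weight $(\Re w)^s/|w|^{s+1}$ genuinely stays comparable to $|z|^{-1}$ across the disc — this requires $s$-dependent constants and a little care because $s$ can be negative (so $(\Re w)^s$ blows up, not down, near the boundary), which is precisely why staying inside $\Sigma_{\psi'}$ with $\psi'<\pi/2$ keeps $\Re w$ bounded below by $c|z|$. Once that uniform geometry is set up, the rest is the dominated/absolutely continuous convergence of the tail of a finite integral, which is routine.
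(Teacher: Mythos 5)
Your proof is correct and follows essentially the same approach as the paper: both rest on the observation that for $z\in\Sigma_\psi$ a disc $D(z,c|z|)$ lies in a larger subsector $\Sigma_{\psi'}$ with $\psi'<\pi/2$, where the $\mathcal V_s$-weight is comparable to $|z|^{-1}$, so that $|z|^k|g^{(k-1)}(z)|$ is controlled by the tail of the finite integral $\int_{\Sigma_{\psi'}}|g(\lambda)|\,|\lambda|^{-1}\,dS(\lambda)$. The only difference is organizational: the paper treats all $k$ at once by integrating the Cauchy formula over radii $r\in(0,c|z|)$, whereas you do $k=1$ by the sub-mean-value inequality and then bootstrap with pointwise Cauchy estimates.
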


\begin{proof}
Let $g \in\mathcal V_s$ be holomorphic, $\psi\in (0,\pi/2)$, $\psi' = (\pi/2 + \psi)/2$ and $b_\psi={\sin((\pi/2-\psi)/2)} = \cos\psi'$.  If $z \in \Sigma_\psi$, then
\[
\{\l\in\C: |\l-z|\le b_\psi|z|\}\subset \{\l\in\Sigma_{\psi'}: |\l|\ge(1-b_\psi)|z|\}.
\]
Let $r \in (0, b_\psi|z|)$.  By Cauchy's integral formula for derivatives,
\[
g^{(k-1)}(z)=\frac{(k-1)!}{2\pi i}\int_{\{\l:\,|\l-z|=r\}}
\frac{g(\l)}{(\l-z)^{k}}\,d\l.
\]
Multiplying by $r^k$ and integrating with respect to $r$ over $(0, b_\psi|z|)$ gives
\[
\frac{(b_\psi|z|)^{k+1}}{k+1} |g^{(k-1)}(z)| \le \frac{(k-1)!}{2\pi} \int_{\{\l: |\l-z|\le b_\psi|z|\}} |g(\l)| \, dS(\l),
\]
and then
\[
|z|^k |g^{(k-1)}(z)| \le \frac{(k+1)(k-1)!(1+b_\psi)}{2\pi b_\psi^{k+1}} \int_{\{\l\in\Sigma_{\psi'}: |\l|\ge (1-b_\psi)|z|\}} \frac{|g(\l)|}{|\l|} \, dS(\l).
\]
It now follows from \eqref{vest} that $|z^k g^{(k-1)}(z)| \to 0$ as $|z|\to\infty, \, z \in \Sigma_\psi$.
\end{proof}

\subsection{The spaces $\D_s$ and the operators $Q_s$}  \label{3.02}

We now define a linear operator  $Q_s$ on $\mathcal{V}_s, \,  s>-1$.    It will play a similar role to the operator $Q$ on $\mathcal{W}$ considered in \cite[Section 3]{BGT2}, where $\mathcal{W}$ is the Banach space of all (equivalence classes of) measurable functions $g : \C_+ \to \C$ such that
\begin{equation} \label{3.2}
\|g\|_{\mathcal{W}} := \int_0^\infty \sup_{\beta\in \R} |g(\a+i\b)| \, d\a < \infty.
\end{equation}
Indeed the definition of $Q_1$ is formally the same as the definition of $Q$ in \cite{BGT2}, but the domain $\mathcal{V}_1$ of $Q_1$ is larger than $\mathcal{W}$.

For $g \in \mathcal{V}_s$, let
\begin{equation} \label{qdef}
(Q_s g)(z):= - \frac{2^s}{\pi}\int_0^\infty \alpha^s\int_{-\infty}^\infty\frac{g(\alpha+i\beta)}{(z+\alpha-i\beta)^{s+1}} \, d\b\,d\alpha,
\quad z\in \C_{+} \cup \{0\}.
\end{equation}
By \eqref{Fa}, the integral is absolutely convergent, and
\begin{equation}\label{angle}
|(Q_s g)(z)|\le \frac{2^s\|g\|_{\mathcal{V}_s}}{\pi \cos^{s+1}\psi}, \qquad z \in \Sigma_\psi, \, \psi \in (0,\pi/2).
\end{equation}
The dominated convergence theorem implies that $Q_s g$ is continuous on $\C_+$, with sectorial limits at infinity and $0$:
\begin{equation} \label{qinf}
\lim_{|z|\to\infty,\,z\in {\Sigma}_\psi}\,(Q_s g)(z)=0, \qquad \psi \in (0,\pi/2),
\end{equation}
and
\begin{equation} \label{qzero}
\lim_{|z|\to 0,\,z\in {\Sigma}_\psi}\,(Q_s g)(z)=(Q_s g)(0), \qquad \psi \in (0,\pi/2).
\end{equation}
Thus $Q_s g$ is bounded and continuous on $\overline{\Sigma}_\psi$ for every $\psi\in (0,\pi/2)$.
Moreover, $Q_s g$ is holomorphic on $\C_+$ and
\begin{equation} \label{qder}
(Q_s g)'(z) = (s+1)\frac{2^s}{\pi}\int_0^\infty \alpha^s\int_{-\infty}^\infty\frac{g(\alpha + i\beta)}{(z+\alpha-i\beta)^{s+2}}\,d\beta \, d\alpha,
\qquad z\in \C_{+}.
\end{equation}
Using this, (\ref{Fa}) and (\ref{Fb}), we obtain  that
\begin{equation}\label{Dangle}
|(Q_s g)'(z)|\le \frac{(s+1)2^s}{\pi\cos^{s+2}\psi \,|z|}\|g\|_{\mathcal{V}_s},\qquad z\in \Sigma_\psi,
\end{equation}
and
\begin{equation*}
|z(Q_s g)'(z)|\le \frac{(s+1)2^s}{\pi\cos^{s+2}\psi}\int_{\C_{+}} \frac{|z|(\Re \l)^s\,|g(\l)|}{|z+\overline{\l}||\l|^{s+1}}\,dS(\l), \qquad z\in \Sigma_\psi.
\end{equation*}
Using the dominated convergence theorem again, we obtain
\begin{equation} \label{Lebeg11}
\lim_{|z|\to0,z\in\Sigma_\psi} z(Q_sg)'(z) = 0.
\end{equation}

We now give another formula for  $Q_s$.   Let $s=n+\delta>-1$ where $n\in \N\cup\{-1,0\}$ and $\delta\in [0,1)$, and let
\begin{equation}\label{Const46}
C_s:=\int_0^\infty\frac{dt}{(t+1)^{n+2}t^\delta}=
\int_0^1\frac{(1-\tau)^s}{\tau^\delta}\,d\tau=B(1-\delta,s+1).
\end{equation}
Then
\[
\int_0^\infty \frac{dt}{(\l+t)^{n+2}t^\delta} = \frac{C_s}{\l^{s+1}},\quad \l\in \C_{+}.
\]
Indeed, both sides of this equation are holomorphic functions of $\l\in\C_+$, and they coincide for $\l \in (0,\infty)$, so they coincide for all $\l\in\C_+$, by the identity theorem for holomorphic functions.  
Putting $\l = z+\a-i\b$, we obtain
\begin{equation}\label{46RRR}
(Q_s g)(z)
=- \frac{2^s}{\pi C_s}\int_0^\infty \int_0^\infty\int_{-\infty}^\infty
\frac{\alpha^s\,g(\alpha+i\beta)\,d\beta\,d\alpha}{(z+\alpha-i\beta+t)^{n+2}}\,\frac{dt}{t^\delta},\quad z\in \C_{+}.
\end{equation}

For $s>-1$ let $\mathcal{D}_s$  be the linear space of all holomorphic functions $f$ on $\C_{+}$ such that
\[
f'\in \mathcal{V}_s,
\]
equipped with the semi-norm
\[
\|f\|_{\mathcal D_{s,0}}:=\|f'\|_{\mathcal{V}_s}, \qquad f \in \mathcal D_s.
\]

If $\sigma>s>-1$, then it is immediate from \eqref{vst} that $\D_s \subset \D_\sigma$. 
We will exhibit some functions in $\D_s$ later in this section and in Section \ref{some_functions}.

In the rest of this section we will obtain a reproducing formula \eqref{qnrep} for functions from $\D_s$ and we will describe some basic properties which will be relevant for the sequel. To this aim, we first define and study the behaviour of operators $Q_s$ on 
the scale of $\mathcal D_s$-spaces. Recall that    
in \cite[Proposition 3.1]{BGT2} we showed that $Q$ maps $\mathcal{W}$ into $\Bes$. However $Q_s$ does not map the whole of $\mathcal{V}_s$ into $\mathcal{D}_s$.  For $s > -1$, a function $g \in \mathcal{V}_s$ for which $Q_sg \notin \D_s$, can be defined as follows:
\begin{multline*}
g(\rho e^{i\varphi}) := \left(\cos^{s+1} \varphi (\rho-\sin\varphi) \log^2(\rho-\sin\varphi)\right)^{-1}, \\
1< \rho < 2 - \sin\varphi, \, \pi/4 < \varphi < \pi/2,
\end{multline*}
and $g(z)= 0$ for all other points in $\C_+$.  We do not give details in this paper.   Instead we will show in Propositions \ref{Furt} and \ref{prim} that $Q_s$ maps $\mathcal{V}_s$ boundedly into $\mathcal{D}_{\sigma}$ for any $\sigma>s$, and it maps holomorphic functions in $\mathcal{V}_s$ into $\D_s$.  
 We need the following auxiliary lemma which will be useful in a number of instances.

\begin{lemma}\label{D0012}
Let $h\in L^1[0,1] \cap  L^\infty[1/2,1]$ be a positive function.
Let $s>-1$, $\beta>1/2$, and
\begin{equation} \label{defG}
G_{h,\beta,s}(\varphi):= \int_{-\pi/2}^{\pi/2}\cos^s\psi \int_0^1\frac{h(t)\,dt}{(t^2+2t\cos(\varphi+\psi)+1)^\beta}\,d\psi,\quad \varphi\in (-\pi/2,\pi/2).
\end{equation}
\begin{enumerate}[\rm(a)]
\item If $2\beta-s-2\le0$, then
\begin{equation}\label{DerAn00}
K_{h,\beta,s} := \sup_{|\varphi|<\pi/2}\,G_{h,\beta,s}(\varphi)<\infty.
\end{equation}
\item If $2\beta-s-2\ge0$, then
\begin{equation}\label{DerAn01}
\tilde{K}_{h,\beta,s} := \sup_{|\varphi|<\pi/2}\, \cos^{2\beta-s-2}\varphi \, G_{h,\beta,s}(\varphi)<\infty.
\end{equation}
\end{enumerate}
\end{lemma}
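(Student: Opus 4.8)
The plan is to estimate the inner integral in \eqref{defG} by splitting the $t$-range and then bounding the $\psi$-integral. The quadratic $t^2+2t\cos(\varphi+\psi)+1$ never vanishes on $[0,1]\times(-\pi/2,\pi/2)\times(-\pi/2,\pi/2)$ except in the limiting case $t=1$, $\varphi+\psi=\pm\pi$ (which cannot happen for $|\varphi|,|\psi|<\pi/2$), so the only source of growth is when $\varphi+\psi$ is close to $\pm\pi$, i.e. both $|\varphi|$ and $|\psi|$ are close to $\pi/2$ with the same sign. Writing $t^2+2t\cos(\varphi+\psi)+1 = (1-t)^2 + 2t(1+\cos(\varphi+\psi)) = (1-t)^2 + 4t\cos^2((\varphi+\psi)/2)$, I will use the substitution and elementary inequality $1+\cos\alpha = 2\cos^2(\alpha/2) \asymp (\pi-|\alpha|)^2$ for $|\alpha|$ near $\pi$. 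Since $L^1[0,1]$-mass of $h$ on $[0,1/2]$ contributes harmlessly (the denominator is bounded below by a constant there, as $t\le 1/2$ forces $t^2+2t\cos(\varphi+\psi)+1 \ge 1 - 2t + t^2 = (1-t)^2 \ge 1/4$), the real work is the range $t\in[1/2,1]$ where $h$ is bounded, say by $\|h\|_{L^\infty[1/2,1]}$.

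First I would reduce, by symmetry in $\varphi\mapsto-\varphi$ (simultaneously with $\psi\mapsto-\psi$) and using $\cos^s\psi$ being even, to the case $\varphi\in[0,\pi/2)$; then the ``dangerous'' part of the $\psi$-integral is $\psi$ near $\pi/2$. On $[1/2,1]$, bound $h(t)\le \|h\|_{L^\infty[1/2,1]}$ and estimate
\[
\int_{1/2}^1 \frac{dt}{((1-t)^2 + 4t\cos^2((\varphi+\psi)/2))^\beta} \le \int_0^{1/2}\frac{du}{(u^2 + 2\cos^2((\varphi+\psi)/2))^\beta},
\]
which is $O(1)$ if $\cos((\varphi+\psi)/2)$ is bounded away from $0$, and otherwise behaves like a constant times $|\cos((\varphi+\psi)/2)|^{1-2\beta}$ when $2\beta>1$ (and like $O(1)$ or $O(\log)$ when $2\beta\le1$, which is even easier and subsumed in case (a)). So up to bounded terms,
\[
G_{h,\beta,s}(\varphi) \lesssim \int_{-\pi/2}^{\pi/2}\frac{\cos^s\psi}{|\cos((\varphi+\psi)/2)|^{2\beta-1}}\,d\psi + O(1).
\]
Now substitute: near the singularity write $\psi = \pi/2 - v$, $\varphi = \pi/2 - w$ with $v,w\in(0,\pi)$ small, so $\cos\psi = \sin v \asymp v$ and $\cos((\varphi+\psi)/2) = \cos(\pi/2 - (v+w)/2) = \sin((v+w)/2) \asymp v+w$. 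The integral reduces to $\int_0^{c} v^s (v+w)^{1-2\beta}\,dv$ as a function of $w\in(0,c)$.

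The crux is the elementary scaling analysis of $I(w):=\int_0^{c} v^s (v+w)^{1-2\beta}\,dv$, $w\in(0,c)$, which is where the dichotomy $2\beta - s - 2 \lessgtr 0$ enters. Rescaling $v = wr$ gives $I(w) = w^{s+2-2\beta}\int_0^{c/w} r^s (1+r)^{1-2\beta}\,dr$. If $2\beta - s - 2 < 0$, then $s+2-2\beta>0$ and the tail integral $\int_0^\infty r^s(1+r)^{1-2\beta}\,dr$ converges iff $s - (2\beta - 1) < -1$, i.e. exactly when $2\beta - s - 2 > 0$ — so here it does not converge at $\infty$; instead $\int_0^{c/w} r^s(1+r)^{1-2\beta}\,dr \asymp (c/w)^{s+2-2\beta}$, and $I(w) \asymp c^{s+2-2\beta} = O(1)$, giving (a). (The borderline $2\beta - s - 2 = 0$ produces a logarithm, still bounded on $(0,c)$, and is the case I would flag as needing a line of care — it belongs to \emph{both} (a) and (b) with exponent $0$.) If $2\beta - s - 2 > 0$, then $\int_0^\infty r^s(1+r)^{1-2\beta}\,dr = B(s+1, 2\beta-s-2) <\infty$ (recognizing the Beta integral after $r = \tau/(1-\tau)$), so $I(w) \asymp w^{s+2-2\beta} B(s+1,2\beta-s-2)$, i.e. $\cos^{2\beta-s-2}\varphi\, G_{h,\beta,s}(\varphi) = w^{2\beta-s-2}\,O(w^{s+2-2\beta}) = O(1)$, giving (b).

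The main obstacle is bookkeeping rather than depth: one must (i) legitimately discard the $t\in[0,1/2]$ piece and the region where $\varphi,\psi$ are not both near $\pi/2$ (all uniformly bounded, using $h\in L^1[0,1]$ there and compactness), (ii) handle the half where $\psi$ is near $-\pi/2$ while $\varphi$ is near $+\pi/2$ — there $\cos((\varphi+\psi)/2)$ is bounded below, so it is harmless — and (iii) track constants so they depend only on $s,\beta, \|h\|_{L^1[0,1]}, \|h\|_{L^\infty[1/2,1]}$ and not on $\varphi$. I would organize the write-up as: (1) the algebraic identity for the quadratic; (2) the split $[0,1/2]\cup[1/2,1]$; (3) the reduction $\varphi\in[0,\pi/2)$ and the localization $v,w\in(0,c)$; (4) the one-variable lemma on $I(w)$ with its two cases; (5) assembling the bounds \eqref{DerAn00} and \eqref{DerAn01}, with the $2\beta-s-2=0$ case covered by either.
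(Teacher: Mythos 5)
Your proposal follows essentially the same route as the paper's proof: the same split of the $t$-integral at $1/2$ (the $[0,1/2]$ piece absorbed by $\|h\|_{L^1[0,1]}$ since the denominator is bounded below there), the same observation that only the region where $\varphi$ and $\psi$ are both near $\pi/2$ with the same sign is dangerous (the paper handles this by splitting the $\psi$-integral at $0$), the same reduction of the inner integral to a multiple of $|\cos((\varphi+\psi)/2)|^{1-2\beta}$, and the same one-variable analysis of $I(w)=\int_0^c v^s(v+w)^{1-2\beta}\,dv$ after the substitution $\psi=\pi/2-v$, $\varphi=\pi/2-w$. (The paper bounds case (a) directly by $\int_0^{\pi/2} v^{s+1-2\beta}\,dv$ and rescales only in case (b); your rescaling in both cases is equivalent.) For the strict inequalities $2\beta-s-2<0$ and $2\beta-s-2>0$ your argument is correct.

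Your parenthetical claim about the borderline $2\beta-s-2=0$ is wrong, however: there $I(w)=\int_0^{c/w}r^s(1+r)^{1-2\beta}\,dr\asymp\log(c/w)$, which is \emph{not} bounded as $w\to0+$. In fact the lemma as stated fails at the borderline: taking $h\equiv1$, a matching lower bound on the inner integral (using $t^2+2t\cos\phi+1\le(1-t)^2+4\cos^2(\phi/2)$ on $[1/2,1]$) gives $G_{h,\beta,s}(\varphi)\gtrsim\int_w^c v^{-1}\,dv=\log(c/w)\to\infty$ as $\varphi\to\pi/2$. The paper's own proof has the same defect -- its case (a) constant $\pi^{s-2\beta+2}/((s-2\beta+2)2^{s-2\beta+2})$ and its case (b) integral $\int_0^\infty t^s(t+1)^{1-2\beta}\,dt$ are both infinite when $s-2\beta+2=0$ -- and every application of the lemma elsewhere in the paper uses strict inequalities, so the hypotheses in (a) and (b) should read $<$ and $>$. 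You were right to flag the borderline as needing care, but the care required is to exclude it, not to absorb it.
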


\begin{proof}
Since $G_{h,\beta,s}(-\varphi)=G_{h,\beta,s}(\varphi)$, we may assume that $\varphi\in [0,\pi/2)$.
Now
\begin{align*}
G_{h,\beta,s}(\varphi) &= \int_{-\pi/2}^{0}\cos^s\psi \int_0^1\frac{h(t)\,dt}{(t^2+2t\cos(\varphi+\psi)+1)^\beta}\,d\psi\\
&\null \hskip30pt +
\int_0^{\pi/2}\cos^s\psi\int_0^1\frac{h(t)\,dt}{(t^2+2t\cos(\varphi+\psi)+1)^\beta}\,d\psi\\
&=: G_{h,\beta, s}^{-}(\varphi)+G_{h,\beta, s}^+(\varphi),
\end{align*}
and we estimate these two integrals separately.

Since $\varphi \in [0,\pi/2)$, $\beta>1/2$, and $s>-1$, we have
\begin{align*}\label{Eqa1}
G_{h,\beta, s}^{-}(\varphi) &\le \int_{-\pi/2}^{0}\cos^s\psi \,d\psi\int_0^1 h(t)\,dt
= \frac{\|h\|_{L^1[0,1]}}{2}B\left(\frac{s+1}{2},\frac{1}{2}\right).
\end{align*}

For the second integral, note that if $\phi = \varphi+\psi \in [0, \pi)$,
\[
t^2+2t\cos\phi+1\ge \begin{cases} \frac{1}{4},\quad &t\in [0,1/2], \\ 
(1-t)^2 + 1 + \cos\phi, \quad  &t\in [1/2,1]. \end{cases}
\]
Hence
\begin{align*}
\lefteqn{\hskip-30pt\int_0^1\frac{h(t)}{(t^2+2t\cos\phi+1)^\beta}\,dt } \\
&\le 4^\beta\int_0^{1/2} h(t)\,dt+
\int_{1/2}^1\frac{h(t)}{((t-1)^2+(1+\cos\phi))^\beta}\,dt\\
&\le  4^\beta \|h\|_{L^1[0,1/2]}+
\|h\|_{L^\infty[1/2,1]}\int_0^\infty\frac{d\tau}{(\tau^2+ 2\cos^2(\phi/2))^{\beta}} \\
&\le \frac{C_{h,\beta}}{\cos^{2\beta-1}(\phi/2)},
\end{align*}
for some constant $C_{h,\beta}>0$.  Replacing $\varphi$ by $\pi/2 -\varphi$ and $\psi$ by $\pi/2-\psi$, and using
\[
\omega\ge \sin \omega\ge \frac{2}{\pi}\omega,\qquad \omega\in (0,\pi/2),
\]
 we infer that if  $\varphi\in [0,\pi/2)$, then
\begin{align*}
G^+_{h,\beta,s}(\pi/2-\varphi) &\le C_{h,\beta} \int_0^{\pi/2}\frac{\sin^s\psi}{\sin^{2\beta-1}((\varphi+\psi)/2)} \,d\psi\\
&\le C_{h,\beta}\pi^{2\beta-1} \int_0^{\pi/2}\frac{\psi^s}{(\varphi+\psi)^{2\beta-1}} \,d\psi.
\end{align*}

In case (a), when $2\beta-s-2 \le 0$, we have
\[
\int_0^{\pi/2} \frac{\psi^s}{(\varphi+\psi)^{2\b-1}} \,d\psi \le \int_0^{\pi/2} \psi^{s-2\b+1} \,d\psi = \frac{\pi^{s-2\b+2}}{(s-2\b+2)2^{s-2\b+2}}<\infty.
\]
In case (b), when $2\beta - s -2 \ge 0$,
we obtain
\[
 \int_0^{\pi/2} \frac{\psi^s}{(\varphi+\psi)^{2\b-1}} \,d\psi \le \varphi^{s+2-2\b} \int_0^\infty \frac{t^s}{(t+1)^{2\b-1}} \,dt, 
\]
and hence
\begin{multline*}
\cos^{2\b-s-2}(\pi/2-\varphi) \, G_{h,\b,s}(\pi/2-\varphi) \le \varphi^{2\b-s-2} G_{h,\b,s}(\pi/2-\varphi)\\
\null \hskip-20pt \le C_{h,\beta} \pi^{2\beta-1} \int_0^\infty \frac{t^s}{(t+1)^{2\b-1}} \,dt < \infty,
\end{multline*}
for some constant $C_{h,\beta}>0$.
\end{proof}

Let $\D_{s,0}$ be the space of all functions $f \in \D_s$ such that $f$ has a sectorial limit $0$ at infinity.  
Then $(\D_{s,0}, \|\cdot\|_{\D_{s,0}})$ is a normed space, and we will see in Corollary \ref{Ban_D} that it is a Banach space.  

The following basic examples will play roles in several estimates later in the paper.  We start with the resolvent functions and their powers.

\begin{exa} \label{resd}
Let $\lambda=|\l|e^{i\varphi}\in \C_{+}$, and $r_\l(z)= (z+\l)^{-1}, z \in \C_+$.   Let $\gamma>0$, and consider
\[
r_{\l}^{\gamma}(z):=(z+\lambda)^{-\gamma},\quad z \in \C_+.
\]
Let $s>-1$.  Then
\begin{align}\label{rglest}
\|r_{\l}^{\gamma}\|_{\mathcal{D}_{s,0}}
&=\gamma \int_{-\pi/2}^{\pi/2}
\cos^s\psi\int_0^\infty \frac{d\rho}{\left|\rho e^{i\psi}+ |\l| e^{i\varphi}\right|^{\gamma+1}}\,d\psi\\
&=\frac{\gamma}{|\l|^{\gamma}}\int_{-\pi/2}^{\pi/2}
\cos^s\psi\int_0^\infty \frac{d\rho}{|\rho+e^{i(\varphi-\psi)}|^{\gamma+1}}\,d\psi  \nonumber \\
&=\frac{\gamma}{|\l|^{\gamma}}
\int_{-\pi/2}^{\pi/2}
\cos^s\psi\int_0^1 \frac{1+t^{\gamma-1}}
{(t^2+2t\cos(\varphi+\psi)+1)^{(\gamma+1)/2}}\,dt\,d\psi, \nonumber
\end{align}
where we have put $t=\rho$ for $\rho\le 1$ and $t = \rho^{-1}$  for $\rho>1$.
Now we apply Lemma \ref{D0012} with $h(t) = 1+t^{\gamma-1}$, $\beta = (\gamma+1)/2$, so $2\beta - s -2 = \gamma -s -1$.   Thus we obtain
\begin{equation} \label{rtl}
\|r_{\l}^{\gamma}\|_{\mathcal{D}_{s,0}}   \le \begin{cases} \dfrac{\gamma K_{h,(\gamma+1)/2,s}}{|\l|^\gamma}, &s>\gamma-1>-1, \\ \noalign{\vskip8pt}
\dfrac{\gamma \tilde{K}_{h,(\gamma+1)/2,s}}{|\l|^{\gamma}\cos^{\gamma-s-1}\varphi},  &\gamma-1>             s>-1.
 \end{cases}
\end{equation}
In particular, taking $\gamma=1$ and a fixed $s>0$, 
\begin{equation} \label{r_l}
\|r_\l\|_{\D_{s,0}} = \int_{-\pi/2}^{\pi/2} \int_0^\infty \frac{\cos^s \psi}{|\l+\rho e^{i\psi}|^2} \,d\rho \,d\psi
\le \frac{C_s}{|\l|}, \qquad\l \in \C_+.
\end{equation}
This estimate will play a crucial role in the proof of Theorem \ref{dcalculus_intro}.
\end{exa}

Next we consider some functions which appear frequently in the studies of holomorphic $C_0$-semigroups.   

\begin{exa}\label{dm} 
Let
\[
f_\nu(z):=z^{\nu} e^{-z},\quad z\in \C_{+},\quad \nu\ge0.
\]
We will show here that $f_\nu\in \mathcal{D}_s$ if and only if $s>\nu$.
Moreover, if $s >\nu$, then
\begin{equation}\label{frac}
\|f_\nu\|_{\D_{s,0}}\le 2 B\left(\frac{s-\nu}{2},\frac{1}{2}\right)\Gamma(\nu+1).
\end{equation}
This estimate will be crucial for operator estimates in Section \ref{norm_estimates}.

We have
\[
f_\nu'(z)=e^{-z}\left(\nu{z^{\nu-1}}-z^\nu\right)
\]
and
\begin{align*}
\|f_\nu\|_{\mathcal{D}_{s,0}}&=\int_{-\pi/2}^{\pi/2}
\cos^s\varphi\int_0^\infty {e^{-\rho\cos\varphi}}{\rho^{\nu-1}}\left|{\rho e^{i\varphi}}-\nu\right|\,d\rho\,d\varphi\\
&=\int_{-\pi/2}^{\pi/2}
\cos^{s-\nu-1}\varphi\int_0^\infty e^{-r}r^{\nu-1}|
r e^{-i\varphi}-\nu\cos\varphi|\,dr\,d\varphi.
\end{align*}
We use the estimates
\[
r |\sin\varphi| \le |r e^{-i\varphi}-\nu\cos\varphi| \le  r+\nu.
\]
If $s > \nu$, we obtain
\begin{align*}
\|f_\nu\|_{\D_{s,0}} &\le
2\int_0^{\pi/2}
\cos^{s-\nu-1}\varphi\,d\varphi \int_0^\infty e^{-r} r^{\nu-1}(r+\nu) \,dr \\
&= 2 B\left(\frac{s-\nu}{2},\frac{1}{2}\right)\Gamma(\nu+1). \notag
\end{align*}
If $s \le \nu$, then
\[
\|f_\nu\|_{\D_{s,0}} \ge
2\int_0^{\pi/2}
\cos^{s-\nu-1}\varphi \, \sin\varphi\,d\varphi \int_0^\infty e^{-\tau} \tau^{\nu} \,d\tau = \infty.
\]
This establishes the claims above.
\end{exa}

Finally we consider a function which will play an important role in our constructions of functional calculi in Section \ref{hardy_sobolev}.

\begin{exa}  \label{arc}
The function $\arccot$ is defined by
\begin{equation} \label{arcdef}
\arccot(z) = \frac{1}{2i} \log \left(\frac{z+i}{z-i}\right), \quad z \in \C_+.
\end{equation} 
Then $|\Re \arccot(z)| \le \pi/2$, $\arccot$ has sectorial limit $0$ at infinity, and
its derivative is $- (z^2+1)^{-1}$. 
It is easy to see that $\arccot \in \D_s$ for all $s>-1$.  For $s = 0$, we have
\begin{align} \label{arccot_D}
&\\
\lefteqn{\|\arccot\|_{\mathcal{D}_{0,0}}} \notag\\
&=\int_{-\pi/2}^{\pi/2}\int_0^\infty \frac{d\rho}{|1+\rho^2e^{2i\varphi}|}\,d\varphi
=\int_0^\infty \int_0^{\pi}\frac{d\psi}{(\rho^4+2\rho^2\cos\psi+1)^{1/2}}\,d\rho \notag\\
&\le \sqrt{\pi}\int_0^\infty \left(\int_0^{\pi} \frac{d\psi}{\rho^4+2\rho^2\cos\psi+1}\right)^{1/2}\,d\rho
= \sqrt{\pi}\int_0^\infty \left(\frac{\pi}{|\rho^4-1|}\right)^{1/2}\,d\rho\notag\\
&=\frac{\pi}{2} B(1/4,1/2)
 < 3 \pi.\notag
\end{align}
See \cite[2.5.16, (38)]{Prud} for the evaluation of the integral with respect to $\psi$.
\end{exa}

\begin{prop}\label{Furt}
Let $\sigma > s > -1$.  The following hold.
\begin{enumerate}[\rm(i)]
\item $\D_s$ is continuously embedded in $\D_\sigma$.
\item The restriction of $Q_\sigma$ to $\mathcal{V}_s$ is in $L(\mathcal{V}_s,\D_s)$.
\item $Q_s \in L (\mathcal{V}_s, \mathcal{D}_{\sigma})$.
\end{enumerate} 
\end{prop}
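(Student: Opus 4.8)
The plan is to prove the three statements in turn, with (i) essentially free and (ii) and (iii) both following from the same family of estimates on $Q_\sigma$ and $Q_s$, combined with the differentiation formula \eqref{qder} and Lemma \ref{D0012}. First, (i) is immediate: if $f \in \D_s$ then $f' \in \mathcal{V}_s \subset \mathcal{V}_\sigma$ by \eqref{vst}, so $f \in \D_\sigma$ and $\|f\|_{\D_{\sigma,0}} = \|f'\|_{\mathcal{V}_\sigma} \le \|f'\|_{\mathcal{V}_s} = \|f\|_{\D_{s,0}}$; one then adds the $|f(\infty)|$ terms (noting the sectorial limit at infinity is the same whichever space we compute it in). For (ii) and (iii), the point is that $Q_\sigma g$ and $Q_s g$ are holomorphic on $\C_+$ with sectorial limit $0$ at infinity (this is \eqref{qinf}), so the only thing to check is that their derivatives lie in $\mathcal{V}_s$, respectively $\mathcal{V}_\sigma$, with norm controlled by $\|g\|_{\mathcal{V}_s}$.

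The main work is therefore to estimate $\|(Q_\tau g)'\|_{\mathcal{V}_\rho}$ for the relevant pairs $(\tau,\rho)$. Using \eqref{qder} and Fubini, I would write
\[
\|(Q_\tau g)'\|_{\mathcal{V}_\rho} \le \frac{(\tau+1)2^\tau}{\pi}\int_{\C_+} (\Re\mu)^\tau |g(\mu)| \left( \int_{\C_+} \frac{(\Re z)^\rho}{|z|^{\rho+1}\,|z+\bar\mu|^{\tau+2}}\, dS(z)\right) dS(\mu),
\]
so the whole estimate reduces to bounding the inner kernel integral
\[
N(\mu) := \int_{\C_+} \frac{(\Re z)^\rho}{|z|^{\rho+1}\,|z+\bar\mu|^{\tau+2}}\, dS(z)
\]
by a constant times $(\Re\mu)^{-\tau}|\mu|^{-(\rho+1-\tau)}$ in the appropriate range (for (ii) with $\tau=\sigma$, $\rho=s$; for (iii) with $\tau=s$, $\rho=\sigma$), because plugging that bound back in gives exactly $\|g\|_{\mathcal{V}_s}$ up to a constant. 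To evaluate $N(\mu)$ I would pass to polar coordinates $z = \mu r e^{i\psi}$... more precisely, writing $z=\varrho e^{i\psi}$ and scaling out $|\mu|$, then splitting the $\varrho$-integral at $|\mu|$ and substituting $t=\varrho/|\mu|$ or its reciprocal, the inner integral takes precisely the form $G_{h,\beta,\rho}(\varphi)$ of \eqref{defG} with $h(t) = 1 + t^{\tau+\rho}$ (or similar), $\beta = (\tau+2)/2$, and $\varphi = \arg\mu$; then $2\beta - \rho - 2 = \tau - \rho$. Lemma \ref{D0012}(a) applies when $\tau \le \rho$, which is exactly the situation in (iii) (where $\tau = s < \sigma = \rho$) and also covers (ii) when one is willing to use the cruder embedding, while part (b) supplies the $\cos^{\tau-\rho}(\arg\mu)$ weight when $\tau > \rho$; the factor $(\Re\mu)^\tau = (|\mu|\cos\arg\mu)^\tau$ absorbs exactly that weight, which is why (ii) requires $\sigma > s$ rather than $\sigma = s$.

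The step I expect to be the genuine obstacle is the bookkeeping in that inner integral: getting the homogeneity in $|\mu|$ exactly right, correctly identifying which of case (a) or case (b) of Lemma \ref{D0012} is in force for each of (ii) and (iii), and checking that the resulting power of $\cos(\arg\mu)$ is no worse than $(\Re\mu)^\tau$ permits — i.e. that the exponent of $\cos(\arg\mu)$ coming out of $\tilde K_{h,\beta,\rho}$ plus $\tau$ is nonnegative, so that after integrating against $(\Re\mu)^\tau|g(\mu)|\,dS(\mu)$ one lands back in $\mathcal{V}_s$ and not a larger space. This is also where the strict inequality $\sigma > s$ is used and cannot be relaxed, consistent with the counterexample sketched before the lemma showing $Q_s$ does not map all of $\mathcal{V}_s$ into $\D_s$. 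Once the kernel bound $N(\mu) \lesssim (\Re\mu)^{-\tau}|\mu|^{\tau-\rho-1}$ is in hand, absolute convergence of the iterated integral (hence the Fubini step) is automatic from $g \in \mathcal{V}_s$, the holomorphy of $Q_\tau g$ and the differentiation-under-the-integral formula \eqref{qder} are already recorded in the excerpt, and the sectorial limit at infinity being $0$ is \eqref{qinf}; so the three assertions follow by assembling these pieces, with the operator norm bounds being the constants produced by Lemma \ref{D0012}.
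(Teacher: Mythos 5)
Your strategy is exactly the paper's: after Fubini, both (ii) and (iii) reduce to bounding the kernel integral $N(\mu)$, which in polar coordinates is precisely $\frac{1}{\tau+1}\|r_{\bar\mu}^{\tau+1}\|_{\D_{\rho,0}}$, i.e.\ the quantity already estimated in Example \ref{resd} via Lemma \ref{D0012}; and your identification of which case of Lemma \ref{D0012} governs which part (case (b) for (ii), case (a) for (iii)), and of why $\sigma>s$ is needed, is correct. The one genuine defect is the explicit bound you set as your target for $N(\mu)$, namely $C(\Re\mu)^{-\tau}|\mu|^{-(\rho+1-\tau)}$: this is false and, even if it were true, would not close the argument. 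Substituting it back gives $\int_{\C_+}|g(\mu)|\,|\mu|^{\tau-\rho-1}\,dS(\mu)$, which is not $\|g\|_{\mathcal{V}_s}$; and the bound itself cannot hold, since the change of variable $z\mapsto|\mu|z$ shows $N(\mu)=|\mu|^{-(\tau+1)}N(e^{i\arg\mu})$, whereas your expression is homogeneous of degree $-(\rho+1)$ in $|\mu|$. The correct estimates coming out of Example \ref{resd} (with $\gamma=\tau+1$, hence $h(t)=1+t^{\tau}$ rather than $1+t^{\tau+\rho}$, and $\beta=(\tau+2)/2$) are
\[
N(\mu)\le C\,|\mu|^{-(\tau+1)}\cos^{\rho-\tau}(\arg\mu)\quad(\tau\ge\rho,\ \text{part (ii)}),\qquad
N(\mu)\le C\,|\mu|^{-(\tau+1)}\quad(\tau\le\rho,\ \text{part (iii)}),
\]
and in both cases one then checks $(\Re\mu)^\tau N(\mu)\le C\,(\Re\mu)^{s}|\mu|^{-(s+1)}$, which is the $\mathcal{V}_s$-weight and closes the proof. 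Once the exponents are corrected in this way, the rest of your outline — part (i) from \eqref{vst}, holomorphy of $Q_\tau g$ and \eqref{qinf} for the value at infinity, and absolute convergence justifying the Fubini step — is sound and matches the paper.
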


\begin{proof}
The first statement is immediate from the definitions of the spaces and \eqref{vst}.  

For the second statement, let $g \in \mathcal{V}_s$.   
From \eqref{norm}, \eqref{qder}, and the second case of \eqref{rtl} with $\gamma=\sigma+1$ and $h(t)=1+t^\sigma$,  we have
\begin{align*}
\lefteqn{\frac{\pi}{2^\sigma(\sigma+1)} \|Q_{\sigma}g\|_{\mathcal{D}_{s,0}}\hskip10pt}\\
&\le \int_{-\pi/2}^{\pi/2}\cos^{s}\psi \int_0^\infty
 \int_{-\pi/2}^{\pi/2} \cos^\sigma \varphi \int_0^\infty \frac{t^{\sigma+1}|g(t e^{i\varphi})|}{|\rho e^{i\psi}+t e^{-i\varphi}|^{\sigma+2}}\,d\rho\,d\varphi \,dt\,d\psi\\
&= \frac{1}{\sigma+1}\int_{-\pi/2}^{\pi/2} \cos^\sigma \varphi \int_0^\infty t^{\sigma+1}\|r_{t e^{-i\varphi}}^{\sigma+1}\|_{\D_{s,0}} |g(t e^{i\varphi})|\,dt\,d\varphi \\
&\le  \int_{-\pi/2}^{\pi/2} \cos^\sigma \varphi \int_0^\infty \frac{\tilde{K}_{h,(\sigma+2)/2,s}}{\cos^{\sigma-s}\varphi} |g(t e^{i\varphi})|\,dt\,d\varphi \\
&= \tilde{K}_{h,(\sigma+2)/2,s} \|g\|_{\mathcal{V}_s}.
\end{align*}
This establishes the second statement.

For the third statement, the same estimation, but with $s$ and $\sigma$ interchanged, and using the first case of 
\eqref{rtl} with $\gamma=s+1$ and $h(t)=1+t^s$, shows that
\[
\frac{\pi}{2^s(s+1)} \|Q_{s}g\|_{\mathcal{D}_{\sigma,0}}\le {K}_{h,(s+2)/2,\sigma} \|g\|_{\mathcal{V}_s}.
\]
This establishes the third statement. 
\end{proof}

For insight on why $\|r_{\l}^{\sigma+1}\|_{\D_{s,0}}$ appears in the proof above, we refer the reader to the proof of Theorem \ref{D00}.

\begin{prop} \label{prim}
Let $g \in \mathcal{V}_s$ be holomorphic, where $s>-1$.  Then $Q_s g \in \mathcal{D}_s$ and $(Q_s g)'= g$.
\end{prop}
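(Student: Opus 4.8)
The plan is to establish two things: that $Q_s g$ lies in $\mathcal{D}_s$ (i.e.\ that $(Q_sg)' \in \mathcal{V}_s$ when $g \in \mathcal{V}_s$ is holomorphic), and that the primitive identity $(Q_sg)' = g$ holds. For the membership, I would \emph{not} try to bound $\|Q_sg\|_{\mathcal{D}_{s,0}}$ directly by $\|g\|_{\mathcal{V}_s}$ as in Proposition \ref{Furt}(ii)--(iii); that approach loses a power of $\cos$ and only lands in $\mathcal{D}_\sigma$. Instead, once the identity $(Q_sg)'=g$ is known, membership is automatic: $g \in \mathcal{V}_s$ by hypothesis, so $(Q_sg)' = g \in \mathcal{V}_s$, which is exactly the definition of $Q_sg \in \mathcal{D}_s$. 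So the whole proposition reduces to proving $(Q_sg)' = g$.

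To prove $(Q_sg)' = g$, the natural route is to recognize $Q_s$ as an iterated Cauchy-type/reproducing operator and reduce to the case $s = $ integer, or better, to differentiate under the integral and identify the kernel as a reproducing kernel for holomorphic functions in $\mathcal{V}_s$. Concretely, from \eqref{qder},
\[
(Q_s g)'(z) = \frac{(s+1)2^s}{\pi}\int_0^\infty \alpha^s \int_{-\infty}^\infty \frac{g(\alpha+i\beta)}{(z+\alpha-i\beta)^{s+2}}\,d\beta\,d\alpha .
\]
The claim is that the kernel $\frac{(s+1)2^s}{\pi}\,\frac{\alpha^s}{(z+\alpha-i\beta)^{s+2}}$ reproduces holomorphic $\mathcal{V}_s$-functions. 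I would verify this first on the dense/convenient test class of resolvent powers $g = r_\mu^{\gamma}$ with $\mu \in \C_+$ and $\gamma$ large (these lie in $\mathcal{V}_s$ by Example \ref{resd}), where the double integral can be computed explicitly: the $\beta$-integral is a Beta-function/residue computation, and the $\alpha$-integral reduces via the substitution giving $C_s/\lambda^{s+1}$ as in \eqref{Const46}. One expects the computation to collapse to $r_\mu^\gamma(z)$ exactly, using the normalization constant $(s+1)2^s/\pi$ together with the Beta identity $B((s+1)/2,1/2)$ recorded in the preliminaries. Then one passes from test functions to general holomorphic $g \in \mathcal{V}_s$: since point evaluations are continuous on $\mathcal{V}_s$ (area-integral against a locally integrable weight), and the linear span of resolvent powers is dense in the closed subspace of holomorphic functions of $\mathcal{V}_s$ — or, more robustly, by a direct density/approximation argument dilating and translating $g$ into the half-plane — the identity extends by continuity.

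An alternative, perhaps cleaner, route avoids explicit special functions: rewrite $Q_s$ using \eqref{46RRR}, which expresses $Q_s$ as a composition of an integer-order operator $Q_{n+1}$-type kernel $(z+\alpha-i\beta+t)^{-(n+2)}$ with a fractional averaging $\int_0^\infty (\cdot)\,t^{-\delta}\,dt/C_s$. For integer order one can iterate the classical Poisson/Cauchy reproducing formula for $H^1$-type functions on $\C_+$ (Lemma \ref{Derin} with $k=1,2,\dots$ guarantees the decay $z^k g^{(k-1)}(z)\to 0$ needed to justify the contour shifts and Fubini), obtaining $(Q_{n+1}g)' = g$ on that subclass; then the fractional average is handled by Fubini and the identity $\int_0^\infty (\lambda+t)^{-(n+2)}t^{-\delta}\,dt = C_s\lambda^{-(s+1)}$. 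The main obstacle, and where I would spend the most care, is precisely the justification of interchanging differentiation, the $\alpha$- and $\beta$-integrations, and the contour deformation in $\beta$ (closing in the appropriate half-plane): one needs the decay estimates from Lemma \ref{Derin} applied to $g$ (holomorphic, in $\mathcal{V}_s$) to kill boundary terms, plus a dominated-convergence argument with the $\cos$-weighted dominating function from \eqref{Fa}. Everything else is bookkeeping with the Beta-function constants.
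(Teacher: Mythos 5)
Your reduction of the membership claim to the identity $(Q_sg)'=g$ is exactly right and is what the paper does. Of your two routes to the identity, the second one (rewrite $Q_s$ via \eqref{46RRR}, prove the integer case $(Q_ng)'=g$ by a Cauchy/Fubini argument, then push the fractional average through with the constant $C_s$ from \eqref{Const46}) is essentially the paper's proof. Your first route, however, has a genuine gap: you would need the linear span of resolvent powers to be dense in the \emph{holomorphic part of $\mathcal{V}_s$} in the $\mathcal{V}_s$-norm, and this is neither proved in the paper nor easy to obtain independently — the density result that is available (Theorem \ref{D00}, density of $\wt{\mathcal{R}}(\C_+)$ in $\D_s$) is itself proved \emph{using} the reproducing formula of Corollary \ref{Repr}, which rests on Proposition \ref{prim}. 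So taking that route at face value would be circular.

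Within your second route there is one technical point you underestimate. You invoke Lemma \ref{Derin} to "justify the contour shifts and Fubini", but Lemma \ref{Derin} only gives decay of $z^kg^{(k-1)}(z)$ inside sectors $\Sigma_\psi$ with $\psi<\pi/2$, whereas the Cauchy-formula argument for \eqref{Qn} requires killing boundary integrals along horizontal segments $\{s+i\beta_k^\pm\}$ and vertical segments $\{\alpha_j+i\beta\}$ that leave every such sector. The paper handles this by a selection argument: Fubini applied to $\|g\|_{\mathcal{V}_n}<\infty$ shows that $\int_0^\infty\alpha^nI(\alpha)\,d\alpha$ and $\int_{-\infty}^\infty J(\beta)\,d\beta$ are finite, whence one can choose sequences $\alpha_j\to\infty$ and $\beta_k^\pm\to\pm\infty$ along which $I(\alpha_j)\to0$ and $J(\beta_k^\pm)\to0$, and pass to the limit over rectangles with those sides. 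Lemma \ref{Derin} is then used only at the last step, to integrate by parts in $\int_0^\infty\alpha^n g^{(n)}(\alpha+z)\,d\alpha$ (where the argument $\alpha+z$ does stay in a sector). If you make that substitution — sequence selection for the contour limits, Lemma \ref{Derin} only for the final integration by parts — your second route becomes a complete proof.
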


\begin{proof}
First  we consider the case when $g\in \mathcal{V}_n$, where $n \in \N\cup\{0\}$.  Then $Q_n g$ is holomorphic and \eqref{qdef} holds for $s=n$.  It suffices to show  that $(Q_ng)' = g$.    Let
\begin{align*}
I(\a) &:= \int_{-\infty}^\infty \frac{|g(\a+i\b)|}{(\a^2+\b^2)^{(n+1)/2}} \,d\b, \qquad \alpha >0,\\
J(\b) &:= \int_0^\infty \frac{\a^n|g(\a+i\b)|}{(\a^2+\b^2)^{(n+1)/2}} \,d\a, \qquad \beta \in \R.
\end{align*}
From \eqref{norm} and Fubini's theorem, we see that $\int_0^\infty \a^n I(\a) \, d\a < \infty$.  Hence $I(\a)<\infty$ for almost all $\a>0$ and it follows that there exists a sequence  $(\alpha_j)_{j\ge 1}$ such that
\[
\lim_{j \to \infty}\alpha_j =\infty, \qquad \lim_{j\to\infty} I(\a_j) = 0.
\]
Similarly $\int_{-\infty}^\infty J(\b)\,d\b < \infty$, and so there exist sequences $(\beta_k^{\pm})_{k \ge 1}$ such that
\[
\lim_{k \to \infty}\beta_k^\pm = \pm\infty, \qquad \lim_{k\to\infty} J(\beta_k^\pm) = 0.
\]

Let $z \in \C_+$ be fixed.    Take $\alpha>0$ with $I(\a)<\infty$, and let $j$ be sufficiently large that $\alpha_j > 2\a + \Re z$, and $k$ be sufficiently large that $\b_k^- < \Im z < \b_k^+$.    We may apply the Cauchy integral formula around the rectangle with vertices $\a +i\beta_k^\pm$ and $\a_j + i \beta_k^{\pm}$, and we obtain
\begin{align*}
\lefteqn{\frac{2\pi}{n!} g^{(n)}(2\a+z)} \\
&= \int_{\b_k^-}^{\b_k^+} \frac{g(\a_j+i\b)}{(\a_j+i\b - z -2\a)^{n+1}} d\b
- \int_{\b_k^-}^{\b_k^+} \frac{g(\a+i\b)}{(\a+i\b - z -2\a)^{n+1}} d\b \\
&\null\hskip10pt -i \int_{\alpha}^{\alpha_j}\frac{g(s+i\beta_k^{-})}{(s+i\beta_k^{-}- z-2\a)^{n+1}}\,ds
+ i \int_{\alpha}^{\alpha_j}\frac{g(s+i\beta_k^{+})}{(s+i\beta_k^{+} - z -2\a)^{n+1}}\,ds.
\end{align*}
Letting $k \to \infty$, we obtain
\[
(-1)^n \frac{2\pi}{n!} g^{(n)}(2\a+z) = \int_{-\infty}^\infty \frac{g(\a+i\b)}{(z+\a-i\b)^{n+1}} \,d\b
- \int_{-\infty}^\infty \frac{g(\a_j+i\b)}{(z+\a_j-i\b)^{n+1}} \,d\b.
\]
Letting $j \to \infty$, we obtain
\begin{equation} \label{Qn}
(-1)^n \frac{2\pi}{n!} g^{(n)}(2\a+z) = \int_{-\infty}^\infty \frac{g(\a+i\b)}{(z+\a-i\b)^{n+1}} \,d\b.
\end{equation}
This holds for almost all $\a>0$.    Substituting this into \eqref{qdef}, and then integrating by parts and using Lemma \ref{Derin}, we infer that
\begin{align} \label{Qnn}
(Q_n g)(z) &= (-1)^{n+1} \frac{2^{n+1}}{n!} \int_0^\infty \alpha^{n} g^{(n)}(2\alpha+z)\,d\alpha \\
&=\frac{(-1)^{n+1}}{n!}\int_0^\infty \alpha^n g^{(n)}(\alpha+z)\,d\alpha = - \int_0^\infty g(\alpha+z)\,d\alpha. \nonumber
\end{align}
By Lemma \ref{Derin}, the integral
$\int_{0}^{\infty} g'(\alpha +z)\,d\alpha$ converges absolutely and uniformly for $z$ in compact subsets of $\C_+$.
So, differentiating under the integral sign we get  $(Q_n g)' = g$.

Now consider the case when $s=n+\delta>-1$, where $n \in \N\cup\{-1,0\}$, $\delta \in (0,1)$, and $g \in \mathcal{V}_n$ is holomorphic.   Then $g\in \mathcal{V}_{n+1}$, $n+1\in \N\cup\{0\}$, and \eqref{Qn} gives
\[
 \int_{-\infty}^\infty \frac{g(\a+i\b)}{(z+\a-i\b+t)^{n+2}} \,d\b=
(-1)^{n+1}\frac{2\pi}{(n+1)!} g^{(n+1)}(2\a+z+t),
\]
for $z \in \C_+$, $t>0$ and almost all $\a>0$.  We obtain from (\ref{46RRR}) and \eqref{Const46} that
\begin{align*}
C_s (Q_s g)(z)
&=- \frac{2^s}{\pi}\int_0^\infty \int_0^\infty\int_{-\infty}^\infty
\frac{\alpha^s\,g(\alpha+i\beta)\,d\beta\,d\alpha}{(z+\alpha-i\beta+t)^{n+2}}\,\frac{dt}{t^\delta}\\
&=(-1)^n\frac{2^{s+1}}{(n+1)!}\int_0^\infty \int_0^\infty \alpha^s
 g^{(n+1)}(2\a+z+t)\, d\alpha\,\frac{dt}{t^\delta}\\
&=\frac{(-1)^n}{(n+1)!}\int_0^\infty \int_t^\infty (\tau-t)^s
 g^{(n+1)}(\tau+z)
 \, d\tau\,\frac{dt}{t^\delta}\\
&=C_s\frac{(-1)^n}{(n+1)!}\int_0^\infty \tau^{n+1} \,g^{(n+1)}(\tau+z)\,d\tau.
\end{align*}
As in \eqref{Qnn}, it follows that $(Q_sg)' = g$.
\end{proof}

\begin{cor}
If $g \in \mathcal{V}_s$ is holomorphic, then $Q_\sigma g = Q_s g$ for all $\sigma\ge s$.
\end{cor}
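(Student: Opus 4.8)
The plan is to show that $Q_s g$ and $Q_\sigma g$ are two antiderivatives of the same function and then to pin down the constant of integration. First I would note that by \eqref{vst} the hypothesis $g \in \mathcal{V}_s$ (and holomorphic) guarantees $g \in \mathcal{V}_\sigma$ as well for every $\sigma \ge s$, so that $Q_\sigma g$ is defined and Proposition \ref{prim} applies with $\sigma$ in place of $s$. Thus Proposition \ref{prim} gives that both $Q_s g$ and $Q_\sigma g$ are holomorphic on $\C_+$ with
\[
(Q_s g)' = g = (Q_\sigma g)'.
\]
Since $\C_+$ is connected, the difference $Q_s g - Q_\sigma g$ is a constant $c \in \C$.

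To evaluate $c$, I would invoke the sectorial limit at infinity recorded in \eqref{qinf}: applied to $Q_s g$ (with parameter $s$) and to $Q_\sigma g$ (with parameter $\sigma$, legitimate by the inclusion above), it yields, for any fixed $\psi \in (0,\pi/2)$,
\[
\lim_{|z|\to\infty,\,z\in\Sigma_\psi}(Q_s g)(z) = 0 = \lim_{|z|\to\infty,\,z\in\Sigma_\psi}(Q_\sigma g)(z).
\]
Hence $c = \lim_{|z|\to\infty,\,z\in\Sigma_\psi}\bigl((Q_s g)(z) - (Q_\sigma g)(z)\bigr) = 0$, and therefore $Q_\sigma g = Q_s g$.

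There is essentially no obstacle here; the only point requiring a moment's care is to make sure the auxiliary facts being quoted — Proposition \ref{prim} and the limit property \eqref{qinf} — are genuinely available for the operator $Q_\sigma$ when $g$ is only assumed a priori to lie in $\mathcal{V}_s$, and this is exactly what the embedding \eqref{vst} provides.
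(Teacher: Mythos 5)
Your proof is correct and follows exactly the paper's intended argument: Proposition \ref{prim} shows both $Q_s g$ and $Q_\sigma g$ are primitives of $g$, and \eqref{qinf} forces the constant difference to vanish. The extra care about $g \in \mathcal{V}_\sigma$ via \eqref{vst} is a worthwhile remark but does not change the route.
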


\begin{proof}
This is immediate from Proposition \ref{prim} and \eqref{qinf}.
\end{proof}
\begin{remark}
The proof of the property $(Q_ng)'=g', n \ge 0,$ in Proposition 3.7 uses just improper convergence of the integrals $\int_{0}^{\infty} \alpha^{k} g^{(k)}(\alpha+z)\,d\alpha$ for $0 \le  k \le n-1.$
It is instructive to note that  if $g \in \mathcal V_n$ is holomorphic and $z \in \mathbb C_+$, then 
\begin{equation}\label{int_n} 
\int_{0}^{\infty} \alpha^{k} |g^{(k)}(\alpha+z)|\,d\alpha<\infty
\end{equation}
 for all $ k \ge 0.$
Indeed  if $g \in \mathcal V_n,$  then \eqref{int_n} holds for $k=n$  by \eqref{Qn} and the definition of norm in $\mathcal V_n.$ If $n \ge 1,$ then using Lemma \ref{Derin},
we infer that $g^{(n-1)}(\alpha+z)=-\int_\alpha^\infty g^{(n)}(s+z)\,ds,$ and hence 
by Fubini's theorem
\begin{equation*}
\int_0^\infty
\alpha^{n-1} |g^{(n-1)}(\alpha+z)|\,d\alpha
\le \frac{1}{n} \int_0^\infty \alpha^n | g^{(n)}(\alpha +z)| d\alpha <\infty.
\end{equation*}
Repeating this argument, we conclude that \eqref{int_n} holds also for  $k$ such that $0 \le k < n.$ 
If $k>n,$ then \eqref{int_n} follows directly from \eqref{Qn} and the inclusion  $\mathcal V_n \subset \mathcal V_k.$ 
\end{remark}

The following representation of functions in $\mathcal{D}_s$ has appeared in \cite[Corollary 4.2]{Al2014} (see also \cite[Lemma 3.13.2]{AlPel} for the case $s=1$).

\begin{cor}\label{Repr}
Let $f\in \mathcal{D}_s$, $s>-1 $. Then the sectorial limits 
\begin{align}
f(\infty): &=\lim_{z\to\infty ,\;z\in \Sigma_{\psi}}\,f(z),\label{finf}\\
f(0):&= \lim_{z\to 0,\;z\in \Sigma_{\psi}}\,f(z)\label{fzero},
\end{align}
exist in $\C$ for every $\psi \in (0,\pi/2)$.   Moreover, for all $z \in \C_{+}\cup \{0\}$,
\begin{align} \label{qnrep}
f(z)&=f(\infty)+(Q_s f')(z)\\
&=f(\infty)- \frac{2^s}{\pi}\int_0^\infty \alpha^s\int_{-\infty}^\infty
\frac{f'(\alpha+i\beta)\,d\beta}{(z+\alpha-i\beta)^{s+1}} \, d\alpha.\notag
\end{align}
With $f(0)$ defined as above, $f \in C(\overline{\Sigma}_\psi)$ for every $\psi\in (0,\pi/2)$.
\end{cor}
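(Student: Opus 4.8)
The plan is to deduce everything from Proposition \ref{prim} together with the properties of $Q_s$ recorded in \eqref{qinf}--\eqref{qzero}. Let $f\in\D_s$, so that $g:=f'$ is a holomorphic function lying in $\mathcal{V}_s$. By Proposition \ref{prim}, $Q_sg\in\D_s$ and $(Q_sg)'=g=f'$ on $\C_+$. Hence $f-Q_sf'$ is a holomorphic function on $\C_+$ with vanishing derivative; since $\C_+$ is connected, there is a constant $c\in\C$ with $f=c+Q_sf'$ on $\C_+$.

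Next I would read off the boundary behaviour. By \eqref{qinf}, $Q_sf'$ has sectorial limit $0$ at infinity, so $f$ has sectorial limit $c$ at infinity for every $\psi\in(0,\pi/2)$; this proves \eqref{finf} and identifies $f(\infty)=c$. Similarly, by \eqref{qzero}, $Q_sf'$ has sectorial limit $(Q_sf')(0)$ at $0$, whence $f$ has sectorial limit $c+(Q_sf')(0)$ at $0$; this proves \eqref{fzero} and gives $f(0)=f(\infty)+(Q_sf')(0)$. Combining these, $f(z)=f(\infty)+(Q_sf')(z)$ for $z\in\C_+$, and --- taking this equality as the definition of $f(0)$ --- also at $z=0$; the explicit double-integral formula in \eqref{qnrep} is then nothing but \eqref{qdef}.

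Finally, continuity on $\overline{\Sigma}_\psi$ is inherited from $Q_sf'$: it was observed just after \eqref{qzero} that $Q_sg$ is bounded and continuous on $\overline{\Sigma}_\psi$ for every $\psi\in(0,\pi/2)$, so the same holds for $f=f(\infty)+Q_sf'$ once $f(0)$ is defined as above (on $\overline{\Sigma}_\psi\setminus\{0\}$ continuity of $f$ is automatic from holomorphy, and at $0$ it is exactly the sectorial limit just computed).

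I do not expect a genuine obstacle here: the analytic substance is entirely contained in Proposition \ref{prim} and in the continuity and limit properties of $Q_s$ established earlier in the section. The only points needing a little care are the constant-of-integration argument --- which relies on connectedness of $\C_+$ and on $Q_sf'$ being holomorphic there, both already available --- and the bookkeeping that transfers the sectorial limits of $Q_sf'$ to $f$.
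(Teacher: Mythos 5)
Your argument is correct and follows the paper's own route exactly: the paper's proof likewise invokes Proposition \ref{prim} to get $(Q_sf')'=f'$ and then reads off the conclusions from \eqref{qinf} and \eqref{qzero}; you have merely spelled out the constant-of-integration step and the transfer of the limits that the paper leaves implicit.
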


\begin{proof}
It follows from Proposition \ref{prim} that $(Q_s f')' = f'$.  The statements follow from \eqref{qinf} and \eqref{qzero}.
\end{proof}

\begin{cor}\label{Ban_D}
For every $s>-1$ the space $\mathcal D_s$ equipped with the norm
\[
\|f\|_{\mathcal D_s}:= |f(\infty)|+\|f\|_{\mathcal D_{s,0}}, \qquad f \in\mathcal D_s,
\]
is a Banach space.
\end{cor}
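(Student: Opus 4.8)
The plan is to identify $\mathcal{D}_s$ isometrically with a (closed) piece of $\C\oplus\mathcal{V}_s$ via the map $J(f):=(f(\infty),f')$, where $\C\oplus\mathcal{V}_s$ carries the norm $\|(c,g)\|:=|c|+\|g\|_{\mathcal{V}_s}$; completeness of $\mathcal{D}_s$ then reduces to recovering, from a limit $(c,g)$, an actual function $f\in\mathcal{D}_s$ with $J(f)=(c,g)$, which is exactly what the reproducing formula \eqref{qnrep} and Proposition \ref{prim} provide. First one checks that $\|\cdot\|_{\mathcal{D}_s}$ is genuinely a norm: it is clearly a seminorm, and if $\|f\|_{\mathcal{D}_s}=0$ then $\|f'\|_{\mathcal{V}_s}=0$, so $f'=0$ a.e.\ on $\C_+$; being holomorphic, $f'\equiv0$, hence $f$ is constant, and $f(\infty)=0$ forces $f\equiv0$. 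By \eqref{qnrep} the map $J$ is isometric.

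Now let $(f_n)_{n\ge1}$ be Cauchy in $\mathcal{D}_s$. Then $(f_n(\infty))_n$ is Cauchy in $\C$, say $f_n(\infty)\to c$, and $(f_n')_n$ is Cauchy in $\mathcal{V}_s$; since $\mathcal{V}_s$ is a Banach space, $f_n'\to g$ in $\mathcal{V}_s$ for some $g\in\mathcal{V}_s$. The crux is to show that $g$ agrees a.e.\ with a holomorphic function on $\C_+$. On any closed disc $\overline{D}(z_0,r)\subset\C_+$ the weight $(\Re z)^s/|z|^{s+1}$ is bounded below by a positive constant $\delta=\delta(z_0,r)$, so
\[
\int_{D(z_0,r)}|f_n'-f_m'|\,dS\le \delta^{-1}\|f_n'-f_m'\|_{\mathcal{V}_s}\to0 ,
\]
and the sub-mean value inequality $|h(z)|\le(\pi\rho^2)^{-1}\int_{D(z,\rho)}|h|\,dS$, valid for $h\in\Hol$, gives for $z\in\overline{D}(z_0,r/2)$ the bound $|f_n'(z)-f_m'(z)|\le\tfrac{4}{\pi r^2}\int_{D(z_0,r)}|f_n'-f_m'|\,dS$. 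Hence $(f_n')$ is uniformly Cauchy on $\overline{D}(z_0,r/2)$, so $f_n'\to h$ locally uniformly on $\C_+$ with $h\in\Hol(\C_+)$; passing to a subsequence that converges a.e.\ in $\mathcal{V}_s$ shows $g=h$ a.e., so we may take $g\in\mathcal{V}_s\cap\Hol(\C_+)$.

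By Proposition \ref{prim}, $Q_sg$ is holomorphic with $(Q_sg)'=g\in\mathcal{V}_s$, so $f:=c+Q_sg\in\mathcal{D}_s$, with $f'=g$ and, by \eqref{qinf}, $f(\infty)=c+(Q_sg)(\infty)=c$. Using \eqref{qnrep} for each $f_n$ (so $f_n=f_n(\infty)+Q_sf_n'$) we then get
\[
\|f_n-f\|_{\mathcal{D}_s}=|f_n(\infty)-c|+\|f_n'-g\|_{\mathcal{V}_s}\to0 ,
\]
which proves completeness. (The same argument with $c=0$ shows that $\mathcal{D}_{s,0}$ is complete as well.) The only non-routine step is the passage from $\mathcal{V}_s$-convergence to local uniform convergence, which is what guarantees holomorphy of the limit $g$; the rest is bookkeeping with the isometry $J$, Proposition \ref{prim}, and \eqref{qinf}.
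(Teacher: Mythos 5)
Your proof is correct. The overall skeleton matches the paper's (both proofs ultimately reconstruct the limit as $f=c+Q_sg$ via Proposition \ref{prim} and identify $f(\infty)$ using \eqref{qinf}), but the mechanism for the key step --- showing that the derivatives converge in $\mathcal{V}_s$ to a \emph{holomorphic} limit --- is genuinely different. The paper first uses the bound \eqref{angle} to see that $(Q_sf_k')_k$ is uniformly Cauchy on each sector $\Sigma_\psi$, deduces locally uniform convergence of $(f_k)_k$ and hence pointwise convergence of $(f_k')_k$, and then recovers $\mathcal{V}_s$-convergence of the derivatives by Fatou's lemma. You instead invoke completeness of the weighted $L^1$-space $\mathcal{V}_s$ directly to get a limit $g$, and then prove holomorphy of $g$ by the classical Bergman-space argument: on compact discs the weight is bounded below, so $\mathcal{V}_s$-convergence dominates local $L^1(dS)$-convergence, and the area sub-mean-value inequality upgrades this to local uniform convergence. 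Your route is the more self-contained and standard one (it would work for any weighted Bergman-type space with a locally bounded-below weight, without needing the operator $Q_s$ until the final reconstruction), whereas the paper's route reuses machinery it has already built (\eqref{angle}, Vitali) and avoids the measure-theoretic detour through a.e.\ convergent subsequences. One cosmetic remark: the isometry of $J(f)=(f(\infty),f')$ is immediate from the definition of $\|\cdot\|_{\mathcal D_s}$; what \eqref{qnrep} actually supplies is the injectivity of $J$ (equivalently, that $\|\cdot\|_{\mathcal D_s}$ is a norm rather than a seminorm), which you in any case also verify directly via the identity theorem.
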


\begin{proof}
Let $s>-1$ be fixed and let $(f_k)_{k=1}^\infty$ be a Cauchy sequence in $\mathcal D_s$.
Then \eqref{angle} and Vitali's theorem imply that $(Q_s f_k')_{k=1}^\infty$ converges uniformly on each $\Sigma_\psi$ to a limit $g$ which is holomorphic on $\C_+$.  Moreover $(f_k(\infty))_{k=1}^{\infty}$ converges to a limit $\zeta \in\C$.   It follows from Proposition \ref{prim} that $(f_k)_{k=1}^\infty$ converges uniformly on $\Sigma_\psi$ to $f := \zeta + g$.  Then $(f'_k)_{k=1}^\infty$ converges pointwise on $\C_+$ to $g' = f'$.   Applying Fatou's lemma to the sequences $(\|f'_k-f'_n\|_{\mathcal{V}_s})_{n=k}^\infty$ for fixed $k$, one sees that $\|f_k' - f'\|_{\mathcal V_s} \to 0$.   So $f' \in \mathcal{V}_s$ and $f \in \D_s$.  By \eqref{qinf}, $f(\infty)=\zeta$ and so $\|f_k - f\|_{\mathcal{D}_s} \to 0$, $k\to\infty$.
\end{proof}

The argument used in the proof of Corollary \ref{Ban_D} also provides the following lemma of Fatou type (see also Lemma \ref{FatouH}).

\begin{cor}\label{Fatou}
Let $s>-1$ and $(f_k)_{k=1}^\infty \subset \mathcal D_s$ be such that $\sup_{k \ge 1} \|f_k\|_{\mathcal D_s}<\infty$ and $f(z)=\lim_{k \to \infty} f_k(z)$ exists for all $z \in \mathbb C_+$. Then $f \in \mathcal D_s$.
\end{cor}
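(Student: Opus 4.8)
The plan is to mimic the argument in the proof of Corollary \ref{Ban_D}, but starting from a bounded (rather than Cauchy) sequence, and using Fatou's lemma in place of the Cauchy-based norm convergence. First I would observe that since $\sup_{k\ge1}\|f_k\|_{\D_s}<\infty$, in particular $\sup_{k\ge1}\|f_k'\|_{\mathcal V_s}<\infty$ and $\sup_{k\ge1}|f_k(\infty)|<\infty$. Because $f_k(z)\to f(z)$ pointwise on $\C_+$ and the point evaluations are continuous on the relevant spaces, the limit $f$ is holomorphic on $\C_+$ (via Vitali's theorem, using local boundedness coming from \eqref{angle} applied to $f_k - f_k(\infty) = Q_s f_k'$, together with the boundedness of $f_k(\infty)$; alternatively just note $f$ is a pointwise limit of holomorphic functions that is locally bounded).

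Next I would show $f'\in\mathcal V_s$. Since $f_k\to f$ uniformly on compact subsets of $\C_+$ (again by Vitali), we have $f_k'(z)\to f'(z)$ for every $z\in\C_+$. Applying Fatou's lemma to the nonnegative integrands in the definition \eqref{norm} of the $\mathcal V_s$-norm gives
\[
\|f'\|_{\mathcal V_s} \le \liminf_{k\to\infty} \|f_k'\|_{\mathcal V_s} \le \sup_{k\ge1}\|f_k'\|_{\mathcal V_s} < \infty.
\]
Hence $f'\in\mathcal V_s$, which is exactly the definition of $f\in\D_s$ (the finiteness of $|f(\infty)|$ being automatic for members of $\D_s$, by Corollary \ref{Repr}, once we know $f'\in\mathcal V_s$). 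This completes the argument.

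I do not expect any genuine obstacle here: every ingredient is already in place. The one point requiring a little care is justifying that the pointwise limit $f$ is holomorphic and that $f_k'\to f'$ pointwise; this is handled by Vitali's theorem exactly as in Corollary \ref{Ban_D}, where one uses \eqref{angle} to get uniform local boundedness of $Q_s f_k' = f_k - f_k(\infty)$ on each sector $\Sigma_\psi$, together with $\sup_k|f_k(\infty)|<\infty$. Given that, the substance of the proof is the single application of Fatou's lemma, so the statement is essentially a corollary of the proof technique of Corollary \ref{Ban_D}, as the text already indicates.
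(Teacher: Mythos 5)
Your proposal is correct and follows essentially the same route as the paper, which gives no separate proof but notes that the argument of Corollary \ref{Ban_D} (local boundedness from \eqref{angle} plus Vitali's theorem, then Fatou's lemma applied to the $\mathcal{V}_s$-integrals) yields the result; your version simply makes that adaptation explicit.
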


Now employing \eqref{qnrep}, the estimates (\ref{angle}), (\ref{Dangle}) and  (\ref{Lebeg11}), and Lemma \ref{Derin}, we obtain the following estimates.

\begin{cor}\label{Cangle}
Let $\psi \in (0,\pi/2)$.   For all $f\in \mathcal{D}_s$, $s>-1$,
\[
|f(z)|\le \max\left(1,\frac{2^s}{\pi\cos^{s+1}\psi}\right)\|f\|_{\mathcal{D}_s},\qquad z\in \Sigma_\psi,
\]
and
\[
|f'(z)|\le \frac{(s+1)2^s}{\pi |z| \cos^{s+2}\psi}\|f'\|_{\mathcal{V}_s},\qquad z\in\Sigma_\psi.
\]
Moreover, $f$ is continuous on $\Sigma_\psi \cup \{0\}$ and
\[
\lim_{|z|\to0, z \in \Sigma_\psi} z f'(z) = 0.
\]
\end{cor}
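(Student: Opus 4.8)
The plan is to read off each assertion directly from the reproducing formula \eqref{qnrep} together with the pointwise estimates already established for the operator $Q_s$. By Corollary \ref{Repr}, every $f \in \mathcal{D}_s$ satisfies $f(z) = f(\infty) + (Q_s f')(z)$ on $\C_+ \cup \{0\}$, and $f' = (Q_s f')'$. So the four claims are just the corresponding facts about $Q_s f'$ restated for $f$.

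First I would prove the bound on $|f(z)|$. From $f = f(\infty) + Q_s f'$ and the triangle inequality,
\[
|f(z)| \le |f(\infty)| + |(Q_s f')(z)| \le |f(\infty)| + \frac{2^s}{\pi \cos^{s+1}\psi}\|f'\|_{\mathcal{V}_s}, \qquad z \in \Sigma_\psi,
\]
by \eqref{angle}. Since $\|f\|_{\mathcal{D}_s} = |f(\infty)| + \|f'\|_{\mathcal{V}_s}$, the right-hand side is at most $\max\bigl(1, \tfrac{2^s}{\pi\cos^{s+1}\psi}\bigr)\|f\|_{\mathcal{D}_s}$, as required. Next, the estimate on $|f'(z)|$ is immediate: since $f' = (Q_s f')'$, the inequality \eqref{Dangle} applied to $g = f'$ gives exactly
\[
|f'(z)| \le \frac{(s+1)2^s}{\pi |z|\cos^{s+2}\psi}\|f'\|_{\mathcal{V}_s}, \qquad z \in \Sigma_\psi.
\]
For the continuity of $f$ on $\Sigma_\psi \cup \{0\}$: $Q_s f'$ is continuous on $\C_+$ with sectorial limit $(Q_s f')(0)$ at $0$ by \eqref{qzero}, hence continuous on $\Sigma_\psi \cup \{0\}$; adding the constant $f(\infty)$ preserves this, and $f(0) := f(\infty) + (Q_s f')(0)$ is the value making $f$ continuous there (this matches \eqref{fzero} in Corollary \ref{Repr}). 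Finally, $z f'(z) = z (Q_s f')'(z) \to 0$ as $|z| \to 0$, $z \in \Sigma_\psi$, is precisely \eqref{Lebeg11} with $g = f'$.

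There is no real obstacle here; the only point requiring a word of care is that the constants appearing in the statement are written in terms of $\|f'\|_{\mathcal{V}_s}$ and $\|f\|_{\mathcal{D}_s}$, so one must keep track of whether each estimate needs the full $\mathcal{D}_s$-norm (as for $|f(z)|$, because of the $f(\infty)$ term) or only the seminorm $\|f\|_{\mathcal{D}_{s,0}} = \|f'\|_{\mathcal{V}_s}$ (as for $|f'(z)|$ and $zf'(z) \to 0$). Lemma \ref{Derin}, invoked in the hint preceding the corollary, is used implicitly through the properties of $Q_s$ (it underlies \eqref{qinf} and \eqref{Lebeg11}), so no separate appeal to it is needed in the write-up.
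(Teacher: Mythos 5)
Your proof is correct and follows exactly the route the paper intends: the reproducing formula $f = f(\infty) + Q_s f'$ from Corollary \ref{Repr} combined with the estimates \eqref{angle}, \eqref{Dangle}, the continuity statement \eqref{qzero}, and \eqref{Lebeg11} applied to $g = f'$. Your bookkeeping of which bounds require the full norm $\|f\|_{\mathcal{D}_s}$ versus only the seminorm $\|f'\|_{\mathcal{V}_s}$ is also accurate.
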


\begin{rem} \label{remdr}
Corollary \ref{Cangle} implies that the point evaluation functionals $\delta_z, \, z \in \C_+$, are continuous on $\D_s, \, s>-1$.  Using \eqref{rtl} and the principle set out in Section \ref{prelims}, we see that the function $\l \mapsto r_{\l}^{\gamma}$ is holomorphic from $\C_+$ to $\D_s$, for any $s>-1$, $\gamma>0$.
\end{rem}

\subsection{More functions in $\D_s$ and their properties}\label{some_functions}

In this section we give more examples of functions from $\D_s$ and note some additional 
elementary properties which will be relevant for the sequel.

\begin{prop} \label{BDs}
For $s > 0$, $\Bes \embedi \D_s$.
\end{prop}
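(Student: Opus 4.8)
The plan is to show directly that if $f \in \Bes$ then $f' \in \mathcal{V}_s$ for every $s > 0$, with a norm bound controlled by $\|f\|_\Bes$. Recall that the $\Bes$-norm controls $\int_0^\infty \sup_{\b \in \R} |f'(\a+i\b)|\,d\a$, while the $\mathcal{V}_s$-norm of $f'$ is, by \eqref{norm},
\[
\|f'\|_{\mathcal{V}_s} = \int_0^\infty \a^s \int_{-\infty}^\infty \frac{|f'(\a+i\b)|}{(\a^2+\b^2)^{(s+1)/2}}\,d\b\,d\a.
\]
So the natural first step is to replace $|f'(\a+i\b)|$ inside the inner integral by $\sup_{\b' \in \R} |f'(\a+i\b')| =: m(\a)$, which is legitimate since $m(\a) < \infty$ for a.e.\ $\a$ (indeed for all $\a > 0$, since $f'$ is holomorphic and $\sup_\b |f'(\a+i\b)|$ is locally bounded in $\a$ for functions in $\Bes$). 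This reduces the claim to the scalar estimate
\[
\|f'\|_{\mathcal{V}_s} \le \int_0^\infty \a^s\, m(\a) \left(\int_{-\infty}^\infty \frac{d\b}{(\a^2+\b^2)^{(s+1)/2}}\right) d\a.
\]

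The second step is to evaluate the inner integral by the substitution $\b = \a t$, giving $\int_{-\infty}^\infty (\a^2+\b^2)^{-(s+1)/2}\,d\b = \a^{-s} \int_{-\infty}^\infty (1+t^2)^{-(s+1)/2}\,dt = \a^{-s} B\!\left(\frac{s+1}{2},\frac12\right)$, using the Beta function identity recorded in the Preliminaries. This is exactly where $s > 0$ is needed, not just $s > -1$: the factor $\a^s$ from the $\mathcal{V}_s$-weight cancels the $\a^{-s}$ from the integral, leaving
\[
\|f'\|_{\mathcal{V}_s} \le B\!\left(\frac{s+1}{2},\frac12\right) \int_0^\infty m(\a)\,d\a = B\!\left(\frac{s+1}{2},\frac12\right) \|f\|_\Bq \le B\!\left(\frac{s+1}{2},\frac12\right)\|f\|_\Bes.
\]
Had $s \le 0$ we would be left with an extra power $\a^s$ with nonpositive exponent which is not integrable against $m(\a)$ near $0$ in general, so the restriction $s>0$ is essential and the example $\arccot \in \D_0$ (Example \ref{arc}) shows $\D_0$ genuinely needs more than $\Bes$.

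Finally, I would record that this shows $f \in \D_s$ with $\|f\|_{\D_s} = |f(\infty)| + \|f'\|_{\mathcal{V}_s} \le \|f\|_\infty + B\!\left(\frac{s+1}{2},\frac12\right)\|f\|_\Bq \le \max\{1, B(\tfrac{s+1}{2},\tfrac12)\}\,\|f\|_\Bes$, using that every $f \in \Bes$ lies in $H^\infty(\C_+) \cap C(\overline{\C}_+)$ so that $f(\infty) = \lim_{\Re z \to \infty} f(z)$ exists and $|f(\infty)| \le \|f\|_\infty$; this gives the continuous embedding $\Bes \embedi \D_s$. I do not expect any real obstacle here — the only point requiring a little care is justifying that $m(\a) = \sup_\b |f'(\a+i\b)|$ is measurable and that the crude pointwise bound $|f'(\a+i\b)| \le m(\a)$ may be inserted before applying Tonelli's theorem, both of which are routine; the measurability of $m$ follows from continuity of $f'$ (the sup over $\b$ can be taken over rationals).
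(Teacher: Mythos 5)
Your proof is correct and follows essentially the same route as the paper: bound $|f'(\alpha+i\beta)|$ by $\sup_{\beta}|f'(\alpha+i\beta)|$, evaluate the inner integral by the substitution $\beta=\alpha t$ so that the weight $\alpha^s$ cancels, and note that $\int_{\R}(1+t^2)^{-(s+1)/2}\,dt$ converges precisely because $s>0$. The only cosmetic difference is the value of the Beta constant (the paper records it as $B(1/2,s/2)$), which does not affect the continuous embedding.
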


\begin{proof}
For $s>0$ and $f\in \Bes$ we have
 \begin{align*}
 \|f\|_{\mathcal{D}_{s,0}}&=|f(\infty)|
 +\int_0^\infty \alpha^s\int_{-\infty}^\infty\frac{|f'(\alpha+i\beta)|}
 {|\alpha+i\beta|^{s+1}} \,d\beta\,d\alpha \label{inclus_b}\\
 &\le \|f\|_{\infty}
 +2\int_0^\infty  \sup_{\beta\in \R}\,|f'(\alpha+i\beta)|\,
 \int_{0}^\infty\frac{dt}
 {(t^2+1)^{(s+1)/2}}\,d\alpha\notag\\
&\le  \max\{1, B(1/2,s/2)\}\|f\|_\Bes.\notag
\end{align*}
Thus $\Bes$ is continuously included in $\D_s$.  
\end{proof}

\begin{rem} \label{Brep}
The representation \eqref{qnrep} in Corollary \ref{Repr} extends the reproducing formula for $\Bes$, i.e., \eqref{qnrep} for $s=1$, to a larger class of functions.
\end{rem}
Recall that  $\LT \embedi \Bes$, see \cite[Section 2.4]{BGT}, Thus, in view of Proposition \ref{BDs}, we have
\[
 \LT \embedi \Bes \embedi \D_s, \qquad s>0.
\]
   We will show in Corollary \ref{Bdens} that $\mathcal B$ is dense in $\mathcal D_s$ for every $s>0$, and hence $\mathcal D_s$ is dense in $\mathcal D_\sigma$ for 
all $\sigma > s > 0$.   On the other hand, we will show in Corollary \ref{Bdens} that $\Bes$ is not dense in $\D^\infty_s$ for $s>0$.

For $f \in \Hol(\C_+)$, let
\[
\tilde{f}(z):=f(1/z), \qquad f_t(z): = f(tz), \qquad t >0, \quad z\in \C_{+}.
\]

\begin{lemma} \label{leminv}
Let $s>-1$ and $t>0$.  Then
\begin{enumerate}[\rm(i)]
\item 
$f\in \mathcal{D}_s$ if and only if $\tilde{f}\in \mathcal{D}_s$
and
\[
\|f-f(\infty)\|_{\mathcal{D}_s}=\|\tilde{f}- f(0)\|_{\mathcal{D}_s}.
\]
\item If $f$ is bounded away from $0$ and $f \in \D_s$, then $1/f \in \D_s$.
\item
 $f \in \D_s$ if and only if $f_t \in \D_s$, and $\|f\|_{\D_s} = \|f_t\|_{\D_s}$.
\end{enumerate}
\end{lemma}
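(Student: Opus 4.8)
The plan is to reduce all three statements to the polar form of the $\mathcal V_s$-norm recorded in \eqref{norm}, namely $\|g\|_{\mathcal V_s}=\int_{-\pi/2}^{\pi/2}\cos^s\varphi\int_0^\infty|g(\rho e^{i\varphi})|\,d\rho\,d\varphi$, together with the fact that $z\mapsto 1/z$ and $z\mapsto tz$ are holomorphic bijections of $\C_+$ whose action on the rays $\{\rho e^{i\varphi}:\rho>0\}$ is explicit; recall also that $\|g\|_{\mathcal D_s}=|g(\infty)|+\|g'\|_{\mathcal V_s}$ for $g\in\mathcal D_s$, and that the sectorial limits $g(\infty)$, $g(0)$ exist by Corollary \ref{Repr}. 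For (iii), I would start from $f_t'(z)=tf'(tz)$; in the inner integral the substitution $u=t\rho$ gives $\int_0^\infty t|f'(t\rho e^{i\varphi})|\,d\rho=\int_0^\infty|f'(ue^{i\varphi})|\,du$, and since the weight $\cos^s\varphi$ is untouched, $\|f_t'\|_{\mathcal V_s}=\|f'\|_{\mathcal V_s}$ as an identity in $[0,\infty]$. In particular $f'\in\mathcal V_s\iff f_t'\in\mathcal V_s$; as $f_t(\infty)=f(\infty)$ is the same sectorial limit, this yields $\|f_t\|_{\mathcal D_s}=\|f\|_{\mathcal D_s}$, and the equivalence is symmetric because $(f_t)_{1/t}=f$.

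For (i), differentiation gives $\tilde f'(z)=-z^{-2}f'(1/z)$, so $|\tilde f'(\rho e^{i\varphi})|=\rho^{-2}|f'(\rho^{-1}e^{-i\varphi})|$. The substitution $u=1/\rho$ converts $\rho^{-2}\,d\rho$ into $du$, and then $\varphi\mapsto-\varphi$ (using evenness of $\cos^s$) gives $\|\tilde f'\|_{\mathcal V_s}=\|f'\|_{\mathcal V_s}$; combined with the holomorphy of $\tilde f$ on $\C_+$ this shows $f\in\mathcal D_s\iff\tilde f\in\mathcal D_s$. For the norm identity, observe that as $z\to\infty$ within $\Sigma_\psi$ one has $1/z\to 0$ within $\Sigma_\psi$, whence $\tilde f(\infty)=f(0)$ (and similarly $\tilde f(0)=f(\infty)$); therefore $(f-f(\infty))(\infty)=0$ and $(\tilde f-f(0))(\infty)=\tilde f(\infty)-f(0)=0$, so that $\|f-f(\infty)\|_{\mathcal D_s}=\|f'\|_{\mathcal V_s}$ and $\|\tilde f-f(0)\|_{\mathcal D_s}=\|\tilde f'\|_{\mathcal V_s}$, and these agree by the change of variables above.

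For (ii), if $|f(z)|\ge\delta>0$ for all $z\in\C_+$, then $1/f$ is holomorphic on $\C_+$ and $(1/f)'=-f'/f^2$, so $|(1/f)'(z)|\le\delta^{-2}|f'(z)|$ pointwise; hence $(1/f)'\in\mathcal V_s$ with $\|1/f\|_{\mathcal D_{s,0}}\le\delta^{-2}\|f\|_{\mathcal D_{s,0}}$, and thus $1/f\in\mathcal D_s$ (its sectorial limit at infinity, namely $1/f(\infty)$, exists by Corollary \ref{Repr}).

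I do not expect a genuine obstacle here: the substance is bookkeeping of changes of variables in the $\mathcal V_s$-integral. The only points deserving attention are verifying that $\tilde f$, $f_t$ and $1/f$ are holomorphic on $\C_+$ and that the relevant sectorial limits exist and are correctly identified (so that the Banach-norm identity in (i) is well posed), both of which follow from Corollary \ref{Repr} and from tracking how $z\mapsto 1/z$ maps the cones $\Sigma_\psi$.
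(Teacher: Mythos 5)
Your proof is correct and follows essentially the same route as the paper: the key computation is the change of variables $\rho\mapsto 1/\rho$ (plus evenness of $\cos^s$) showing $\|\tilde f'\|_{\mathcal V_s}=\|f'\|_{\mathcal V_s}$, combined with the identification $\tilde f(\infty)=f(0)$ from Corollary \ref{Repr}. Parts (ii) and (iii) are handled by the paper as ``very easy,'' and your pointwise bound $|(1/f)'|\le\delta^{-2}|f'|$ and the scaling substitution $u=t\rho$ are exactly the intended arguments.
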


\begin{proof}
Note that
\begin{align*}
\|\tilde{f}'\|_{\mathcal{V}_s}&=\int_{-\pi/2}^{\pi/2}\cos^s\varphi\int_0^\infty \frac{|f'(\rho^{-1}e^{-i\varphi})|}{\rho^2}\,d\rho\,d\phi\\
&=\int_{-\pi/2}^{\pi/2}\cos^s\varphi\int_0^\infty |f'(r e^{i\varphi})|\,dr\,d\varphi\\
&=\|f'\|_{\mathcal{V}_s}.
\end{align*}
Moreover, by Corollary \ref{Repr}, $\tilde{f}(\infty)=f(0)$.  
This proves (i).  The other statements are very easy.
\end{proof}

\begin{rems}  \label{e-z}
1.  Neither of the spaces $\Bes$ and $\mathcal{D}_0$ is contained in the other.  Indeed, the function $e^{-z} \in \LT \subset \Bes$ but $e^{-z} \not \in \mathcal D_0$ (see Example \ref{dm}).   On the other hand, there are bounded functions in $\D_0$ which are not in $\Bes$, for example the  function $\exp(\arccot z) \in \mathcal D_0$ and is bounded but it is not in $\Bes$ (see Example \ref{eac}).

More generally, for $\nu\ge0$, let $f_\nu(z) = z^\nu e^{-z}, \, z\in \C_+$.   Then $f_\nu \in \D_s$ if and only if $s>\nu$ (see Example \ref{dm}).   Note that, if $\nu>0$, $f_\nu$ is not bounded on any right half-plane.   One can show that if $f \in \D_{s_0}$, then
\[
|f(z)| \le \frac{2^s}{\pi} \|f\|_{\D_s,0}\left(1+\frac{4\b^2}{\a^2}\right)^{(s+1)/2}, \quad z =\a+i\b \in \C_+.
\]
The function $\log(1+z) e^{-z}$ is in $\D_s, s>0$, but is also unbounded on every right half-plane.

\noindent
2.  Since $e^{-z} \in \D_s$ for $s>0$, it follows from Lemma \ref{leminv} that the functions $e^{-t/z}$ are in $\mathcal D_s$ for all $t>0, \, s >0$.   This shows that functions $f \in \mathcal D_s$ may not have full limits at infinity or at zero.   However, the properties \eqref{finf} and \eqref{fzero} in Corollary \ref{Repr} establish values for $f$ at infinity and at zero as sectorial limits.

\noindent 3.  The spaces $\D_s, \, s>-1$, are invariant under shifts given by
\[
(T(\tau)f)(z) = f(z+\tau), \qquad f \in \D_s, \, \tau \in \C_+, \, z \in \C_+.
\]
Indeed these operators form a bounded $C_0$-semigroup on $\D_s$.   See Section \ref{shifts} for a proof.   On the other hand, $\D_s$ are not invariant under the vertical shifts when $\tau \in i\R$, as we see in the following example. 
\end{rems}

\begin{exa} \label{eac}
As stated in Example \ref{arc}, the function $\arccot$ is in $\D_s$ for all $s>-1$.   
Let
\[
g(z)=\exp(\arccot(z)),  \qquad \quad z\in \C_{+},
\]
Since $|\Re \arccot(z)| \le \pi/2$, $\|g\|_\infty = \exp(\pi/2) = g(\infty)$.   For $s>-1$, 
we have
\begin{align*}
\|g\|_{\mathcal{D}_s}&= |g(\infty)|+\|g \cdot (\arccot)'\|_{\mathcal{V}_s}
\le \exp(\pi/2)(1+\|\arccot\|_{\D_s}).
\end{align*}
Thus $g \in \D_s$ for all $s>-1$.   

However the boundary function of $g$ is not continuous at $z=\pm i$.
Indeed, for a fixed $\ep>0$,
\[
\arccot (i+i\ep)=\frac{1}{2i}\log\left(1+\frac{2}{\ep}\right),
\]
hence
\[
g(i+i\ep)=\exp{(-i\log(1+2/\ep)^{1/2})}
\]
does not have a limit as $\ep\to 0+$.

Note that $g(\ep + i)$ does not converge as $\ep\to0+$.   This means
that if $f(z):=g(z-i)$, then $f$ does not have a sectorial limit at 0 and therefore does not belong to $\D_s$ for any $s>-1$.  Thus $\D_s$ is not invariant under vertical shifts.
\end{exa}

\subsection{Bernstein functions and $\D_s$}
Recall that a holomorphic function $g :\C_+ \to \C_+$ is a \emph{Bernstein function} if it is of the form
\begin{equation} \label{bsfn}
g(z) = a + bz + \int_{(0,\infty)} (1-e^{-zs}) \, d\mu(s),
\end{equation}
where $a\ge 0$, $b\ge0$ and $\mu$ is a positive Borel measure on $(0,\infty)$ such that
$\int_{(0,\infty)} \frac{s}{1+s} \,d\mu(s) < \infty$.
The following properties of Bernstein functions $g$ will be used (these properties differ slightly from those used in \cite{BGTad}):
\begin{enumerate}
\item[(B1)] $g$ maps $\Sigma_\psi$ into $\Sigma_\psi$ for each $\psi \in [0,\pi/2]$, see \cite[Proposition 3.6]{Schill} or \cite[Proposition 2.1(1)]{BGTad}.
\item[(B2)] $g$ is increasing on $(0,\infty)$.
\item[(B3)]  For all $z \in \C_+$,
\[
g(\Re z) \le \Re g(z) \le |g(z)|, \qquad |g'(z)| \le  g'(\Re z).
\]
Here the first inequality follows from taking the real parts in \eqref{bsfn}, and the second inequality is shown in \cite[Section 3.5, (B3)]{BGT}.
\end{enumerate}
Further information on Bernstein functions can be found in  \cite{Schill}.

\begin{lemma} \label{bsds}
Let $g$ be a Bernstein function, $\lambda\in\C_+$, and
\[
f(z;\lambda): =(\l+g(z))^{-1},\qquad z\in \C_{+}.
\]
Then $f(\cdot;\l) \in \D_s$ for $s>2$ and 
\[
\|f(\cdot;\l)\|_{\D_{s,0}} \le \frac{2^s}{(s-2)|\l|} .
\]
\end{lemma}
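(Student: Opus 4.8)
The plan is to bound the seminorm $\|f(\cdot;\lambda)\|_{\D_{s,0}}=\|f'(\cdot;\lambda)\|_{\mathcal V_s}$ by a direct computation, using
\[
f'(z;\lambda)=-\frac{g'(z)}{(\lambda+g(z))^{2}},\qquad z\in\C_+ .
\]
By (B3), $\Re g(z)\ge g(\Re z)\ge 0$, so $\Re(\lambda+g(z))\ge\Re\lambda>0$; hence $\lambda+g$ is zero-free on $\C_+$, $f(\cdot;\lambda)$ is holomorphic there, and it suffices to establish the norm estimate, which then also yields $f(\cdot;\lambda)\in\D_s$ since membership in $\D_s$ only asks that $f'\in\mathcal V_s$.

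Write $z=\rho e^{i\varphi}$, $|\varphi|<\pi/2$. The pointwise input comes entirely from the Bernstein properties: by (B1), (B2) we have $|\arg g(\rho e^{i\varphi})|\le|\varphi|$, so Lemma \ref{trig}(iii) (inequality \eqref{Fa}, applied with $g(\rho e^{i\varphi})$ in the role of $z$) gives $|\lambda+g(\rho e^{i\varphi})|\ge\cos\varphi\,|\lambda|$; taking real parts and using (B3) gives $|\lambda+g(\rho e^{i\varphi})|\ge\Re g(\rho e^{i\varphi})\ge g(\rho\cos\varphi)$ and $|g'(\rho e^{i\varphi})|\le g'(\rho\cos\varphi)$. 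With $m=m(\varphi):=|\lambda|\cos\varphi$ these combine to
\[
\int_0^\infty\frac{|g'(\rho e^{i\varphi})|}{|\lambda+g(\rho e^{i\varphi})|^{2}}\,d\rho
\le\int_0^\infty\frac{g'(\rho\cos\varphi)}{\max\bigl(m^{2},\,g(\rho\cos\varphi)^{2}\bigr)}\,d\rho .
\]

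The key step is the inner integral. Substituting $u=\rho\cos\varphi$ and using that $g$ is increasing on $(0,\infty)$ (so $\{u:g(u)\le m\}=[0,u_0]$), one splits at $u_0$ and telescopes:
\[
\int_0^\infty\frac{g'(\rho\cos\varphi)}{\max(m^{2},g(\rho\cos\varphi)^{2})}\,d\rho
=\frac1{\cos\varphi}\int_0^\infty\frac{g'(u)}{\max(m^{2},g(u)^{2})}\,du
\le\frac1{\cos\varphi}\Bigl(\frac1m+\frac1m\Bigr)=\frac{2}{|\lambda|\cos^{2}\varphi},
\]
with the obvious modifications when $g>m$ or $g<m$ throughout $(0,\infty)$; crucially this bound holds whether or not $g(\infty)$ is finite. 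Integrating in $\varphi$,
\[
\|f(\cdot;\lambda)\|_{\D_{s,0}}\le\frac{2}{|\lambda|}\int_{-\pi/2}^{\pi/2}\cos^{s-2}\varphi\,d\varphi=\frac{2}{|\lambda|}\,B\Bigl(\frac{s-1}{2},\frac12\Bigr),
\]
which is finite precisely for $s>1$; and for $s>2$ the elementary bounds $\int_{-\pi/2}^{\pi/2}\cos^{s-2}\varphi\,d\varphi<\pi<2^{s-1}/(s-2)$ (the last because $t\mapsto 2^{t}/t$ attains its minimum $e\ln2>\pi/2$ on $(0,\infty)$) give $\|f(\cdot;\lambda)\|_{\D_{s,0}}<2^{s}/((s-2)|\lambda|)$.

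I expect the inner $\rho$-integral to be the only real obstacle. Discarding the quadratic denominator — keeping only $|\lambda+g(\rho e^{i\varphi})|\ge\cos\varphi\,|\lambda|$ and reducing to $\int_0^\infty|g'(\rho e^{i\varphi})|\,|\lambda+g(\rho e^{i\varphi})|^{-1}\,d\rho$, i.e.\ the total variation of $\rho\mapsto\log(\lambda+g(\rho e^{i\varphi}))$ — diverges whenever $g$ is unbounded (already for $g(z)=z$). Retaining the square and bounding it below by the maximum of the two natural lower bounds $(\cos\varphi\,|\lambda|)^2$ and $g(\rho\cos\varphi)^2$ is exactly what makes the telescoping argument go through uniformly over all Bernstein $g$; the remaining ingredients (holomorphy, the pointwise inequalities, the final Beta-function estimate) are routine given Lemma \ref{trig} and (B1)--(B3).
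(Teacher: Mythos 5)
Your proof is correct, and at the decisive step it diverges from the paper's argument in an interesting way. The paper bounds the denominator once and for all via the angular inequality \eqref{Fc}, $|\lambda+g(\rho e^{i\psi})|\ge \cos\bigl((|\psi|+\pi/2)/2\bigr)\,(|\lambda|+|g(\rho e^{i\psi})|)$, then uses (B2)--(B3) and the substitution $t=g(\rho\cos\psi)$ to reduce the inner integral to $\int_0^\infty (|\lambda|+t)^{-2}\,dt=1/|\lambda|$; the price is the prefactor $\cos^{-2}((|\psi|+\pi/2)/2)\cos^{-1}\psi\sim (\pi/2-|\psi|)^{-3}$, which is exactly what forces $s>2$ in the paper's final Beta-type integral $4\int_0^1(1+t)^{s-2}(1-t)^{s-3}\,dt$. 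You instead take the \emph{maximum} of two separate lower bounds --- $|\lambda|\cos\varphi$ from \eqref{Fa} combined with (B1), and $g(\rho\cos\varphi)$ from taking real parts --- and split the $u$-integral at the crossover point, telescoping each piece to $1/m$. This costs only $\cos^{-2}\varphi\sim(\pi/2-|\varphi|)^{-2}$, so your computation in fact shows $f(\cdot;\lambda)\in\D_s$ for all $s>1$ with $\|f(\cdot;\lambda)\|_{\D_{s,0}}\le 2B(\tfrac{s-1}{2},\tfrac12)/|\lambda|$, a genuinely sharper conclusion than the lemma states; for $s>2$ your elementary estimate $2\pi(s-2)\le 2^s$ (via the minimum $e\ln 2>\pi/2$ of $t\mapsto 2^t/t$) recovers the stated constant $2^s/((s-2)|\lambda|)$. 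All the supporting steps --- zero-freeness of $\lambda+g$ on $\C_+$, the pointwise use of (B1)--(B3), the handling of the degenerate cases where $g$ stays on one side of $m$, and the fundamental theorem of calculus for the smooth increasing function $g$ on $(0,\infty)$ --- are sound.
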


\begin{proof}
We have
\[
f'(z;\lambda)=-\frac{g'(z)}{(\l+g(z))^2},
\]
and then, for $\psi\in (-\pi/2,\pi/2)$, using Lemma \ref{trig}, (B2) and (B3),
\begin{align*}
\int_0^\infty |f'(\rho e^{i\psi};\lambda)|\,d\rho
&=\int_0^\infty \frac{|g'(\rho e^{i\psi})|}
{|\l+g(\rho e^{i\psi})|^2} \,d\rho\\
&\le \frac{1}{\cos^2((|\psi|+\pi/2)/2)}
\int_0^\infty \frac{g'(\rho\cos\psi)}
{(|\l|+|g(\rho e^{i\psi})|)^2} \,d\rho\\
&\le\frac{1}{\cos^2((|\psi|+\pi/2)/2)}
\int_0^\infty \frac{g'(\rho\cos\psi)}
{(|\l|+g(\rho\cos\psi))^2} \,d\rho\\
&\le \frac{1}{\cos^2((|\psi|+\pi/2)/2)\cos\psi}
\int_0^\infty \frac{dt}
{(|\lambda|+t)^2}\\
&=
\frac{1}{\cos^2((|\psi|+\pi/2)/2)\cos\psi}\cdot \frac{1}{|\lambda|}.
\end{align*}

If $s>2$, then
\begin{align*}
|\lambda|\|f(\cdot;\lambda)\|_{\mathcal{D}_{s,0}}
&=|\l|\int_{-\pi/2}^{\pi/2}\cos^s\psi
\int_0^\infty |f'(\rho e^{i\psi};\lambda)|\,d\rho\,d\psi\\
&\le 2\int_0^{\pi/2}
\frac{\cos^{s-1}\psi}{\cos^2((\psi+\pi/2)/2)}\,d\psi
=
4\int_0^{\pi/2}
\frac{\cos^{s-1}\psi}{1-\sin\psi} \,d\psi\\
&=4\int_0^1 (1+t)^{s-2}(1-t)^{s-3}\,dt < \frac{2^s}{s-2}.   \qedhere
\end{align*}
\end{proof}

\subsection{Algebras associated with $\mathcal D_s$}
The spaces $\mathcal{D}_s, \, s>-1$, are not algebras, but there are some related algebras.   Consider the Banach spaces $\mathcal{D}_s^\infty:= \mathcal{D}_s\cap H^\infty (\C_{+})$ equipped with the norm
\[
\|f\|_{\mathcal{D}_s^\infty}:=\|f\|_\infty+\|f'\|_{\mathcal{V}_s}.
\]
Thus $\mathcal{D}_s^\infty$ is the space of bounded holomorphic functions on $\C_{+}$ such that
\[
f(\infty):=\lim_{|z|\to\infty, z\in \Sigma_\psi}\,f(z)
\]
exists for every $\psi\in (0,\pi/2)$, and
\[
\|f\|_{\mathcal{D}_s^\infty}:=\|f\|_{\infty}+\int_0^\infty \alpha^s \int_{-\infty}^\infty \frac{|f'(\alpha+i\beta)|}{(\alpha^2+\beta^2)^{(s+1)/2}}\,d\alpha\,d\beta  < \infty.
\]
Then $(\mathcal{D}_s^\infty, \|\cdot\|_{\D_s})$ is a Banach algebra, and in particular
\begin{equation}\label{algebra}
\|fg\|_{\mathcal{D}_s^\infty}\le \|f\|_{\mathcal{D}_s^\infty} \|g\|_{\mathcal{D}_s^\infty},\quad f,g\in \mathcal{D}_s^\infty.
\end{equation}
By Proposition \ref{BDs}, $\Bes\embedi \mathcal{D}_s^\infty$ for $s>0$, and the embeddings are continuous.

Example \ref{eac} shows that the function $g(z) := \exp(\arccot z)$ is in $\mathcal{D}_s^\infty$ for all $s>-1$, with $\|g\|_{\mathcal{D}_s^\infty} = \|g\|_{\mathcal{D}_s}$, and  consequently $\mathcal{D}_s^\infty$ is not invariant under vertical shifts.

It follows from Lemma \ref{leminv} that
\[
f\in \mathcal{D}_s^\infty \quad \text{if and only if}\quad
\tilde{f}(z):=f(1/z)\in \mathcal{D}_s^\infty,
\]
and
\[
\|f\|_{\mathcal{D}_s^\infty}=\|\tilde{f}\|_{\mathcal{D}_s^\infty}.
\]
Moreover, the spectrum of $f$ in $\mathcal{D}_s^\infty$ is the closure of the range of $f$.  In particular, the spectral radius of $f$ is $\|f\|_\infty$.

Now we consider the linear space
\[
\mathcal D_\infty:=\bigcup_{s>-1} \mathcal{D}_s.
\]
We will show that $\mathcal D_\infty$ is an algebra, which opens the way to an operator functional calculus on $\mathcal D_\infty$.

\begin{lemma}\label{Alg1} For $s,\sigma>-1$, let $f\in \mathcal{D}_s$ and $g\in \mathcal{D}_\sigma$.
Then
\[
h:=fg \in \mathcal{D}_{s+\sigma+1},
\]
and
\begin{equation} \label{fgest}
\|h\|_{\mathcal{D}_{s+\sigma+1}} \le
\left(2+\frac{2^s+2^\sigma}{\pi}\right)\|f\|_{\mathcal{D}_s}\,\|g\|_{\mathcal{D}_\sigma}.
\end{equation}
Hence,
$\mathcal{D}_\infty$ is an algebra.
\end{lemma}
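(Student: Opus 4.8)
The plan is to differentiate $h=fg$, estimate the two summands of $h'=f'g+fg'$ separately in the $\mathcal{V}_{s+\sigma+1}$ norm, and exploit the fact that the extra angular weight $\cos^{\sigma+1}\varphi$ gained on passing from $\mathcal{V}_s$ to $\mathcal{V}_{s+\sigma+1}$ precisely absorbs the $\cos^{-(\sigma+1)}\varphi$ blow-up in the pointwise bound for $g$ (and symmetrically, with the roles of $f$ and $g$, and of $s$ and $\sigma$, interchanged). The first ingredient is the pointwise estimate coming from the reproducing formula \eqref{qnrep} together with \eqref{angle} (equivalently, Corollary \ref{Cangle}): taking $\psi\downarrow|\varphi|$ in \eqref{angle}, for $z=\rho e^{i\varphi}$ with $|\varphi|<\pi/2$ one has
\[
|g(z)|\le |g(\infty)|+\frac{2^\sigma}{\pi\cos^{\sigma+1}\varphi}\,\|g\|_{\mathcal{D}_{\sigma,0}},\qquad |f(z)|\le |f(\infty)|+\frac{2^s}{\pi\cos^{s+1}\varphi}\,\|f\|_{\mathcal{D}_{s,0}}.
\]

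Next I would use the polar form of the norm in \eqref{norm} and the fact that $\sigma+1>0$ (so that $\cos^{s+\sigma+1}\varphi\le\cos^s\varphi$ while $\cos^{s+\sigma+1}\varphi/\cos^{\sigma+1}\varphi=\cos^s\varphi$) to estimate
\[
\|f'g\|_{\mathcal{V}_{s+\sigma+1}}=\int_{-\pi/2}^{\pi/2}\cos^{s+\sigma+1}\varphi\int_0^\infty |f'(\rho e^{i\varphi})|\,|g(\rho e^{i\varphi})|\,d\rho\,d\varphi\le \Bigl(|g(\infty)|+\tfrac{2^\sigma}{\pi}\|g\|_{\mathcal{D}_{\sigma,0}}\Bigr)\|f\|_{\mathcal{D}_{s,0}},
\]
by inserting the bound for $|g|$ and splitting the $\varphi$-integral into the $|g(\infty)|$-piece and the $\|g\|_{\mathcal{D}_{\sigma,0}}$-piece, each of which is dominated by $\int_{-\pi/2}^{\pi/2}\cos^s\varphi\int_0^\infty|f'(\rho e^{i\varphi})|\,d\rho\,d\varphi=\|f\|_{\mathcal{D}_{s,0}}$ up to the displayed constants. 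Symmetrically $\|fg'\|_{\mathcal{V}_{s+\sigma+1}}\le\bigl(|f(\infty)|+\tfrac{2^s}{\pi}\|f\|_{\mathcal{D}_{s,0}}\bigr)\|g\|_{\mathcal{D}_{\sigma,0}}$. In particular $h'\in\mathcal{V}_{s+\sigma+1}$, so $h$ is holomorphic on $\C_+$ with $h'\in\mathcal{V}_{s+\sigma+1}$, i.e. $h\in\mathcal{D}_{s+\sigma+1}$, and its sectorial limit at infinity is $h(\infty)=f(\infty)g(\infty)$ (directly, or via Corollary \ref{Repr}).

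Finally I would assemble $\|h\|_{\mathcal{D}_{s+\sigma+1}}=|h(\infty)|+\|f'g+fg'\|_{\mathcal{V}_{s+\sigma+1}}$; adding the two estimates above to $|f(\infty)|\,|g(\infty)|$ and regrouping gives
\[
\|h\|_{\mathcal{D}_{s+\sigma+1}}\le\bigl(|f(\infty)|+\|f\|_{\mathcal{D}_{s,0}}\bigr)\bigl(|g(\infty)|+\|g\|_{\mathcal{D}_{\sigma,0}}\bigr)+\frac{2^s+2^\sigma}{\pi}\,\|f\|_{\mathcal{D}_{s,0}}\,\|g\|_{\mathcal{D}_{\sigma,0}},
\]
which is at most $\bigl(1+\tfrac{2^s+2^\sigma}{\pi}\bigr)\|f\|_{\mathcal{D}_s}\|g\|_{\mathcal{D}_\sigma}$, hence a fortiori at most the claimed $\bigl(2+\tfrac{2^s+2^\sigma}{\pi}\bigr)\|f\|_{\mathcal{D}_s}\|g\|_{\mathcal{D}_\sigma}$; if one prefers, one can instead use the cruder bound $|g(z)|\le\max\bigl(1,2^\sigma/(\pi\cos^{\sigma+1}\varphi)\bigr)\|g\|_{\mathcal{D}_\sigma}$ from Corollary \ref{Cangle} together with $\max(1,t)\le 1+t$ throughout and reach the stated constant directly. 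The algebra claim for $\mathcal{D}_\infty$ is then immediate: any $f,g\in\mathcal{D}_\infty$ lie in some $\mathcal{D}_s$, $\mathcal{D}_\sigma$, so $fg\in\mathcal{D}_{s+\sigma+1}\subset\mathcal{D}_\infty$, while closure under linear combinations follows from $\mathcal{D}_s\subset\mathcal{D}_\sigma$ for $s<\sigma$ (see \eqref{vst}).

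There is no deep obstacle here: the proof is essentially bookkeeping once one observes that the weight gain $\cos^{\sigma+1}\varphi$ in passing from $\mathcal{V}_s$ to $\mathcal{V}_{s+\sigma+1}$ exactly cancels the $\cos^{-(\sigma+1)}\varphi$ singularity in the pointwise estimate for $g$. The only points that need care are the use of $\sigma+1>0$ to control the $|g(\infty)|$-contribution by the $\cos^s\varphi$ weight, the (harmless) limit $\psi\downarrow|\varphi|$ in the pointwise estimates, and tracking the numerical constant.
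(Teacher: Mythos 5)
Your proof is correct and follows essentially the same route as the paper's: bound $|f|$ and $|g|$ pointwise via Corollary \ref{Cangle} (equivalently \eqref{qnrep} with \eqref{angle}), split $h'=f'g+fg'$, and let the extra weight $\cos^{\sigma+1}\varphi$ (resp.\ $\cos^{s+1}\varphi$) absorb the angular blow-up of the factor that is not differentiated. Your bookkeeping even yields the slightly sharper constant $1+(2^s+2^\sigma)/\pi$, which of course implies the stated bound.
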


\begin{proof}
By Corollary \ref{Cangle}, for $\rho>0$ and $|\varphi|< \pi/2$, we have
\begin{align*}
|f(\rho e^{i\varphi})| &\le \left(1+\frac{2^s}{\pi\cos^{s+1}\varphi}\right)\|f\|_{\mathcal{D}_s},\\
|g(\rho e^{i\varphi})| &\le \left(1+\frac{2^\sigma}{\pi\cos^{\sigma+1}\varphi}\right)\|g\|_{\mathcal{D}_\sigma}.
\end{align*}
Hence
\begin{align*}
\|h\|_{\mathcal{D}_{s+\sigma+1,0}} &= \int_{-\pi/2}^{\pi/2} \cos^{s+\sigma+1}\varphi \int_0^\infty \left| f'(\rho e^{i\varphi}) g(\rho e^{i\varphi}) + f(\rho e^{i\varphi}) g'(\rho e^{i\varphi})\right| \,d\rho\,d\varphi \\
&\le \left(1+\frac{2^\sigma}{\pi}\right)\|f\|_{\mathcal{D}_{s,0}}\,\|g\|_{\mathcal{D}_\sigma}
+\left(1+\frac{2^s}{\pi}\right)\|f\|_{\mathcal{D}_s}\,\|g\|_{\mathcal{D}_{\sigma,0}}.
\end{align*}
This shows that $h \in \mathcal{D}_q$, and \eqref{fgest} follows easily.
\end{proof}

\subsection{Derivatives of functions in $\D_s$}

This section further clarifies the behaviour of the derivatives of functions from $\mathcal D_s$, and Lemma \ref{D001} is of independent interest.  Corollary \ref{D00n} will be used in Section \ref{AnalF}.   For $m,n \in \N$, the notation $z^mf^{(n)}$ denotes the function mapping $z$ to $z^mf^{(n)}(z)$.  Moreover, $f_t$ is the function mapping $z$ to $f(tz)$ .

\begin{lemma}\label{D001}
Let $f\in \mathcal{D}_s$, $s>-1$. Then
$zf'\in \mathcal{D}_{s+1}$,   and there exists $C'_{s}$ (independent of $f$) such that
\begin{equation}\label{DerAnD}
\|zf'\|_{\mathcal{D}_{s+1}}\le  {C}'_{s}\|f\|_{\mathcal{D}_s}.
\end{equation}
and
\begin{equation} \label{DerAnE}
\|f_t-f_\tau\|_{\D_{s+1}} \le \frac{C'_s \|f\|_{\D_s}}{\min\{t,\tau\}} |t-\tau|, \qquad  t,\tau>0.
\end{equation}
\end{lemma}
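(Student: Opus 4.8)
The plan is to prove both estimates by relating everything back to the norm on $\mathcal{V}_s$ of $f'$ and the powers $r_\lambda^\gamma$ estimated in Example~\ref{resd}, rather than by manipulating $f$ directly. First I would establish \eqref{DerAnD}. Write $h(z) = zf'(z)$, so $h'(z) = f'(z) + zf''(z)$. The term $f'(z)$ contributes $\|f'\|_{\mathcal{V}_{s+1}} \le \|f'\|_{\mathcal{V}_s} = \|f\|_{\mathcal{D}_{s,0}}$ by \eqref{vst}, which is harmless. The real work is bounding $\|z f''(z)\|_{\mathcal{V}_{s+1}}$. The idea is to use the reproducing formula \eqref{qnrep}: since $f \in \mathcal{D}_s$ we have $f = f(\infty) + Q_s(f')$, hence $f' = (Q_s f')'$ and, differentiating the integral formula \eqref{qder} once more,
\[
f''(z) = (Q_s f')''(z) = -(s+1)(s+2)\frac{2^s}{\pi}\int_0^\infty \alpha^s \int_{-\infty}^\infty \frac{f'(\alpha+i\beta)}{(z+\alpha-i\beta)^{s+3}}\,d\beta\,d\alpha.
\]
Multiplying by $z$, taking absolute values, and using $|z|/|z+\alpha-i\beta| \le C_\psi$ on $\Sigma_\psi$ (from \eqref{Fb}), I would integrate $zf''$ against the measure $(\Re z)^{s+1}|z|^{-(s+2)}\,dS(z)$ defining $\|\cdot\|_{\mathcal{V}_{s+1}}$, swap the order of integration by Fubini, and recognise the inner integral over $z$ as a constant multiple of $\|r_{\lambda}^{s+2}\|_{\mathcal{D}_{s+1,0}}$ with $\lambda = \alpha - i\beta = t e^{-i\varphi}$ (after the substitution $z = \rho e^{i\psi}$), exactly as in the proof of Proposition~\ref{Furt}. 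Applying the first case of \eqref{rtl} (which requires $s+1 > (s+2)-1 > -1$, i.e. $s > -1$, so it is available) gives $\|r_{te^{-i\varphi}}^{s+2}\|_{\mathcal{D}_{s+1,0}} \le C_s t^{-(s+2)}$, and the remaining $t^{s+1}\cos^s\varphi$ weight assembles into $C_s \|f'\|_{\mathcal{V}_s}$. One also needs to check the $|h(\infty)|$ part of $\|h\|_{\mathcal{D}_{s+1}}$: by Lemma~\ref{Derin} applied with $k=2$ to $g = f' \in \mathcal{V}_s$ holomorphic, $z^2 f''(z) = z^2 (Q_s f')''(z) \to 0$, and more simply $zf'(z) \to 0$ sectorially (Lemma~\ref{Derin} with $k=1$), so $h(\infty) = 0$ and that term drops out.

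For \eqref{DerAnE}, I would reduce it to \eqref{DerAnD} via the fundamental theorem of calculus in the dilation parameter. For $t, \tau > 0$, write
\[
f(tz) - f(\tau z) = \int_\tau^t \frac{d}{du}\bigl(f(uz)\bigr)\,du = \int_\tau^t z f'(uz)\,du = \int_\tau^t \frac{1}{u}\,(wf'(w))\big|_{w = uz}\,du.
\]
Now $w \mapsto w f'(w)$ lies in $\mathcal{D}_{s+1}$ with norm $\le C_s\|f\|_{\mathcal{D}_s}$ by \eqref{DerAnD}, and by Lemma~\ref{leminv}(iii) its dilate $z \mapsto (uz)f'(uz)$ has the same $\mathcal{D}_{s+1}$-norm for every $u > 0$. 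Since point evaluations on $\mathcal{D}_{s+1}$ are continuous (Remark~\ref{remdr}) and $u \mapsto (uz)f'(uz)$ is continuous into $\mathcal{D}_{s+1}$, the integral converges as a Bochner integral in $\mathcal{D}_{s+1}$ (using the principle recalled in Section~\ref{prelims}), and
\[
\|f(tz)-f(\tau z)\|_{\mathcal{D}_{s+1}} \le \left|\int_\tau^t \frac{du}{u}\right| \cdot C_s\|f\|_{\mathcal{D}_s} = C_s\|f\|_{\mathcal{D}_s}\,\bigl|\log(t/\tau)\bigr|.
\]
Finally, $|\log(t/\tau)| \le |t-\tau|/\min\{t,\tau\}$ (immediate from $|\log x| \le |x-1|/\min\{1,x\}$ with $x = t/\tau$), which yields exactly \eqref{DerAnE}.

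The main obstacle is the first step: obtaining the bound $\|z f''(z)\|_{\mathcal{V}_{s+1}} \le C_s \|f'\|_{\mathcal{V}_s}$. The delicate points there are justifying that $f''$ is genuinely given by the twice-differentiated formula \eqref{qder} (this follows from differentiating under the integral sign, legitimate by the locally uniform convergence already used for \eqref{qder}), checking that the Fubini swap is valid (all integrands are nonnegative after taking absolute values, so Tonelli applies), and verifying that the inner $z$-integral really is a dilate/rotate of an $\|r_\lambda^{s+2}\|_{\mathcal{D}_{s+1,0}}$ integral so that the sharp bound \eqref{rtl} can be invoked with the correct exponents — in particular one must confirm one lands in the first (rather than second) case of \eqref{rtl}, so that no extra $\cos^{-k}\varphi$ singularity appears and the $\varphi$-integral of $\cos^s\varphi$ converges. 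Everything after that is routine bookkeeping of constants.
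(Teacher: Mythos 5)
Your overall strategy is the paper's: bound $\|zf''\|_{\mathcal{V}_{s+1}}$ by feeding the differentiated reproducing formula into Lemma \ref{D0012}, and get \eqref{DerAnE} by integrating the dilation derivative and using scale invariance. Two points need repair, though neither is fatal.

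First, your case-check for \eqref{rtl} is arithmetically wrong. You need $\|r_\lambda^{\gamma}\|_{\D_{s+1,0}}$ with $\gamma=s+2$, and the first case of \eqref{rtl} requires the space index to exceed $\gamma-1$ strictly; here the space index is $s+1=\gamma-1$ exactly, so the condition reads $s+1>s+1$ and is \emph{not} "$s>-1$" as you claim. You land on the boundary between the two cases. The conclusion you want still holds, because in Lemma \ref{D0012} the boundary case $2\beta-s-2=0$ is covered by part (a) (or (b)) with exponent zero on $\cos\varphi$, so no singularity appears — but you must invoke Lemma \ref{D0012} directly rather than \eqref{rtl} as stated. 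Relatedly, after Fubini the inner $z$-integral carries an extra factor $|z|$ compared with $\|r_\lambda^{s+2}\|_{\D_{s+1,0}}$ (the measure weight is $(\Re z)^{s+1}|z|^{-(s+2)}$ but you are integrating $|zf''|$, not $|f''|$), so it is not literally that norm; after the substitution it corresponds to $h(u)=u+u^{s}$ in \eqref{defG}, not $h(u)=1+u^{s+1}$. The paper avoids both wrinkles by differentiating $Q_\sigma$ for a strictly larger $\sigma>s$ and applying Lemma \ref{D0012}(b) with exponent $\sigma-s>0$.

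Second, in \eqref{DerAnE} you treat $\int_\tau^t u^{-1}\,(wf'(w))|_{w=uz}\,du$ as a Bochner integral in $\D_{s+1}$, which requires continuity (or at least strong measurability) of $u\mapsto (u\cdot)f'(u\cdot)$ as a $\D_{s+1}$-valued map. That is not available at this stage: strong continuity of dilations on $\D_s$ is only obtained later, via the density results of Section \ref{density}, which in turn rely on this lemma's consequences. The paper's proof sidesteps this entirely by writing $g'(z)=\int_t^\tau \frac{d}{dz}(zf'(rz))\,dr$ pointwise and then applying Tonelli to the resulting double integral over $\C_+\times[t,\tau]$; you should do the same. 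Your final inequality $|\log(t/\tau)|\le |t-\tau|/\min\{t,\tau\}$ is fine and gives the stated bound.
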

 
\begin{proof}
Note that 
\[
\|zf'\|_{\mathcal{D}_{s+1}}\le \|f'\|_{\mathcal{V}_{s+1}}+\|zf''\|_{\mathcal{V}_{s+1}}
\le \|f\|_{\mathcal{D}_{s}}+\|zf''\|_{\mathcal{V}_{s+1}}.
\]
So, for (\ref{DerAnD}), it suffices to consider $\|zf^{''}\|_{\mathcal V_{s+1}}$.
The argument is similar to Example \ref{resd} and the proof of Proposition $\ref{Furt}$.
By Corollary \ref{Repr},
 for fixed $\sigma>s$, 
\[
f''(z)=-c_\sigma
\int_{-\pi/2}^{\pi/2} \cos^\sigma \varphi \int_0^\infty \frac{\rho^{\sigma+1} f'(\rho e^{i\varphi})}{(z+\rho e^{-i\varphi})^{\sigma+3}}\,d\rho\,d\varphi,\quad z\in \C_{+},
\]
where $c_\sigma=(\sigma+1)(\sigma+2)\frac{2^\sigma}{\pi}$.
Then similar estimates to those in Example \ref{resd} and Proposition \ref{Furt} give
\begin{align*}
\lefteqn{c_\sigma^{-1}\|zf''\|_{\mathcal{V}_{s+1}}}\\
&=\int_{-\pi/2}^{\pi/2}\cos^{s+1}\psi \int_0^\infty r
\left|
\int_{-\pi/2}^{\pi/2} \cos^\sigma\varphi \int_0^\infty 
\frac{\rho^{\sigma+1}f'(\rho e^{i\varphi})}{(r e^{i\psi}+\rho e^{-i\varphi})^{\sigma+3}}\,d\rho\,d\varphi\right|
\,dr\,d\psi\\
&\le \int_{-\pi/2}^{\pi/2} \cos^\sigma \varphi \int_{-\pi/2}^{\pi/2} \cos^{s+1}\psi
\int_{0}^\infty\int_0^\infty \frac{t |f'(\rho e^{i\varphi})|\,dt\,d\rho}{(t^2+2t\cos(\varphi+\psi)+1)^{(\sigma+3)/2}} 
\,d\psi\,d\varphi \\
&= \int_{-\pi/2}^{\pi/2} \cos^{\sigma-s}\varphi \, G_{h,\beta,s+1}(\varphi)
\int_0^\infty  |f'(\rho e^{i\varphi})|
\,d\rho\,d\varphi,
\end{align*}
where $h(t)=t+t^\sigma, \,  t\in (0,1)$, $\beta = (\sigma+3)/2$ and $G_{h,\beta,s+1}(\varphi)$ is defined in \eqref{defG}, noting that $2\beta - (s+1)-2 = \sigma-s$.
Now the estimate (\ref{DerAnD}) follows from Lemma \ref{D0012}(b).

For \eqref{DerAnE}, let $g = f_\tau - f_t$.  Without loss, assume that $0 < t < \tau$.  Then
\[
g'(z) = \tau f'(\tau z) - t f'(tz) = \int_t^\tau \left(f'(rz) + rz f''(rz)\right)  \, dr = \int_t^\tau \frac{d}{dz} (zf'(rz)) \,dr.
\]
Hence, by Fubini's theorem,
\begin{align*}
\|g\|_{\D_{s+1}} &\le \int_{\C_+} \int_t^\tau \frac{(\Re z)^{s+1}}{|z|^{s+2}} \left|\frac{d}{dz} (zf'(rz))\right|\,dr \,dS(z)\\
&= \int_t^\tau \frac{\|zf'\|_{\D_{s+1}}}{r} \,dr \le \frac{C'_s\|f\|_{\D_s}}{t} (\tau-t),
\end{align*}
since the $\D_{s+1}$-norm is invariant under the change of variable $z\mapsto rz$ (Lemma \ref{leminv}(iii)).
\end{proof}

The following corollary is easily proved by induction.

\begin{cor} \label{D00n}
If $f \in \D_\infty$ and $n \in \N$, then $z^nf^{(n)} \in \D_\infty$.
\end{cor}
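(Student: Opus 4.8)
Corollary \ref{D00n} asserts that if $f \in \D_\infty$ and $n \in \N$, then $z^n f^{(n)} \in \D_\infty$. The plan is to argue by induction on $n$, using Lemma \ref{D001} as the engine.

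\emph{Base case.} For $n=1$ the claim is exactly the content of Lemma \ref{D001}: if $f \in \D_s$ for some $s>-1$, then $zf' \in \D_{s+1} \subset \D_\infty$, with the norm estimate \eqref{DerAnD}. Since $f \in \D_\infty$ means $f \in \D_s$ for \emph{some} $s>-1$, this gives $zf' \in \D_\infty$.

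\emph{Inductive step.} Suppose the statement holds for $n$, i.e.\ $z^n f^{(n)} \in \D_\infty$ whenever $f \in \D_\infty$. Let $f \in \D_\infty$ and set $g(z) := z^n f^{(n)}(z)$, so $g \in \D_s$ for some $s>-1$ by the inductive hypothesis. By Lemma \ref{D001} applied to $g$, we have $zg' \in \D_{s+1} \subset \D_\infty$. Now compute
\[
zg'(z) = z\left(n z^{n-1} f^{(n)}(z) + z^n f^{(n+1)}(z)\right) = n z^n f^{(n)}(z) + z^{n+1} f^{(n+1)}(z) = n g(z) + z^{n+1} f^{(n+1)}(z).
\]
Hence $z^{n+1} f^{(n+1)}(z) = zg'(z) - n g(z)$, which is a difference of two elements of $\D_\infty$ (note $\D_\infty$ is closed under linear combinations since the $\D_s$ increase with $s$ and are linear spaces). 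Therefore $z^{n+1} f^{(n+1)} \in \D_\infty$, completing the induction.

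\emph{Main obstacle.} There is no serious obstacle here: the only subtlety is the bookkeeping of which index $s$ one lands in at each stage (each application of Lemma \ref{D001} raises the index by one, so after $n$ steps one is in $\D_{s_0+n}$ for the initial index $s_0$), and the observation that $\D_\infty$, being an increasing union of the linear spaces $\D_s$, is itself a linear space, so that the identity $z^{n+1}f^{(n+1)} = zg' - ng$ indeed stays inside $\D_\infty$. Both points are routine, which is why the corollary is stated as "easily proved by induction."
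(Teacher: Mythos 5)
Your proof is correct and is precisely the induction the paper has in mind (the paper only remarks that the corollary "is easily proved by induction" from Lemma \ref{D001}, without writing it out). The base case, the identity $z^{n+1}f^{(n+1)} = zg' - ng$ with $g = z^nf^{(n)}$, and the observation that each step lands in $\D_{s+1} \supset \D_s$ are exactly the intended argument.
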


\begin{remark}
Lemma \ref{D001} is sharp in the sense that for any $s>0$ and $\sigma \in (-1,s+1)$, there exist functions $f \in \D_s$ for which $zf' \notin \D_\sigma$.  For example, the function $f_\nu(z) := z^\nu e^{-z}$ has these properties if $\max\{0,\sigma-1\}<\nu< s$.   This follows directly from Example \ref{dm}.   
\end{remark}

\section{Hardy-Sobolev algebras on sectors}\label{hardy-sobolev}

\subsection{$H^p$-spaces on the right half-plane and their norms}

\label{hardy_rays}

In this section and in Section \ref{hardy_sec} we will study the Hardy spaces $H^1(\Sigma_\psi)$ defined on sectors $\Sigma_\psi, \, \psi \in (0,\pi)$.
The properties of such spaces 
 are similar to the properties of the classical Hardy space $H^1(\C_+)$, 
though their theory seems to be more involved. The Hardy spaces $H^p(\Sigma_\psi)$ have been studied, mostly for $p > 1$, 
but the results are scattered around
various places in the literature, which is often obscure, and some proofs contain rather complicated,
incomplete or vague arguments. We propose a streamlined
(and probably new) approach avoiding the use of Carleson measures or log-convexity, 
and we obtain a new result (Corollary \ref{HHH}) on the way. 
The case $p = 1$
does not require any significant adjustments as we illustrate below.  Standard references for the theory of Hardy spaces on the right half-plane are \cite{Duren1} and \cite{Garnett}.

We set out the situation  when $\psi=\pi/2$ in this section, and the case of general $\psi$ in Section \ref{hardy_sec}.
Although we are mainly interested
in $H^1$-spaces, we present statements
which are valid for $H^p$-spaces with $p \in [1, \infty)$, since the arguments are the
same for all such $p$.

Let $1\le p<\infty$. The classical Hardy space $H^p(\C_{+})$ in the right half-plane $\C_+$ is defined as
\[
H^p(\C_{+})=\left \{g\in \mbox{Hol}(\C_{+}): \|g\|_{p}:=\sup_{\a>0}\,
\left(\int_{-\infty}^\infty |g(\a+i\b)|^p\,d\b\right)^{1/p}<\infty \right \}.
\]
It is well-known that $\|\cdot\|_p$ is a norm on $H^p(\mathbb C_+)$ and  $(H^p(\C_+),\|\cdot\|_{p})$ is a Banach space.
Moreover, for almost every $t \in \mathbb R$ there exists a sectorial limit
$g(it):=\lim_{z \to it} g(z)$ in $\C$. For every $g \in H^p(\C_+)$ one has $\lim_{\a \to 0} g(\a+i\cdot)= g(i\cdot)$ in $L^p(\mathbb R)$,
and $\|g\|_{H^p(\C_+)} := \|g\|_p = \|g(i\cdot)\|_{L^p(\R)}$.

One may also consider the normed space $(H^{p}(\Sigma_{\pi/2}), \|\cdot\|_{H^p(\Sigma_{\pi/2})})$ as the space of all 
$g\in \Hol(\C_{+})$ such that
\[\|g\|_{H^p(\Sigma_{\pi/2})}:=\sup_{|\varphi|<\pi/2}\,
\left(\int_0^\infty \left(|g(te^{i\varphi})|^p+
|g(te^{-i\varphi})|^p\right)\,dt\right)^{1/p}<\infty.
\]
and $(H^{p}_{*}(\Sigma_{\pi/2}), \|\cdot\|_{H^p_{*}(\Sigma_{\pi/2})})$ as
\[
\left \{g\in \Hol(\C_{+}):
\|g\|_{H^p_*(\Sigma_{\pi/2})}:=\sup_{|\varphi|<\pi/2}\left(\int_0^\infty |g(te^{i\varphi})|^p\,dt\right)^{1/p} < \infty \right\}.
\]
It is clear that $H^p(\Sigma_{\pi/2})$ and $H^p_*(\Sigma_{\pi/2})$ coincide as vector spaces, and
\[
\|g\|_{H^p_*(\Sigma_{\pi/2})} \le \|g\|_{H^p(\Sigma_{\pi/2})}\le 2^{1/p}\|g\|_{H^p_*(\Sigma_{\pi/2})}.
\]

\begin{lemma}\label{11L1A}
Let $p\in [1,\infty)$.
Then $H^{p}(\Sigma_{\pi/2})\subset H^{p}(\C_{+})$ and
\begin{equation}\label{RelAB}
\|g\|_{p}\le \|g\|_{H^{p}(\Sigma_{\pi/2})}, \qquad g\in H^{p}(\Sigma_{\pi/2}).
\end{equation}
\end{lemma}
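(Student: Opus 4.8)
The plan is to identify $g$ with the Poisson integral of a boundary function on $i\R$ whose $L^p(\R)$-norm is controlled by $\|g\|_{H^p(\Sigma_{\pi/2})}$; since the norm of a function in $H^p(\C_+)$ equals the $L^p(\R)$-norm of its boundary function, this gives the inclusion and the inequality simultaneously. The one structural fact used at the outset is that $|g|^p$ is subharmonic on $\C_+$, because $g\in\Hol(\C_+)$ and $p\ge 1$.

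First I would prove decay estimates. Integrating the hypothesis $\int_0^\infty(|g(te^{i\varphi})|^p+|g(te^{-i\varphi})|^p)\,dt\le\|g\|_{H^p(\Sigma_{\pi/2})}^p$ over $\varphi\in(0,\psi)$, $\psi<\pi/2$, and passing to polar coordinates, one gets $\int_{\Sigma_\psi}|g(w)|^p|w|^{-1}\,dS(w)\le\psi\|g\|_{H^p(\Sigma_{\pi/2})}^p$. Applying the sub-mean-value inequality for $|g|^p$ over a disc $D(z,c|z|)$ with $c=c(\psi)\in(0,1)$ small enough that $D(z,c|z|)\subset\Sigma_{(\psi+\pi/2)/2}$ whenever $z\in\Sigma_\psi$, one obtains $|g(z)|\le C_\psi|z|^{-1/p}\|g\|_{H^p(\Sigma_{\pi/2})}$ on each $\Sigma_\psi$; in particular $g$ has sectorial limit $0$ at infinity. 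Because $H^p(\Sigma_{\pi/2})$ is invariant, with equal norm, under $g\mapsto z^{-2/p}g(1/z)$, the analogous estimate holds in a punctured neighbourhood of $0$, and standard arguments then place $g$ in the Smirnov class $N^+(\C_+)$.

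Next I would produce the boundary function. For each fixed $\varphi<\pi/2$ the restriction $g|_{\Sigma_\varphi}$, transplanted to $\C_+$ via the conformal power map together with the appropriate outer weight, is an $H^p(\C_+)$-function, so $g$ has nontangential limits $g(iy)$ for almost every $y\in\R$. Since the point $te^{i\varphi}$ tends to $it$ \emph{along the circle} $|z|=t$, that is, nontangentially to $i\R$, we have $g(te^{\pm i\varphi})\to g(\pm it)$ as $\varphi\uparrow\pi/2$ for almost every $t>0$, so Fatou's lemma shows that $\int_\R|g(iy)|^p\,dy$ is at most $\liminf_{\varphi\uparrow\pi/2}\int_0^\infty(|g(te^{i\varphi})|^p+|g(te^{-i\varphi})|^p)\,dt$, hence at most $\|g\|_{H^p(\Sigma_{\pi/2})}^p$. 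Thus $g(i\cdot)\in L^p(\R)$ with $\|g(i\cdot)\|_{L^p(\R)}\le\|g\|_{H^p(\Sigma_{\pi/2})}$. Finally, a function in $N^+(\C_+)$ whose boundary function lies in $L^p(\R)$ coincides with the Poisson (equivalently Cauchy) integral of that boundary function and therefore belongs to $H^p(\C_+)$ with $\|g\|_p=\|g(i\cdot)\|_{L^p(\R)}$; combining this with the previous bound gives $g\in H^p(\C_+)$ and $\|g\|_p\le\|g\|_{H^p(\Sigma_{\pi/2})}$.

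The step I expect to be the main obstacle is controlling $g$ near the imaginary axis: the decay bound from the first step degenerates as $\psi\uparrow\pi/2$ and is by itself too weak to force $g\in H^p(\C_+)$ (a bound $|g(z)|\lesssim|z|^{-1/p}$ near $i\R$ does not make $\beta\mapsto|g(\alpha+i\beta)|^p$ integrable), so one must genuinely exploit the fact that the ray integrals $\int_0^\infty|g(te^{i\varphi})|^p\,dt$ are bounded uniformly in $\varphi$, in order to exclude a singular inner factor and obtain the Poisson representation. This is also why the endpoint $p=1$ needs no separate treatment: subharmonicity, Fatou's lemma, and the Smirnov/Poisson identification are all insensitive to the value of $p\in[1,\infty)$.
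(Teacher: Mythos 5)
Your architecture (boundary function plus a Smirnov-type identification) is reasonable in outline, but there is a genuine gap at its load-bearing step: you never prove that $g$ belongs to $N^+(\C_+)$, equivalently that $g$ is the Poisson/Cauchy integral of its boundary values. The decay estimates you obtain from subharmonicity, $|g(z)|\le C_\psi |z|^{-1/p}\|g\|_{H^p(\Sigma_{\pi/2})}$ on $\Sigma_\psi$, have constants that blow up as $\psi\uparrow\pi/2$ and give no control near $i\R$, so membership in the Nevanlinna or Smirnov class of $\C_+$ cannot be read off from them; your own closing paragraph concedes exactly this and then defers the needed argument (``one must genuinely exploit the ray-integral bounds to exclude a singular inner factor'') without supplying it. Everything downstream --- the existence of nontangential limits on $i\R$, the Fatou bound on $\|g(i\cdot)\|_{L^p(\R)}$, and the final implication $N^+\cap L^p(i\R)\Rightarrow H^p$ --- hinges on this unproved claim, so the proof is incomplete at essentially the point where the lemma's real content lies. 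A smaller but related slip: transplanting $g|_{\Sigma_\varphi}$ to $\C_+$ for a fixed $\varphi<\pi/2$ only yields boundary values of $g$ on the rays $\arg z=\pm\varphi$, not on $i\R$, so that sentence does not deliver the nontangential limits $g(iy)$ you invoke; those too must come from the missing $N^+$ membership.

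For comparison, the paper closes precisely this gap by a change of variable rather than by Smirnov theory: it sets $g_\gamma(z)=(\gamma z^{\gamma-1})^{1/p}\,g(z^\gamma)$ for $\gamma\in(1/2,1)$, which is holomorphic on the strictly larger sector $\Sigma_{\pi/(2\gamma)}$ --- hence across $i\R\setminus\{0\}$ --- and whose ray integrals over $\C_+$ coincide with those of $g$ over the compressed angles. One then runs Cauchy's formula with the kernel $\a/\bigl(\pi i(\l-z)(\l+\overline z)\bigr)$ over annuli $t_{1,n}<|z|<t_{2,n}$, selects the radii by a Fubini/Chebyshev argument so that the arc contributions vanish, and obtains the Poisson formula for $g_\gamma$ directly; Young's inequality bounds $\|g_\gamma(\a+i\cdot)\|_{L^p(\R)}$ by $\|g\|_{H^p(\Sigma_{\pi/2})}$ for every $\a>0$, and Fatou's lemma as $\gamma\to1-$ gives the stated inequality. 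If you want to rescue your route, the cleanest fix is to borrow this device and establish the Poisson representation for $g_\gamma$ first, rather than attempting to verify $g\in N^+(\C_+)$ head-on.
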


\begin{proof}
Fix $p \in [1,\infty)$. 
For fixed
$\gamma\in (1/2,1)$ define
\[
g_\gamma(z):=(\gamma z^{\gamma-1})^{1/p}g(z^\gamma)\in \Hol(\Sigma_{\pi/(2\gamma)}).
\]
Note that
\begin{equation}\label{Al11}
\int_0^\infty |g_\gamma(te^{i\varphi})|^p\,dt=
\int_0^\infty |g(te^{i\gamma\varphi})|^p\,dt, 
\end{equation}
and
\[
\int_{-\pi/2}^{\pi/2} \int_0^\infty |g_\gamma(te^{i\varphi})|^p\,dt \, d\varphi\le \frac{\pi}{2} \|g\|^p_{{H^{p}(\Sigma_{\pi/2})}}.
\]
By Fubini's theorem and H\"older's inequality there exist sequences
$(t_{1,n})_{n\ge 1}$ and $(t_{2,n})_{n\ge 1}$  such that $0<t_{1,n}<t_{2,n}$,
$t_{1,n}\to0$, $t_{2,n}\to \infty$ as $n\to\infty$, and
\begin{align}\label{seq112}
\lim_{n\to\infty}\, t_{1,n} \int_{-\pi/2}^{\pi/2}  |g_\gamma (t_{1,n}e^{i\varphi})|\,d\varphi&=0, \\
\lim_{n\to\infty}\,\int_{-\pi/2}^{\pi/2} |g_\gamma (t_{2,n}e^{i\varphi})|\,d\varphi&=0.  \label{seq113}
\end{align}

Let $\Omega_n:=\{z\in \C_{+}:\, t_{1,n}<|z|<t_{2,n}\}$.  By Cauchy's formula,
\begin{equation}\label{LL112}
g_\gamma(z)=\frac{\a}{\pi i}\int_{\partial\Omega_n}\frac{g_\gamma(\l) }{(\l-z)(\l+\overline{z})} \,d\l,\quad z= \a+i\b \in \Omega_n,
\end{equation}
for large $n$.  Passing to the limit in \eqref{LL112} as $n \to \infty$, and using (\ref{Al11}), (\ref{seq112}) and (\ref{seq113}),
we infer that $g_\gamma$ satisfies the Poisson formula
\begin{equation}\label{Puas2}
g_\gamma(z)=\frac{\a}{\pi}\int_{-\infty}^\infty
\frac{g_\gamma(it)}{(t-\b)^2+\a^2}\,dt.
\end{equation}
Hence, by  Young's inequality and (\ref{Al11}), for every $\a>0$,
\[
\|g_\gamma(\a+i\cdot)\|_{L^p(\R)}\le \|g_\gamma(i\cdot)\|_{L^p(\mathbb R)}\le \|g\|_{H^{p}(\Sigma_{\pi/2})}.
\]
Letting $\gamma\to 1$,  Fatou's Lemma implies  (\ref{RelAB}).
\end{proof}

\begin{lemma}\label{11L2A}
Let  $p\in [1,\infty)$.
Then $H^{p}(\C_{+})\subset H^{p}(\Sigma_{\pi/2})$ and
\begin{equation}\label{RelABC}
\|g\|_{H^{p}(\Sigma_{\pi/2})}\le \|g\|_{p}, \qquad g\in H^{p}(\C_{+}).
\end{equation}
\end{lemma}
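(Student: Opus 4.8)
The plan is to reduce the statement to the Hilbert space case $p=2$ and then to a norm estimate for an explicit Hilbert-type integral operator, thereby avoiding Carleson measures and convexity theorems.

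First, replacing $g$ by the outer part $g_o$ of its inner--outer factorisation, one may assume that $g$ has no zeros in $\C_{+}$: indeed $g_o\in H^p(\C_{+})$ is zero-free with $|g_o|\ge|g|$ on $\C_{+}$ and $\|g_o\|_p=\|g(i\cdot)\|_{L^p(\R)}=\|g\|_p$, so an upper bound for the ray integrals of $|g_o|^p$ also bounds those of $|g|^p$. Since $\C_{+}$ is simply connected, the function $h:=g^{p/2}$ is then a well-defined element of $\Hol(\C_{+})$ with $|h|^2=|g|^p$, hence $h\in H^2(\C_{+})$ and $\|h\|_2^2=\|g\|_p^p$. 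It therefore suffices to show
\[
\int_0^\infty\bigl(|h(te^{i\varphi})|^2+|h(te^{-i\varphi})|^2\bigr)\,dt\le\|h\|_2^2
\]
for every $h\in H^2(\C_{+})$ and every $\varphi\in(0,\pi/2)$; taking the supremum over $\varphi$ and recalling $|h|^2=|g|^p$ then gives both $g\in H^p(\Sigma_{\pi/2})$ and \eqref{RelABC}.

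By the Paley--Wiener theorem, $h=\mathcal L\phi$ with $\phi\in L^2(\R_{+})$ and $\|h\|_2^2=2\pi\|\phi\|_{L^2(\R_{+})}^2$. For $0<\varphi<\pi/2$ one has $h(te^{i\varphi})=\int_0^\infty e^{-te^{i\varphi}s}\phi(s)\,ds$, and since $\Re(e^{i\varphi}s+e^{-i\varphi}u)=(s+u)\cos\varphi>0$, an application of Fubini's theorem (legitimate because $\int_0^\infty\int_0^\infty\frac{|\phi(s)||\phi(u)|}{s+u}\,ds\,du\le\pi\|\phi\|_2^2$ by Hilbert's inequality) yields
\[
\int_0^\infty|h(te^{i\varphi})|^2\,dt=\int_0^\infty\int_0^\infty\frac{\phi(s)\overline{\phi(u)}}{e^{i\varphi}s+e^{-i\varphi}u}\,ds\,du.
\]
Adding the identity obtained by replacing $\varphi$ with $-\varphi$, the two kernels combine to the nonnegative symmetric kernel
\[
\tilde K_\varphi(s,u)=\frac{2\cos\varphi\,(s+u)}{s^2+2\cos(2\varphi)\,su+u^2},\qquad s,u>0,
\]
so that $\int_0^\infty\bigl(|h(te^{i\varphi})|^2+|h(te^{-i\varphi})|^2\bigr)\,dt=\langle \tilde T_\varphi\phi,\phi\rangle$, where $\tilde T_\varphi$ is the integral operator on $L^2(\R_{+})$ with kernel $\tilde K_\varphi$.

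It remains to prove $\|\tilde T_\varphi\|\le 2\pi$, which I would do by the Schur test with the weight $s\mapsto s^{-1/2}$: the substitution $u=s\lambda$ gives
\[
\int_0^\infty\tilde K_\varphi(s,u)u^{-1/2}\,du=2\cos\varphi\,s^{-1/2}\int_0^\infty\frac{(1+\lambda)\lambda^{-1/2}}{\lambda^2+2\cos(2\varphi)\lambda+1}\,d\lambda=2\pi\,s^{-1/2},
\]
the last equality following from the classical integral $\int_0^\infty\frac{x^{\mu-1}}{x^2+2x\cos\theta+1}\,dx=\frac{\pi\sin((1-\mu)\theta)}{\sin\theta\,\sin\mu\pi}$ used with $\theta=2\varphi$ and $\mu=\tfrac12,\tfrac32$ (each of the two terms equalling $\pi/(2\cos\varphi)$). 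Hence $\langle\tilde T_\varphi\phi,\phi\rangle\le 2\pi\|\phi\|_2^2=\|h\|_2^2$, as required; the value $\varphi=0$ is the limiting kernel $2/(s+u)$, covered directly by Hilbert's inequality. The real obstacle here is conceptual rather than computational: the naive pointwise (Poisson kernel) bound for $|g(te^{i\varphi})|^p$ is not integrable in $t$ along a ray, so cancellation must be exploited, and finding the route that captures the sharp constant --- passing to $h\in H^2(\C_{+})$ and matching the Schur bound $2\pi$ with the Plancherel constant --- is the main point.
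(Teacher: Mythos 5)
Your proof is correct, and for the crucial $p=2$ step it takes a genuinely different route from the paper. The reduction of general $p$ to $p=2$ is the same in spirit: the paper factors out the Blaschke product and raises the zero-free factor to the power $p/2$, while you factor out the full inner part; both reductions are legitimate (yours is marginally cleaner, since the paper's display at that point really asserts an inequality $|g|\le|\tilde g|$ rather than the equality as written). Where you diverge is the half-plane $H^2$ estimate itself: the paper invokes Djrbashian's representation $\int_0^\infty |g(te^{i\varphi})|^2\,dt=\int_{\R}e^{2\varphi t}f(t)\,dt$ with a nonnegative density $f$, so that adding the $\pm\varphi$ rays produces $2\int\cosh(2\varphi t)f(t)\,dt$, which is monotone in $|\varphi|$ and maximal at $\varphi=\pi/2$. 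You instead make the quadratic form explicit via Paley--Wiener, obtaining the symmetric nonnegative kernel $\tilde K_\varphi(s,u)=2\cos\varphi\,(s+u)/(s^2+2\cos(2\varphi)su+u^2)$, and bound its operator norm by $2\pi$ with a Schur test against $s^{-1/2}$; matching this with the Plancherel constant gives exactly \eqref{RelABC}. Your argument is self-contained (no appeal to Djrbashian) and recovers the sharp constant; what it does not give, and the paper's monotone representation does, is the monotonicity in $\varphi$ of the ray integrals, which the paper reuses later (e.g.\ in \eqref{Afs}) — though that is not needed for this lemma, and in any case your kernel computation is essentially a proof of the Djrbashian representation in disguise. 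Two harmless imprecisions: the dominating kernel for Fubini is $((s+u)\cos\varphi)^{-1}$, not $(s+u)^{-1}$, so the majorant is $\pi\|\phi\|_2^2/\cos\varphi$ (still finite for each fixed $\varphi<\pi/2$, which is all you need); and one should note explicitly that the classical Mellin integral you quote requires $2\varphi\in(0,\pi)$, with $\varphi=0$ covered, as you say, by Hilbert's inequality directly.
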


\begin{proof}
First let $p=2$ so that $g\in H^2(\C_{+})$. Then, by
\cite[Ch.VIII, p.508]{Djrbashian},
there exists
$f\in L^1(\R, e^{\pi |t|}\,dt)$ such that $f \ge 0$ on $\mathbb R$, and for all  $|\varphi|\le \pi/2$,
\[
\int_0^\infty |g(te^{i\varphi})|^2\,dt=
\int_{-\infty}^\infty e^{2\varphi t} f(t)\,dt.
\]
Hence
\begin{align*}
\int_0^\infty \bigl(|g(te^{i\varphi})|^2+|g(t e^{-i\varphi})|^2\bigr)\,dt&=
2\int_{-\infty}^\infty \cosh(2\varphi t)f(t)\,dt \\
\le 2\int_{-\infty}^\infty \cosh(\pi t)f(t)\,dt&=\int_0^\infty \bigl(|g(it)|^2+|g(-it)|^2 \bigr)\,dt,\notag
\end{align*}
and (\ref{RelAB}) holds for $p=2$.

Let $p\in [1,\infty), \, p\not=2$, be fixed, and $g\in H^p(\C_{+})$. Then
$g(z)=B(z)\tilde g(z)$, $z \in \mathbb C_+$, where $B$ is the Blaschke product associated with $g$ and $\tilde g$ has no zeros in $\C_+$.  Then there is a well-defined holomorphic function  $g_p(z)=[\tilde g (z)]^{p/2}$ on $\C_+$ and $g_p \in H^2(\C_{+})$.  Using  (\ref{RelAB})
for $p=2$,
for all $|\varphi|<\pi/2$, we have
\begin{gather} 
\int_0^\infty \bigl(|g(te^{i\varphi})|^p+|g(te^{-i\varphi})|^p\bigr)\,dt
= \int_0^\infty \bigl(|g_p(te^{i\varphi})|^2+|g_p(te^{-i\varphi})|^2\bigr)\,dt\label{RRD} \\
\le \int_{-\infty}^\infty
|g_p(it)|^2\,dt=\int_{-\infty}^\infty
|g(it)|^p\,dt=\|g\|_p^p, \notag
\end{gather}
and (\ref{RelABC}) follows.
\end{proof}

Lemmas \ref{11L1A} and \ref{11L2A} imply the next statement.

\begin{cor}\label{HHH}
Let $p\in [1,\infty)$.
Then $H^{p}(\Sigma_{\pi/2})=H^{p}(\C_+)$, and for every $g\in H^{p}(\C_{+})$,
\begin{equation}\label{n_equality}
\|g\|_{H^{p}(\Sigma_{\pi/2})}=\|g\|_{H^{p}(\C_+)},
\end{equation}
and
\begin{equation}\label{Lux11}
\|g\|_{H^{p}_{*}(\Sigma_{\pi/2})}\le \|g\|_{H^p(\C_+)}\le 2^{1/p}\|g\|_{H^{p}_{*}(\Sigma_{\pi/2})}.
\end{equation}
\end{cor}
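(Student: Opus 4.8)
The plan is to deduce Corollary \ref{HHH} almost immediately from the two preceding lemmas and the elementary comparison between the $H^p$ and $H^p_*$ norms on the sector. First I would note that Lemma \ref{11L1A} gives the set-theoretic inclusion $H^p(\Sigma_{\pi/2}) \subset H^p(\C_+)$ together with $\|g\|_p \le \|g\|_{H^p(\Sigma_{\pi/2})}$, while Lemma \ref{11L2A} gives the reverse inclusion $H^p(\C_+) \subset H^p(\Sigma_{\pi/2})$ together with $\|g\|_{H^p(\Sigma_{\pi/2})} \le \|g\|_p$. Combining these two chains of inequalities forces $H^p(\Sigma_{\pi/2}) = H^p(\C_+)$ as vector spaces and $\|g\|_{H^p(\Sigma_{\pi/2})} = \|g\|_p = \|g\|_{H^p(\C_+)}$ for every $g$ in the common space, which is precisely \eqref{n_equality}.

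For \eqref{Lux11}, I would invoke the elementary two-sided comparison
\[
\|g\|_{H^p_*(\Sigma_{\pi/2})} \le \|g\|_{H^p(\Sigma_{\pi/2})} \le 2^{1/p}\|g\|_{H^p_*(\Sigma_{\pi/2})},
\]
which was already recorded in the text just before Lemma \ref{11L1A} (it follows at once from the definitions, since the integrand defining the $H^p$-norm on the sector is the sum of the integrand for $H^p_*$ at $\varphi$ and at $-\varphi$, each of which is dominated by the supremum over $|\varphi|<\pi/2$, giving the factor $2$ inside the $p$-th power and hence $2^{1/p}$ after taking the $p$-th root). Substituting the already-established identity $\|g\|_{H^p(\Sigma_{\pi/2})} = \|g\|_{H^p(\C_+)}$ into this chain yields \eqref{Lux11} directly.

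There is essentially no obstacle here: the corollary is a purely formal consequence of Lemmas \ref{11L1A} and \ref{11L2A} and the definitional $H^p$ versus $H^p_*$ comparison. All of the genuine analytic content — the Poisson representation obtained by a contour-integral limiting argument in Lemma \ref{11L1A}, and the Djrbashian-type integral identity combined with Blaschke factorization in Lemma \ref{11L2A} — has already been carried out. The proof of the corollary itself is a two- or three-line string of inequalities, so I would simply write: "Lemmas \ref{11L1A} and \ref{11L2A} together give \eqref{n_equality}, and \eqref{Lux11} then follows from the inequalities $\|g\|_{H^p_*(\Sigma_{\pi/2})} \le \|g\|_{H^p(\Sigma_{\pi/2})}\le 2^{1/p}\|g\|_{H^p_*(\Sigma_{\pi/2})}$ recorded above."
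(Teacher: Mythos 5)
Your proposal is correct and coincides with the paper's own argument: the paper simply states that Lemmas \ref{11L1A} and \ref{11L2A} imply the corollary, which is exactly your combination of the two opposite inclusions with their norm inequalities, and \eqref{Lux11} then follows from the definitional comparison between $\|\cdot\|_{H^p(\Sigma_{\pi/2})}$ and $\|\cdot\|_{H^p_*(\Sigma_{\pi/2})}$ recorded before Lemma \ref{11L1A}. Nothing further is needed.
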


Note that the two-sided estimate (\ref{Lux11}) was proved in \cite{Sedl} and \cite{Luxemburg} in a more complicated way
(see also \cite{Akopjan},  \cite{dil}, \cite{Martirosian}, \cite{Kev1}, \cite{Kev2}). 
 The coincidence of norms in \eqref{n_equality} seems not to have been noted before. It appears to be quite useful, as we will see in the proof of Corollary \ref{Zb} below.

\begin{remark}\label{Lux}
The two-sided estimate (\ref{Lux11}) is sharp (and cannot be improved). Indeed, let $p\in [1,\infty)$ and let 
\[
f_k(z):=\frac{1}{\pi^{1/p}(z+1+ik)^{2/p}},\qquad k\in\N.
\]
Then for all $k$, we have $\|f_k\|_{H^p(\C_{+})}=1$ 
and, by direct estimates, \begin{align*}
&\left( \|f_k\|_{H^p_{*}(\Sigma_{\pi/2})} \right)^p=\int_0^\infty |f_k(te^{-i\pi/2})|^p\,dt\\
&=\frac{1}{\pi}\int_0^\infty \frac{dt}{(t-k)^2+1}\,dt
=\frac{1}{2}+\frac{\arctan k}{\pi}.
\end{align*}
Thus,
\[
\|f_0\|_{H^p(\C_{+})}=2^{1/p}\|f_0\|_{H^p_{*}(\Sigma_{\pi/2})} \qquad \text{and}\qquad
\lim_{k\to\infty}\,\frac{\|f_k\|_{H^p_{*}(\Sigma_{\pi/2})}}{\|f_k\|_{H^p(\C_{+})}}=1.
\]
In fact, for all $f \in \H^p_{*}(\Sigma_{\pi/2})$, the norm $\|f\|_{H^p_{*}(\Sigma_{\pi/2})}$ is attained at the boundary of $\Sigma_{\psi/2}$.
\end{remark}

\begin{cor}\label{Zb}
Let $g \in H^{p}(\Sigma_{\pi/2})$, $p\in [1,\infty)$.
Then there exist $g(\pm it):=\lim_{\varphi \to \pm \pi/2} g(te^{\pm i\varphi})$ for a.e.\ $t \in \mathbb R_+$,
$g(\pm i \cdot) \in L^p(\mathbb R_+)$, and
\begin{equation}\label{DH11}
\lim_{\varphi\to\pm \pi/2}\,
\int_0^\infty |g(te^{i\varphi})-g(\pm it)|^p\,dt=0.
\end{equation}
As a consequence, for every $g \in H^p(\Sigma_{\pi/2})$,
\begin{equation}\label{norm_h}
\|g\|_{H^p(\Sigma_{\pi/2})}=\|g(i\cdot)\|_{L^p(\mathbb R)}.
\end{equation}
\end{cor}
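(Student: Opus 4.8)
The plan is to reduce the whole statement to the classical boundary theory of $H^p(\C_+)$ by means of Corollary~\ref{HHH}. Fix $g\in H^p(\Sigma_{\pi/2})$. By Corollary~\ref{HHH}, $g\in H^p(\C_+)$, so by the classical half-plane theory (\cite{Duren1}) $g$ has a boundary function $g^\ast\in L^p(\R)$ with $\|g\|_{H^p(\C_+)}=\|g^\ast\|_{L^p(\R)}$, attained as a non-tangential limit: $g(z)\to g^\ast(iy)$ whenever $z\to iy$ inside a Stolz angle $\{z:|z-iy|\le C\,\Re z\}$, for a.e.\ $y\in\R$. The first thing I would check is that, for fixed $t>0$, the ray $\varphi\mapsto te^{i\varphi}$ runs into $it$ non-tangentially as $\varphi\uparrow\pi/2$: since $|te^{i\varphi}-it|=t\sqrt{2(1-\sin\varphi)}$ while the real part of $te^{i\varphi}$ is $t\cos\varphi$, one has $|te^{i\varphi}-it|\le\sqrt2\,t\cos\varphi$ for all $\varphi\in(0,\pi/2)$, so the whole ray lies in a fixed Stolz angle at $it$; likewise $te^{-i\varphi}$ runs into $-it$ non-tangentially. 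Hence the limits $g(\pm it):=\lim_{\varphi\to\pm\pi/2}g(te^{\pm i\varphi})$ exist for a.e.\ $t>0$ and coincide with $g^\ast(\pm it)$, which already yields the a.e.\ existence of $g(\pm it)$, the membership $g(\pm i\cdot)\in L^p(\R_+)$, and the pointwise a.e.\ convergence of the boundary slices $t\mapsto g(te^{\pm i\varphi})$ as $\varphi\to\pi/2$.

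The next step is to promote this to the $L^p$-convergence \eqref{DH11}. Set $a(\varphi):=\int_0^\infty|g(te^{i\varphi})|^p\,dt$ and $b(\varphi):=\int_0^\infty|g(te^{-i\varphi})|^p\,dt$ for $\varphi\in(0,\pi/2)$, and $A:=\int_0^\infty|g(it)|^p\,dt$, $B:=\int_0^\infty|g(-it)|^p\,dt$. By Corollary~\ref{HHH} and the classical norm identity, $A+B=\|g^\ast\|_{L^p(\R)}^p=\|g\|_{H^p(\Sigma_{\pi/2})}^p=\sup_{0<\varphi<\pi/2}\bigl(a(\varphi)+b(\varphi)\bigr)$, so in particular $a(\varphi)+b(\varphi)\le A+B$ for every $\varphi$. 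On the other hand Fatou's lemma, applied along $\varphi\to\pi/2$ to the pointwise-convergent slices, gives $\liminf a(\varphi)\ge A$ and $\liminf b(\varphi)\ge B$. Feeding these three facts into a short subsequence argument (using that $a,b$ are bounded by $A+B<\infty$, so convergent subsequences can be extracted) forces $a(\varphi)\to A$ and $b(\varphi)\to B$ as $\varphi\to\pi/2$. Now Lemma~\ref{duren21}, applied along an arbitrary sequence $\varphi_n\to\pi/2$, can be invoked for $t\mapsto g(te^{i\varphi})$ on $\R_+$: it converges a.e.\ to $g(i\cdot)$ and its $L^p(\R_+)$-norm $a(\varphi)^{1/p}$ converges to $\|g(i\cdot)\|_{L^p(\R_+)}=A^{1/p}$, so it converges in $L^p(\R_+)$, which is \eqref{DH11}; the reflected family is handled identically. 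Finally \eqref{norm_h} is immediate, either from the chain $\|g\|_{H^p(\Sigma_{\pi/2})}^p=A+B=\|g(i\cdot)\|_{L^p(\R)}^p$ above, or directly from Corollary~\ref{HHH} together with $\|g\|_{H^p(\C_+)}=\|g^\ast\|_{L^p(\R)}$.

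I expect the separation (subsequence) step to be the only real obstacle. Corollary~\ref{HHH} controls only the \emph{sum} $a(\varphi)+b(\varphi)$, since that is precisely the supremum defining the $H^p(\Sigma_{\pi/2})$-norm, so there is no direct bound $a(\varphi)\le A$ for the individual slice, and the convergence $a(\varphi)\to A$ must be squeezed out of the two one-sided Fatou estimates together with the bound on the sum. Everything else — the non-tangential identification of the ray limits with $g^\ast$ and the concluding application of Lemma~\ref{duren21} — is routine.
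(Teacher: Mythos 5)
Your proposal is correct and follows essentially the same route as the paper: reduce to $H^p(\C_+)$ via Corollary~\ref{HHH}, identify the ray limits with the classical a.e.\ boundary values, combine Fatou's lemma with the norm inequality \eqref{RRD} to get convergence of the $L^p$-norms, and conclude with Lemma~\ref{duren21}. The only (harmless) difference is that you separate the two one-sided integrals $a(\varphi)\to A$, $b(\varphi)\to B$ by a liminf/limsup squeeze before applying Lemma~\ref{duren21} to each ray, whereas the paper applies it once to the combined boundary slice, for which only convergence of the sum is needed.
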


\begin{proof}
Let $p \in [1,\infty)$ be fixed. By Corollary \ref{HHH}, it suffices to prove \eqref{DH11} for
$g\in H^p(\C_{+})$.
If $g\in H^p(\C_{+})$, then as recalled above, for almost all $t \in \mathbb R$
there exists a sectorial limit $g(it):=\lim_{z \to it} g(z)$, and $g(i\cdot) \in L^p(\R)$.
 Therefore, we also have $\lim_{\varphi \to \pm \pi/2} g(te^{\pm i\varphi})=g(\pm it)$ for almost all $t$.
Using this and Fatou's Lemma, we infer from (\ref{RRD}) that
\begin{align*}
\limsup_{\varphi\to \pi/2}\,&\int_0^\infty \bigl(|g(te^{i\varphi})|^p+|g(te^{-i\varphi})|^p \bigr)\,dt\le
\int_{-\infty}^\infty |g(it)|^p\,dt\\
&\le \liminf_{\varphi\to\pi/2}\,\int_0^\infty \bigl(|g(te^{i\varphi})|^p+|g(te^{-i\varphi})|^p\bigr)\,dt,
\end{align*}
and hence
\begin{equation}\label{Duren}
\lim_{\varphi\to\pi/2}\,\int_0^\infty \bigl(|g(te^{i\varphi})|^p+|g(te^{-i\varphi})|^p \bigr)\,dt=
\int_{-\infty}^\infty |g(it)|^p\,dt.
\end{equation}
Then, by \eqref{Duren} and Lemma \ref{duren21}, using once again
 the pointwise a.e.\ convergence of  $g(te^{\pm i\varphi})$ to $g(\pm it)$ as $\varphi\to\pm \pi/2$,
we obtain \eqref{DH11}. Since $\|g\|_{H^p(\C_+)}=\|g(i\cdot)\|_{L^p(\mathbb R)},$ we get \eqref{norm_h} as well.
\end{proof}

For (formally) more general versions of   \eqref{n_equality} and  \eqref{Lux11} see \eqref{equivalent} and \eqref{iso} below.

\subsection{The spaces $H^1(\Sigma_\psi)$}\label{hardy_sec}

Now using the results of Section \ref{hardy_rays} for $\psi=\pi/2$, we develop basic properties of $H^1(\Sigma_\psi)$ for any $\psi \in (0,\pi)$.   Define the Hardy space $H^1(\Sigma_\psi)$ on the sector $\Sigma_\psi$ to be the space of all functions $f \in \Hol(\Sigma_\psi)$ such that
\begin{equation}\label{CC2}
\|f\|_{H^1(\Sigma_\psi)} := \sup_{|\varphi|<\psi}
\int_0^\infty \bigl(|f(te^{i\varphi})|+
|f(te^{-i\varphi})| \bigr) \,dt <\infty.
\end{equation}

We will also consider a non-symmetric version of $H^1(\Sigma_\psi)$, defined as
\begin{equation*}
H^1_{*}(\Sigma_{\psi}):=\left\{f \in \Hol\, (\Sigma_{\psi}):
\|f\|_{H^1_{*}(\Sigma_{\psi})}:=\sup_{|\varphi|< \psi}\,\int_0^\infty |f(t e^{i\varphi})|\,dt<\infty\right\}.
\end{equation*}

\begin{thm}\label{hardy0}
Let $\psi, \psi_1, \psi_2 \in (0,\pi)$.
\begin{enumerate}[\rm(i)]
\item  $f \in H^1 (\Sigma_{\psi})$ if and only if $f \in H^1_*(\Sigma_\psi)$, and then
\begin{equation}\label{equivalent}
2^{-1} \|f\|_{H^1(\Sigma_{\psi})} \le \|f\|_{H^1_*(\Sigma_{\psi})}\le \|f\|_{H^1(\Sigma_{\psi})}.
\end{equation}
\item For any $\psi_1,\psi_2 \in (0,\pi)$, the map
\begin{eqnarray*}
H^1(\Sigma_{\psi_1}) &\to& H^1(\Sigma_{\psi_2})\\
f(z) &\mapsto &\frac{\psi_1}{\psi_2} z^{(\psi_1/\psi_2) -1}f(z^{\psi_1/\psi_2})
\end{eqnarray*}
is an isometric isomorphism of $H^1(\Sigma_{\psi_1})$ onto $H^1(\Sigma_{\psi_2})$, and of $H^1_*(\Sigma_{\psi_1})$ onto $H^1_*(\Sigma_{\psi_2})$.
\item If $f \in H^1(\Sigma_\psi)$, then the limits
$f(re^{\pm i\psi}):=\lim_{\varphi \to \pm \psi} f(re^{i\varphi})$ exist a.e., and in the $L^1$-sense with respect to $r$.   Moreover
\begin{equation}\label{Boch1}
f(z)=
\frac{1}{2\pi i}
\int_{\partial\Sigma_{\psi}}\,\frac{f(\lambda)}{\lambda-z} \,d\lambda, \quad
z\in \Sigma_\psi.
\end{equation}
\item If  $f \in H^1(\Sigma_\psi)$, then
\begin{align}
\|f\|_{H^1(\Sigma_\psi)}&=\int_{0}^{\infty}\bigl(|f(te^{i\psi})|+
|f(te^{-i\psi})| \bigr) \,dt.  \label{iso} 
\end{align}
\item $H^1(\Sigma_{\psi})$ and $H^1_*(\Sigma_{\psi})$ are Banach spaces.
\end{enumerate}
\end{thm}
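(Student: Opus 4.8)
The plan is to treat the case $\psi=\pi/2$, already settled in Section~\ref{hardy_rays}, as known, and to reduce everything else to it through the rescaling map of part~(ii). Part~(i) is elementary: the second inequality in \eqref{equivalent} is immediate since $H^1_*(\Sigma_\psi)$ retains only one of the two boundary terms, and the first holds because, for fixed $|\varphi|<\psi$, both $\int_0^\infty|f(te^{i\varphi})|\,dt$ and $\int_0^\infty|f(te^{-i\varphi})|\,dt$ are bounded by $\|f\|_{H^1_*(\Sigma_\psi)}$ (the second after replacing $\varphi$ by $-\varphi$ in the defining supremum), so $\|f\|_{H^1(\Sigma_\psi)}\le 2\|f\|_{H^1_*(\Sigma_\psi)}$. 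For part~(ii), set $\gamma=\psi_1/\psi_2$ and $g(z):=\gamma z^{\gamma-1}f(z^\gamma)$; since $|\arg z|<\psi_2$ forces $|\arg z^\gamma|=\gamma|\arg z|<\psi_1$, the map $z\mapsto z^\gamma$ sends $\Sigma_{\psi_2}$ into $\Sigma_{\psi_1}$, so $g\in\Hol(\Sigma_{\psi_2})$, and the substitution $u=t^\gamma$ gives
\[
\int_0^\infty|g(te^{i\varphi})|\,dt=\int_0^\infty|f(ue^{i\gamma\varphi})|\,du,\qquad |\varphi|<\psi_2.
\]
As $\varphi$ ranges over $(-\psi_2,\psi_2)$, $\gamma\varphi$ ranges over $(-\psi_1,\psi_1)$; taking suprema yields $\|g\|_{H^1_*(\Sigma_{\psi_2})}=\|f\|_{H^1_*(\Sigma_{\psi_1})}$, and adding the contributions of $\pm\varphi$, also $\|g\|_{H^1(\Sigma_{\psi_2})}=\|f\|_{H^1(\Sigma_{\psi_1})}$; the same formula with $\psi_1,\psi_2$ interchanged is a two-sided inverse, so this is an isometric isomorphism for both norms.

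Parts~(iii)--(v) then follow by transport to $\psi=\pi/2$. Specialising (ii) to $\psi_1=\psi$, $\psi_2=\pi/2$, each $f\in H^1(\Sigma_\psi)$ corresponds isometrically to some $g\in H^1(\Sigma_{\pi/2})=H^1(\C_+)$, the identification of spaces and the equality of norms being Corollary~\ref{HHH} and \eqref{n_equality}. Corollary~\ref{Zb} supplies the a.e.\ and $L^1$ boundary limits $g(\pm it)$ and the identity $\|g\|_{H^1(\Sigma_{\pi/2})}=\int_{-\infty}^\infty|g(it)|\,dt$. Transporting these back through the substitution $z\mapsto z^{2\psi/\pi}$ underlying (ii) — the radial factor $|z|^{(2\psi/\pi)-1}$ being absorbed by the change of variables and the angular phase being asymptotically constant — gives the a.e.\ and $L^1$ boundary limits $f(re^{\pm i\psi})$ asserted in part~(iii), and, combined with the isometry $\|f\|_{H^1(\Sigma_\psi)}=\|g\|_{H^1(\Sigma_{\pi/2})}$, the norm identity \eqref{iso} of part~(iv). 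For part~(v): $H^1(\Sigma_{\pi/2})=H^1(\C_+)$ is complete (recalled in Section~\ref{hardy_rays}), hence so is $H^1(\Sigma_\psi)$ by the isometry of (ii), and then so is $H^1_*(\Sigma_\psi)$ by the norm equivalence \eqref{equivalent}.

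The remaining part of (iii), the Cauchy representation \eqref{Boch1}, is the step I expect to require the most care, and I would prove it directly on $\Sigma_\psi$ in the spirit of Lemma~\ref{11L1A} rather than by transport (a naive substitution $z\mapsto z^\gamma$ in the half-plane Cauchy formula yields a different kernel, so it does not suffice). Fixing $z\in\Sigma_\psi$ and $\varphi_0\in(\arg z,\psi)$, one applies the Cauchy integral formula on the truncated sector $\{\lambda:\varepsilon_n<|\lambda|<R_n,\ |\arg\lambda|<\varphi_0\}$, which is compactly contained in $\Sigma_\psi$; the two circular-arc contributions vanish along suitable sequences $R_n\to\infty$ and $\varepsilon_n\to0$, since boundedness of $\varphi\mapsto\int_0^\infty|f(te^{i\varphi})|\,dt$ together with Fubini makes $t\mapsto\int_{-\varphi_0}^{\varphi_0}|f(te^{i\theta})|\,d\theta$ an $L^1(0,\infty)$ function, so that $\int_{-\varphi_0}^{\varphi_0}|f(te^{i\theta})|\,d\theta$ is small along a sequence $t\to\infty$ and $t\int_{-\varphi_0}^{\varphi_0}|f(te^{i\theta})|\,d\theta$ is small along a sequence $t\to0$. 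Letting $\varphi_0\to\psi$ and using the $L^1$ boundary convergence from part~(iii), together with the uniform bound $|re^{i\varphi_0}-z|\ge|z|\sin\frac{\psi-\arg z}{2}$ valid for $\varphi_0$ near $\psi$, converts the two ray integrals into the right-hand side of \eqref{Boch1}; this argument must of course be carried out after the boundary-value statement of part~(iii) is available.
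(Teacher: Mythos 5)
Your proof is correct and follows essentially the same route as the paper: parts (i) and (ii) by direct verification, parts (iii)--(v) (minus \eqref{Boch1}) by transporting the $\psi=\pi/2$ results of Corollaries \ref{HHH} and \ref{Zb} through the isometry of (ii), and \eqref{Boch1} by Cauchy's theorem on truncated subsectors with the arc terms killed along sequences extracted via Fubini, exactly in the spirit of Lemma \ref{11L1A}. The only (cosmetic) difference is that you take the radial and angular limits in two separate stages rather than simultaneously along $\partial\Sigma_{\psi-n^{-1}}$ as the paper does.
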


\begin{proof}
The proof of (i) is clear, and (ii) is a direct verification.   
For $\psi = \pi/2$, the statements (iii) and (iv), excluding \eqref{Boch1}, were proved in Corollaries \ref{HHH} and \ref{Zb}, and (v) is well-known.   Then the general cases are reduced to the case when $\psi=\pi/2$,
by means of (ii).   

The  Cauchy formula \eqref{Boch1} is well-known for $\psi=\pi/2$ (see, for example, \cite[Theorem 11.8]{Duren1}).   For general $\psi$, we may argue similarly to the proof of Lemma \ref{11L1A}, as follows. 

Since $f \in H^1(\Sigma_{\psi})$, 
\[
\int_0^\infty \int_{-\psi}^\psi |f(te^{i\varphi})| \,d\varphi\,dt < \infty.
\]
Hence there exist sequences  $(t_{1,n})_{n\ge1}$ and $(t_{2,n})_{n\ge1}$ such that $0 < t_{1,n} < t_{2,n}$, $t_{1,n} \to 0, \, t_{2,n} \to \infty$ as $n\to\infty$, and
\[
\lim_{n\to \infty} \int_{-\psi}^\psi t_{1,n} |f(t_{1,n} e^{i\varphi})| \,d\varphi = \lim_{n\to\infty}
\int_{-\psi}^\psi |f(t_{2,n} e^{i\varphi})| \,d\varphi = 0.
\]
By applying Cauchy's theorem around the boundary of
\[
\left\{z \in \partial \Sigma_{\psi - n^{-1}} : t_{1,n} < |z| < t_{2,n} \right\}
\]
for large $n$, and taking the limit, we obtain \eqref{Boch1}.
\end{proof}

\begin{rem}
In addition to \eqref{iso}, it is also possible to prove that
\[
 \|f\|_{H^1_{*}(\Sigma_\psi)} = \max \left(\int_{0}^{\infty}|f(te^{-i\psi})|\, dt,\int_{0}^{\infty}|f(te^{i\psi})|\,dt\right).
\]
This requires additional techniques, and it is not used in this paper.
\end{rem}

\subsection{Functions with derivatives in $H^1(\Sigma_\psi)$}

For $\psi\in (0,\pi)$, let us introduce the space
 \[\H_\psi:=\left\{ f \in \Hol(\Sigma_\psi): f' \in H^1 (\Sigma_{\psi})\right\}.\]
In view of Corollary \ref{HHH},
\begin{equation} \label {HH11}
  \H_{\pi/2} = H^{1,1}(\C_+):=\{f \in \Hol(\C_+): f'\in H^1(\C_+)\},
\end{equation}
and we may sometimes use the notation $H^{1,1}(\C_+)$ instead of $\H_{\pi/2}$.

Such function spaces are often called Hardy-Sobolev spaces, and we will also use
this terminology sporadically.
Spaces more general than $\H_\psi$ appear in \cite{Domelevo}.
Namely, for $f \in \Hol(\Sigma_\psi)$ it was required in \cite{Domelevo} that the boundary values of $f$ exist and belong (after an appropriate ``rescaling'') to a Besov space $B^s_{\infty, 1}, \, s>0$.   One can develop a similar approach to those spaces, but we do not see much advantage in such generality within the present context.

\begin{thm}\label{hardy2}
Let $f \in \H_\psi,\, \psi\in (0,\pi)$.
\begin{enumerate}[\rm(i)]
\item The function $f$ extends to a continuous bounded function on $\overline{\Sigma}_{\psi}$.
\item The limit
\[
f(\infty):=\lim_{|z|\to \infty, z \in \Sigma_\psi}\,f(z)
\]
exists.
\item One has
\begin{equation}\label{bound_for_sup}
\|f\|_{H^\infty(\Sigma_\psi)} \le |f(\infty)| + \|f'\|_{H^1(\Sigma_\psi)}.
\end{equation}
In particular, the evaluation functionals $\delta_z,\, z\in\Sigma_\psi$, are continuous on $\H_\psi$.
\end{enumerate}
\end{thm}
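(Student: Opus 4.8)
The plan is to work directly on $\Sigma_\psi$, exploiting that the norm of $H^1(\Sigma_\psi)$ controls radial integrals. By Theorem~\ref{hardy0}(i), $f'\in H^1_*(\Sigma_\psi)$, so for every $\varphi\in(-\psi,\psi)$ the function $t\mapsto f'(te^{i\varphi})$ lies in $L^1(\R_+)$ with $\int_0^\infty|f'(te^{i\varphi})|\,dt\le\|f'\|_{H^1_*(\Sigma_\psi)}\le\|f'\|_{H^1(\Sigma_\psi)}$. Hence $r\mapsto f(re^{i\varphi})$ is absolutely continuous on $(0,\infty)$, has finite limits $f_\varphi(0)$ and $f_\varphi(\infty)$ at the two ends, and
\[
f(\rho e^{i\varphi})=f_\varphi(\infty)-\int_\rho^\infty f'(te^{i\varphi})e^{i\varphi}\,dt,\qquad \rho>0.
\]
The first real step is to show $f_\varphi(\infty)$ and $f_\varphi(0)$ do not depend on $\varphi$. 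I would integrate $f'$ along circular arcs: writing $g(t):=\int_{-\psi}^{\psi}|f'(te^{i\varphi})|\,d\varphi$, Tonelli gives $\int_0^\infty g(t)\,dt\le 2\psi\|f'\|_{H^1_*(\Sigma_\psi)}<\infty$, so there are sequences $R_n\uparrow\infty$ and $r_n\downarrow 0$ with $R_ng(R_n)\to0$ and $r_ng(r_n)\to0$; since $|f(Re^{i\varphi_1})-f(Re^{i\varphi_2})|\le R\,g(R)$ along $|z|=R$, letting $n\to\infty$ gives $f_{\varphi_1}(\infty)=f_{\varphi_2}(\infty)=:f(\infty)$, and similarly a common value $f(0)$.

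Once $f(\infty)$ is unambiguous, the displayed representation gives $|f(z)|\le|f(\infty)|+\|f'\|_{H^1_*(\Sigma_\psi)}\le|f(\infty)|+\|f'\|_{H^1(\Sigma_\psi)}$ for all $z\in\Sigma_\psi$, which is (iii); boundedness in (i) follows, and continuity of the evaluation functionals is then immediate from $|f(z)|\le\|f\|_{H^\infty(\Sigma_\psi)}\le\|f\|_{\H_\psi}$. For (ii) I would note that $\tilde f(z):=f(1/z)$ again lies in $\H_\psi$ (the $H^1_*(\Sigma_\psi)$-norm of the derivative is invariant under $z\mapsto1/z$ by a change of variables), so it suffices to prove the full statement of (i) --- continuity on all of $\overline{\Sigma}_\psi$, including the vertex $0$ --- and then apply it to $\tilde f$ to obtain a limit at $\infty$. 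To extend $f$ continuously to $\overline{\Sigma}_\psi$, set $f(re^{\pm i\psi}):=f(\infty)-\int_r^\infty f'(te^{\pm i\psi})e^{\pm i\psi}\,dt$ using the $L^1(\R_+)$ boundary values of $f'$ furnished by Theorem~\ref{hardy0}(iii), and $f(0):=$ the common radial limit. Continuity at an interior boundary point $r_0e^{i\psi}$ is then a short estimate: for $z_n=r_ne^{i\varphi_n}\to r_0e^{i\psi}$ one splits $f(z_n)-f(r_0e^{i\psi})$ into $\int_{r_0}^{r_n}f'(te^{i\psi})e^{i\psi}\,dt$ (small by absolute continuity of the integral), a term bounded by $\|f'(\cdot e^{i\varphi_n})-f'(\cdot e^{i\psi})\|_{L^1(\R_+)}\to0$ (Theorem~\ref{hardy0}(iii)), and a term bounded by $|e^{i\varphi_n}-e^{i\psi}|\,\|f'\|_{H^1_*(\Sigma_\psi)}\to0$.

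The main obstacle is continuity at the vertex $0$ (equivalently, the sectorial limit at $\infty$): one needs $\Psi(r):=\sup_{|\varphi|\le\psi}\int_0^r|f'(te^{i\varphi})|\,dt\to0$ as $r\downarrow0$, and the simple pointwise bound for $|f'|$ degrades near the boundary rays, making the $\varphi$-integral near $\pm\psi$ hard to control. I would split the range of $\varphi$. For $|\varphi|\le\psi_0<\psi$, set $\psi_1:=(\psi_0+\psi)/2$ and pick $c>0$ small enough that every disc $D(te^{i\varphi},ct)$ (and the cone it sweeps out over $t>0$) lies in $\Sigma_{\psi_1}$; the sub-mean-value property of $|f'|$ together with Fubini then gives $\int_0^r|f'(te^{i\varphi})|\,dt\le C(c)\int_{\Sigma_{\psi_1},\,|w|<(1+c)r}\frac{|f'(w)|}{|w|}\,dS(w)$, which is the tail near $0$ of the finite integral $\int_{\Sigma_{\psi_1}}\frac{|f'(w)|}{|w|}\,dS(w)=\int_{-\psi_1}^{\psi_1}\int_0^\infty|f'(te^{i\varphi'})|\,dt\,d\varphi'\le2\psi_1\|f'\|_{H^1(\Sigma_\psi)}$, hence tends to $0$ uniformly in $|\varphi|\le\psi_0$. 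For $\varphi$ near $\pm\psi$, choose $\psi_0$ close enough to $\psi$ that $\|f'(\cdot e^{i\varphi})-f'(\cdot e^{\pm i\psi})\|_{L^1(\R_+)}<\varepsilon$ there (possible by Theorem~\ref{hardy0}(iii)); then $\int_0^r|f'(te^{i\varphi})|\,dt\le\int_0^r|f'(te^{\pm i\psi})|\,dt+\varepsilon$, and the first term tends to $0$ with $r$ since $f'(\cdot e^{\pm i\psi})\in L^1(\R_+)$. Thus $\limsup_{r\to0}\Psi(r)\le\varepsilon$ for every $\varepsilon>0$, so $\Psi(r)\to0$ and $f(z)\to f(0)$ as $z\to0$ in $\overline{\Sigma}_\psi$; applying this to $\tilde f$ yields the limit at $\infty$, completing (i) and (ii).
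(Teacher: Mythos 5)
Your proof is correct, but it takes a genuinely different route from the paper's. The paper disposes of (i) and (ii) by transporting everything to the half-plane via the isometry $f(z)\mapsto f(z^{2\psi/\pi})$ of Theorem~\ref{hardy0}(ii) and then invoking the inclusion $\H_{\pi/2}=H^{1,1}(\C_+)\subset\Bes$ together with the boundary-continuity and limit-at-infinity properties of Besov functions established in \cite{BGT}; statement (iii) is dismissed as ``easily seen''. You instead work intrinsically on the sector: absolute integrability of $f'$ along each ray gives radial limits at $0$ and $\infty$ and the representation $f(\rho e^{i\varphi})=f(\infty)-\int_\rho^\infty f'(te^{i\varphi})e^{i\varphi}\,dt$, from which (iii) is immediate; independence of the radial limits on $\varphi$ comes from your clean arc-integral argument (choosing radii $R_n$ with $R_n\int_{-\psi}^{\psi}|f'(R_ne^{i\varphi})|\,d\varphi\to0$); boundary continuity away from the vertex uses the $L^1$-convergence to boundary values in Theorem~\ref{hardy0}(iii); and the delicate point --- uniformity of $\int_0^r|f'(te^{i\varphi})|\,dt\to0$ in $\varphi$ at the vertex --- is handled correctly by your two-zone split, with the sub-mean-value/Fubini estimate on the interior cone (the same device as Lemma~\ref{Derin}) and the $L^1$ boundary convergence near the rays $\pm\psi$. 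The inversion $\tilde f(z)=f(1/z)$ then converts the vertex statement into (ii). What the paper's proof buys is brevity, at the cost of importing nontrivial external results about $\Bes$; what yours buys is self-containedness (only Theorem~\ref{hardy0} is used) and a slightly sharper conclusion, namely that $f$ has full limits at both $0$ and $\infty$ within $\overline{\Sigma}_\psi$, with the uniform-in-$\varphi$ vertex estimate made explicit rather than hidden in the citation.
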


\begin{proof}
Let $\psi = \pi/2$.  Since $\mathcal H_{\pi/2}=H^{1,1}(\mathbb C_+) \subset \Bes$ by \eqref{HH11} and \cite[Proposition 2.4]{BGT}, the statement (i) 
 follows from \cite[Proposition 2.2(iv)]{BGT}, and (ii) follows from \cite[Proposition 2.4]{BGT}.  In the general case, the map $f(z) \mapsto f(z^{2\psi/\pi})$ is an isomorphism of $\H_\psi$ onto $H^{1,1}(\C_+)$, by Theorem \ref{hardy0}(ii), so (i) and (ii) hold for $\H_\psi$.   The statement (iii) is easily seen.
\end{proof}

It follows from Theorem \ref{hardy2} that $\H_\psi$ is an algebra for every $\psi\in (0,\pi)$.   We define a  norm on $\H_\psi$ by 
\begin{equation}
\|f \|_{\H_\psi}:=\|f\|_{H^{\infty}(\Sigma_\psi)}+\|f'\|_{H^1(\Sigma_\psi)}, \qquad f \in \H_\psi.
\end{equation}
This is easily seen to be an algebra norm.   Theorem \ref{hardy2}(iii) shows that 
\begin{equation} \label{hnorms}
\|f\|'_{\H_\psi} := |f(\infty)| + \|f'\|_{H^1(\Sigma_\psi)}
\end{equation}
is an equivalent norm on $\H^1(\Sigma_\psi)$.

The following lemma is simple, but crucial for our theory.   The completeness of the norm is a standard fact, the scale-invariance is trivial, and the final isomorphism follows from Theorem \ref{hardy0}(ii).

\begin{lemma}\label{simple}
For every $\psi\in (0,\pi)$, the space $(\H_\psi, \|\cdot\|_{\H_\psi})$ is a Banach algebra.  For $t>0$, the map $f(z) \mapsto f(tz)$ is an isometric algebra isomorphism on $\H_\psi$.
Moreover, for any $\psi_1,\psi_2 \in (0,\pi)$, the map
\begin{eqnarray*}
\H_{\psi_1}&\to& \H_{\psi_2}\\
f(z)&\mapsto& f(z^{\psi_1/\psi_2})
\end{eqnarray*}
is an isometric algebra isomorphism.
\end{lemma}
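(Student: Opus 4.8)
The plan is to assemble the lemma from ingredients that are, by this point, either routine or already available, following the outline in the sentence preceding the statement. First I would record that no work is needed for the algebra structure: $\H_\psi$ is an algebra by Theorem~\ref{hardy2} (a product of a function in $H^1(\Sigma_\psi)$ with a function in $H^\infty(\Sigma_\psi)$ again lies in $H^1(\Sigma_\psi)$), and $\|\cdot\|_{\H_\psi}$ is submultiplicative by the Leibniz rule together with the pointwise bound $|h(z)k(z)|\le\|k\|_{H^\infty(\Sigma_\psi)}|h(z)|$. So only completeness and the two (iso)morphism claims remain.

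Next I would dispatch the scale invariance. For $t>0$ and $g(z):=f(tz)$ one has $\|g\|_{H^\infty(\Sigma_\psi)}=\|f\|_{H^\infty(\Sigma_\psi)}$, while $g'(z)=tf'(tz)$ and the substitution $r\mapsto tr$ in the defining integral~\eqref{CC2} gives $\|g'\|_{H^1(\Sigma_\psi)}=\|f'\|_{H^1(\Sigma_\psi)}$; since $f\mapsto f(t\,\cdot)$ is a bijective algebra homomorphism with inverse $f\mapsto f(t^{-1}\,\cdot)$, it is an isometric algebra isomorphism of $\H_\psi$. For the change of sector, fix $\psi_1,\psi_2\in(0,\pi)$, set $\kappa:=\psi_1/\psi_2$, and let $(\Phi f)(z):=f(z^\kappa)$. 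Since $z\mapsto z^\kappa$ is a biholomorphism of $\Sigma_{\psi_2}$ onto $\Sigma_{\psi_1}$, $\Phi$ is a bijective algebra homomorphism of $\H_{\psi_1}$ onto $\H_{\psi_2}$ with inverse the analogous map $F\mapsto F(w^{1/\kappa})$, and $\|\Phi f\|_{H^\infty(\Sigma_{\psi_2})}=\|f\|_{H^\infty(\Sigma_{\psi_1})}$. The key point is that $(\Phi f)'(z)=\kappa z^{\kappa-1}f'(z^\kappa)$ is exactly the image of $f'$ under the isometric isomorphism $H^1(\Sigma_{\psi_1})\to H^1(\Sigma_{\psi_2})$ of Theorem~\ref{hardy0}(ii); hence $\|(\Phi f)'\|_{H^1(\Sigma_{\psi_2})}=\|f'\|_{H^1(\Sigma_{\psi_1})}$ and therefore $\|\Phi f\|_{\H_{\psi_2}}=\|f\|_{\H_{\psi_1}}$.

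Finally I would prove completeness of $(\H_\psi,\|\cdot\|_{\H_\psi})$; by the previous paragraph it would suffice to treat $\psi=\pi/2$, i.e.\ $H^{1,1}(\C_+)$ in view of~\eqref{HH11}, but the argument works verbatim for every $\psi$. Let $(f_n)$ be Cauchy in $\H_\psi$. Since $\|\cdot\|_{\H_\psi}\ge\|\cdot\|_{H^\infty(\Sigma_\psi)}$, the sequence converges uniformly on $\Sigma_\psi$ to some $f\in H^\infty(\Sigma_\psi)$, and then $f_n'\to f'$ locally uniformly on $\Sigma_\psi$ by the Weierstrass theorem. Since also $\|f_n-f_m\|_{\H_\psi}\ge\|f_n'-f_m'\|_{H^1(\Sigma_\psi)}$ and $H^1(\Sigma_\psi)$ is a Banach space (Theorem~\ref{hardy0}(v)), $(f_n')$ converges in $H^1(\Sigma_\psi)$ to some $g$; comparing with the pointwise limit forces $g=f'$, so $f'\in H^1(\Sigma_\psi)$ and hence $f\in\H_\psi$. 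Then $\|f_n-f\|_{H^\infty(\Sigma_\psi)}\to0$ and $\|f_n'-f'\|_{H^1(\Sigma_\psi)}\to0$, so $\|f_n-f\|_{\H_\psi}\to0$, as required. I do not expect a real obstacle: the lemma is elementary, as the authors themselves note. The only step deserving a moment's attention is the identification of $(\Phi f)'$ with the Theorem~\ref{hardy0}(ii) transform of $f'$, since this is precisely what upgrades $\Phi$ from a topological to an isometric isomorphism; everything else is a change of variables or a standard Banach-space completeness argument.
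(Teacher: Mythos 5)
Your proposal is correct and follows the same route as the paper, which merely asserts that completeness is standard, scale-invariance is trivial, and the sector-change isometry follows from Theorem~\ref{hardy0}(ii); you have simply written out those verifications, and in particular you correctly identify $(\Phi f)'$ with the Theorem~\ref{hardy0}(ii) image of $f'$, which is the one point where the isometry is not a bare change of variables. The only micro-step left implicit is that $H^1(\Sigma_\psi)$-convergence of $f_n'$ implies pointwise convergence on the open sector (needed to conclude $g=f'$), which follows from the standard mean-value estimate showing point evaluations are continuous on $H^1(\Sigma_\psi)$.
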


We now give some examples of functions in $\H_\psi$ which will play important roles in subsequent sections of this paper.  The first example is of similar type to Example \ref{resd}.

\begin{exas} \label{hexs}
Let $\psi \in (0,\pi/2)$, and $\l = |\l| e^{i\varphi} \in \Sigma_{\pi-\psi}$.
\begin{enumerate}[\rm1.]
\item Let $\gamma >0$, and consider the function $r_\l^\gamma(z) = (\l+z)^{-\gamma}, \,z \in \Sigma_{\psi}$.    Then 
 $r_{\l}^{\gamma}\in \H_\psi$ and
\begin{equation} \label{ff}
\|r_{\l}^{\gamma}\|_{\H_\psi} = 
\int_{\partial\Sigma_\psi} \frac{\gamma |dz|}{|z+\l|^{\gamma+1}} \le \frac{2}{\sin^{\gamma+1} ((\varphi-\psi)/2) \,|\l|^{\gamma}},
\end{equation}
where we have used Lemma \ref{trig}.  Thus $r_\l^\gamma \in \H_\psi$, and there exists $C_{\varphi,\psi,\gamma}$ such that
\begin{equation} \label{rtl2}
\|r_\l^\gamma\|'_{\H_\psi} \le \frac{C_{\varphi,\psi,\gamma}}{|\l|^\gamma}, \qquad \l \in \Sigma_\varphi, \, \varphi < \pi - \psi.
\end{equation}
In particular, if $\gamma=1$, then 
\begin{equation}\label{res_sigma_cal_intro}
\|r_{\l}\|_{\H_\psi} \le \frac{2}{\sin^{2} ((\varphi-\psi)/2) \,|\l|}, \qquad \l \in \Sigma_{\pi-\varphi}.
\end{equation}
This property will be important for the proof of Lemma \ref{DH111} and hence of Theorem \ref{DM122}, and eventually of Theorem \ref{sigma_cal_intro} (see Theorem \ref{sigma_cal}). A more general estimate will be given in Corollary \ref{BF}.
\item Let  $\gamma \in (0,\pi/(2\psi)$ and $e_{\gamma,\l}(z) := e^{-\l z^\gamma}, \,z \in \Sigma_\psi$.
Then $e_{\gamma,\l} \in \H_\psi$ and 
\[
\|e_{\gamma,\l}\|'_{\H_{\psi}} = \|e_{1,\l} \|'_{\H_{\gamma\psi}} \le \int_{\partial\Sigma_\psi} |\l| e^{-\Re\l z} \, |dz|
\le \frac{1}{\cos(\varphi+\psi)} + \frac{1}{\cos (\varphi-\psi)}.
\]
\end{enumerate}
\end{exas}

More examples can be found in Sections \ref{handd} and \ref{bandh}.  In particular, Lemma \ref{Dh1} shows that the restriction of any function in $\D_\infty$ to $\Sigma_\psi, \, \psi \in (0,\pi/2)$, belongs to $\H_\psi$.

The following lemma is a result of Fatou type closely related to Corollary \ref{Fatou}.

\begin{lemma}\label{FatouH}
Let $\psi \in (0,\pi)$ and $(f_k)_{k=1}^\infty \subset \H_\psi$ be such that $\sup_{k \ge 1} \|f_k\|_{\H_\psi}<\infty$ and $f(z):=\lim_{k \to \infty} f_k(z)$ exists for all $z \in \mathbb C_+$. Then $f \in \H_\psi$.
\end{lemma}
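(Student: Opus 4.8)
The plan is to follow the scheme of Corollary \ref{Fatou} (equivalently, the completeness argument behind Corollary \ref{Ban_D}), with the $\mathcal{D}_s$-norm replaced by the $\H_\psi$-norm. Set $M := \sup_{k\ge1}\|f_k\|_{\H_\psi} < \infty$. Since $\|g\|_{H^\infty(\Sigma_\psi)} \le \|g\|_{\H_\psi}$, the sequence $(f_k)$ is uniformly bounded on $\Sigma_\psi$, and by hypothesis it converges pointwise on $\Sigma_\psi$ to $f$. Hence Vitali's theorem (in the form cited in Section \ref{prelims}) applies and shows that $f \in \Hol(\Sigma_\psi)$ and that $f_k \to f$ locally uniformly on $\Sigma_\psi$; by Weierstrass' theorem on locally uniform convergence of holomorphic functions, $f_k' \to f'$ locally uniformly on $\Sigma_\psi$, in particular pointwise.

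It remains to show $f' \in H^1(\Sigma_\psi)$. First note $\sup_{k}\|f_k'\|_{H^1(\Sigma_\psi)} \le M$. Fix $\varphi$ with $|\varphi| < \psi$; the two rays $t \mapsto te^{\pm i\varphi}$, $t>0$, lie in $\Sigma_\psi$, so $f_k'(te^{\pm i\varphi}) \to f'(te^{\pm i\varphi})$ for every $t > 0$. Applying Fatou's lemma on $(0,\infty)$ to the nonnegative integrands,
\[
\int_0^\infty \bigl(|f'(te^{i\varphi})| + |f'(te^{-i\varphi})|\bigr)\, dt \le \liminf_{k\to\infty}\int_0^\infty \bigl(|f_k'(te^{i\varphi})| + |f_k'(te^{-i\varphi})|\bigr)\, dt \le M .
\]
Taking the supremum over $|\varphi| < \psi$ in the definition \eqref{CC2} (applied to $f'$) gives $\|f'\|_{H^1(\Sigma_\psi)} \le M < \infty$, hence $f' \in H^1(\Sigma_\psi)$ and therefore $f \in \H_\psi$.

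There is no genuinely hard step here; the only points requiring a little care are the passage from local uniform convergence of $(f_k)$ to convergence of $(f_k')$ along each ray, and the observation that the supremum defining $\|\cdot\|_{H^1(\Sigma_\psi)}$ is a supremum of integrals taken along rays contained in $\Sigma_\psi$, so that Fatou's lemma can be applied ray by ray. One could instead reduce to $H^1_*(\Sigma_\psi)$ via Theorem \ref{hardy0}(i), but this is not necessary.
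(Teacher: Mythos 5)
Your argument is correct and follows essentially the same route as the paper: uniform boundedness on $\Sigma_\psi$ from the sup-norm part of $\|\cdot\|_{\H_\psi}$, Vitali's theorem to obtain holomorphy of $f$ and pointwise convergence of $f_k'$ to $f'$, and then Fatou's lemma applied ray by ray followed by the supremum over $|\varphi|<\psi$. No gaps.
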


\begin{proof}
By Theorem \ref{hardy2}, the functions $\{f_k: k\ge 1\}$ are uniformly bounded on $\Sigma_\psi$.  By Vitali's theorem, $f$ is holomorphic, and $f_k'(z) \to f'(z)$ as $k \to \infty$ for each $z \in \Sigma_\psi$.   By Fatou's Lemma, for $|\varphi| < \psi$,
\[
\int_0^\infty |f'(te^{i\varphi})| \,dt \le \liminf_{k\to\infty} \int_0^\infty |f_k'(te^{i\varphi})| \,dt \le \sup_{k\ge1} \|f_k\|_{\H_\psi}.
\]
Thus $f \in \H_\psi$.
\end{proof}

\subsection{The spaces $H^{1,1}(\C_+)$ and $\lt L^1$}

In \cite[Proposition 2.4]{BGT}, we showed that $\H_{\pi/2} \embedi \Bes$.  We will now show a stronger result that $H^{1,1}(\C_+) \embedi \lt L^1 + \C \subset \LT$, where $(\LT,\|\cdot\|)_{\mathrm{HP}}$ is the Hille-Phillips algebra as in Section \ref{prelims}.  In particular, it shows that the Laplace transforms of singular measures on $(0,\infty)$ are not in $H^{1,1}(\C_+)$, which may be of interest.

\begin{thm}\label{ACM}
If $f\in H^{1,1}(\C_{+})$ then there exists $g \in L^1(\R_+)$ such that $f = f(\infty) + \lt g$.   Moreover, there is an absolute constant $C$ such that 
\begin{equation} \label{AW1}
\|f\|_{\mathrm{HP}} \le C\|f\|_{H^{1,1}(\C_+)}.
\end{equation}
\end{thm}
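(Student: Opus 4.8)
The plan is to represent $f'$ as a Laplace transform and then ``integrate'' that representation. Since $f \in H^{1,1}(\C_+) = \H_{\pi/2}$ by \eqref{HH11}, we have $f' \in H^1(\C_+)$; we may assume $f' \not\equiv 0$, the case $f' \equiv 0$ being trivial with $g = 0$. By the Riesz factorization of $H^1$-functions on the half-plane (a standard consequence of inner--outer factorization; see \cite{Duren1}), write $f' = h_1 h_2$ with $h_1, h_2 \in H^2(\C_+)$ and $\|h_1\|_{H^2(\C_+)}^2 = \|h_2\|_{H^2(\C_+)}^2 = \|f'\|_{H^1(\C_+)}$. By the Paley--Wiener theorem there exist $\phi_j \in L^2(\R_+)$ with $h_j = \mathcal L\phi_j$ and $\|\phi_j\|_{L^2(\R_+)} = (2\pi)^{-1/2}\|h_j\|_{H^2(\C_+)}$; set $\psi := \phi_1 * \phi_2$ (note that $\psi$ need not lie in $L^1(\R_+)$). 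Applying the convolution theorem to the $L^1$-functions $e^{-\alpha t}\phi_j(t)$, where $\alpha = \Re z > 0$, one checks that $f'(z) = (\mathcal L\psi)(z)$ for all $z \in \C_+$.

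The crux is a Hardy-type inequality, namely $\int_0^\infty |\psi(t)|\,t^{-1}\,dt \le \frac12 \|f'\|_{H^1(\C_+)}$. To prove it, write $\psi(t) = \int_0^t \phi_1(s)\phi_2(t-s)\,ds$, substitute $s = tu$, and apply Tonelli's theorem:
\[
\int_0^\infty \frac{|\psi(t)|}{t}\,dt \le \int_0^1 \int_0^\infty |\phi_1(tu)|\,|\phi_2(t(1-u))|\,dt\,du.
\]
For each fixed $u \in (0,1)$, the Cauchy--Schwarz inequality in $t$ together with the scalings $v = tu$ and $v = t(1-u)$ bounds the inner integral by $(u(1-u))^{-1/2}\|\phi_1\|_{L^2(\R_+)}\|\phi_2\|_{L^2(\R_+)}$; integrating over $u$ and using $\int_0^1 (u(1-u))^{-1/2}\,du = B(1/2,1/2) = \pi$ gives the bound $\pi\|\phi_1\|_{L^2}\|\phi_2\|_{L^2} = \frac12\|f'\|_{H^1(\C_+)}$, since $\|\phi_1\|_{L^2}\|\phi_2\|_{L^2} = (2\pi)^{-1}\|f'\|_{H^1(\C_+)}$. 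I expect this inequality, together with the Riesz factorization it relies on, to be the main substance of the argument; everything else is routine bookkeeping with Laplace transforms.

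Finally, set $g(t) := -\psi(t)/t$ for $t > 0$; by the previous step $g \in L^1(\R_+)$ with $\|g\|_{L^1(\R_+)} \le \frac12\|f'\|_{H^1(\C_+)}$. Differentiating under the integral sign (legitimate since $\int_0^\infty t\,|g(t)|\,e^{-\alpha t}\,dt = \int_0^\infty |\psi(t)|\,e^{-\alpha t}\,dt < \infty$ for every $\alpha > 0$) yields $(\mathcal L g)'(z) = (\mathcal L\psi)(z) = f'(z)$ on $\C_+$, so $f - f(\infty) - \mathcal L g$ is constant there. Letting $z \to \infty$ along $(0,\infty)$ and using $\mathcal L g(z) \to 0$ (dominated convergence) together with $f(z) \to f(\infty)$ (Theorem \ref{hardy2}(ii)) shows that this constant vanishes, so $f = f(\infty) + \mathcal L g = \mathcal L(f(\infty)\delta_0 + g)$. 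Hence $f \in \lt L^1 + \C \subset \LT$, and
\[
\|f\|_{\mathrm{HP}} = |f(\infty)| + \|g\|_{L^1(\R_+)} \le \|f\|_{H^\infty(\C_+)} + \frac12\|f'\|_{H^1(\C_+)} \le \|f\|_{H^{1,1}(\C_+)},
\]
which establishes \eqref{AW1} (indeed with $C = 1$).
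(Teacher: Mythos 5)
Your argument is correct, and it reaches the conclusion by a genuinely different route from the paper. The paper proceeds by approximation: it sets $f_n(z)=f(z)-f(z+n)$, uses the reproducing formula for $\Bes$ together with Fubini to exhibit $f_n$ as the Laplace transform of an explicit function $g_n$, bounds $\|g_n\|_{L^1}$ by quoting Hardy's inequality for $H^1(\C_+)$ from Duren, invokes the Bochner--Rudin characterization of Fourier--Stieltjes transforms to conclude $f\in\LT$ in the limit, and then needs a separate $H^2$/Paley--Wiener argument to show that the resulting measure has no singular part. You instead construct the $L^1$ density in one pass: Riesz factorization plus Paley--Wiener give $f'=\mathcal L\psi$ with $\psi$ a convolution of two $L^2(\R_+)$ functions, and your Cauchy--Schwarz/Beta-function computation is exactly the standard proof of the half-plane Hardy inequality $\int_0^\infty |\psi(t)|\,t^{-1}\,dt\le \tfrac12\|f'\|_{H^1(\C_+)}$ that the paper uses as a black box. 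So the two proofs share the same analytic core, but yours packages it more efficiently: it is self-contained, it avoids both the limiting argument and the separate absolute-continuity step, and it yields the explicit constant $C=1$ in \eqref{AW1}. The only external inputs you rely on (inner--outer factorization of $H^1(\C_+)$ and Paley--Wiener for $H^2(\C_+)$) are standard and correctly normalized, and the verification that $f'=\mathcal L\psi$ via the convolution theorem applied to $e^{-\alpha t}\phi_j$ is sound.
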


\begin{proof}
Let $f\in H^{1,1}(\C_{+})$, and, for $n\in\N$, let 
\begin{align*}
f_n(z) &:=f(z)-f(z+n),\qquad z \in \C_+,  \\
g_n(t) &:= -\frac{2}{\pi} t(1-e^{-nt})\int_0^\infty \alpha e^{-\alpha t}
\int_{-\infty}^\infty
f'(\alpha+i\beta) e^{-i\beta t}\,d\beta\,d\alpha, \quad t>0.
\end{align*}
Then $f_n \in H^{1,1}(\C_+)$, and $\|f_n\|_{H^{1,1}(\C_+)} \le 2\|f\|_{H^{1,1}(\C_+)}$.  Moreover,
\begin{align*}
|g_n(t)| &\le \frac{2}{\pi}t(1-e^{-nt})\int_0^\infty \alpha e^{-\alpha t}
\int_{-\infty}^\infty
|f'(\alpha+i\beta)|\,d\beta\,d\alpha\\
&\le \frac{2}{\pi} \|f'\|_{H^1(\C_{+})}t(1-e^{-nt})\int_0^\infty \alpha e^{-\alpha t}
\,d\alpha= \frac{2}{\pi}\|f'\|_{H^1(\C_{+})}\frac{(1-e^{-nt})}{t}\\
&\le  \frac{2n}{\pi} \|f'\|_{H^1(\C_{+})}.
\end{align*} 
By the reproducing formula for $\Bes$ (see Remark \ref{Brep} or \cite[Proposition 2.20]{BGT}) and Fubini's theorem,  
\begin{align*}
f_n(z)=&-\frac{2}{\pi}\int_0^\infty
\alpha\int_{-\infty}^\infty
f'(\alpha+i\beta)
\left(\frac{1}{(z+\alpha-i\beta)^{2}}-
\frac{1}{(z+n+\alpha-i\beta)^{2}}\right)
d\beta\,d\alpha\\
=&-\frac{2}{\pi}
\int_0^\infty \alpha
\int_{-\infty}^\infty 
f'(\alpha+i\beta)
\left(\int_0^\infty e^{-(z+\alpha-i\beta)t}
t(1-e^{-nt})\,dt\right)\,d\beta\,d\alpha\\
=&-\frac{2}{\pi}\int_0^\infty e^{-zt} g_n(t)\,dt,\quad z\in \C_{+}.
\end{align*}
It follows that $(\pi/2) f_n'$ is the Laplace transform of $t g_n(t)$, and then $tg_n(t)$ is the inverse Fourier transform  of $(\pi/2)f_n'(i\cdot)\in L^1(\R)$.  By Hardy's inequality in the form of \cite[p.198]{Duren1},
\begin{equation}\label{H11}
\int_0^\infty |g_n(t)|\,dt =\int_0^\infty \frac{|tg_n(t)|}{t}\,dt \le
 \frac{\pi}{4}\|f_n'\|_{H^1(\C_{+})}
\le \frac{\pi}{2}\|f'\|_{H^1(\C_{+})}.
\end{equation}
Moreover, 
\[
f(z)=f(\infty)+\lim_{n\to\infty} f_n(z),\qquad z\in \overline{\C}_{+},
\]
and then by \cite[Theorem 1.9.2]{Rud} 
we obtain that
$f\in \mathcal{LM}$ and (\ref{AW1}) holds.

Now let
\[
u(z) := f'(z+1) -\int_0^\infty e^{-zt}e^{-t}t \, d\mu(t),\qquad z\in \C_{+}.
\]   
Since $f\in H^\infty(\C_{+})$ and $f'\in H^1(\C_{+})$, we have that $u\in H^1(\C_+) \cap H^\infty(\C_+) \subset H^2(\C_{+})$.  Hence $u = \lt h$ for some $h \in L^2(\R_+)$, so 
\[
u(z) = \int_0^\infty e^{-zt} h(t) \, dt, \qquad z \in \C_+.
\]
From the uniqueness properties of Laplace transforms it follows that 
\[
-e^{-t}t\,\mu(dt)=h(t)\,dt.
\]
Thus $\mu$ is absolutely continuous on $(0,\infty)$, with Radon-Nikodym derivative $g.$   Since $\mu$ is a bounded measure, $g \in L^1(\R_+)$, and 
\[
\mu(dt)=f(\infty)\delta_0 +g(t)\,dt.
\]
Hence $f = f(\infty) + \lt g$.
\end{proof}

\subsection{The spaces $\H_\psi$ and $\D_s$}  \label{handd}
 Since $H^{1,1}(\C_+) = \H_{\pi/2}$ (by Corollary \ref{HHH}), we have shown in Theorem \ref{ACM} that
\[
\H_{\pi/2} \embedi \lt L^1 + \C \subset \LT \embedi \Bes \embedi \D_s
\]
if $s >0$.  In the next lemma, we show that, for all $s>-1$, $\H_{\pi/2} \embedi \D_s$ and $\mathcal D_s \embedr \H_\psi$ for every $\psi\in (0,\pi/2)$.  Moreover it follows that $\D_s^\infty \embedr \H_\psi$.  

\begin{lemma}\label{Dh1}
\begin{enumerate}[\rm(i)]
\item  If $f\in \H_{\pi/2}$, then $f\in \mathcal{D}_s^\infty$ for every $s>-1$, and
\begin{align}\label{embedh}
\|f\|_{\mathcal D_{s,0}}
&\le B\left(\frac{s+1}{2},\frac{1}{2}\right)\|f'\|_{H^1(\C_+)}, \\
\|f\|_{\mathcal D_{s}^\infty }
&\le \max \left\{1, B\left(\frac{s+1}{2},\frac{1}{2}\right)\right\}\|f\|_{\H_{\pi/2}}.  \notag
\end{align}
\item If $f\in \mathcal{D}_s$, $s>-1$, then 
$f \in \H_\psi$ for every $\psi \in (0,\pi/2)$, and
\begin{equation}\label{Haab}
\|f'\|_{H^1(\Sigma_\psi)}\le \frac{2^{s}}{\pi\cos^{s+2}(\psi/2+\pi/4)}\|f\|_{\mathcal{D}_{s,0}}.
\end{equation}
\end{enumerate}
\noindent Thus, for all $\psi \in (0,\pi/2)$ and $s>-1$, there are natural continuous embeddings
\[
\H_{\pi/2} \embedi \mathcal D_s \embedr  \H_\psi.
\]
\end{lemma}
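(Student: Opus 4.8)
The two inclusions are proved by dual routines: for (i) one transports the half-plane Hardy structure of $f'$ through the polar form of the $\mathcal{V}_s$-norm, and for (ii) one runs the reproducing formula of Corollary~\ref{Repr} backwards and estimates the resulting kernel ray by ray in $\Sigma_\psi$.

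For (i), let $f\in\H_{\pi/2}$, so $f'\in H^1(\Sigma_{\pi/2})$. By Corollary~\ref{HHH} this is the same as $f'\in H^1(\C_+)$ with identical norms, and by \eqref{Lux11} (or Theorem~\ref{hardy0}(i)) one has $\int_0^\infty|f'(\rho e^{i\varphi})|\,d\rho\le\|f'\|_{H^1(\C_+)}$ for every $\varphi$ with $|\varphi|<\pi/2$. Feeding this into the polar expression $\|f'\|_{\mathcal{V}_s}=\int_{-\pi/2}^{\pi/2}\cos^s\varphi\int_0^\infty|f'(\rho e^{i\varphi})|\,d\rho\,d\varphi$ from \eqref{norm}, and using $\int_{-\pi/2}^{\pi/2}\cos^s\varphi\,d\varphi=B(\tfrac{s+1}{2},\tfrac12)$, gives the first line of \eqref{embedh}. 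Since $f$ is bounded on $\Sigma_{\pi/2}$ with $\|f\|_\infty=\|f\|_{H^\infty(\Sigma_{\pi/2})}$ by Theorem~\ref{hardy2}, adding $\|f\|_\infty$ yields the bound on $\|f\|_{\mathcal{D}_s^\infty}$, hence $\H_{\pi/2}\embedi\mathcal{D}_s^\infty$; as $|f(\infty)|\le\|f\|_\infty$ this also gives $\H_{\pi/2}\embedi\mathcal{D}_s$.

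For (ii), fix $s>-1$, $\psi\in(0,\pi/2)$ and $f\in\mathcal{D}_s$. Corollary~\ref{Repr} and \eqref{qder} give
\[
f'(z)=(s+1)\frac{2^s}{\pi}\int_0^\infty\alpha^s\int_{-\infty}^\infty\frac{f'(\alpha+i\beta)}{(z+\alpha-i\beta)^{s+2}}\,d\beta\,d\alpha .
\]
Fix $\varphi$ with $|\varphi|<\psi$, put $z=te^{i\varphi}$ and $\lambda=\alpha-i\beta\in\C_+$, pass to absolute values and integrate in $t$; by Fubini the estimate reduces to controlling $\int_0^\infty|te^{i\varphi}+\lambda|^{-(s+2)}\,dt$. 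Since $|\arg\lambda|<\pi/2$ we have $\bigl|\tfrac{\varphi-\arg\lambda}{2}\bigr|<\tfrac{\psi}{2}+\tfrac{\pi}{4}<\tfrac{\pi}{2}$, so Lemma~\ref{trig}(i) together with monotonicity of $\cos$ on $[0,\pi/2]$ yields $|te^{i\varphi}+\lambda|\ge\cos(\tfrac{\psi}{2}+\tfrac{\pi}{4})(t+|\lambda|)$, hence $\int_0^\infty|te^{i\varphi}+\lambda|^{-(s+2)}\,dt\le\frac{1}{(s+1)\cos^{s+2}(\psi/2+\pi/4)\,|\lambda|^{s+1}}$. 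Recombining, and using $|\lambda|=(\alpha^2+\beta^2)^{1/2}$ so that the remaining double integral is precisely $\|f'\|_{\mathcal{V}_s}=\|f\|_{\mathcal{D}_{s,0}}$, gives the uniform ray bound $\int_0^\infty|f'(te^{i\varphi})|\,dt\le\frac{2^s}{\pi\cos^{s+2}(\psi/2+\pi/4)}\|f\|_{\mathcal{D}_{s,0}}$ for all $|\varphi|<\psi$; summing the two bounding rays in the definition \eqref{CC2} (and noting that $-\varphi$ also satisfies $|-\varphi|<\psi$) yields \eqref{Haab}. Thus $f'|_{\Sigma_\psi}\in H^1(\Sigma_\psi)$, and combining with the $H^\infty(\Sigma_\psi)$-bound from Corollary~\ref{Cangle} shows $f|_{\Sigma_\psi}\in\H_\psi$ and $\mathcal{D}_s\embedr\H_\psi$. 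Chaining with $\mathcal{D}_s^\infty\embedi\mathcal{D}_s$ (valid since $\|f\|_{\mathcal{D}_s}\le\|f\|_{\mathcal{D}_s^\infty}$) gives $\mathcal{D}_s^\infty\embedr\H_\psi$, and combining with (i) produces the displayed chain $\H_{\pi/2}\embedi\mathcal{D}_s\embedr\H_\psi$.

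Part (i) is routine — essentially just Corollary~\ref{HHH} and a Beta integral. The only real work is the ray estimate in (ii); there the point to watch is that the $H^1(\Sigma_\psi)$-norm is governed by the boundary rays $e^{\pm i\psi}\R_+$, whereas $f$ is a priori only holomorphic on the open sector, so one must obtain the bound uniformly over all $|\varphi|<\psi$ rather than argue on the boundary itself. The constant in \eqref{Haab} blows up as $\psi\uparrow\pi/2$, reflecting that the restriction $\mathcal{D}_s\embedr\H_\psi$ genuinely requires $\psi<\pi/2$.
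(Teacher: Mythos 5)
Your proof is correct and follows essentially the same route as the paper's: part (i) is the polar form of the $\mathcal{V}_s$-norm combined with Corollary \ref{HHH}/\eqref{Lux11} and the Beta integral, and part (ii) runs the reproducing formula of Corollary \ref{Repr} through Lemma \ref{trig}(i) and Fubini to get the uniform ray estimate, your only (harmless) deviation being that you work on interior rays $|\varphi|<\psi$ and take a supremum where the paper writes the computation directly for $te^{\pm i\psi}$. One shared blemish: adding the two rays in \eqref{CC2} literally produces the constant $2^{s+1}$ rather than the $2^{s}$ stated in \eqref{Haab} — but the paper's own proof has exactly the same factor-of-two looseness, so this is not a defect of your argument.
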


Note that the estimates  \eqref{rtl2} and \eqref{embedh} for functions $r_\l^\gamma$ reproduce the estimate \eqref{rtl}, with different constants.

\begin{proof}
Let $s >-1$ be fixed, and let $f \in \H_{\pi/2}$.  Using \eqref{equivalent}, we have
\begin{align*}
\|f\|_{\mathcal D_{s,0}}&= \int_{-\pi/2}^{\pi/2}\cos^s\varphi
\int_0^\infty |f'(te^{i\varphi})|\,dt\,d\varphi \\
&\le \left(\int_{-\pi/2}^{\pi/2}\cos^s\varphi\,d\varphi\right)
\sup_{|\varphi|\le \pi/2}\int_0^\infty |f'(te^{i\varphi})|\,dt \\
&= B\left(\frac{s+1}{2},\frac{1}{2}\right)\|f'\|_{H^{1}(\C_+)},
\end{align*}
and (i) follows.

To prove (ii),  note that if $f \in \mathcal D_s$, then by  Corollary \ref{Repr},
\begin{equation*}
f'(z)=\frac{(s+1)2^s}{\pi}\int_0^\infty \alpha^s\int_{-\infty}^\infty \frac{f'(\alpha+i\beta)}{(z+\alpha-i\beta)^{s+2}}\,d\beta\,d\alpha,
\quad z\in \C_{+}.
\end{equation*}
Hence  using \eqref{Fc},  for every $\psi \in (0,\pi/2)$,
we obtain
\begin{align*}
\lefteqn {\hskip-10pt 2^{-s}\int_0^\infty |f'(te^{\pm i\psi})|\,dt}\\
&\le \frac{(s+1)}{\pi}\int_0^\infty
\int_0^\infty \alpha^s\int_{-\infty}^\infty \frac{|f'(\alpha+i\beta)|}{|te^{\pm i\psi}+\alpha-i\beta|^{s+2}}\,d\beta\,d\alpha\,dt\\
&\le  \frac{s+1}{\pi\cos^{s+2}(\psi/2+\pi/4)}
\int_0^\infty \alpha^s\int_{-\infty}^\infty 
\left(
\int_0^\infty \frac{|f'(\alpha+i\beta)|}{(t+|\alpha+i\beta|)^{s+2}}\right) \,dt\,d\beta\,d\alpha\\
&= \frac{1}{\pi\cos^{s+2}(\psi/2+\pi/4)}
\int_0^\infty \alpha^s\int_{-\infty}^\infty \frac{|f'(\alpha+i\beta)|}{|\alpha+i\beta|^{s+1}}\,d\beta\,d\alpha\\
&=\frac{1}{\pi\cos^{s+2}(\psi/2+\pi/4)}\|f'\|_{\mathcal{V}_s},
\end{align*}
and \eqref{Haab} follows.
\end{proof}

For a function $f \in \Hol(\Sigma_\psi)$, $\gamma>0$ and $0<\varphi\le \min\{\pi,\psi/\gamma\}$ , define $f_\gamma \in \Hol(\Sigma_{\varphi})$ by
\[
f_\gamma(z):= f(z^\gamma), \quad z \in \Sigma_{\varphi}.
\]

\begin{cor} \label{gggC}
Let $f \in \D_s, \, s>-1$, and let $\gamma \in (0,1)$.   Then $f_\gamma \in \D_\sigma^\infty \cap \H_{\pi/2}$ for all $\sigma>-1$.   Moreover, for each $s>-1$ and $\sigma>-1$, there exist constants $C_{s,\sigma,\gamma}$ and $\tilde C_{s,\gamma}$ such that
\[
\|f_\gamma\|_{\D_\sigma^\infty} \le C_{s,\sigma,\gamma} \|f\|_{\D_s}  \quad \text{and} \quad 
\|f_\gamma\|_{\H_{\pi/2}} \le \tilde C_{s,\gamma} \|f\|_{\D_s},
\qquad f \in \D_s.
\]
\end{cor}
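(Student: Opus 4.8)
The plan is to deduce the statement from the embeddings between $\D_s$, $\H_{\pi/2}$ and $\D_\sigma^\infty$ already established in Lemma \ref{Dh1}, together with the scaling isomorphism of Theorem \ref{hardy0}(ii) for Hardy spaces on sectors. The decisive point is that $\gamma\pi/2 < \pi/2$: thus $z \mapsto z^\gamma$ maps $\C_+ = \Sigma_{\pi/2}$ into $\Sigma_{\gamma\pi/2} \subset \C_+$, so $f_\gamma(z) = f(z^\gamma)$ is a well-defined holomorphic function on $\C_+$; and since $f$ has a sectorial limit at infinity by Corollary \ref{Repr}, $f_\gamma$ has the sectorial limit $f_\gamma(\infty) = f(\infty)$ at infinity, with $|f(\infty)| \le \|f\|_{\D_s}$ by Corollary \ref{Cangle}.

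First I would bound $f_\gamma'$ in $H^1(\C_+)$. Applying Lemma \ref{Dh1}(ii) with $\psi = \gamma\pi/2 \in (0,\pi/2)$ shows $f' \in H^1(\Sigma_{\gamma\pi/2})$ and, by \eqref{Haab},
\[
\|f'\|_{H^1(\Sigma_{\gamma\pi/2})} \le \frac{2^s}{\pi\cos^{s+2}((\gamma+1)\pi/4)}\,\|f\|_{\D_{s,0}}.
\]
Now $f_\gamma'(z) = \gamma z^{\gamma-1} f'(z^\gamma)$ is exactly the image of $f'$ under the isometric isomorphism $H^1(\Sigma_{\gamma\pi/2}) \to H^1(\Sigma_{\pi/2})$ of Theorem \ref{hardy0}(ii) (with $\psi_1 = \gamma\pi/2$, $\psi_2 = \pi/2$, so $\psi_1/\psi_2 = \gamma$). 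Hence $f_\gamma' \in H^1(\Sigma_{\pi/2}) = H^1(\C_+)$ (the identification being Corollary \ref{HHH}) and $\|f_\gamma'\|_{H^1(\C_+)} = \|f'\|_{H^1(\Sigma_{\gamma\pi/2})}$. Consequently $f_\gamma \in \H_{\pi/2} = H^{1,1}(\C_+)$, and combining \eqref{bound_for_sup} with $f_\gamma(\infty) = f(\infty)$,
\[
\|f_\gamma\|_{\H_{\pi/2}} \le |f(\infty)| + 2\|f_\gamma'\|_{H^1(\C_+)} \le \Bigl(1 + \frac{2^{s+1}}{\pi\cos^{s+2}((\gamma+1)\pi/4)}\Bigr)\|f\|_{\D_s},
\]
which is the second asserted estimate with $\tilde C_{s,\gamma} := 1 + 2^{s+1}\pi^{-1}\cos^{-(s+2)}((\gamma+1)\pi/4)$.

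Finally I would feed this back into Lemma \ref{Dh1}(i): since $f_\gamma \in \H_{\pi/2}$, for every $\sigma > -1$ we get $f_\gamma \in \D_\sigma^\infty$ and, by \eqref{embedh},
\[
\|f_\gamma\|_{\D_\sigma^\infty} \le \max\Bigl\{1, B\Bigl(\tfrac{\sigma+1}{2}, \tfrac12\Bigr)\Bigr\}\,\|f_\gamma\|_{\H_{\pi/2}} \le C_{s,\sigma,\gamma}\|f\|_{\D_s},
\]
completing the proof. I do not anticipate a real obstacle: the argument is essentially bookkeeping of sector angles, the two things to watch being that $\gamma < 1$ forces $\gamma\pi/2 < \pi/2$ (so Lemma \ref{Dh1}(ii) is available at that angle and the constant $\cos^{-(s+2)}((\gamma+1)\pi/4)$ is finite) and that the weight $z^{\gamma-1}$ produced by Theorem \ref{hardy0}(ii) matches $f_\gamma'(z) = \gamma z^{\gamma-1}f'(z^\gamma)$ exactly. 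A self-contained alternative would start from the reproducing formula of Corollary \ref{Repr} for $f'$, estimate $\int_0^\infty |f_\gamma'(te^{\pm i\varphi})|\,dt$ via the substitution $\rho = t^\gamma$, and bound $\int_0^\infty |\rho e^{i\psi}+\overline\lambda|^{-(s+2)}\,d\rho$ uniformly for $|\psi| < \gamma\pi/2$ and $|\arg\lambda| < \pi/2$, but the route above is shorter.
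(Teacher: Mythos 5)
Your proof is correct and follows essentially the same route as the paper's: Lemma \ref{Dh1}(ii) to place $f'$ in $H^1(\Sigma_{\gamma\pi/2})$, the isometric scaling isomorphism (you invoke Theorem \ref{hardy0}(ii) directly where the paper cites Lemma \ref{simple}, which is derived from it) to land $f_\gamma$ in $\H_{\pi/2}$, and Lemma \ref{Dh1}(i) to conclude $f_\gamma\in\D_\sigma^\infty$. The only difference is that you track the constants explicitly, which the paper does not bother to do.
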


\begin{proof}
Using Lemma \ref{Dh1}, (i) and (ii), and Lemma \ref{simple}, we see firstly that $f \in \H_{\pi\gamma/2}$, and then $f_\gamma \in \H_{\pi/2} \subset \D_\sigma^\infty$.    Moreover each of the embeddings from Lemma \ref{Dh1} is continuous, and the map $f \mapsto f_\gamma$ is isometric.
\end{proof}

Now we relate the spaces $\mathcal H_\psi$ and $\mathcal D_s$ to another class of spaces used in the literature on functional calculi.
For $\psi \in (0,\pi)$, let 
\begin{equation} \label{edef}
\mathcal E_\psi:=\left \{f \in {\Hol} (\Sigma_\psi): \|f\|_\psi:=\sup_{\varphi \in (0,\psi)}\int_{\partial \Sigma_{\varphi}} \frac{|f(z)|}{|z|}|dz| <\infty\right \}.
\end{equation}
It is easy to see that $(\mathcal E_\psi, \|\cdot \|_{\psi})$ is a Banach space,
and that
\[
\mathcal E_\psi=\{f \in {\Hol} (\Sigma_\psi): f(z)/z \in H^1(\Sigma_\psi)\}.
\]

\begin{prop}\label{weight1}
Let $f \in \mathcal E_\psi$ and let $g(z):=f(z)/z$.
Then, for every $\varphi\in (0,\psi)$,
\begin{equation}\label{Q2}
\|f'\|_{H^{1}(\Sigma_\varphi)} \le  \frac{1}{2\pi}
\left(\frac{\pi-\psi-\varphi}{\sin(\psi+\varphi)}+
\frac{\pi-\psi+\varphi}{\sin(\psi-\varphi)}\right)\|g\|_{H^1(\Sigma_\psi)},
\end{equation}
where we set $\frac{0}{\sin 0}:= 1.$ Thus, $\mathcal E_\psi \embedr \mathcal H_{\varphi}, \, \varphi\in (0,\psi)$, and $\mathcal E_{\pi/2} \embedi \mathcal D_s,\,  s>0$.
\end{prop}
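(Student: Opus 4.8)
The plan is to derive a Cauchy-type reproducing formula for $f'$ on $\Sigma_\psi$ and then integrate it along rays, reducing everything to one elementary one-variable integral. Write $g(z):=f(z)/z$; then $f\in\mathcal E_\psi$ means exactly $g\in H^1(\Sigma_\psi)$, and \eqref{iso} applied to $g$ gives $\|f\|_\psi=\int_{\partial\Sigma_\psi}|f(\lambda)|\,|\lambda|^{-1}\,|d\lambda|=\|g\|_{H^1(\Sigma_\psi)}$. Starting from the Cauchy formula \eqref{Boch1} for $g$ and using $\tfrac{z}{\lambda-z}=-1+\tfrac{\lambda}{\lambda-z}$, I would obtain
\[
f(z)=zg(z)=c_0+\frac{1}{2\pi i}\int_{\partial\Sigma_\psi}\frac{f(\lambda)}{\lambda-z}\,d\lambda,\qquad c_0:=-\frac{1}{2\pi i}\int_{\partial\Sigma_\psi}g(\lambda)\,d\lambda,
\]
both integrals converging absolutely because $\int_{\partial\Sigma_\psi}|g|\,|d\lambda|=\|g\|_{H^1(\Sigma_\psi)}<\infty$. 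Differentiating under the integral sign — legitimate since $|\lambda|/|\lambda-z|^2$ is bounded on $\partial\Sigma_\psi$ locally uniformly in $z\in\Sigma_\psi$ — yields $f'(z)=\frac{1}{2\pi i}\int_{\partial\Sigma_\psi}\frac{f(\lambda)}{(\lambda-z)^2}\,d\lambda$. Letting $|z|\to\infty$ inside any $\Sigma_\varphi$ with $\varphi<\psi$ and dominating $|\lambda|/|\lambda-z|$ by $1/\cos\tfrac{\varphi+\pi-\psi}{2}$ via Lemma \ref{trig}(ii) shows in passing that $f(\infty)=c_0$ exists, with $|f(\infty)|\le\tfrac1{2\pi}\|g\|_{H^1(\Sigma_\psi)}$.

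For \eqref{Q2}, I would take moduli, apply Tonelli, and evaluate the inner integral. With $\lambda=se^{\pm i\psi}$ one has $|\lambda-te^{i\varphi'}|^2=t^2-2ts\cos(\psi\mp\varphi')+s^2$, and completing the square gives the elementary identity $\int_0^\infty\tfrac{dt}{t^2-2ts\cos\theta+s^2}=\tfrac{\pi-\theta}{s\sin\theta}$ for $\theta\in(0,2\pi)$ (the value at $\theta=\pi$ being the limit $1/s$). Since $|f(\lambda)|/|\lambda|=|g(\lambda)|$, summing the two boundary rays of $\Sigma_\psi$, and likewise treating the ray $te^{-i\varphi'}$, produces for every $\varphi'\in(0,\psi)$
\[
\int_0^\infty\bigl(|f'(te^{i\varphi'})|+|f'(te^{-i\varphi'})|\bigr)\,dt\le\frac{1}{2\pi}\left(\frac{\pi-\psi+\varphi'}{\sin(\psi-\varphi')}+\frac{\pi-\psi-\varphi'}{\sin(\psi+\varphi')}\right)\|g\|_{H^1(\Sigma_\psi)}.
\]
The coefficient is continuous in $\varphi'$ on $[0,\varphi]$ (the apparent pole at $\psi+\varphi'=\pi$ is removable), hence bounded, so $f'\in H^1(\Sigma_\varphi)$. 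Then \eqref{iso}, applied now to $f'$, identifies $\|f'\|_{H^1(\Sigma_\varphi)}$ with the boundary integral $\int_0^\infty(|f'(te^{i\varphi})|+|f'(te^{-i\varphi})|)\,dt$, and the displayed bound at $\varphi'=\varphi$ is precisely \eqref{Q2}. Note that no monotonicity of the coefficient in $\varphi'$ is needed: the identity \eqref{iso} does that work.

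Finally, the embeddings. Since $f'\in H^1(\Sigma_\varphi)$ we have $f\in\mathcal H_\varphi$, and by Theorem \ref{hardy2}(iii) and \eqref{hnorms}, $\|f\|'_{\mathcal H_\varphi}=|f(\infty)|+\|f'\|_{H^1(\Sigma_\varphi)}$, so the two bounds above give $\mathcal E_\psi\embedi\mathcal H_\varphi$ for $\varphi\in(0,\psi)$ with norm control. For $\mathcal E_{\pi/2}\embedi\mathcal D_s$, $s>0$, take $\psi=\pi/2$, so the coefficient collapses to $1/(2\cos\varphi')$; then, using the polar form of the $\mathcal V_s$-norm in \eqref{norm},
\[
\|f'\|_{\mathcal V_s}=\int_0^{\pi/2}\cos^s\varphi\int_0^\infty\bigl(|f'(\rho e^{i\varphi})|+|f'(\rho e^{-i\varphi})|\bigr)\,d\rho\,d\varphi\le\frac{\|g\|_{H^1(\C_+)}}{2}\int_0^{\pi/2}\cos^{s-1}\varphi\,d\varphi=\frac{B(s/2,1/2)}{4}\,\|f\|_{\pi/2},
\]
which is finite exactly when $s>0$; combined with $|f(\infty)|\le\tfrac1{2\pi}\|f\|_{\pi/2}$ this puts $f$ in $\mathcal D_s$ with controlled norm. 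The step needing most care will be the bookkeeping around the elementary integral — the exact constant $\tfrac{\pi-\theta}{\sin\theta}$ and the sign changes / removable singularity when $\psi+\varphi\ge\pi$ — together with using \eqref{iso} to upgrade the ray bound to a genuine $H^1(\Sigma_\varphi)$-estimate.
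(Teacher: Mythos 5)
Your proof is correct and follows essentially the same route as the paper: the Cauchy representation $f'(z)=\frac{1}{2\pi i}\int_{\partial\Sigma_\psi}\frac{\lambda g(\lambda)}{(\lambda-z)^2}\,d\lambda$, Fubini, and the elementary integral $\int_0^\infty\frac{d\rho}{\rho^2-2\rho\cos\theta+1}=\frac{\pi-\theta}{\sin\theta}$ yielding exactly the constant in \eqref{Q2}. The only (harmless) deviations are that you derive the kernel by differentiating the Cauchy formula for $g$ rather than invoking Cauchy's theorem for $f'$ directly, and you prove the embedding $\mathcal E_{\pi/2}\embedi\mathcal D_s$ by integrating the ray estimate against $\cos^s\varphi$ instead of citing Lemma \ref{Dh1}, which gives an explicit constant.
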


\begin{proof}
By Cauchy's theorem,  for every $z\in\Sigma_\psi$,
\[
f'(z)
=\frac{1}{2\pi i}\int_{\partial\Sigma_\psi}\,
\frac{\l g(\l)\,d\l}{(z-\l)^2},
\]
Hence, for every $\varphi\in(0,\psi)$, by Fubini's theorem,
\begin{align*}
\int_0^\infty \left(|f'(\rho e^{i\varphi})| + |f'(\rho e^{-i\varphi})|\right)\,d\rho 
\le\frac{1}{2\pi}
\int_{\partial\Sigma_\psi}|g(\l)| \, J(\l, \varphi)
\, |d\l|,
\end{align*}
where
\begin{align*}
J(\l, \varphi)&:= \int_0^\infty\left(\frac{|\l|}{|\rho e^{i\varphi}-\l|^2}+
\frac{|\l|}{|\rho e^{-i\varphi}-\l|^2}\right)\ d\rho\\
\\
&=\int_0^\infty\frac{d\rho}{\rho^2-2\rho\cos(\psi+\varphi)+1}
+\int_0^\infty\frac{d\rho}{\rho^2-2\rho\cos(\psi-\varphi)+1}
\\
&=\frac{\pi-\psi-\varphi}{\sin(\psi+\varphi)}+
\frac{\pi-\psi+\varphi}{\sin(\psi-\varphi)},
\end{align*}
in view of \cite[p.311, (25)]{Prud}.  Hence $f \in \H_\varphi$ 
and \eqref{Q2} follows. Thus, $\mathcal E_\psi \embedr \mathcal H_{\varphi}$ for all  $\varphi\in (0,\psi)$.
Recalling Lemma \ref{Dh1} and that $\H_{\pi/2} = H^{1,1}(\mathbb C_+)$, we have proved the inclusion $\mathcal E_{\pi/2} \embedi \mathcal D_s, \, s>0$,
as well.
\end{proof}

Note that the inclusions in Proposition \ref{weight1} are strict. Indeed,  if $f(z)=z(z+1)^{-1}$ then  one has
$f \in \mathcal H_\psi$ for every $\psi \in (0,\pi)$, but $f \notin \mathcal E_{\psi}$ for any $\psi \in (0,\pi)$.  Moreover, if
\[
f(z)=\frac{z\,e^{-z}}{(z+1)\log^2(z+2)},
\]
then $z^{-1}f \in H^1(\C_{+})$, and $f \in \mathcal{D}_0$, but 
$f \not\in \mathcal{D}_s$ for any $s\in (-1,0)$.

The spaces $\mathcal E_\psi$ are studied in \cite[Chapter 10]{Hytonen};
see also \cite[Appendix H2]{Hytonen}, \cite[Section 6]{Haase_Sp} and \cite[Appendix C]{Haak_H}. 
To ensure the algebra property and to relate the spaces to the $H^\infty$-calculus, the authors considered the algebras $H^\infty(\Sigma_\psi)\cap \mathcal E_\psi$.  Lemma \ref{weight1} shows that the spaces  $\mathcal E_\psi$ are fully covered within the framework of the algebras $\mathcal D_\infty$ and $\mathcal H_\psi$.  These algebras will be associated to the more powerful functional calculi constructed in Corollary \ref{Compat} and Theorem \ref{sigma_cal}.

\subsection{Bernstein functions and $\H_\psi$}  \label{bandh}
To illustrate the relevance of the Hardy-Sobolev spaces, we show that the ``resolvent'' of a Bernstein function belongs to an appropriate Hardy-Sobolev space.  This observation will be used in Section \ref{norm_estimates} to provide a new proof of the permanence of subordination for holomorphic semigroups, one of the main results of \cite{GT15}; see also \cite{BGTad} and  \cite{BGT}.

Let $g$ be a Bernstein function, $\psi\in (0,\pi/2)$, and $\lambda \in \Sigma_{\pi-\psi}$.   
Let
\[
f(z, \lambda):=(\lambda+g(z))^{-1},\qquad
z\in \Sigma_\psi.
\]
If $\l \in \C_+$, it follows from Lemmas \ref{bsds} and \ref{Dh1} that $f(\cdot, \lambda) \in \mathcal \H_\psi$, and $\|f(\cdot,\l)\|_{\H_\psi} \le C_\psi/|\l|$, where $C_\psi$ is independent of $g$ and $\l\in\C_+$.   In order to obtain the correct angle, we will need to extend this to $\l \in \Sigma_{\pi-\varphi}$, where $\varphi \in (\psi,\pi/2)$.

\begin{cor}\label{BF}
Let $g$ be a Bernstein function, $\psi\in (0,\pi/2)$, $\varphi \in (\psi,\pi)$ and $\lambda \in \Sigma_{\pi-\varphi}$.  Then
\begin{equation}\label{bernst}
\|f(\cdot, \lambda)\|_{\mathcal \H_\psi} \le
2\left(\frac{1}{\sin(\min(\varphi,\pi/2))}+\frac{2}{\cos\psi \sin^2((\varphi-\psi)/2)}\right)\frac{1}{|\lambda|}.
\end{equation}
\end{cor}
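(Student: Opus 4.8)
The plan is to estimate the two parts of $\|f(\cdot,\lambda)\|_{\H_\psi}=\|f(\cdot,\lambda)\|_{H^\infty(\Sigma_\psi)}+\|f'(\cdot,\lambda)\|_{H^1(\Sigma_\psi)}$ directly on the rays of $\Sigma_\psi$, using only Lemma~\ref{trig}(i) and the properties (B1)--(B3); this treats all $\lambda\in\Sigma_{\pi-\varphi}$ at once, not merely $\lambda\in\C_+$ as covered before. The crucial observation is that for $z\in\Sigma_\psi$ one has $g(z)\in\Sigma_\psi$ by (B1), so the arguments of $\lambda$ and of $g(z)$ differ by less than $(\pi-\varphi)+\psi=\pi-(\varphi-\psi)<\pi$, and hence Lemma~\ref{trig}(i) gives
\[
|\lambda+g(z)|\ge \sin\bigl((\varphi-\psi)/2\bigr)\bigl(|\lambda|+|g(z)|\bigr),\qquad z\in\Sigma_\psi,
\]
with the constant $\sin((\varphi-\psi)/2)>0$ precisely because $\varphi>\psi$. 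In particular $\lambda+g(z)\neq 0$ on $\Sigma_\psi$, so $f(\cdot,\lambda)\in\Hol(\Sigma_\psi)$, and bounding $|g(z)|\ge0$ yields immediately $\|f(\cdot,\lambda)\|_{H^\infty(\Sigma_\psi)}\le\bigl(\sin((\varphi-\psi)/2)\,|\lambda|\bigr)^{-1}$.

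Next I would bound $\|f'(\cdot,\lambda)\|_{H^1(\Sigma_\psi)}$. Since $f'(z,\lambda)=-g'(z)(\lambda+g(z))^{-2}$, using (B3) in the forms $|g'(te^{i\phi})|\le g'(t\cos\phi)$ and $|g(te^{i\phi})|\ge\Re g(te^{i\phi})\ge g(t\cos\phi)$ together with the displayed lower bound, for each $|\phi|<\psi$ we get
\begin{align*}
\int_0^\infty|f'(te^{i\phi},\lambda)|\,dt
&\le \frac{1}{\sin^2((\varphi-\psi)/2)}\int_0^\infty\frac{g'(t\cos\phi)\,dt}{(|\lambda|+g(t\cos\phi))^2}\\
&= \frac{1}{\sin^2((\varphi-\psi)/2)\cos\phi}\int_{g(0+)}^{g(\infty)}\frac{du}{(|\lambda|+u)^2}
\le \frac{1}{\sin^2((\varphi-\psi)/2)\cos\phi\,|\lambda|},
\end{align*}
where I substituted $s=t\cos\phi$ and then $u=g(s)$, the latter being legitimate since $g$ is increasing on $(0,\infty)$ by (B2). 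The same bound holds with $e^{i\phi}$ replaced by $e^{-i\phi}$; adding the two and taking the supremum over $|\phi|<\psi$, where $\sup_{|\phi|<\psi}(\cos\phi)^{-1}=(\cos\psi)^{-1}$, gives
\[
\|f'(\cdot,\lambda)\|_{H^1(\Sigma_\psi)}\le \frac{2}{\cos\psi\,\sin^2((\varphi-\psi)/2)\,|\lambda|}.
\]

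Finally I would combine the two estimates. Since $\cos\psi\,\sin((\varphi-\psi)/2)\le1$, the $H^\infty$-bound is dominated by the $H^1$-bound, so $\|f(\cdot,\lambda)\|_{\H_\psi}\le 4\bigl(\cos\psi\,\sin^2((\varphi-\psi)/2)\,|\lambda|\bigr)^{-1}$, which is at most the right-hand side of \eqref{bernst} because the latter also contains the nonnegative term $2\bigl(\sin(\min(\varphi,\pi/2))\,|\lambda|\bigr)^{-1}$. (That term is the natural one if instead one bounds the sup-norm via \eqref{bound_for_sup} of Theorem~\ref{hardy2}, i.e.\ by $|f(\infty,\lambda)|+\|f'(\cdot,\lambda)\|_{H^1(\Sigma_\psi)}$: here $g(\infty)=\lim_{t\to\infty}g(t)\in(0,\infty]$ by monotonicity, so $|f(\infty,\lambda)|=|\lambda+g(\infty)|^{-1}$, and $g(\infty)\ge0$ makes this directly estimable by $\bigl(\sin(\min(\varphi,\pi/2))\,|\lambda|\bigr)^{-1}$.)

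The main obstacle is purely the angle bookkeeping that makes Lemma~\ref{trig}(i) applicable with a strictly positive constant — exactly where the hypothesis $\varphi>\psi$ is used — combined with the fact that $g$ need neither extend continuously to $\overline{\Sigma_\psi}$ nor be bounded; the substitution $u=g(s)$ and the real-part comparisons (B1)--(B3) are precisely what circumvent this. Everything else (the Fubini step, the two one-dimensional substitutions, and the elementary comparison of constants) is routine.
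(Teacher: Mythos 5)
Your proof is correct and takes essentially the same route as the paper's: the lower bound $|\lambda+g(z)|\ge\sin((\varphi-\psi)/2)\,(|\lambda|+|g(z)|)$ obtained from (B1) and Lemma~\ref{trig}(i), the comparisons (B3), the monotone substitution justified by (B2), and a separate estimate for the value at infinity (your parenthetical via \eqref{bound_for_sup} is exactly the paper's step). The only cosmetic differences are that you bound the integral on every interior ray and take the supremum instead of appealing to the boundary identity \eqref{iso}, and that you additionally observe the direct pointwise $H^\infty$ bound, which lets the claimed constant follow either way.
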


\begin{proof}
For fixed $\psi \in (0,\pi/2)$,   $\varphi\in (\psi,\pi)$, and
$\lambda\in \Sigma_{\pi-\varphi}$, observe that
\[
\|f'(\cdot,\lambda)\|_{H^1(\Sigma_\psi)}
\le \int_0^\infty \left( \frac{|g'(te^{i\psi})|}{|\lambda+g(te^{i\psi})|^2} + \frac{|g'(te^{-i\psi})|}
{|\lambda+g(te^{-i\psi})|^2} \right) \,dt.
\]
Using the property (B1) for Bernstein functions and \eqref{Fc},  we have
\[
|\lambda+g(te^{\pm i\psi})|\ge
\sin((\varphi-\psi)/2)(|\lambda|+|g(te^{\pm i\psi})|), \qquad t \ge 0.
\]
Moreover, in view of (B3), for all $t \ge 0$,
\begin{align*}
|g(te^{\pm i\psi})|\ge
\Re g(te^{\pm i\psi})\ge g(t\cos\psi)\quad \text{and} \quad
|g'(e^{\pm i\psi}t)|&\le g'(t\cos\psi).
\end{align*}
Using (B2), we have
\begin{align*}
\|f'(\cdot, \lambda)\|_{H^1(\Sigma_\psi)}
&\le \frac{2}{\sin^2((\varphi-\psi)/2)}
\int_0^\infty\frac{g'(t\cos\psi)}
{(|\lambda|+g(t\cos\psi ))^2} \,dt\\
&\le  \frac{2}{\cos\psi \sin^2((\varphi-\psi)/2)}
\int_0^\infty\frac{ds}
{(|\lambda|+s)^2}\\
&=
\frac{2}{\cos\psi \sin^2((\varphi-\psi)/2)}\frac{1}{|\lambda|}.
\end{align*}
Thus $f(\cdot,\lambda) \in \H_\psi$ and $f(\infty,\lambda) = (\lambda + g(\infty))^{-1}$.  Since $|\arg\lambda|<\pi-\varphi$ and $g(\infty) \in [0,\infty]$, 
\begin{equation}\label{res_limit}
|f(\infty,\lambda)| 
\le \frac{1}{\sin(\min(\varphi,\pi/2))|\lambda|}.
\end{equation}
Now \eqref{bernst} follows from Theorem  \ref{hardy2}(ii).
\end{proof}

\subsection{Representations for functions in $\H_\psi$}  \label{arccotrep}

In this section we derive a reproducing formula for functions from $\H_\psi$
and obtain certain alternative representations for its kernel.

\begin{prop}\label{FrepH}
Let $f \in \H_\psi, \, \psi \in (0,\pi)$. Let $\gamma=\frac{2\psi}{\pi}$ and
\begin{equation}\label{fgamma_c}
 f_\gamma(z):=f(z^{\gamma}),\qquad z\in \C_{+}.
\end{equation}
Then
\begin{equation}\label{RepA1}
f(z)=f(\infty)-\frac{1}{\pi}\int_0^\infty \int_{-\infty}^\infty \frac{f_\gamma'(\alpha+i\beta)}{z^{1/\gamma}+\alpha-i\beta}\,d\beta\,d\alpha,\qquad
z\in \Sigma_\psi \cup \{0\}.
\end{equation}
\end{prop}

\begin{proof}
Since $f \in  \H_\psi$, Lemma \ref{simple} implies that $f'_\gamma \in H^1(\C_{+})$. Hence, by Lemma \ref{Dh1} and Theorem \ref{hardy2},
we have $f_\gamma\in \mathcal{D}_0\cap C(\overline {\mathbb C}_+)$. Then, in view of Corollary \ref{Repr},
\[
f_\gamma(z)=f_\gamma(\infty)-\frac{1}{\pi}\int_0^\infty \int_{-\infty}^\infty \frac{f_\gamma'(\alpha+i\beta)}{z+\alpha-i\beta}\,d\beta\,d\alpha,
\qquad z\in \C_{+}\cup \{0\},
\]
and (\ref{RepA1}) follows.
\end{proof}

\begin{cor}\label{alpha}
Let $f\in \mathcal{D}_\infty$, and $\gamma\in (0,1)$.
If $f_\gamma$ is given by \eqref{fgamma_c}, then
\begin{equation}\label{RepA}
f(z)=f(\infty)-\frac{1}{\pi}\int_0^\infty \int_{-\infty}^\infty \frac{f_\gamma'(\alpha+i\beta)}{z^{1/\gamma}+\alpha-i\beta}\,d\beta\,d\alpha,\quad
z\in \Sigma_{\pi\gamma/2}\cup\{0\}.
\end{equation}
\end{cor}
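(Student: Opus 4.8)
The plan is to deduce this from Proposition \ref{FrepH} by checking that the hypotheses of that proposition are met for a suitable choice of $\psi$. The key observation is that the representation \eqref{RepA} is exactly \eqref{RepA1} with the parameter $\gamma$ in the role of $2\psi/\pi$, so it suffices to verify that $f$, restricted to a sector, lies in $\H_\psi$ for the value $\psi = \pi\gamma/2$.

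First I would fix $f \in \D_\infty$, so that $f \in \D_s$ for some $s > -1$, and fix $\gamma \in (0,1)$. Set $\psi := \pi\gamma/2 \in (0,\pi/2)$. By Lemma \ref{Dh1}(ii), the restriction of $f$ to $\Sigma_\psi$ belongs to $\H_\psi$ (since $\psi < \pi/2$), with $f' \in H^1(\Sigma_\psi)$ and $f(\infty)$ the sectorial limit at infinity, which agrees with the $\H_\psi$-value $f(\infty)$ by Theorem \ref{hardy2}(ii) and Corollary \ref{Repr}. Now apply Proposition \ref{FrepH} to this $f \in \H_\psi$: with $\gamma = 2\psi/\pi$ and $f_\gamma(z) = f(z^\gamma)$, formula \eqref{RepA1} gives
\[
f(z) = f(\infty) - \frac{1}{\pi}\int_0^\infty \int_{-\infty}^\infty \frac{f_\gamma'(\alpha+i\beta)}{z^{1/\gamma}+\alpha-i\beta}\,d\beta\,d\alpha, \qquad z \in \Sigma_\psi \cup \{0\} = \Sigma_{\pi\gamma/2}\cup\{0\},
\]
which is precisely \eqref{RepA}. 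The absolute convergence of the integral is guaranteed within the proof of Proposition \ref{FrepH}, since $f_\gamma \in \D_0$ there by Lemma \ref{Dh1} and Corollary \ref{gggC}, and the inner integral is controlled by \eqref{Fa}.

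I do not expect any serious obstacle here; the corollary is essentially a specialization of Proposition \ref{FrepH} combined with the embedding $\D_s \embedr \H_\psi$ for $\psi < \pi/2$ from Lemma \ref{Dh1}(ii). The only point requiring a word of care is that Proposition \ref{FrepH} is stated for $f \in \H_\psi$ with $\psi \in (0,\pi)$ and general $\gamma \in (0,2)$, whereas here $\gamma \in (0,1)$ forces $\psi \in (0,\pi/2)$; this is the regime in which Lemma \ref{Dh1}(ii) applies, so the restriction $\gamma < 1$ in the statement is exactly what makes the argument go through. One should also note that $f_\gamma$ as defined in Corollary \ref{alpha} on $\C_+$ coincides on $\C_+$ with the function $f_\gamma$ appearing in Proposition \ref{FrepH} (both are $z \mapsto f(z^\gamma)$), so there is no ambiguity in the notation.
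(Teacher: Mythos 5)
Your proposal is correct and follows exactly the paper's own route: the paper likewise deduces the corollary by noting that Lemma \ref{Dh1} places $f$ (for $f\in\D_s$) in $\H_{\pi\gamma/2}$ and then invoking Proposition \ref{FrepH} with $\psi=\pi\gamma/2$. Your additional remarks on why $\gamma<1$ is the right regime and on the consistency of the notation $f_\gamma$ are accurate but not needed beyond what the paper records.
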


\begin{proof}
By Lemma \ref{Dh1}, $f_\gamma \in \H_{\pi\gamma/2}$, so \eqref{RepA} follows from \eqref{RepA1}.
\end{proof}

The next reproducing formula for functions in $\H_\psi$ resembles
\cite[Lemma 7.4]{Boy}, and it was, in fact, inspired by \cite[Lemma 7.4]{Boy}.  In particular, this formula replaces the double (area) integral in
\eqref{RepA1} with a  line integral, it involves boundary values of $f'$ rather than scalings of $f'$ (such as in \eqref{RepA1}), and it offers a different kernel which might sometimes be easier to deal with.  The $\arccot$ function has been defined in \eqref{arcdef}.

\begin{prop}\label{L7.4}
Let $f \in \mathcal \H_\psi$, $\psi\in(0,\pi)$.   Let $\nu=\pi/(2\psi)$, and 
\begin{equation}\label{fpsi}
 f_\psi(t):=\frac{f(e^{i\psi}t)+f(e^{-i\psi}t)}{2}, \quad t>0.
\end{equation}
Then
\begin{equation}\label{arctan}
f(z)=f(\infty)-\frac{2}{\pi}\int_0^\infty
f_\psi'(t)\, {\arccot}(z^\nu/t^{\nu})\,dt,\qquad z\in \Sigma_\psi\cup \{0\}.
\end{equation}
\end{prop}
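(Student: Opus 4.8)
The plan is to reduce to the case $\psi=\pi/2$ and then verify \eqref{arctan} by differentiating both sides. For the reduction, set $\gamma:=2\psi/\pi\in(0,2)$ (so $\nu=1/\gamma$) and $h(v):=f(v^\gamma)$, $v\in\C_+$. By Lemma \ref{simple}, $h\in\H_{\pi/2}=H^{1,1}(\C_+)$, and $h(\infty)=f(\infty)$ since $v^\gamma\to\infty$ in $\Sigma_\psi$ as $|v|\to\infty$. Using the continuous extension of $f$ to $\overline\Sigma_\psi$ (Theorem \ref{hardy2}(i)) and $\gamma\pi/2=\psi$, one computes $h_{\pi/2}(t)=\tfrac12\bigl(h(it)+h(-it)\bigr)=\tfrac12\bigl(f(t^\gamma e^{i\psi})+f(t^\gamma e^{-i\psi})\bigr)=f_\psi(t^\gamma)$, hence $h_{\pi/2}'(t)=\gamma t^{\gamma-1}f_\psi'(t^\gamma)$ for a.e.\ $t$. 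Granting \eqref{arctan} for $h$ (case $\psi=\pi/2$), substitute $z=v^\gamma$ (so $v=z^\nu$ for $z\in\Sigma_\psi$) and change variables $\tau=t^\gamma$ in the integral, noting $\gamma t^{\gamma-1}\,dt=d\tau$ and $v/t=z^\nu/\tau^\nu$; this gives \eqref{arctan} for all $z\in\Sigma_\psi$, while the value at $z=0$ comes from the case $v=0$ together with $\arccot(0)=\pi/2$.

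It remains to treat $f\in H^{1,1}(\C_+)$, where $\nu=1$ and $f_{\pi/2}(t)=\tfrac12\bigl(f(it)+f(-it)\bigr)$. Since $f'\in H^1(\C_+)$ has $L^1$ boundary values $f'(\pm i\cdot)$ and $f'(\alpha+i\cdot)\to f'(i\cdot)$ in $L^1_{\mathrm{loc}}(\R)$ as $\alpha\downarrow0$ (Corollary \ref{Zb}), while $f$ extends continuously to $\overline{\C_+}$, the function $t\mapsto f(\pm it)$ is locally absolutely continuous with $\tfrac{d}{dt}f(\pm it)=\pm i f'(\pm it)$ a.e.; thus $f_{\pi/2}$ is locally absolutely continuous, $f_{\pi/2}'(t)=\tfrac{i}{2}\bigl(f'(it)-f'(-it)\bigr)$ a.e., and $f_{\pi/2}'\in L^1(\R_+)$. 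Write $F(z)$ for the right-hand side of \eqref{arctan}. For $z$ in a compact subset of $\C_+$ the ratio $z/t$ stays, uniformly in $t>0$, in a fixed closed subsector of $\C_+$ which avoids the singularities $\pm i$ of $\arccot$, so $\arccot(z/t)$ is bounded there; hence $F$ is holomorphic on $\C_+$ and one may differentiate under the integral sign. Using $\arccot'(w)=-(w^2+1)^{-1}$ and folding the two resulting terms together by $t\mapsto-t$,
\[
F'(z)=\frac{2}{\pi}\int_0^\infty f_{\pi/2}'(t)\,\frac{t}{z^2+t^2}\,dt=\frac{i}{\pi}\int_{-\infty}^\infty\frac{t\,f'(it)}{z^2+t^2}\,dt .
\]
Now $\dfrac{t}{z^2+t^2}=\dfrac12\Bigl(\dfrac1{t-iz}+\dfrac1{t+iz}\Bigr)$, and the Cauchy integral formula for $H^1(\C_+)$ (Theorem \ref{hardy0}(iii) with $\psi=\pi/2$) yields $\int_{-\infty}^\infty\frac{f'(it)}{t-iz}\,dt=0$ and $\int_{-\infty}^\infty\frac{f'(it)}{t+iz}\,dt=-2\pi i\,f'(z)$ for $z\in\C_+$; combining these gives $F'(z)=f'(z)$ on $\C_+$. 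Hence $F-f$ is constant, and letting $|z|\to\infty$ (so that $\arccot(z/t)\to0$ for each $t$, dominated as above) gives $F(z)\to f(\infty)$, so the constant is $0$. Finally $F(0)=f(\infty)-\arccot(0)\cdot\tfrac{2}{\pi}\int_0^\infty f_{\pi/2}'(t)\,dt=f(\infty)-\bigl(f_{\pi/2}(\infty)-f_{\pi/2}(0)\bigr)=f(0)$, since $f_{\pi/2}(\infty)=f(\infty)$ and $f_{\pi/2}(0)=f(0)$ by Theorem \ref{hardy2}.

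The routine estimates aside, the delicate points are the orientation in the Cauchy formula (equivalently, the two signs in the partial-fraction computation above) and the justification of differentiation under the integral sign near $\partial\Sigma_\psi$; the latter is clean precisely because, for $z$ in a compact subset of $\Sigma_\psi$, the argument of $z^\nu/t^\nu$ lies in a compact subset of $(-\pi/2,\pi/2)$ independently of $t>0$, so $z^\nu/t^\nu$ never approaches $\pm i$.
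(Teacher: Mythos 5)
Your proof is correct, but it takes a genuinely different route from the paper's. The paper argues constructively: it starts from the area-integral reproducing formula of Proposition \ref{FrepH}, replaces $f_\gamma'(\lambda)$ by its boundary Cauchy/Poisson representation
\[
f_\gamma'(\lambda)=\frac{2}{\pi}\int_0^\infty\frac{f_\psi'(s)\,s^\nu}{\lambda^2+s^{2\nu}}\,ds,
\]
and then recognises the resulting double integral as the $\mathcal D_0$-representation \eqref{ArcN} of $\arccot$, so that the $\arccot$ kernel emerges by Fubini. You instead reduce to $\psi=\pi/2$ by the isometry of Lemma \ref{simple} and then \emph{verify} the identity: you show the right-hand side $F$ is holomorphic, compute $F'=f'$ via the partial-fraction split $\tfrac{t}{z^2+t^2}=\tfrac12\bigl(\tfrac1{t-iz}+\tfrac1{t+iz}\bigr)$ and the two Cauchy identities for $H^1(\C_+)$ (your signs are right; I checked them against \eqref{Boch1} with the downward orientation), and then fix the additive constant by letting $|z|\to\infty$ along a subsector, where $\arccot(z/t)$ is uniformly bounded and tends to $0$. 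Both proofs ultimately rest on the same two ingredients --- the Cauchy formula for $H^1(\C_+)$ and the substitution $z\mapsto z^\gamma$ --- but yours avoids Proposition \ref{FrepH} and the representation \eqref{ArcN} entirely, at the cost of having to justify differentiation under the integral, the normalisation at infinity, and the value at $z=0$ separately. One point you handle more explicitly than the paper, and correctly, is why $f_{\pi/2}$ is locally absolutely continuous with $f_{\pi/2}'=\tfrac{i}{2}\bigl(f'(i\cdot)-f'(-i\cdot)\bigr)\in L^1(\R_+)$; the paper needs the same fact for $f_\psi'$ to be meaningful and integrable but leaves it to the boundary-value theory of Theorem \ref{hardy0}(iii).
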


\begin{proof}
Since $\arccot \l \in \mathcal D_0$ (see Example \ref{arc}), \eqref{qnrep} shows that
\begin{equation}\label{ArcN}
\arccot(\l)=\frac{1}{\pi}\int_0^\infty\int_{-\infty}^\infty \frac{(\l+u-iv)^{-1}}{(u+iv)^2+1}\,dv\, du,
\quad \l\in \C_{+}.
\end{equation}

Let $\gamma=1/\nu$, and $f_\gamma$ be given by \eqref{fgamma_c}.
By Proposition \ref{FrepH}, for $z \in \Sigma_\psi \cup \{0\}$, 
\begin{equation}\label{AlA}
f(z)-f(\infty)=
\frac{1}{\pi}\int_0^\infty \int_{-\infty}^\infty
\frac{f_\gamma'(\a+i\b)}{z^{\nu}+\a-i\b}\,d\b\,d\a.
\end{equation}
It follows from Lemma \ref{simple} that $f_\gamma' \in H^1(\Sigma_{\pi/2}) = H^1(\C_+)$, so by Cauchy's formula for functions in $H^1(\C_+)$ \cite[Theorem 11.8]{Duren1}, we have, for $\l  \in \C_{+}$,
\begin{align*}
f_\gamma'(\l) &= \frac{1}{2\pi} \int_{-\infty}^\infty \frac{f_\gamma'(it)}{\l-it} \,dt +  \frac{1}{2\pi}\int_{-\infty}^\infty \frac{f_\gamma'(it)}{\l+it} \,dt \\
&= - \frac{i}{\pi}\int_{-\infty}^\infty
\frac{tf_\gamma'(it)}{\l^2+t^2} \,dt\\
&=\frac{i\gamma}{\pi }\int_0^\infty
\frac{e^{i\pi(\gamma-1)/2}f'(e^{i\pi\gamma/2}t^\gamma)t^\gamma -
e^{-i\pi(\gamma-1)/2}f'(e^{-i\pi\gamma/2}t^\gamma)t^\gamma}{\l^2+t^2} \,dt\\
&=\frac{\gamma}{\pi}\int_0^\infty
\left(e^{i\psi}f'(e^{i\psi}t^\gamma)
+e^{-i\psi}f'(e^{-i\psi}t^\gamma)\right)\frac{t^{\gamma}}{\l^2+t^2} \,dt\\
&=\frac{2}{\pi}\int_0^\infty
\frac{f_\psi'(s) s^{\nu}}{\l^2+s^{2\nu}} \,ds.
\end{align*}
Therefore, by (\ref{AlA})
and (\ref{ArcN}), we obtain
\begin{align*}
&f(z)-f(\infty)\\
&=
-\frac{2}{\pi^2}\int_0^\infty f'_\psi(s)
\left(\int_0^\infty \int_{-\infty}^\infty
\frac{s^\nu\,d\b\, d\a}{(z^{\nu}+\a-i\b)((\a+i\b)^2+s^{2\nu})}\right)\,ds\\
&=-\frac{2}{\pi^2}\int_0^\infty f'_\psi(s)
\left(\int_0^\infty \int_{-\infty}^\infty
\frac{d\b\, d\a}{((z/s)^{\nu}+\a-i\b)((\a+i\b)^2+1)}\right)\,ds\\
&=-\frac{2}{\pi}\int_0^\infty f'_\psi(s)\arccot(z^\nu/s^\nu)\,ds,
\end{align*}
so \eqref{arctan} holds for $z \in \Sigma_\psi \cup \{0\}$.
\end{proof}

Proposition \ref{L7.4} motivates a more careful study of the kernel $\arccot (z^\nu)$.   The integral representation of this kernel will be crucial in deriving fine estimates for the $\H$-calculus for operators in Section \ref{alternative}. 

\begin{lemma}\label{log}
Let
$\psi\in (0,\pi)$ and  $\nu={\pi}/{(2\psi)}$.
Then
\begin{align}\label{rew}
\arccot(z^{\nu})&=\frac{1}{2\pi}\int_0^\infty
V_\psi(z,t)\log\left|\frac{1+t^\nu}{1-t^\nu}\right|
\,\frac{dt}{t}+\frac{1}{4 i}\int_{\Gamma_\psi}\,
\frac{d\l}{\l-z},\quad z\in \Sigma_\psi,
\end{align}
where 
\begin{align}\label{vpsi}
V_\psi(z,t)&:=-\frac{t}{2}
\left(\frac{ e^{-i\psi}}
{z-t e^{-i\psi}}
+\frac{ e^{i\psi}}{z-t e^{i\psi}}\right), \\
\Gamma_\psi&:= \{\l: |\l|=1,\,\arg\l \in (\psi,2\pi-\psi)\}.  \notag
\end{align}
\end{lemma}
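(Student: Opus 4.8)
The plan is to represent $h(z):=\arccot(z^\nu)$ by a Cauchy-type integral over $\partial\Sigma_\psi$ and then to evaluate the boundary values explicitly. Since $\nu\psi=\pi/2$, the map $z\mapsto z^\nu$ carries $\Sigma_\psi$ onto $\C_+$, so $h$ is holomorphic on $\Sigma_\psi$; moreover $h$ extends continuously to $\overline{\Sigma_\psi}\setminus\{e^{i\psi},e^{-i\psi}\}$ (the only boundary singularities of $\arccot$ on $\overline{\C}_+$ being $\pm i$, cf.\ Examples \ref{arc} and \ref{eac}), with $h(0)=\arccot 0=\pi/2$, and $|h(z)|\le C/|z|^\nu$ as $|z|\to\infty$ because $|\arccot w|\le C/|w|$ for large $|w|$. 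Note that $h$ is \emph{not} in $\H_\psi$ (its derivative $-\nu z^{\nu-1}(z^{2\nu}+1)^{-1}$ is not in $H^1(\Sigma_\psi)$), so the reproducing formula of Theorem \ref{hardy0}(iii) is not available directly.

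First I would fix $z\in\Sigma_\psi$ and apply Cauchy's integral formula to $h$ on the truncated sector $\Sigma_{\psi-\delta}\cap\{\varepsilon<|\lambda|<R\}$ for small $\delta>0$ with $z\in\Sigma_{\psi-\delta}$, where $h$ is continuous on the closure and holomorphic inside. Letting $\varepsilon\to0$ (the inner arc contributes $O(\varepsilon)$ since $h$ is bounded near $0$) and $R\to\infty$ (the outer arc contributes $O(R^{-1})$ since $|h|\le CR^{-\nu}$ there), and checking that the two ray integrals converge (the integrand is bounded near $t=0$ and decays like $t^{-\nu-1}$ at $\infty$), I obtain
\[
h(z)=\frac{1}{2\pi i}\int_0^\infty\Bigl(\frac{h(te^{-i(\psi-\delta)})\,e^{-i(\psi-\delta)}}{te^{-i(\psi-\delta)}-z}-\frac{h(te^{i(\psi-\delta)})\,e^{i(\psi-\delta)}}{te^{i(\psi-\delta)}-z}\Bigr)\,dt.
\]
Then I would let $\delta\to0$. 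The technical point is a $\delta$-uniform integrable majorant: from $|\arccot w|\le\tfrac{\pi}{2}+\tfrac12\bigl|\log\tfrac{|w+i|}{|w-i|}\bigr|$ and the elementary inequalities $(t^\nu-1)^2\le t^{2\nu}\mp2t^\nu\cos(\nu\delta)+1\le(t^\nu+1)^2$ one gets $|h(te^{\pm i(\psi-\delta)})|\le\tfrac{\pi}{2}+\log\bigl|\tfrac{1+t^\nu}{1-t^\nu}\bigr|$ for all small $\delta\ge0$, together with $|h(te^{\pm i(\psi-\delta)})|\le Ct^{-\nu}$ for $t$ large; dividing by $|te^{\pm i(\psi-\delta)}-z|\ge c(1+t)$ gives a majorant that is integrable on $(0,\infty)$ (the logarithm has an integrable singularity at $t=1$). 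Dominated convergence, using the continuity of $h$ on $\overline{\Sigma_\psi}\setminus\{e^{\pm i\psi}\}$ and the boundary values
\[
h(te^{\pm i\psi})=\arccot(\pm it^\nu)=\frac{\pi}{2}\,\mathbf 1_{(0,1)}(t)\mp\frac{i}{2}\log\Bigl|\frac{1+t^\nu}{1-t^\nu}\Bigr|
\]
(which follow from the definition \eqref{arcdef} of $\arccot$ and the fact that $\arccot$ is real-valued on $(0,\infty)$, so $\overline{\arccot w}=\arccot\bar w$), then yields
\[
h(z)=\frac{1}{2\pi i}\int_0^\infty\Bigl(\frac{h(te^{-i\psi})e^{-i\psi}}{te^{-i\psi}-z}-\frac{h(te^{i\psi})e^{i\psi}}{te^{i\psi}-z}\Bigr)\,dt.
\]

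Finally I would split the boundary value $h(te^{\pm i\psi})$ into its real part $\tfrac{\pi}{2}\mathbf 1_{(0,1)}(t)$ and imaginary part $\mp\tfrac i2\log\bigl|\tfrac{1+t^\nu}{1-t^\nu}\bigr|$ and identify the two resulting contributions. For the imaginary part, the algebraic identity $\tfrac{e^{-i\psi}}{te^{-i\psi}-z}+\tfrac{e^{i\psi}}{te^{i\psi}-z}=\tfrac{1}{t-ze^{i\psi}}+\tfrac{1}{t-ze^{-i\psi}}=\tfrac{2}{t}V_\psi(z,t)$ produces exactly $\tfrac{1}{2\pi}\int_0^\infty V_\psi(z,t)\log\bigl|\tfrac{1+t^\nu}{1-t^\nu}\bigr|\,\tfrac{dt}{t}$. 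For the real part, the contribution is $\tfrac{1}{4i}\int_0^1\bigl(\tfrac{e^{-i\psi}}{te^{-i\psi}-z}-\tfrac{e^{i\psi}}{te^{i\psi}-z}\bigr)dt$, and applying Cauchy's theorem to $\lambda\mapsto(\lambda-z)^{-1}$ around the closed contour consisting of the segment $[0,e^{i\psi}]$, the arc $\Gamma_\psi$, and the segment $[e^{-i\psi},0]$ — which bounds the region $\{re^{i\theta}:0\le r\le1,\ \psi\le\theta\le2\pi-\psi\}$ and hence does \emph{not} enclose $z\in\Sigma_\psi$ — shows that this equals $\tfrac{1}{4i}\int_{\Gamma_\psi}\tfrac{d\lambda}{\lambda-z}$. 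Adding the two pieces gives \eqref{rew}. (As consistency checks, $z\to\infty$ gives $0=0$, and $z\to0$ recovers $\arccot 0=\pi/2$ via $\int_0^\infty\log\bigl|\tfrac{1+s}{1-s}\bigr|\,\tfrac{ds}{s}=\pi^2/2$ and $\tfrac1{4i}\int_{\Gamma_\psi}\tfrac{d\lambda}{\lambda}=\tfrac{\pi-\psi}{2}$.)

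The main obstacle is precisely that $h=\arccot(\cdot^\nu)$ lies outside the Hardy--Sobolev framework built so far: it has logarithmic singularities at the two corner directions $e^{\pm i\psi}$ and does not vanish at the origin, so Theorem \ref{hardy0}(iii) cannot be invoked and the Cauchy formula must be set up by hand; the crux is producing the $\delta$-uniform integrable majorant that legitimizes passing from $\partial\Sigma_{\psi-\delta}$ to $\partial\Sigma_\psi$.
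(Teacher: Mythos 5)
Your proof is correct and follows essentially the same route as the paper's: a Cauchy integral representation of $\arccot(z^\nu)$ over $\partial\Sigma_\psi$, the explicit boundary values $\arccot(\pm i t^\nu)=\tfrac{\pi}{2}\chi_{(0,1)}(t)\mp\tfrac{i}{2}\log\bigl|\tfrac{1+t^\nu}{1-t^\nu}\bigr|$, and Cauchy's theorem to convert the indicator-function part into the arc integral over $\Gamma_\psi$. The only difference is that you make explicit the limiting argument (truncation of the sector and a $\delta$-uniform integrable majorant) justifying the Cauchy formula on the boundary rays where the boundary function has logarithmic singularities at $e^{\pm i\psi}$, a point the paper treats more briskly.
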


\begin{proof}
We have
\begin{align*}
\arccot(z^{\nu})
=\frac{1}{2i}\log\left|\frac{z^\nu+i}{z^\nu-i}\right|
+\frac{1}{2}\arg\left(\frac{z^\nu+i}
{z^\nu-i}\right),\qquad z\in \Sigma_\psi.
\end{align*}
Since $\nu\psi=\pi/2$, for every $t>0, \, t\ne1$,
\[
\lim_{z\in \Sigma_\psi,\,z\to te^{i\psi}}
\arccot(z^{\nu})
=\frac{1}{2i}\log\left|\frac{1+t^\nu}{1-t^\nu}\right|
+\frac{1}{2}\arg\left(\frac{t^\nu+1}
{t^\nu-1}\right),
\]
and
\[
\lim_{z\in \Sigma_\psi,\,z\to te^{-i\psi}}\arccot(z^{\nu})
=-\frac{1}{2i}\log\left|\frac{1+t^\nu}{1-t^\nu}\right|
+\frac{1}{2}\arg\left(\frac{t^\nu-1}
{t^\nu+1}\right).
\]
Here,
\[
\arg\left(\frac{t^\nu+1}
{t^\nu-1}\right)=\arg\left(\frac{t^\nu-1}
{t^\nu+1}\right)=\pi\chi_{(0,1)}(t),\quad t>0, \, t\ne1,
\]
where $\chi_{[0,1]}$ is the characteristic function of $(0,1)$.
So,
\begin{equation}\label{zv}
\lim_{z\in \Sigma_\psi,\,z\to te^{\pm i\psi}}
\arccot(z^{\nu})
=\mp\frac{i}{2}\log\left|\frac{1+t^\nu}{1-t^\nu}\right|
+\frac{\pi}{2}\chi_{[0,1]}(t),
\end{equation}

Now fix $z \in \Sigma_\psi$.  Using
$\limsup_{|\l|\to\infty,\,\l\in \C_{+}}\,|\l\arccot \l|<\infty$, it follows from
 Cauchy's theorem that
\[
\arccot(z^{\nu})=\frac{1}{2\pi i}\int_{\partial\Sigma_\psi}\,
\frac{\arccot(\l^{\nu})}{\l-z}\,d\l, \quad \int_{\l\in\partial\Sigma_\psi, |\l|<1} \frac{d\l}{\l-z} = \int_{\Gamma_\psi} \frac{d\l}{\l-z}.
\]
Thus by \eqref{zv},
\begin{align*}
\arccot(z^{\nu})
&=\frac{1}{4\pi}\int_0^\infty
\log\left|\frac{1+t^\nu}{1-t^\nu}\right|
\left(\frac{1}{t-e^{i\psi}z}
+\frac{1}{t-e^{-i\psi}z}\right)\,dt
+\frac{1}{4 i}\int_{\Gamma_\psi}\,
\frac{d\l}{\l-z}\\
&=\frac{1}{2\pi}\int_0^\infty
V_\psi(z,t)\log\left|\frac{1+t^\nu}{1-t^\nu}\right|
\,\frac{dt}{t}+\frac{1}{4 i}\int_{\Gamma_\psi}\,
\frac{d\l}{\l-z}. \qedhere
\end{align*}
\end{proof}

\begin{rem}\label{Value}
Letting $z \to 0$  in (\ref{rew}) we obtain
\begin{align*}
\frac{\pi}{2}
&=\frac{1}{2\pi}\int_0^\infty
\log\left|\frac{1+t^\nu}{1-t^\nu}\right|
\frac{dt}{t}
+\frac{\pi-\psi}{2},
\end{align*}
hence
\begin{equation}\label{remA}
\frac{1}{2\pi}\int_0^\infty
\log\left|\frac{1+t^\nu}{1-t^\nu}\right|
\frac{dt}{t}=\frac{\psi}{2}.
\end{equation}
\end{rem}

\section{Dense sets in $\D_s$ and $\H_\psi$} \label{density}

In this section we establish some results concerning density and approximations in our spaces. 

\subsection{Dense subsets of $\D_s$ and some applications}

Let $\mathcal{R}(\C_+)$ be the linear span of $\{r_\l : \l \in \C_+\}$, and $\wt{\mathcal{R}}(\C_+)$ be the sum of $\mathcal{R}(\C_+)$ and the constant functions.  
Using Example \ref{hexs}(1) and Lemma \ref{Dh1}, we have
\begin{equation} \label{rinh}
\mathcal{\wt{R}}(\C_+) \subset \H_{\pi/2} \embedi \D_s, \quad s>-1.
\end{equation}
 
\begin{thm} \label{D00}
The space
\[
\wt{\mathcal{R}}(\C_+):=\left\{a_0 + \sum_{k=1}^n a_k(\lambda_k+z)^{-1}: n \in \mathbb N, \; a_k \in \C, \; \lambda_k\in \C_{+}\right\}
\]
is dense in $\mathcal{D}_s$ for each $s>-1$.
\end{thm}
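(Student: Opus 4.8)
The plan is to approximate an arbitrary $f \in \mathcal D_s$ in two stages: first by functions which decay at the boundary $i\R$ and at $0$ and $\infty$ (so that the reproducing formula \eqref{qnrep} is well-behaved), and then by finite linear combinations of resolvents. Since $\wt{\mathcal R}(\C_+) \subset \mathcal D_s$ by \eqref{rinh}, it suffices to produce, for each $f \in \mathcal D_s$ and $\varepsilon>0$, an element of $\wt{\mathcal R}(\C_+)$ within $\varepsilon$ of $f$ in $\|\cdot\|_{\mathcal D_s}$. Writing $f = f(\infty) + Q_s f'$ via Corollary \ref{Repr}, and noting $f(\infty)$ is already a constant (hence in $\wt{\mathcal R}(\C_+)$), the problem reduces to approximating $Q_s g$ for $g = f' \in \mathcal V_s$ holomorphic.

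First I would reduce to a dense subclass of the integrand. By \eqref{vst} and the structure of $\mathcal V_s$ as a weighted $L^1$-space, one can truncate: replace $g$ by $g \cdot \chi_{\{1/R < |z| < R,\, |\arg z| < \pi/2 - 1/R\}}$ for large $R$, which converges to $g$ in $\mathcal V_s$ by dominated convergence; but this truncated function is no longer holomorphic, so instead I would work directly with the formula \eqref{formulaD_intro}/\eqref{qdef}. The cleaner route is to observe that by Proposition \ref{prim}, $(Q_sg)' = g$, and $Q_s$ depends linearly and boundedly (in the relevant sense, cf. \eqref{angle}, \eqref{Dangle}) on $g$; moreover $Q_s r_\l^{\,s+1}$ should be computable as a scalar multiple of $r_\l$ (up to a constant), since $r_\l^{\,s+1} \in \mathcal V_s$ by Example \ref{resd} and applying $Q_s$ to it inverts differentiation up to the constant $f(\infty)=0$. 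Concretely, $(z+\l)^{-1}$ has derivative $-(z+\l)^{-2}$, and iterating, $r_\l$ itself is (a constant times) $Q_s$ applied to a power of $r_\l$; so resolvents lie in the range of $Q_s$ on nice integrands. The strategy is then: approximate the holomorphic function $g \in \mathcal V_s$ in the $\mathcal V_s$-norm by finite linear combinations $\sum_k c_k r_{\l_k}^{\,m}$ for a suitable fixed power $m$ (e.g. $m = \lceil s\rceil + 2$ so that the pieces lie in $\mathcal V_s$ by \eqref{rtl}), apply $Q_s$, and use that $Q_s$ maps $\mathcal V_s \to \mathcal D_\sigma$ boundedly (Proposition \ref{Furt}) together with the fact (to be extracted from the proof of Proposition \ref{Furt}, as hinted in the remark after it) that $\|Q_s g\|_{\mathcal D_{s,0}}$ is controlled by $\|g\|_{\mathcal V_s}$ when $g$ is built from such resolvent powers, via the estimate $\|r_\l^{\,s+1}\|_{\mathcal D_{s,0}} \le C_s/|\l|$ type bounds. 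Then each $Q_s r_{\l_k}^{\,m}$ is a scalar multiple of $r_{\l_k}$ plus possibly lower-order resolvent powers, all of which lie in $\wt{\mathcal R}(\C_+)$ after a partial-fractions rearrangement, or one simply notes $Q_s$ of a linear combination of resolvent powers is again such a combination and hence — by partial fractions in $\l$ — in $\wt{\mathcal R}(\C_+)$; actually the simplest is: $Q_s(r_\l^m)$ is a primitive of $-r_\l^m$ vanishing at $\infty$, i.e. a constant times $r_\l^{m-1}$, and iterating down we land on $r_\l$.

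The density of finite sums of resolvent powers in $\mathcal V_s$ (the holomorphic ones, or enough to approximate any holomorphic $g \in \mathcal V_s$) is the key analytic input, and I expect \textbf{this to be the main obstacle}. One approach: use the integral representation itself. From the proof of Proposition \ref{prim}, $Q_n g(z) = -\int_0^\infty g(\alpha+z)\,d\alpha$, and the inner formula \eqref{Qn} expresses $g^{(n)}(2\alpha+z)$ as an average of $g(\alpha+i\beta)$ against the kernel $(z+\alpha-i\beta)^{-(n+1)}$; discretizing these integrals (Riemann sums, justified by the Bochner-integral/Riemann-sum principle recalled in Section \ref{prelims} applied with $\mathcal X = \mathcal D_s$, using that $\l \mapsto r_\l$ is holomorphic $\C_+ \to \mathcal D_s$ per Remark \ref{remdr}) directly exhibits $f - f(\infty) = Q_s f'$ as a limit in $\mathcal D_s$ of Riemann sums of the vector-valued integral $\int \alpha^s f'(\alpha+i\beta)\, r_{\alpha-i\beta}^{\,s+1}\, d\beta\, d\alpha$. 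Since each $r_{\alpha-i\beta}^{\,s+1}$ is, by the reduction above, in the $\mathcal D_s$-closed span of $\{r_\l : \l\in\C_+\}$ — indeed $r_\mu^{\,s+1}$ for integer $s+1$ is literally a $\mu$-derivative of $r_\mu$, hence a limit of difference quotients $c(r_{\mu} - r_{\mu'})/(\mu-\mu')$, which are in $\mathcal R(\C_+)$; for non-integer $s+1$ one first approximates $s$ by a larger integer using $\mathcal D_s \subset \mathcal D_\sigma$ densely, via \eqref{vst} and a separate density argument, or handles $r_\mu^\gamma$ directly as a $\mathcal D_s$-limit of its own resolvent-based Riemann sums — the Riemann sum lies in $\overline{\wt{\mathcal R}(\C_+)}^{\,\mathcal D_s}$, and passing to the limit gives $f \in \overline{\wt{\mathcal R}(\C_+)}^{\,\mathcal D_s}$. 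The technical care needed is: (i) justifying that the vector-valued integrand $\Omega \to \mathcal D_s$, $(\alpha,\beta)\mapsto \alpha^s f'(\alpha+i\beta) r_{\alpha-i\beta}^{\,s+1}$, is continuous with integrable norm, which follows from $\|r_\mu^{\,s+1}\|_{\mathcal D_s} \le C_s|f(\infty)\text{-irrelevant}|\cdot(\cos\arg\mu)^{-(s+1-s)_+}/|\mu|^{s+1}$-type bounds from \eqref{rtl} combined with $f' \in \mathcal V_s$ and $\mu = \alpha - i\beta$ having $|\arg\mu|<\pi/2$, $|\mu| = |\alpha+i\beta|$, so that $\alpha^s |f'(\alpha+i\beta)| \|r_{\alpha-i\beta}^{\,s+1}\|_{\mathcal D_s} \lesssim \alpha^s|f'(\alpha+i\beta)|/|\alpha+i\beta|^{s+1}$, exactly the $\mathcal V_s$-density; and (ii) reducing the non-integer case to the integer case, for which density of $\mathcal D_s$ in $\mathcal D_\sigma$ (to be established, or one may note it is claimed right before Corollary \ref{Bdens}) closes the gap. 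I would present the argument in the integer-$s$ case in full and indicate the reduction for general $s$.
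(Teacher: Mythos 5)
Stripped of the false starts, your final plan is essentially the paper's proof: write $f = f(\infty) + Q_sf'$ via Corollary \ref{Repr}, realise $Q_sf'$ as a $\D_s$-valued Bochner integral of $(\Re\l)^s f'(\l)\,r_{\bar\l}^{\,s+1}$ over $\C_+$ (integrability coming from $\|r_\mu^{\,s+1}\|_{\D_{s,0}}\lesssim |\mu|^{-(s+1)}$, the borderline case of \eqref{rtl}, against the $\mathcal V_s$-norm of $f'$), identify the integral with $Q_sf'$ using continuity of point evaluations, and approximate by Riemann sums. The paper's proof is exactly this Bochner-integral/Riemann-sum argument, and is in fact terser than you are about the resulting sub-step, namely that the Riemann sums involve \emph{powers} $r_\mu^{\,s+1}$ rather than resolvents themselves.

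On that sub-step — showing $r_\mu^{\,s+1}$ lies in the $\D_s$-closure of $\mathcal R(\C_+)$ — one of your two suggested routes fails. Approximating $s$ from above by an integer and invoking density of $\D_s$ in $\D_\sigma$ is circular (that density is Corollary \ref{Bdens}, which the paper \emph{deduces} from Theorem \ref{D00}), and is in any case unavailable for $s\in(-1,0)$, where no admissible integer exponent lies below $s$. Your other route is the correct one and should be carried out for all $s$: for integer exponents, $r_\mu^{\,n}$ is a $\l$-derivative of the $\D_s$-holomorphic map $\l\mapsto r_\l$ (Remark \ref{remdr}), hence a $\D_s$-limit of difference quotients from $\mathcal R(\C_+)$; for fractional exponents $s+1=n+1+\delta$ one writes, via the identity behind \eqref{Const46},
\[
r_\mu^{\,s+1} = C_s^{-1}\int_0^\infty t^{-\delta}\, r_{\mu+t}^{\,n+2}\,dt
\]
as a further Bochner integral in $\D_s$ of integer powers, the norm of the integrand being summable because $\arg(\mu+t)$ stays away from $\pm\pi/2$ for fixed $\mu\in\C_+$. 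With that insertion the argument closes and coincides with the paper's.
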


\begin{proof}
Let $\mathcal{R}_{\D_s}(\C_+)$ be the closure of $\mathcal{R}(\C_+)$ in $\D_{s}$, 
and let $f \in \D_s$.   It follows from Example \ref{resd} and Remark \ref{remdr} (or a direct estimate) that the function
\[
(R_f)(\l) := - \frac{2^s}{\pi} f'(\l) r_{\overline{\l}}
\]
is continuous from $\C_+$ to $\D_s$, and it is Bochner integrable with respect to area measure $S$ on $\C_+$.  Since point evaluations are continuous on $\D_s$ (Remark \ref{remdr}), it follows from Corollary \ref{Repr} that 
\[
Q_s f' = \int_{\C_+} R_f(\l) \,dS(\l)
\]
as the Bochner integral of a continuous function.  Hence $Q_s f'$ belongs to the closure in $\D_s$ of the linear span of the range of the integrand, which is contained in $\mathcal{R}_{\D_s}(\C_+)$.   Now $f = f(\infty) + Q_sf'$ which is in the closure of $\wt{\mathcal{R}}(\C_+)$ in $\D_s$.
\end{proof}

From Proposition \ref{Furt}, we have the continuous inclusion
\[
\D_s \embedi \D_\sigma \quad \text{if $\sigma>s>-1$},
\]
and from \eqref{rinh}, Theorem \ref{ACM} and Proposition \ref{BDs}, we have
\[
\wt{\mathcal{R}}(\C_+) \subset H^{1,1}(\C_+) \embedi \lt L^1 + \C \subset \LT \embedi \Bes \embedi \D_s^\infty \embedi \D_s \quad \text{if $s>0$}.
\]
Here $\lt L^1+\C$ is the sum of $\lt L^1$ and the constant functions, and it is a closed subspace of $\LT$.  The following density results hold.

\begin{cor}  \label{Bdens}
\begin{enumerate}[\rm1.]
\item If $\sigma>s>-1$, then $\D_s$ is dense in $\D_\sigma$.
\item For $s>-1$, the spaces $H^{1,1}(\C_+)$ and $\D_s^\infty$ are dense in $\D_s$.
\item For $s>0$, the spaces $\mathcal{L}L^1+\C$,  $\mathcal{LM}$ and $\mathcal{B}$ are dense in $\mathcal{D}_s$.
\item For $s>0$, the spaces $\wt{\mathcal{R}}(\C_+)$, $H^{1,1}(\C_+)$, $\LL + \C$, $\LT$ and $\Bes$ are not dense in $\D_s^\infty$.
\end{enumerate}
\end{cor}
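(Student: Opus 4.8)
The plan is to obtain the positive statements 1--3 from a single input --- the density of $\wt{\mathcal R}(\C_+)$ in $\D_s$ provided by Theorem~\ref{D00} --- and then to treat the negative statement 4 by a separate, genuinely analytic argument.

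First I would record the chain of continuous inclusions
\[
\wt{\mathcal R}(\C_+)\subset H^{1,1}(\C_+)\subset \LL+\C\subset\LT\subset\Bes\subset\D_s^\infty\subset\D_s,\qquad s>0,
\]
in which the first inclusion is \eqref{rinh} together with \eqref{HH11}, the second is Theorem~\ref{ACM} (one also notes directly that $r_\l=\lt e_\l\in\LL$ for $\l\in\C_+$, so in fact $\wt{\mathcal R}(\C_+)\subset\LL+\C$), then $\LL+\C\subset\LT$ trivially, $\LT\embedi\Bes$ by Section~\ref{prelims}, $\Bes\embedi\D_s^\infty$ by Proposition~\ref{BDs}, and $\D_s^\infty\subset\D_s$ from the definition. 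Since $\wt{\mathcal R}(\C_+)$ is dense in $\D_s$, every space lying between it and $\D_s$ in the chain is automatically dense in $\D_s$, which yields statements 2 and 3; for $s\in(-1,0]$ one uses instead $\wt{\mathcal R}(\C_+)\subset H^{1,1}(\C_+)\subset\D_s^\infty\subset\D_s$ from Lemma~\ref{Dh1}. For statement 1, $\wt{\mathcal R}(\C_+)$ lies in $\D_s$ and is dense in $\D_\sigma$ by Theorem~\ref{D00}, while $\D_s\embedi\D_\sigma$ is continuous by Proposition~\ref{Furt}(i); hence $\D_s$ is dense in $\D_\sigma$. I expect no difficulty in this part.

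The real content is statement 4, where a density argument cannot help, because all the listed spaces are dense in the coarser space $\D_s$; the point must be an obstruction visible only in the finer topology of $\D_s^\infty$. The plan is: (a) observe that the $\D_s^\infty$-norm dominates $\|\cdot\|_{H^\infty(\C_+)}$, so $\D_s^\infty$-convergence forces uniform convergence on $\C_+$; (b) recall that every $f\in\Bes$ lies in $C(\overline{\C}_+)$, so that a $\D_s^\infty$-convergent sequence $(f_n)\subset\Bes$ is uniformly Cauchy on $\C_+$, hence --- by continuity of each $f_n$ on $\overline{\C}_+$ and density of $\C_+$ there --- uniformly Cauchy on $\overline{\C}_+$, and so its $\D_s^\infty$-limit extends continuously to $\overline{\C}_+$; thus the $\D_s^\infty$-closure of $\Bes$ contains only functions admitting a continuous extension to $\overline{\C}_+$; (c) exhibit a function in $\D_s^\infty$ without such an extension, namely $g(z)=\exp(\arccot z)$, which by Example~\ref{eac} lies in $\D_s^\infty$ for every $s>-1$ and for which $g(\ep+i)$ has no limit as $\ep\to0+$; since $\ep+i\in\C_+$ tends to $i$, $g$ cannot extend continuously to $\overline{\C}_+$. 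Consequently $g\notin\overline{\Bes}^{\,\D_s^\infty}\subsetneq\D_s^\infty$, so $\Bes$ is not dense in $\D_s^\infty$, and since $\wt{\mathcal R}(\C_+)$, $H^{1,1}(\C_+)$, $\LL+\C$ and $\LT$ are all contained in $\Bes$, none of them is dense either. The main obstacle I anticipate is step (b): passing from ``uniformly Cauchy on $\C_+$'' to ``uniformly Cauchy on $\overline{\C}_+$'' relies on the approximants being continuous up to the boundary --- a property specific to $\Bes$ (recalled in the introduction), not merely that they are bounded holomorphic functions --- and this is exactly why the statement must be phrased for $\D_s^\infty$ and not $\D_s$; everything else is bookkeeping with the inclusion chain and a citation to Example~\ref{eac}.
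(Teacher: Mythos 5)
Your proposal is correct and follows essentially the same route as the paper: statements 1--3 come from the density of $\wt{\mathcal R}(\C_+)$ in $\D_s$ (Theorem \ref{D00}) together with the chain of inclusions, and statement 4 comes from the observation that the $\D_s^\infty$-closure of $\Bes$ consists of functions extending continuously to the boundary. The only cosmetic difference is the choice of obstruction: the paper uses $e^{-1/z}$, discontinuous at $z=0$, while you use $\exp(\arccot z)$, discontinuous at $z=i$ --- an alternative the paper itself records immediately after the corollary.
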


\begin{proof}
The first three statements are immediate from Theorem \ref{D00}.

Since any function in $\Bes$ extends continuously to $i\R$, the same holds for the closure of $\Bes$ in $\mathcal{D}_s^\infty$ when $s>0$.  The function
$f(z)=e^{-1/z}\in \mathcal{D}_s^\infty$ for $s>0$,
(see Remarks \ref{e-z} and Example \ref{dm} with $\nu=0$),  but $f$ is not continuous at  $z=0$.  This establishes the final statement.
\end{proof}

The function $g(z) = \exp(\arccot z)$ considered in Example \ref{eac} provides another example of a function from $\mathcal{D}_0^\infty$ which is discontinuous on $i\R$, and so does not belong to the closure of $\Bes$ in $\D^\infty_s$ for $s>0$. 

In order to obtain operator norm-estimates for functions $f^{(n)}$ applied to semigroup generators (see Theorem \ref{AnalF}), we will need a stronger version of Corollary \ref{D00n} on differentiability of $t \to f(t\cdot)$ in the $D_s$-norm.   
We first prove a lemma, and we present the stronger statement in Corollary \ref{AlgDer}.

\begin{lemma}\label{raF}
Let $\lambda\in \C_{+}$, $\tau>0$, $t \in (\tau/2,2\tau)$, and define
\[
g_{t,\tau,\l}(z):=\frac{r_\l(tz)-r_\l(\tau z)}{t-\tau}-zr_\l'(\tau z),\qquad z\in \C_{+}.
\]
Then, for every $s>-1$, 
\begin{equation}\label{Nder}
\lim_{t\to\tau}\,\|g_{t,\tau,\l}\|_{\mathcal{D}_s}=0.
\end{equation}
\end{lemma}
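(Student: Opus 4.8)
\textbf{Proof plan for Lemma \ref{raF}.}

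The plan is to fix $\lambda \in \C_+$ and $\tau > 0$, write $g_{t,\tau,\l}$ as a double integral in which the $(t-\tau)$-difference quotient is replaced by an integral of a derivative, and then estimate the $\D_s$-norm by pushing the norm inside all the integrals via Fubini and the already-established resolvent estimate \eqref{rtl} (equivalently \eqref{r_l}). Concretely, since $r_\l(tz) = (tz+\l)^{-1}$, one has $\frac{d}{dt} r_\l(tz) = z r_\l'(tz)$, and hence
\[
\frac{r_\l(tz) - r_\l(\tau z)}{t-\tau} - z r_\l'(\tau z) = \frac{1}{t-\tau}\int_\tau^t \bigl(z r_\l'(uz) - z r_\l'(\tau z)\bigr)\,du = \frac{1}{t-\tau}\int_\tau^t \int_\tau^u z^2 r_\l''(vz)\,dv\,du.
\]
Thus $g_{t,\tau,\l}$ is an average of the functions $z \mapsto z^2 r_\l''(vz)$ over $v$ between $\tau$ and $t$, minus (by symmetry of the iterated integral) a harmless bookkeeping term; more precisely, $\|g_{t,\tau,\l}\|_{\D_s} \le \sup_{v \in (\tau/2,2\tau)} \| z \mapsto z(zr_\l'(vz))' - \text{(value at }v=\tau)\|$-type bounds, and the cleanest route is to apply \eqref{DerAnE} of Lemma \ref{D001} directly: with $f = r_\l^{?}$ one needs $r_\l \in \D_{s-1}$ if $s > 0$, but for general $s > -1$ this is not available, so instead I would work by hand.

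The key steps, in order: (1) Reduce to estimating $\|g_{t,\tau,\l}\|_{\D_{s,0}}$ and $|g_{t,\tau,\l}(\infty)|$ separately; the limit $g_{t,\tau,\l}(\infty) = 0$ since each term has sectorial limit $0$ at infinity (by Lemma \ref{Derin} applied to $r_\l' \in \mathcal V_s$, or directly), so only the seminorm matters. (2) Use the Taylor-type formula above to write $g_{t,\tau,\l}(z) = \int_0^1 \int_0^1 (t-\tau)\, \theta\, z^2 r_\l''(z(\tau + \theta\sigma(t-\tau)))\, d\sigma\, d\theta$ (after the substitution making the iterated integral a product of $[0,1]$-integrals), so that $g'_{t,\tau,\l}(z)$ is an absolutely convergent integral of $\frac{d}{dz}\bigl(z^2 r_\l''(wz)\bigr)$ over the relevant $w$-values. (3) Apply Fubini in the definition of $\|\cdot\|_{\mathcal V_s}$ to bound $\|g_{t,\tau,\l}\|_{\D_{s,0}}$ by $|t-\tau|$ times a supremum over $w \in (\tau/2, 2\tau)$ of $\| z \mapsto z^2 r_\l''(wz) \|$-flavoured seminorms; since the $\D_{s+1}$-norm (and $\D_{s+2}$-norm) of $z \mapsto z^j r_\l^{(j)}(z)$ is controlled by $\|r_\l\|_{\D_s}$ via iterated application of Lemma \ref{D001} (valid once we are in a range where $r_\l$ lies in the relevant space — note $r_\l^\gamma \in \D_{s,0}$ for all $s > -1$ by Example \ref{resd}), these seminorms are finite and uniformly bounded for $w$ in a compact subinterval of $(0,\infty)$ and $\l$ fixed. (4) Conclude $\|g_{t,\tau,\l}\|_{\D_s} \le C_{s,\tau,\l}\,|t-\tau| \to 0$ as $t \to \tau$.

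The main obstacle I anticipate is technical rather than conceptual: ensuring that the second-derivative object $z \mapsto z^2 r_\l''(wz)$ genuinely has finite $\mathcal V_s$-seminorm for every $s > -1$ (including $s$ near $-1$), and that the bound is uniform in $w$ near $\tau$. This is where one must be careful: $r_\l \in \D_{s,0}$ holds for all $s > -1$ by \eqref{r_l}/\eqref{rtl}, but Lemma \ref{D001} raises the index, giving $zr_\l' \in \D_{s+1}$ and $z(zr_\l')' \in \D_{s+2}$; since $\D_s \subset \D_\sigma$ for $\sigma > s$, finiteness is fine, but to get the bound in terms of $\|r_\l\|_{\D_s}$ with the right scaling in $w$ one invokes the scale-invariance $\|h_w\|_{\D_s} = \|h\|_{\D_s}$ (Lemma \ref{leminv}(iii)) together with $\frac{d}{dz}(z^2 r_\l''(wz)) = \frac{1}{w}\,(\zeta \mapsto \zeta(\zeta r_\l'(\zeta))')'\big|_{\zeta = wz}\cdot(\text{chain rule factors})$; carefully tracking these factors and the interval $w \in (\tau/2, 2\tau)$ gives a constant $C_{s,\tau}\,|\l|^{-1}$, and then dominated convergence (or simply the explicit $|t-\tau|$ factor) finishes the proof. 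An alternative, possibly cleaner, route is to apply \eqref{DerAnE} twice — once to $r_\l$-type data after first verifying $r_\l \in \D_{s-1}$ when $s \ge 0$ and handling $s \in (-1,0)$ by the embedding $\D_s \subset \D_0$ — but the direct Taylor-remainder estimate above avoids any case split and is the approach I would write up.
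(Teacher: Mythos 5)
Your plan is essentially correct, but it takes a genuinely different and more elaborate route than the paper. The paper's proof is a three-line dominated convergence argument: it computes $g_{t,\tau,\l}'(z)$ in closed form,
\[
g_{t,\tau,\l}'(z)= \frac{\tau t z^2 - \l^2}{(tz+\l)^2(\tau z+\l)^2} + \frac{\l-\tau z}{(\tau z+\l)^3},
\]
observes that this tends to $0$ pointwise as $t\to\tau$ and is dominated by $C_{\tau,\l}(1+|z|^2)^{-1}$ uniformly for $t\in(\tau/2,2\tau)$, and notes that $(1+|z|^2)^{-1}$ has finite $\mathcal{V}_s$-norm for every $s>-1$; dominated convergence then gives \eqref{Nder}. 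Your Taylor-remainder decomposition $g_{t,\tau,\l}=\frac{1}{t-\tau}\int_\tau^t\int_\tau^u z^2 r_\l''(vz)\,dv\,du$ is exact (there is no extra ``bookkeeping term'', harmless or otherwise), and pushing the $\mathcal{V}_s$-norm inside via Fubini yields the stronger quantitative conclusion $\|g_{t,\tau,\l}\|_{\D_s}\le C_{s,\tau,\l}|t-\tau|$, which the paper's qualitative argument does not give. The price is that you must verify $\sup_{w\in(\tau/2,2\tau)}\|\tfrac{d}{dz}(z^2 r_\l''(wz))\|_{\mathcal{V}_s}<\infty$ for every $s>-1$.

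On that last point your write-up is the weakest: you correctly flag that Lemma \ref{D001} (and \eqref{DerAnE}) cannot deliver the bound in $\D_s$ itself, since it raises the index from $s$ to $s+1$ and would require $r_\l\in\D_{s-2}$, which fails for $s\le 1$. But the resolution you then sketch (``iterated application of Lemma \ref{D001}'' plus scale invariance) still only lands you in $\D_{s+2}$; it does not close the gap. What does close it is a direct computation: $\tfrac{d}{dz}\bigl(z^2 r_\l''(z)\bigr)=\tfrac{4z}{(z+\l)^3}-\tfrac{6z^2}{(z+\l)^4}$ is a linear combination of $r_\l^2$, $r_\l^3$ and $r_\l^4$ by partial fractions, and each $\|r_\l^{\gamma}\|_{\mathcal{V}_s}=(\gamma-1)^{-1}\|r_\l^{\gamma-1}\|_{\D_{s,0}}$ is finite for all $s>-1$ by \eqref{rtl}; scale invariance (Lemma \ref{leminv}(iii)) then handles the uniformity in $w$. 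You cite Example \ref{resd} in passing, so the ingredient is on your radar, but you should replace the appeal to Lemma \ref{D001} by this partial-fraction step to make the argument airtight. Alternatively, simply differentiate $g_{t,\tau,\l}$ once and for all as the paper does --- it avoids the entire index-shifting discussion.
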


\begin{proof}
We have
\begin{align*}
g_{t,\tau,\l}'(z)&=\left(-\frac{t}{(tz+\lambda)^2}+
\frac{\tau}{(\tau z+\lambda)^2}\right)\frac{1}{t-\tau}
+\frac{1}{(\tau z+\lambda)^2}-\frac{2\tau z}{(\tau z+\lambda)^3}\\
&= \frac{\tau t z^2 - \l^2}{(tz+\l)^2(\tau z+\l)^2} + \frac{\l-\tau z}{(\tau z+\l)^3}  \to0,  \qquad t \to \tau.
\end{align*}
Hence 
\[
|g_{t,\tau,\l}'(z)|\le \frac{C_{\tau,\l}}{1+|z|^2}, \qquad t \in (\tau/2,2\tau), \, z \in \C_+,
\]
for some $C_{\tau,\l}$.   Since
\[
\int_{-\pi/2}^{\pi/2}  \cos^s\psi\int_0^\infty \frac{d\rho}{1+\rho^2}\,d\psi<\infty
\]
for any $s>-1$, the dominated convergence theorem implies \eqref{Nder}.
\end{proof}

\begin{cor}\label{DcorA}
Let $f\in \mathcal{D}_s$, $s>-1$.  For  $\tau>0$, let $zf'_\tau$ denote the function mapping $z$ to $zf'(\tau z)$.  Then
\[
\lim_{t\to \tau}\left\|
\frac{f_t-f_\tau}{t-\tau}-zf'_\tau\right\|_{\mathcal{D}_{s+1}}=0.
\]
\end{cor}

\begin{proof}
Let $\tau>0$ be fixed, and
\[
(R_{t,\tau} f)(z) := \frac{f(t z)-f(\tau z)}{t-\tau}-zf'_\tau, \qquad f \in\D_s, \, t>\tau/2.
\]
By Lemma \ref{D001}, $\{R_{t,\tau} : t> \tau/2\}$ is a bounded subset of $L(\D_s,\D_{s+1})$.   By Lemma \ref{raF},
\[
\lim_{t\to\tau} \|R_{t,\tau}r_\l\|_{\D_{s+1}} = 0, \qquad \l \in \C_+.
\]
Since the linear span of the functions $r_\l$ and the constants is dense in $\mathcal{D}_s$ (see Theorem \ref{D00}), the assertion follows.  
 \end{proof}

\begin{cor}\label{AlgDer}
Let $f\in\mathcal{D}_s$, $s>-1$, and let
\[
G(t)(z) : = f(tz), \qquad F_n(t)(z) := z^nf^{(n)}(tz),  \quad n\in \N, \,t>0, \, z \in \C_+. 
\]
Then $G$ and $F_n$ map $(0,\infty)$ into $\D_{s+n}$,  $G$ is $n$-times differentiable as a function from $(0,\infty)$ to $\D_{s+n}$, and
\[
F_n = G^{(n)}.
\]
\end{cor}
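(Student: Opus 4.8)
The plan is to reduce the whole statement to the case $n=1$, which is precisely Corollary~\ref{DcorA}, and then run an induction on $n$ in which the scaling factors combine to produce an exact cancellation.

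First I would check that $G$ and $F_n$ do take values in $\D_{s+n}$. For $G$ this is immediate from the scale-invariance of the $\D_s$-norm (Lemma~\ref{leminv}(iii)), since $G(t) = f(t\cdot) \in \D_s \embedi \D_{s+n}$. For $F_n$ I would first establish $z^nf^{(n)} \in \D_{s+n}$ by induction on $n$ using Lemma~\ref{D001}: the case $n=1$ is Lemma~\ref{D001}, and if $h := z^{n-1}f^{(n-1)} \in \D_{s+n-1}$ then $zh' \in \D_{s+n}$, while $h'(w) = (n-1)w^{n-2}f^{(n-1)}(w) + w^{n-1}f^{(n)}(w)$ gives $z^nf^{(n)} = zh' - (n-1)h \in \D_{s+n}$. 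Then $F_n(t) = t^{-n}(z^nf^{(n)})(t\cdot) \in \D_{s+n}$, again by scale-invariance.

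For the differentiability I would argue by induction on $n$, the claim being that $G^{(n)}$ exists as a map $(0,\infty) \to \D_{s+n}$ and equals $F_n$. The base case $n=1$ is exactly Corollary~\ref{DcorA}, which says that $(t-\tau)^{-1}\bigl(G(t) - G(\tau)\bigr) - F_1(\tau) \to 0$ in $\D_{s+1}$ as $t \to \tau$. For the inductive step, assume $G^{(n-1)} = F_{n-1}$ as a map into $\D_{s+n-1}$, and regard it as a map into $\D_{s+n}$ via the continuous embedding $\D_{s+n-1} \embedi \D_{s+n}$ of Proposition~\ref{Furt}. Put $h := z^{n-1}f^{(n-1)} \in \D_{s+n-1}$; then a direct computation gives the scaling identity $F_{n-1}(t) = t^{-(n-1)}\,h(t\cdot)$. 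Applying Corollary~\ref{DcorA} to $h$ in place of $f$ (with $s$ replaced by $s+n-1$) shows that $t \mapsto h(t\cdot)$ is differentiable as a map into $\D_{s+n}$ with derivative the function $H_t : z \mapsto zh'(tz)$, and combining this with the elementary differentiation of the scalar factor $t^{-(n-1)}$ yields, in $\D_{s+n}$,
\[
\frac{d}{dt}F_{n-1}(t) = -(n-1)t^{-n}\,h(t\cdot) + t^{-(n-1)}H_t.
\]
Now substituting $h'(w) = (n-1)w^{n-2}f^{(n-1)}(w) + w^{n-1}f^{(n)}(w)$ gives $H_t = (n-1)t^{n-2}F_{n-1}(t) + t^{n-1}F_n(t)$, and using $h(t\cdot) = t^{n-1}F_{n-1}(t)$, the two terms proportional to $F_{n-1}(t)$ cancel, leaving $\frac{d}{dt}F_{n-1}(t) = F_n(t)$. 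This closes the induction and, reading the chain of identities $G^{(k)} = F_k$ for $0 \le k \le n$ with each $F_k$ regarded in $\D_{s+n}$, shows that $G$ is $n$-times differentiable into $\D_{s+n}$ with $G^{(n)} = F_n$.

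The only genuine analytic ingredient here is Corollary~\ref{DcorA} (itself resting on Lemma~\ref{raF} and the density of $\wt{\mathcal{R}}(\C_+)$ in $\D_s$), which is already in hand, so the work that remains is essentially algebraic: verifying the scaling identities $F_{n-1}(t) = t^{-(n-1)}h(t\cdot)$ and $H_t = (n-1)t^{n-2}F_{n-1}(t) + t^{n-1}F_n(t)$, observing the cancellation, and being careful to regard all of $G, G', \dots, G^{(n)}$ as maps into the single space $\D_{s+n}$ via the embeddings $\D_{s+k} \embedi \D_{s+n}$. That bookkeeping, rather than any hard estimate, is the only thing one has to be attentive about.
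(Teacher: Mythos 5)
Your proof is correct and follows essentially the same route as the paper: induction on $n$ with Corollary~\ref{DcorA} applied to $z^{k}f^{(k)}$ at each step, combined with scale-invariance of the norms and the continuous embeddings $\D_{s+k}\embedi\D_{s+n}$. The only difference is that you spell out the product-rule cancellation explicitly, where the paper states the resulting identity directly.
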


\begin{proof}
Firstly, $f(tz) \in \D_s \subset \D_{s+n}$, so $G$ maps $(0,\infty)$ into $\D_{s+n}$.

The proof is by induction on $n$.   The case $n=1$ is given by Corollary \ref{DcorA}.   Assume that $G^{(n)} = F_n$ with values in $\D_{s+n}$, and let $f_n(z) = z^n f^{(n)}(z)$. Then 
\[
G^{(n)}(t)(z) =  F_n(t)(z) = t^{-n}f_n(tz).
\]
By Corollary \ref{DcorA} applied to $f_n\in\D_{s+n}$, $G^{(n)}$ is differentiable with respect to $t$, when considered as a function with values in $\D_{s+n+1}$.  Finally,  
\[
G^{(n+1)}(t)(z) = \frac{d}{dt} (z^n f^{(n)}(tz)) = z^{n+1} f^{(n+1)}(tz) = F_{n+1}(t)(z).  \qedhere
\]
\end{proof}

\subsection{Approximations via change of variables}
Here we consider approximations of $f$ from $\D_s$ and $\H_\psi$ by the functions $f_\gamma(z) = f(z^\gamma)$ as $\gamma \to1-$.

\begin{prop} \label{gggP}
Let $\gamma \in (0,1)$.  The following hold.
\begin{enumerate}[\rm1.]
\item
Let $s>-1$, and $f \in \D_s$.  Then
\begin{equation} \label{Dc1}
\|f_\gamma\|_{\D_s} \le \|f\|_{\D_s},
\end{equation}
and
\begin{equation} \label{Dc2}
\lim_{\gamma\to1-} \|f_\gamma-f\|_{\D_s} = 0.
\end{equation}
\item
Let $\psi\in (0,\pi)$and $g\in \H_{\psi}$. Then
\begin{equation} \label{Sal1}
\lim_{\gamma\to1-} \|g_\gamma -g\|_{\H_\psi}= 0.
\end{equation}
\end{enumerate}
\end{prop}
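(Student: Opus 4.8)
The plan is to treat the two parts in parallel, the main work being part 1 (the $\D_s$ case), with part 2 reduced to the half-plane case $\psi = \pi/2$ by the isometric isomorphism $\H_\psi \cong \H_{\pi/2}$ of Lemma \ref{simple} and then handled similarly (in fact it was already shown that $\H_{\pi/2} \embedi \D_s$ for all $s>-1$, so part 2 will essentially follow from part 1 plus a routine check on the $H^\infty$-norm). For part 1, note first that $f_\gamma(\infty) = f(\infty)$ by the sectorial-limit description in Corollary \ref{Repr} (composition with $z \mapsto z^\gamma$ maps sectors into sectors and preserves limits at infinity), so only the seminorm $\|\cdot\|_{\D_{s,0}} = \|\cdot\|_{\mathcal V_s}$ applied to the derivative needs attention. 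The key computation for \eqref{Dc1} is a change of variables: using the polar form of the $\mathcal V_s$-norm from \eqref{norm},
\[
\|f_\gamma\|_{\D_{s,0}} = \int_{-\pi/2}^{\pi/2} \cos^s\varphi \int_0^\infty |f_\gamma'(\rho e^{i\varphi})| \, d\rho \, d\varphi,
\]
and since $f_\gamma'(\rho e^{i\varphi}) = \gamma \rho^{\gamma-1} e^{i(\gamma-1)\varphi} f'(\rho^\gamma e^{i\gamma\varphi})$, the substitution $t = \rho^\gamma$ in the inner integral gives $\int_0^\infty |f_\gamma'(\rho e^{i\varphi})| \, d\rho = \int_0^\infty |f'(t e^{i\gamma\varphi})| \, dt$. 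Thus
\[
\|f_\gamma\|_{\D_{s,0}} = \int_{-\pi/2}^{\pi/2} \cos^s\varphi \int_0^\infty |f'(t e^{i\gamma\varphi})| \, dt \, d\varphi = \frac{1}{\gamma}\int_{-\pi\gamma/2}^{\pi\gamma/2} \cos^s(\varphi/\gamma) \int_0^\infty |f'(t e^{i\varphi})| \, dt \, d\varphi,
\]
and since $\gamma < 1$ one has $\cos^s(\varphi/\gamma) \le \gamma^{-s}\cos^s\varphi$ is not quite what is wanted; instead compare directly: on $|\varphi| \le \pi\gamma/2$ we have $|\varphi/\gamma| \ge |\varphi|$, hence $\cos(\varphi/\gamma) \le \cos\varphi$, and the domain of integration has shrunk, so $\|f_\gamma\|_{\D_{s,0}} \le \int_{-\pi/2}^{\pi/2}\cos^s\varphi\int_0^\infty|f'(te^{i\varphi})|\,dt\,d\varphi = \|f\|_{\D_{s,0}}$ once one absorbs the factor $\gamma^{-1}$ against the Jacobian $d\varphi$; being careful, the cleanest route is to keep the variable $\varphi$ (not rescaled) and write $\|f_\gamma\|_{\D_{s,0}} = \int_{-\pi/2}^{\pi/2}\cos^s\varphi\, \Phi(\gamma\varphi)\,d\varphi$ where $\Phi(\theta) := \int_0^\infty|f'(te^{i\theta})|\,dt$, and bound $\cos^s\varphi \le \cos^s(\gamma\varphi)$ combined with the substitution $\theta = \gamma\varphi$ to land on $\int_{-\pi/2}^{\pi/2}\cos^s\theta\,\Phi(\theta)\,d\theta$. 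This gives \eqref{Dc1}.

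For \eqref{Dc2}, the strategy is the standard density argument. The family $\{f \mapsto f_\gamma : \gamma \in (1/2,1)\}$ together with the identity is uniformly bounded on $\D_s$ by \eqref{Dc1}. On the dense subspace $\wt{\mathcal R}(\C_+)$ of $\D_s$ (Theorem \ref{D00}) it suffices to verify $\|({r_\l})_\gamma - r_\l\|_{\D_s} \to 0$ for each fixed $\l \in \C_+$ (constants are fixed by the operation, since $(r_\l)_\gamma(z) = (z^\gamma+\l)^{-1}$ and the constant-function case is trivial). For a single $r_\l$, as $\gamma \to 1-$ we have $(r_\l)_\gamma \to r_\l$ pointwise on $\C_+$, and $(r_\l)_\gamma'(z) = -\gamma z^{\gamma-1}(z^\gamma+\l)^{-2} \to -(z+\l)^{-2} = r_\l'(z)$ pointwise; a dominating function of the form $C_\l(1+|z|^2)^{-1}\max\{1, (\Re z)^{s}/|z|^{s}\}$-integrable majorant against $\cos^s\varphi\,d\rho\,d\varphi$ can be found (uniformly for $\gamma \in (1/2,1)$ using $|z^\gamma + \l| \gtrsim \cos(\pi/4)(|z|^\gamma + |\l|)$ from Lemma \ref{trig}), so dominated convergence gives $\|(r_\l)_\gamma' - r_\l'\|_{\mathcal V_s} \to 0$; together with $(r_\l)_\gamma(\infty) = 0 = r_\l(\infty)$ this yields convergence in $\D_s$. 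The usual $\varepsilon/3$-argument then upgrades this to all of $\D_s$.

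For part 2, by Lemma \ref{simple} the map $f \mapsto f(z^{2\psi/\pi})$ is an isometric algebra isomorphism $\H_\psi \to \H_{\pi/2} = H^{1,1}(\C_+)$, and it conjugates $g \mapsto g_\gamma$ into the same operation on $H^{1,1}(\C_+)$; so it suffices to prove \eqref{Sal1} for $\psi = \pi/2$. Here $\|g\|_{\H_{\pi/2}} = \|g\|_{H^\infty(\C_+)} + \|g'\|_{H^1(\C_+)}$. For the $H^1$-part: $g' \in H^1(\C_+) \embedi \mathcal V_s$ means $g \in \D_s$ for all $s>-1$ (Lemma \ref{Dh1}(i)), but more directly, the change-of-variables computation above shows $\|g_\gamma'\|_{H^1(\Sigma_{\pi\gamma/2})}$ relates to $\|g'\|_{H^1(\C_+)}$; to get convergence of $g_\gamma'$ to $g'$ in $H^1(\C_+)$ one again reduces to the dense subclass $\wt{\mathcal R}(\C_+) \subset H^{1,1}(\C_+)$ (which is dense there, e.g.\ via Theorem \ref{ACM} and density of $\lt L^1 + \C$, or directly) and checks it for $r_\l$. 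For the $H^\infty$-part: since $g$ extends continuously and boundedly to $\overline{\C}_+$ with $g(\infty)$ existing (Theorem \ref{hardy2}), $g_\gamma \to g$ uniformly on $\C_+$ — indeed $\sup_{z \in \C_+}|g(z^\gamma) - g(z)|$ can be controlled using uniform continuity of $g$ on $\overline{\C}_+ \cup \{\infty\}$ (a compact set in the one-point-type compactification of the sector) together with the fact that $z^\gamma \to z$ uniformly on $\{|z| \le R\}$ for each $R$ and that both $g(z^\gamma), g(z)$ are close to $g(\infty)$ for $|z|$ large, uniformly in $\gamma$ near $1$; splitting $\C_+$ into $\{|z| \le R\}$ and $\{|z| > R\}$ handles both regimes. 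Combining the two parts gives \eqref{Sal1}.

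\textbf{Main obstacle.} The delicate point is the uniform domination needed to run dominated convergence in \eqref{Dc2} (and its $\H_\psi$ analogue): one must produce, for fixed $\l$, a single $\mathcal V_s$-integrable majorant for $\{(r_\l)_\gamma' : \gamma \in (1/2,1)\}$, and the factor $z^{\gamma-1}$ and the denominator $|z^\gamma+\l|^2$ both need care near $z = 0$ and $z = \infty$; the inequalities of Lemma \ref{trig} give the clean uniform lower bound $|z^\gamma + \l| \ge \cos(\pi/4)(|z|^\gamma + |\l|)$ that makes this manageable. The $H^\infty$-convergence in part 2 also requires a genuine (if short) compactness/uniform-continuity argument rather than a pointwise one, since $\C_+$ is unbounded.
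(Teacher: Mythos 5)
Your proof of \eqref{Dc1} has a genuine gap, and it is the central one. After the change of variables you arrive (correctly) at
\[
\|f_\gamma\|_{\D_{s,0}}=\int_{-\pi/2}^{\pi/2}\cos^s\varphi\,\Phi(\gamma\varphi)\,d\varphi,
\qquad \Phi(\theta):=\int_0^\infty|f'(te^{i\theta})|\,dt ,
\]
but the two ways you then try to close the estimate both fail. Substituting $\theta=\gamma\varphi$ produces the Jacobian factor $1/\gamma>1$, which cannot be ``absorbed'': for $s=0$ and $\Phi$ concentrated near $\theta=0$ the resulting quantity is strictly \emph{larger} than $\int\cos^s\theta\,\Phi(\theta)\,d\theta$, so the claimed pointwise comparison is simply false for a general nonnegative $\Phi$. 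The alternative bound $\cos^s\varphi\le\cos^s(\gamma\varphi)$ holds only for $s\ge0$ (for $s\in(-1,0)$ the inequality reverses, and near $\theta=\gamma\pi/2$ the ratio $\cos^s(\theta/\gamma)/\cos^s\theta$ is unbounded), and even for $s\ge0$ it still leaves the factor $1/\gamma$. The point is that \eqref{Dc1} is \emph{not} a change-of-variables identity: it genuinely uses that for $g\in H^{1,1}(\C_+)$ the symmetrized ray-integral $\varphi\mapsto\int_0^\infty\bigl(|g'(te^{i\varphi})|+|g'(te^{-i\varphi})|\bigr)\,dt$ is nondecreasing on $[0,\pi/2)$ --- a consequence of Theorem \ref{hardy0}(iv) (the $H^1(\Sigma_\varphi)$-norm is attained on the boundary and increases with the angle). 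With that monotonicity, $\Phi(\gamma\varphi)+\Phi(-\gamma\varphi)\le\Phi(\varphi)+\Phi(-\varphi)$ pointwise and the contraction is immediate for $g\in H^{1,1}(\C_+)$; one then extends to all of $\D_s$ by density of $\H_{\pi/2}$ in $\D_s$ (Corollary \ref{Bdens}), identifying the extension with $f\mapsto f_\gamma$ via continuity of point evaluations. Without this ingredient your argument cannot even produce a bound uniform in $\gamma$ when $s\in(-1,0)$, so the $\varepsilon/3$-argument you use for \eqref{Dc2} also loses its footing.

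The remainder is essentially sound and partly diverges from the paper in a harmless way. Your density argument for \eqref{Dc2} via $\wt{\mathcal R}(\C_+)$ and dominated convergence for $r_\l$ would work once uniform boundedness is secured (the paper instead deduces \eqref{Dc2} from \eqref{Sal1} with $\psi=\pi/2$ together with the continuous dense embedding $\H_{\pi/2}\embedi\D_s$, which is shorter). For part 2 your reduction to $\psi=\pi/2$ by Lemma \ref{simple} is correct, but your treatment of the $H^\infty$-part is unnecessary: since $\|\cdot\|_{\H_\psi}$ and $\|\cdot\|'_{\H_\psi}=|f(\infty)|+\|f'\|_{H^1(\Sigma_\psi)}$ are equivalent by \eqref{bound_for_sup} and $g_\gamma(\infty)=g(\infty)$, it suffices to prove $\|g'_\gamma-g'\|_{H^1(\Sigma_\psi)}\to0$; this follows from the a.e.\ convergence of boundary values, the norm identity $\int_{\partial\Sigma_\psi}|g'_\gamma|\,|dz|=\int_{\partial\Sigma_{\gamma\psi}}|g'|\,|dz|\to\int_{\partial\Sigma_\psi}|g'|\,|dz|$ (Theorem \ref{hardy0}(iii)), and Lemma \ref{duren21} --- no density or uniform-continuity argument needed.
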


\begin{proof}
1.
First let $g \in H^{1,1}(\C_{+})$.  If $0<\psi<\varphi\le \pi/2$, then it follows from Theorem \ref{hardy0}(iv) that
\begin{equation}\label{Afs}
\int_0^\infty \left(|g'(te^{i\varphi})|+|g'(te^{-i\varphi})|\right)\,dt
\ge
\int_0^\infty \left(|g'(te^{i\psi})|+|g'(t e^{-i\psi})|\right)\,dt.
\end{equation}
Hence
\begin{align*}
\|g\|_{\D_s} &= |g(\infty)|+
\int_0^{\pi/2}\cos^s\varphi \int_0^\infty
\left(|g'(te^{i\varphi})|+|g(te^{-i\varphi})|\right)\,dt\,d\varphi \\
&\ge |g(\infty)|+
\int_0^{\pi/2}\cos^s\varphi \int_0^\infty
\left(|g'(te^{i\gamma\varphi})|+|g'(te^{-i\gamma\varphi})|\right)\,dt\,d\varphi \\
&= \|g_\gamma\|_{\D_s}.
\end{align*}
Since $\H_{\pi/2}$ is dense in $\D_s$ (see Corollary \ref{Bdens}), it follows that the map $g \mapsto g_\gamma$ extends to a contraction on $\D_s$, and this contraction maps $f$ to $f_\gamma$.  Then \eqref{Dc1} holds.

Now \eqref{Dc2} follows from \eqref{Sal1}, \eqref{Dc1}, and the fact that $\H_{\pi/2}$ is continuously and densely embedded in $\D_s$ (see Proposition \ref{Dh1} and Corollary \ref{Bdens}).

2.  Since the norms $\|\cdot\|_{\H_\psi}$ and $\|\cdot\|'_{\H_\psi}$ are equivalent and $g_\gamma(\infty) = g(\infty)$, it suffices to show that 
\[
\|g'_\gamma - g'\|_{H^1(\Sigma_\psi)} = \int_{\partial\Sigma_\psi} |g_\gamma'(z)-g(z)| \,|dz| \to 0.
\]
By Lemma \ref{simple}, we have
\[
\int_{\partial\Sigma_\psi}|g'_\gamma(z)|\,|dz|
=\int_{\partial\Sigma_{\gamma\psi}}|g'(z)|\,|dz|.
\]
Applying Theorem \ref{hardy0}(iii) to $g'$,
\[
\lim_{\gamma\to 1-} \int_{\partial\Sigma_{\gamma\psi}}|g'(z)|\,|dz|  
=\int_{\partial\Sigma_{\psi}}|g'(z)|\,|dz| 
\]
and
\[
\lim_{\gamma\to1-} g'_\gamma(z) = g'(z), \quad \text{for almost all $z \in\partial \Sigma_\psi$}.
\]
Now the statement \eqref{Sal1} follows from Lemma \ref{duren21}.
\end{proof}

\subsection{Density of rational functions in $\H_\psi$}

In Theorem \ref{D00} and Corollary \ref{Bdens}, we established that $\wt{\mathcal{R}}(\C_+)$ and several larger spaces are dense in $\D_s$, for $s>-1$ or $s>0$.  In particular, we noted that $H^{1,1}(\C_+)$ is dense in $\D_s$.

Let $\psi\in (0,\pi)$ and  
\[
\H_{\psi,0}=\{f\in \H_\psi:\,f(\infty)=0\},
\]
with the norm
\[
\|f\|_{\H_{\psi,0}}=\|f'\|_{H^1(\Sigma_\psi)}.
\]
By \eqref{bound_for_sup}, this norm is equivalent to $\|\cdot\|_{\H_\psi}$ on $\H_{\psi,0}$.

Let $\psi \in (0,\pi)$ and $\mathcal{R}(\Sigma_\psi)$ be the linear span of $\{r_\lambda :\lambda \in \Sigma_{\pi-\psi}\}$.  Let $\mathcal{R}_\H(\Sigma_\psi)$ be the closure of $\mathcal{R}(\Sigma_\psi)$ in $\H_{\psi,0}$.
 We will prove that $\H_{\psi,0} = \mathcal R_{\mathcal H}(\Sigma_\psi)$.  Thus the rational functions which vanish at infinity and have simple poles outside $\overline\Sigma_\psi$ are dense in $\H_\psi$ modulo constants.
This fact may be known, but we did not find it in the literature.   Our proof involves several lemmas given below and it may be of interest as a piece of function theory.  The following lemma, relating to the function spaces $\mathcal E_\varphi$ from Definition \eqref{edef}, is the key step in our proof.

\begin{lemma}\label{DH111}
Let $\psi\in (0,\pi)$, $\varphi\in (\psi,\pi)$
and let $f\in H^1(\Sigma_{\varphi})$. 
If
\[
\int_{\partial\Sigma_{\varphi}}\frac{|f(\lambda)|}{|\lambda|}\,|d\lambda|<\infty,
\]
then 
\begin{equation}\label{GGG1}
f\in \mathcal{R}_\H(\Sigma_\psi).
\end{equation}
\end{lemma}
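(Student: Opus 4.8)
The plan is to realize $f$ as a norm-convergent superposition of resolvents $r_\lambda$ with $\lambda \in \Sigma_{\pi-\psi}$, using the Cauchy representation of $f$ on the slightly wider sector $\Sigma_\varphi$ from Theorem \ref{hardy0}(iii). Since $f \in H^1(\Sigma_\varphi)$, we have
\[
f(z) = \frac{1}{2\pi i}\int_{\partial\Sigma_\varphi}\frac{f(\lambda)}{\lambda-z}\,d\lambda, \qquad z \in \Sigma_\varphi,
\]
the integral taken downwards along the two rays $\arg\lambda = \pm\varphi$. For $z \in \Sigma_\psi$ and $\lambda \in \partial\Sigma_\varphi$ we have $-\lambda \notin \overline{\Sigma}_\psi$; writing $(\lambda - z)^{-1} = -r_{-\lambda}(z)$ with $-\lambda\in\Sigma_{\pi-\varphi}\subset\Sigma_{\pi-\psi}$, the formula exhibits $f$ as an integral of (constant multiples of) functions $r_{-\lambda}$. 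The first step is therefore to check that the vector-valued integrand $\lambda \mapsto \frac{-1}{2\pi i}f(\lambda)\, r_{-\lambda}$ is Bochner integrable as a map into $\H_{\psi,0}$ with respect to arclength on $\partial\Sigma_\varphi$, and that the resulting Bochner integral represents $f$ in $\H_{\psi,0}$.

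For Bochner integrability I would use the estimate \eqref{res_sigma_cal_intro} (equivalently \eqref{rtl2} with $\gamma=1$): for $\lambda = |\lambda|e^{\pm i\varphi} \in \partial\Sigma_\varphi$, the pole $-\lambda$ lies on $\partial\Sigma_{\pi-\varphi}$, so $\|r_{-\lambda}\|'_{\H_\psi} \le C_{\varphi,\psi}/|\lambda|$ with $C_{\varphi,\psi}$ depending only on $\varphi$ and $\psi$ (the relevant half-angle $(\pi-\varphi - \psi)/2$ is bounded away from $0$ since $\varphi + \psi < 2\pi$... more precisely one uses $\varphi > \psi$ and $\varphi<\pi$ to get a uniform positive lower bound on the sine). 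Consequently
\[
\int_{\partial\Sigma_\varphi} \frac{|f(\lambda)|}{2\pi}\,\|r_{-\lambda}\|'_{\H_\psi}\,|d\lambda| \le \frac{C_{\varphi,\psi}}{2\pi}\int_{\partial\Sigma_\varphi}\frac{|f(\lambda)|}{|\lambda|}\,|d\lambda| < \infty,
\]
by the hypothesis. Continuity of $\lambda\mapsto r_{-\lambda}$ into $\H_{\psi}$ follows from the remark in Example \ref{hexs}(1) together with the continuity of the point-evaluation functionals (Theorem \ref{hardy2}(iii)) and the principle recorded in Section \ref{prelims}; hence the integrand is continuous and the Bochner integral
\[
G := \frac{-1}{2\pi i}\int_{\partial\Sigma_\varphi} f(\lambda)\,r_{-\lambda}\,d\lambda \in \H_{\psi,0}
\]
exists. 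Evaluating at any $z \in \Sigma_\psi$ and using continuity of $\delta_z$ on $\H_\psi$, we get $G(z) = f(z)$ by the Cauchy formula above; since point evaluations separate points, $G = f$, so $f \in \H_{\psi,0}$ to begin with (that $f' \in H^1(\Sigma_\psi)$ also follows, but more simply from $H^1(\Sigma_\varphi) \subset H^1(\Sigma_\psi)$ and differentiating under the integral). Finally, because $G$ is the Bochner integral of a continuous $\H_{\psi,0}$-valued function, it lies in the closed linear span of $\{r_{-\lambda} : \lambda \in \partial\Sigma_\varphi\} \subset \mathcal{R}(\Sigma_\psi)$, i.e.\ $f \in \mathcal{R}_\H(\Sigma_\psi)$, which is \eqref{GGG1}.

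The main obstacle I anticipate is the uniform resolvent bound: one must ensure that $\|r_{-\lambda}\|'_{\H_\psi} \le C/|\lambda|$ with a constant independent of $\lambda \in \partial\Sigma_\varphi$, and this needs the geometric gap between $\varphi$ and $\psi$ (and $\varphi<\pi$) to keep the relevant cosine/sine factor in \eqref{rtl2} bounded below; once that is in hand, the integrability hypothesis $\int_{\partial\Sigma_\varphi}|f(\lambda)|/|\lambda|\,|d\lambda| < \infty$ — precisely the statement that $f/z$-type weight is $L^1$ on the boundary, i.e.\ $f\in\mathcal E_\varphi$ in the notation of \eqref{edef} — does exactly the remaining work. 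A secondary point to handle carefully is the direction/parametrization of $\partial\Sigma_\varphi$ and the behavior of the integral near $0$ and $\infty$, but these are routine given $f \in H^1(\Sigma_\varphi)$ and the weight bound.
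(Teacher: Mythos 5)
Your proof is correct and follows essentially the same route as the paper: both write $f$ via the Cauchy formula of Theorem \ref{hardy0}(iii) as a Bochner integral in $\H_{\psi,0}$ of the resolvents $r_{-\lambda}$, $\lambda\in\partial\Sigma_\varphi$, use the uniform bound $\|r_{-\lambda}\|'_{\H_\psi}\le C_{\varphi,\psi}/|\lambda|$ from Example \ref{hexs}(1) together with the hypothesis to get absolute convergence, and conclude by approximating the Bochner integral with Riemann sums lying in $\mathcal{R}(\Sigma_\psi)$. The only discrepancies are cosmetic (sign/orientation of the contour, and the precise constant, which should involve $\sin^2((\varphi-\psi)/2)$ as in \eqref{res_sigma_cal_intro}).
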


\begin{proof}
From \eqref{ff} for $\gamma=1$, the function $F(\lambda) := r_\l$ maps $\Sigma_{\pi-\psi}$ into $\H_{\psi,0}$, and is locally bounded.  Moreover $F$ is holomorphic (see Theorem \ref{hardy2}(iii) and Section \ref{prelims}) and its derivative is $-r_\l^2$.

The Cauchy integral formula \eqref{Boch1} may be written as
\begin{equation*}
f(z)=
\frac{1}{2\pi i}
\int_{\partial\Sigma_{\varphi}} f(\l) F(-\l)(z) \,d\lambda, \quad
z\in \Sigma_\psi.
\end{equation*}
From \eqref{res_sigma_cal_intro}, we obtain
\[
\int_{\partial\Sigma_\varphi} \|f(\l)F(-\l)\|_{\H_{\psi,0}} \,|d\l| \le \frac{1}{2\pi \sin^2 ((\varphi-\psi)/2)
} \int_{\partial\Sigma_\varphi} \frac{|f(\l)|}{|\l|}\,d\l < \infty.
\]
Thus 
\[
f = \frac{1}{2\pi i} \int_{\partial\Sigma_\varphi} f(\l)F(-\l) \,d\l
\]
as a Bochner integral in $\H_{\psi,0}$, with continuous integrand, so it may be approximated in the $\H_{\psi,0}$-norm by Riemann sums of the integrand, which lie in $\mathcal{R}(\Sigma_\psi)$. 
Hence $f\in \mathcal{R}_\H(\Sigma_\psi)$.
\end{proof}

The next step in the proof is to construct a family of functions which serve as an approximate identity for $\mathcal H_{\varphi,0}$ when restricted to any sector smaller than $\Sigma_\varphi$.

\begin{lemma}\label{Add}
Let $\varphi \in (0,\pi)$ and $\epsilon\in (0,1)$, and let 
\begin{equation}\label{Deps}																	
g_\epsilon(z):= \frac{2z^\epsilon}{1+z^\epsilon}(1+\epsilon z)^{-2},\qquad
z\in \C\setminus (-\infty,0].
\end{equation}
Then $g_\epsilon\in \H_{\varphi,0} \cap H^1(\Sigma_\varphi)$ and   
\begin{equation}\label{DR1}
 \sup_{\epsilon \in (0,1)}
\|g_\epsilon'\|_{H^1(\Sigma_\varphi)} <\infty.
\end{equation}
Moreover, 
\begin{equation}\label{DR2}
\lim_{\epsilon\to 0}\,g_\epsilon(z)=1,\quad z\in \C\setminus (-\infty,0],
\end{equation}
and, for $0<a<b<\infty$, there exists $C_{\varphi,a,b}$ such that 
\begin{equation}\label{DR3}
|g'_\epsilon(z)|\le C_{\varphi,a,b}\cdot \epsilon, \quad z\in\partial\Sigma_\varphi,\,
|z|\in (a,b).
\end{equation}
\end{lemma}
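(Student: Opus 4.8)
The plan is to read everything off the explicit form of $g_\epsilon'$, using Lemma~\ref{trig}(i) to bound the denominators on $\partial\Sigma_\varphi$ by a constant depending only on $\varphi$ (crucially \emph{not} on $\epsilon$), and then to reduce the two resulting ray integrals to the elementary integrals $\int_0^\infty(1+s)^{-2}\,ds$ and $\int_0^\infty(1+u)^{-3}\,du$ via the substitutions $s=t^\epsilon$ and $u=\epsilon t$.

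First I would record that $g_\epsilon\in\Hol(\Sigma_\varphi)$: since $\varphi<\pi$ we have $\Sigma_\varphi\subset\C\setminus(-\infty,0]$, the factor $1+\epsilon z$ does not vanish there, and $1+z^\epsilon$ does not vanish either because $\arg(z^\epsilon)=\epsilon\arg z\in(-\epsilon\varphi,\epsilon\varphi)\subset(-\pi,\pi)$, so $z^\epsilon$ is never a negative real. Writing $g_\epsilon=\bigl(2-2(1+z^\epsilon)^{-1}\bigr)(1+\epsilon z)^{-2}$ and differentiating gives
\[
g_\epsilon'(z)=\frac{2\epsilon\,z^{\epsilon-1}}{(1+z^\epsilon)^2(1+\epsilon z)^2}-\frac{4\epsilon\,z^{\epsilon}}{(1+z^\epsilon)(1+\epsilon z)^3},
\]
and the explicit factor $\epsilon$ in each term already makes \eqref{DR3} transparent, since on $\{z\in\partial\Sigma_\varphi:a\le|z|\le b\}$ the denominators are bounded below and $|z|^{\epsilon-1},|z|^{\epsilon}$ above by constants depending only on $\varphi,a,b$.

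For the uniform bound \eqref{DR1} I would fix $|\varphi'|\le\varphi$, set $z=te^{i\varphi'}$, and apply Lemma~\ref{trig}(i) with $\lambda=1$: one gets $|1+\epsilon z|\ge\cos(\varphi'/2)(1+\epsilon t)$ and, since $z^\epsilon$ has argument $\epsilon\varphi'$, $|1+z^\epsilon|\ge\cos(\epsilon\varphi'/2)(1+t^\epsilon)$; both are $\ge c\,(1+\epsilon t)$, resp.\ $c\,(1+t^\epsilon)$, where $c:=\cos(\varphi/2)\in(0,1]$. Inserting these into the two terms of $g_\epsilon'$ and using $t^\epsilon(1+t^\epsilon)^{-1}\le1$ and $(1+\epsilon t)^{-1}\le1$, I obtain
\[
\int_0^\infty|g_\epsilon'(te^{i\varphi'})|\,dt\le\frac{2}{c^4}\int_0^\infty\frac{\epsilon\,t^{\epsilon-1}}{(1+t^\epsilon)^2}\,dt+\frac{4}{c^4}\int_0^\infty\frac{\epsilon}{(1+\epsilon t)^3}\,dt=\frac{2}{c^4}+\frac{2}{c^4}=\frac{4}{c^4},
\]
the integrals being computed by $s=t^\epsilon$ and $u=\epsilon t$. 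Summing the contributions of the rays $\pm\varphi'$ and taking the supremum then gives $g_\epsilon'\in H^1(\Sigma_\varphi)$ with $\sup_{\epsilon\in(0,1)}\|g_\epsilon'\|_{H^1(\Sigma_\varphi)}\le8\cos^{-4}(\varphi/2)$.

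It then remains to assemble the membership and limit claims. The same lower bounds yield $|g_\epsilon(te^{i\varphi'})|\le 2c^{-3}\,t^\epsilon(1+t^\epsilon)^{-1}(1+\epsilon t)^{-2}$, which is $O(t^\epsilon)$ as $t\to0^+$ and $O(t^{-2})$ as $t\to\infty$, so $g_\epsilon\in H^1(\Sigma_\varphi)$ for each fixed $\epsilon$; from the product form $g_\epsilon(z)\to0$ as $|z|\to\infty$ in $\Sigma_\varphi$, so $g_\epsilon(\infty)=0$ and hence $g_\epsilon\in\H_{\varphi,0}$ (using Theorem~\ref{hardy2}). For \eqref{DR2}, $z^\epsilon=e^{\epsilon\log z}\to1$ and $\epsilon z\to0$ as $\epsilon\to0$ for each fixed $z\notin(-\infty,0]$, whence $g_\epsilon(z)\to\tfrac{2}{1+1}=1$. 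The one delicate point — and the only place the argument could go wrong — is the $\epsilon$-uniformity in \eqref{DR1}: the crude estimate $|1+z^\epsilon|\ge t^\epsilon|\sin(\epsilon\varphi')|$ degenerates as $\epsilon\to0$, whereas Lemma~\ref{trig}(i) supplies the $\epsilon$-independent constant $\cos(\varphi/2)$, after which the whole estimate collapses to two explicit one-variable integrals.
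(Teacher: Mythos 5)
Your proof is correct and follows essentially the same route as the paper: compute $g_\epsilon'$ explicitly, use Lemma \ref{trig}(i) to bound $|1+z^\epsilon|$ and $|1+\epsilon z|$ from below on the rays by $\epsilon$-independent constants times $1+t^\epsilon$ and $1+\epsilon t$, and reduce \eqref{DR1} to elementary one-variable integrals, with \eqref{DR2} and \eqref{DR3} read off directly from the formulas. The only (cosmetic) difference is that you evaluate the first integral exactly via the substitution $s=t^\epsilon$, where the paper instead splits it at $t=1$; both yield a bound uniform in $\epsilon\in(0,1)$.
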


\begin{proof}
It is clear that $g_\ep \in H^1(\Sigma_\varphi)$ and $g_\ep(\infty)=0$.  
Moreover
\begin{equation*}
g'_\epsilon(z)=\frac{2\epsilon z^{\epsilon}}{z(1+z^\epsilon)^2(1+\epsilon z)^2}-
\frac{4\epsilon z^{\epsilon}}{(1+z^\epsilon)(1+\epsilon z)^3}.
\end{equation*}
Applying Lemma \ref{trig}, there is a constant $C_\varphi$ 
such that, for $z \in \partial\Sigma_\varphi$ and $t=|z|$,
\begin{equation} \label{DerD}
|g'_\epsilon(z)|\le C_\varphi \epsilon t^\epsilon
\left(\frac{2}{t(1+t^\epsilon)^2(1+\epsilon t)^2}+
\frac{4}{(1+t^\epsilon)(1+\epsilon t)^3}\right).
\end{equation}
Hence
\begin{align*}
\|g'_\epsilon\|_{H^1(\Sigma_\varphi)} &\le 2\epsilon C_\varphi\int_0^\infty
\frac{t^\epsilon\,dt}{t(1+t^\epsilon)^2(1+\epsilon t)^2} + 4\epsilon C_\varphi\int_0^\infty
\frac{t^{\epsilon}\,dt}{(1+t^\epsilon)(1+\epsilon t)^3} \\ 
&\le 2 \epsilon C_\varphi \int_1^\infty \frac{dt}{(1+\epsilon t)^2} +  2\epsilon C_\varphi \int_0^1 t^{\epsilon-1}\,dt + 4 \epsilon C_\varphi \int_0^\infty \frac{dt}{(1+\epsilon t)^3} \\
&= 6 C_\varphi.
\end{align*}
  This yields \eqref{DR1}.

The property \eqref{DR2} is straightforward, and \eqref{DR3} follows from \eqref{DerD}.
\end{proof}

Lemmas \ref{DH111} and \ref{Add} enable us to show that any function $f \in \H_{\varphi,0}$, when restricted to $\Sigma_\psi, \, \psi \in (0,\varphi)$, can be approximated by rational functions (with simple poles) in $\H_{\psi,0}$.

\begin{lemma}\label{DM1}
Let $\psi \in (0,\pi)$, $\varphi\in (\psi,\pi)$, and let
$
f\in \H_{\varphi,0}.
$
Then 
\begin{equation}\label{dens}
f\in \mathcal{R}_\H(\Sigma_\psi).
\end{equation}
\end{lemma}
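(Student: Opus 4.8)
The plan is to use the approximate identity $g_\epsilon$ from Lemma \ref{Add} to cut a general $f \in \H_{\varphi,0}$ down to a function that fits the hypotheses of Lemma \ref{DH111}, and then quote that lemma. Fix $\psi < \varphi' < \varphi$ (an intermediate angle), and consider $f_\epsilon := f g_\epsilon$. Since $\H_{\varphi,0}$ is an algebra (Theorem \ref{hardy2} and the discussion after it) and $g_\epsilon \in \H_{\varphi,0}$, we have $f_\epsilon \in \H_{\varphi,0}$, with $f_\epsilon' = f' g_\epsilon + f g_\epsilon'$. The key point is that $f_\epsilon$, restricted to the smaller sector $\Sigma_{\varphi'}$, satisfies
\[
\int_{\partial\Sigma_{\varphi'}} \frac{|f_\epsilon(\l)|}{|\l|}\,|d\l| < \infty,
\]
because $|f_\epsilon(\l)| = |f(\l)|\,|g_\epsilon(\l)| \le \|f\|_{H^\infty(\Sigma_\varphi)} \cdot C |\l|^\epsilon (1+|\l|)^{-2-\epsilon} \cdot (\text{const})$ near $\l = 0$ and near $\l = \infty$: the factor $|\l|^\epsilon$ kills the logarithmic divergence of $\int |\l|^{-1}\,|d\l|$ at the origin, and the factor $(1+\epsilon|\l|)^{-2}$ ensures decay at infinity. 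Hence $f_\epsilon|_{\Sigma_{\varphi'}} \in \mathcal{E}_{\varphi'}$, and since $f_\epsilon' = f'g_\epsilon + fg_\epsilon' \in H^1(\Sigma_{\varphi'})$ (both terms are in $H^1$: $f' \in H^1(\Sigma_\varphi)$ and $g_\epsilon$ bounded, while $g_\epsilon' \in H^1$ and $f$ bounded), we get $f_\epsilon|_{\Sigma_{\varphi'}} \in \H_{\varphi',0}$. By Lemma \ref{DH111} with $(\psi, \varphi')$ in place of $(\psi, \varphi)$, it follows that $f_\epsilon \in \mathcal{R}_\H(\Sigma_\psi)$.

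It then remains to show $\|f_\epsilon - f\|_{\H_{\psi,0}} \to 0$ as $\epsilon \to 0$, i.e. $\|f'_\epsilon - f'\|_{H^1(\Sigma_\psi)} \to 0$. Write
\[
f_\epsilon' - f' = f'(g_\epsilon - 1) + f g_\epsilon'.
\]
For the first term, $g_\epsilon \to 1$ pointwise on $\C \setminus (-\infty,0]$ (property \eqref{DR2}) and $\|g_\epsilon\|_{H^\infty(\Sigma_\psi)}$ is uniformly bounded (clear from \eqref{Deps}, or via \eqref{bound_for_sup} and \eqref{DR1}), so $|f'(g_\epsilon-1)| \le (1 + \sup_\epsilon \|g_\epsilon\|_\infty) |f'| \in L^1(\partial\Sigma_\psi)$ and dominated convergence gives $\|f'(g_\epsilon-1)\|_{H^1(\Sigma_\psi)} \to 0$. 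For the second term, $\|fg_\epsilon'\|_{H^1(\Sigma_\psi)} \le \|f\|_{H^\infty(\Sigma_\psi)} \|g_\epsilon'\|_{H^1(\Sigma_\psi)}$; but $\|g_\epsilon'\|_{H^1(\Sigma_\psi)}$ does not obviously tend to $0$ from \eqref{DR1} alone. Here one uses \eqref{DR3}: on any compact arc $\{|z| \in [a,b]\} \cap \partial\Sigma_\psi$ we have $|g_\epsilon'(z)| \le C_{\psi,a,b}\,\epsilon \to 0$, while the contributions from $|z| < a$ and $|z| > b$ are controlled by splitting the integral in \eqref{DR1} and choosing $a$ small, $b$ large — more precisely, revisiting the estimate in the proof of Lemma \ref{Add} shows $\int_{|t|<a} + \int_{|t|>b}$ of the relevant integrands is small uniformly in $\epsilon$, so $\|g_\epsilon'\|_{H^1(\Sigma_\psi)} \to 0$. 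Hence $\|fg_\epsilon'\|_{H^1(\Sigma_\psi)} \to 0$ as well.

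Combining, $f_\epsilon \to f$ in $\H_{\psi,0}$, and since each $f_\epsilon \in \mathcal{R}_\H(\Sigma_\psi)$, which is closed by definition, we conclude $f \in \mathcal{R}_\H(\Sigma_\psi)$, proving \eqref{dens}.

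\textbf{Main obstacle.} The delicate point is showing $\|g_\epsilon'\|_{H^1(\Sigma_\psi)} \to 0$: the uniform bound \eqref{DR1} and the pointwise/compact estimate \eqref{DR3} must be combined with a tail estimate that is uniform in $\epsilon$. The proof of Lemma \ref{Add} essentially contains the needed pieces — the integrand of $\|g_\epsilon'\|_{H^1}$ is bounded by $2\epsilon C_\varphi t^{\epsilon-1}(1+\epsilon t)^{-2}$ plus a similar term, whose integral over $\{t < a\}$ is $O(a^\epsilon) \to$ small and over $\{t > b\}$ is $O(1/(\epsilon b)) \cdot \epsilon = O(1/b)$ — so the argument goes through, but it requires care rather than a black-box citation.
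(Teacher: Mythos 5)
Your overall strategy is the same as the paper's: multiply $f$ by the approximate identity $g_\epsilon$, apply Lemma \ref{DH111} to $fg_\epsilon$, and then show $fg_\epsilon \to f$ in the Hardy--Sobolev norm. The first two steps are fine. But the convergence step contains a genuine error: the claim that $\|g_\epsilon'\|_{H^1(\Sigma_\psi)} \to 0$ is false. Indeed, taking the ray $\varphi=0$ (which contributes to the $H^1(\Sigma_\psi)$-norm),
\[
\int_0^1 |g_\epsilon'(t)|\,dt \ge \Bigl|\int_0^1 g_\epsilon'(t)\,dt\Bigr| = |g_\epsilon(1)-g_\epsilon(0)| = (1+\epsilon)^{-2} \to 1,
\]
so $\|g_\epsilon'\|_{H^1(\Sigma_\psi)}$ is bounded \emph{below} by a quantity tending to $2$. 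Your tail estimates are where the slip occurs: $\int_0^a \epsilon t^{\epsilon-1}\,dt = a^\epsilon \to 1$ (not ``small'') and $\int_b^\infty \epsilon(1+\epsilon t)^{-3}\,dt = \tfrac12(1+\epsilon b)^{-2} \to \tfrac12$ (not $O(1/b)$) as $\epsilon\to0$ with $a,b$ fixed. The tails of $\|g_\epsilon'\|_{H^1}$ are uniformly \emph{bounded}, not uniformly small — which is exactly why Lemma \ref{Add} only asserts \eqref{DR1} and \eqref{DR3}, not decay of the norm.

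The correct way to kill the term $\|fg_\epsilon'\|_{H^1}$, as in the paper, is to let the smallness come from $f$ rather than from $g_\epsilon'$: bound the integral over $\{|z|<a\}$ and $\{|z|>b\}$ by $\bigl(\sup_{|z|<a}|f(z)| + \sup_{|z|>b}|f(z)|\bigr)\|g_\epsilon'\|_{H^1(\Sigma_\varphi)}$, use \eqref{DR3} on the middle piece, let $\epsilon\to0$ first and then $a\to0$, $b\to\infty$. This requires $f(0)=f(\infty)=0$; since a general $f\in\H_{\varphi,0}$ only satisfies $f(\infty)=0$, one must first reduce to the case $f(0)=0$ by subtracting a rational function such as $2f(0)\bigl((z+1)^{-1}-(z+2)^{-1}\bigr)$, which already lies in $\mathcal{R}_\H(\Sigma_\psi)$. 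Your proposal omits this reduction and never uses the vanishing of $f$ at $0$ and $\infty$, so the gap is not merely a citation issue but a missing ingredient. (Your introduction of the intermediate angle $\varphi'$ is harmless but unnecessary: $fg_\epsilon$ already satisfies the hypotheses of Lemma \ref{DH111} on the full sector $\Sigma_\varphi$.)
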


\begin{proof}
Assume first that $f\in H_{\varphi,0}$ and $f(0)=0$, and let $g_\epsilon$ be defined by
\eqref{Deps}.
Then
$f g_\epsilon \in 
\H_{\varphi,0} \cap H^1(\Sigma_{\varphi})$,
and
\[
\int_{\partial \Sigma_{\varphi}} \frac{|f(z)g_\epsilon(z)|}{|z|}\,|dz|<\infty.
\]
By Lemma \ref{DH111}, $fg_\ep \in \mathcal{R}_\H(\Sigma_\psi)$.  

Note that
\[
\|f(1-g_\epsilon)\|_{\H_{\varphi,0}} \le
\|f'(1-g_\epsilon)\|_{H^1(\Sigma_{\varphi})}+
\|f g'_\epsilon\|_{H^1(\Sigma_{\varphi})}.
\]
We will prove that
\begin{equation}\label{Dlim}
\lim_{\epsilon\to 0}\,\|f'(1-g_\epsilon)\|_{H^1(\Sigma_\varphi)}=0,
\end{equation}
and
\begin{equation}\label{Dlim1}
\lim_{\epsilon\to 0}\,\|fg'_\epsilon\|_{H^1(\Sigma_{\varphi})}=0.
\end{equation}

By \eqref{bound_for_sup} and \eqref{DR1},
\[
\sup_{\epsilon\in(0,1) } \|g_\epsilon\|_{H^\infty(\Sigma_\varphi)}\le \sup_{\epsilon \in (0,1)} 
\|g'_\epsilon\|_{H^1(\Sigma_\varphi)}<\infty,
\]
so, using \eqref{DR2} and
the dominated convergence theorem, we have
\[
\|f'(1-g_\epsilon)\|_{H^1(\Sigma_{\varphi})}=
\int_{\partial\Sigma_\varphi} |f'(\lambda)(1-g_\epsilon(\lambda))|\,|d\lambda|
\to 0,\quad \epsilon\to 0.
\]
For $0<a<b<\infty$, 
\begin{align*}
\|f g'_\epsilon\|_{H^1(\Sigma_{\varphi})}
&=\int_{\partial\Sigma_{\varphi}}|f(\lambda)||g_\epsilon'(\lambda)|\,|d\lambda|\\
&\le \sup_{|z|>b,\,z\in\partial\Sigma_{\varphi}}\,|f(z)|\,
\int_{|\lambda|>b, \,\lambda\in \partial\Sigma_{\varphi}} |g'_\epsilon(\lambda)|\,|d\lambda|\\
&\null\hskip20pt +\sup_{|z|<a,\,z\in\partial\Sigma_{\varphi}}\,|f(z)|\,
\int_{|\lambda|<a, \,\lambda\in \partial\Sigma_{\varphi}} |g'_\epsilon(\lambda)|\,|d\lambda|\\
&\null\hskip20pt +\|f\|_{H^{\infty}(\Sigma_{\varphi})}
\int_{|\lambda|\in (a,b), \,\lambda\in \partial\Sigma_{\varphi}} 
|g'_\epsilon(\lambda)|\,|d\lambda|\\
&\le \left(\sup_{|z|>b,\,z\in\partial\Sigma_{\varphi}}\,|f(z)|
+\sup_{|z|<a,\,z\in\partial\Sigma_{\varphi}}\,|f(z)|\right)
\|g'_\epsilon\|_{H^1(\Sigma_{\varphi})}\\
&\null\hskip20pt +2(b-a)
\|f\|_{H^{\infty}(\Sigma_{\varphi})}
\sup_{|z|\in(a,b),\,z\in \partial\Sigma_{\varphi}}\,|g'_\epsilon(z)|.
\end{align*}
Letting first $\epsilon\to0$, using \eqref{DR1}-\eqref{DR3} along with Vitali's theorem, and then letting  $a\to0$ and $b \to \infty$, using $f(0)=f(\infty)=0$, we obtain
\[
\lim_{\epsilon\to 0}\|f g'_\epsilon\|_{H^1(\Sigma_{\varphi})}=0.
\]
We have now proved the assertions \eqref{Dlim} and \eqref{Dlim1}.   Thus we  obtain  \eqref{dens} under the additional assumption that $f(0)=0$.

Now let $f\in \H_{\varphi,0}$ be arbitrary.  Then consider
\[
f_0(z):=f(z)-2f(0)\left(\frac{1}{z+1}-\frac{1}{z+2}\right),
\]
and note that
\[
f_0 \in \H_{\varphi,0} \cap H^1(\Sigma_{\varphi}),\qquad f_0(0)=0.
\]
Then, by the above, 
\[
f_0\in \mathcal{R}_\H(\Sigma_\varphi),
\]
and hence \eqref{dens} holds.
\end{proof}

We now approximate functions $f \in \H_{\psi,0}$ by functions from $\H_{\psi',0}, \, \psi'>\psi$, using the change of variables from Proposition \ref{gggP}.

For $f\in \H_{\psi,0}$ and $f_\gamma(z) = f(z^\gamma)$ for $\gamma \in (0,1)$, we now have
\[
f_\gamma\in \H_{\varphi_\gamma,0},\qquad \varphi_\gamma:=\min\{\gamma^{-1}\psi,\pi\}>\psi.
\]
By \eqref{Sal1},
\[
\lim_{\gamma\to 1-}\,\|f-f_\gamma\|_{\H_{\psi,0}}=0.
\]
By Lemma \ref{DM1}, $f_\gamma \in \mathcal{R}_\H(\Sigma_\psi)$.   Thus we obtain the following result that the rational functions with simple poles are dense in $\H_{\psi,0}$. 

\begin{thm}\label{DM122}
Let $\psi\in (0,\pi)$. Then
\begin{equation}\label{GGG122}
\H_{\psi,0}=\mathcal{R}_\H(\Sigma_\psi).
\end{equation}
\end{thm}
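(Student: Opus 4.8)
The plan is to deduce the theorem directly from Lemma \ref{DM1} together with the scaling approximation in Proposition \ref{gggP}(2). First I would dispose of the easy inclusion $\mathcal{R}_\H(\Sigma_\psi)\subseteq\H_{\psi,0}$: by Examples \ref{hexs}(1) each $r_\l$ with $\l\in\Sigma_{\pi-\psi}$ lies in $\H_\psi$ and vanishes at infinity, so $\mathcal{R}(\Sigma_\psi)\subseteq\H_{\psi,0}$; since $\H_{\psi,0}$ is a closed subspace of the Banach algebra $\H_\psi$ (being the kernel of the bounded functional $f\mapsto f(\infty)$), passing to closures gives the inclusion. I would also record, via \eqref{bound_for_sup}, that the norm $\|\cdot\|_{\H_{\psi,0}}$ is equivalent to $\|\cdot\|_{\H_\psi}$ on $\H_{\psi,0}$, so convergence in the two norms is the same there.

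For the reverse inclusion, fix $f\in\H_{\psi,0}$, and for $\gamma\in(0,1)$ set $f_\gamma(z):=f(z^\gamma)$. By Lemma \ref{simple}, $f_\gamma\in\H_{\varphi_\gamma}$ with $\varphi_\gamma:=\min\{\gamma^{-1}\psi,\pi\}$, and $f_\gamma(\infty)=f(\infty)=0$, so $f_\gamma\in\H_{\varphi_\gamma,0}$; moreover $\varphi_\gamma\in(\psi,\pi)$ once $\gamma$ is close enough to $1$. Hence Lemma \ref{DM1}, applied with the pair $\psi<\varphi_\gamma$, yields $f_\gamma\in\mathcal{R}_\H(\Sigma_\psi)$ for all such $\gamma$. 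Letting $\gamma\to1-$, Proposition \ref{gggP}(2) gives $\|f_\gamma-f\|_{\H_\psi}\to0$, hence also $\|f_\gamma-f\|_{\H_{\psi,0}}\to0$ by the norm equivalence above. Since $\mathcal{R}_\H(\Sigma_\psi)$ is closed in $\H_{\psi,0}$, it follows that $f\in\mathcal{R}_\H(\Sigma_\psi)$, which is \eqref{GGG122}.

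At the level of the theorem itself this is mere bookkeeping; the genuine difficulty has already been absorbed into Lemma \ref{DM1}, and through it into Lemmas \ref{DH111} and \ref{Add}. The only evident way to produce rational approximants is the Cauchy reproducing formula \eqref{Boch1}, which expresses $f$ as a Bochner integral of resolvents over $\partial\Sigma_\varphi$; but that integral converges absolutely in $\H_{\psi,0}$ only when $\int_{\partial\Sigma_\varphi}|f(\l)|/|\l|\,|d\l|<\infty$, a condition not automatic for $f\in\H_{\psi,0}$. The role of the approximate identity $g_\epsilon$ of Lemma \ref{Add} is precisely to force enough decay of $fg_\epsilon$ at $0$ and $\infty$ to secure this integrability, while passing to a strictly larger sector $\Sigma_{\varphi_\gamma}$ (via the harmless change of variables $z\mapsto z^\gamma$) supplies the margin needed to bound the relevant $H^1(\partial\Sigma_\psi)$-norms by quantities controlled on $\partial\Sigma_{\varphi_\gamma}$. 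Thus the anticipated main obstacle is not the assembly above but the two-step limiting argument inside Lemma \ref{DM1}: multiply by $g_\epsilon$ and invoke Lemma \ref{DH111} on the larger sector, then let $\epsilon\to0$ while controlling $\|f g_\epsilon'\|_{H^1(\Sigma_\varphi)}$ by splitting $\partial\Sigma_\varphi$ into small-, medium-, and large-modulus pieces and using $f(0)=f(\infty)=0$.
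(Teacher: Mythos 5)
Your proposal is correct and follows essentially the same route as the paper: the easy inclusion is definitional, and the reverse inclusion is obtained exactly as in the text by passing to $f_\gamma(z)=f(z^\gamma)\in\H_{\varphi_\gamma,0}$ with $\varphi_\gamma>\psi$, invoking Lemma \ref{DM1}, and letting $\gamma\to1-$ via Proposition \ref{gggP}(2) together with the closedness of $\mathcal{R}_\H(\Sigma_\psi)$. Your closing remarks correctly locate the substantive work in Lemmas \ref{DH111}, \ref{Add} and \ref{DM1}.
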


\section{Convergence Lemmas}

In this section we formulate Convergence Lemmas for functions in $\D_s$ and $\H_\psi$, composed with fractional powers.

\begin{lemma}\label{48St2}
Let $s>-1$ and $(f_k)_{k=1}^\infty\subset \mathcal{D}_s$ be such that
\begin{equation*}\label{Bound50A}
\sup_{k \ge 1}\|f_k\|_{\mathcal{D}_s}<\infty,
\end{equation*}
and for every $z\in \C_{+}$ there exists
\[
f_0(z):=\lim_{k\to\infty}\,f_k(z).
\]
Let $g\in \mathcal{D}_s$  satisfy
\[
g(0)=g(\infty)=0.
\]
For $\gamma\in (0,1)$,  let
\begin{equation}\label{fgamma}
f_{k,\gamma}(z) = f_k(z^\gamma), \quad g_\gamma(z) = g(z^\gamma), \qquad z \in \C_+.
\end{equation}
Then
\begin{equation}\label{Se50A}
\lim_{k\to\infty}\,\|f_{k,\gamma}g_\gamma\|_{\mathcal{D}_s}=0.
\end{equation}
\end{lemma}

\begin{proof}
By Corollary \ref{Fatou}, $f_0 \in \mathcal D_s$.  So without loss of generality,
we can assume that $f_0=0$.
By Corollary \ref{gggC}, $g_{\gamma}$ and $f_{k, \gamma}$ belong to the algebra $\mathcal{D}_s^\infty$, so $f_{k, \gamma}g_{\gamma}\in \mathcal{D}_s$.  Moreover
\[
B:=\sup_{k \ge 1}\left(\|f_{k, \gamma}\|_{\mathcal{D}_s}+\|f_{k, \gamma}\|_{\infty}\right)< \infty.
\]

Let $0 < r < R < \infty$ and $\Omega_{r,R} = \{z \in \C_+ : r \le |z| \le R\}$.   By Vitali's theorem,
\[
\lim_{k\to\infty}\,\left(|f_{k, \gamma}(z)|+|f_{k, \gamma}'(z)|\right)=0
\]
uniformly on $\Omega_{r,R}$.    Therefore the integrals
\[
\int_{\Omega_{r,R}} \frac{(\Re z)^s}{|z|^{s+1}} |f'_{k,\gamma}(z) g_\gamma(z)| \,dS(z), \quad \int_{\Omega_{r,R}} \frac{(\Re z)^s}{|z|^{s+1}} |f_{k,\gamma}(z) g'_\gamma(z)| \,dS(z),
\]
tend to $0$ as $k\ \to \infty$.   Moreover,
\[
\int_{\C_+ \setminus \Omega_{r,R}} \frac{(\Re z)^s}{|z|^{s+1}} |f'_{k,\gamma}(z) g_\gamma(z)| \,dS(z) \le \sup_{z \in \C_+ \setminus \Omega_{r,R}} |g_\gamma(z)| \, \|f_{k,\gamma}\|_{\D_s},
\]
and
\[
\int_{\C_+ \setminus \Omega_{r,R}} \frac{(\Re z)^s}{|z|^{s+1}} |f_{k,\gamma}(z) g'_\gamma(z)| \,dS(z) \le \|f_{k,\gamma}\|_\infty \int_{\C_+ \setminus\Omega_{r,R}} \frac{(\Re z)^s}{|z|^{s+1}} |g'_\gamma(z)| \,dS(z).
\]
Hence
\[
\limsup_{k\to\infty} \| f_{k,\gamma}g_\gamma\|_{\D_s} \le B \left( \sup_{\C_+ \setminus \Omega_{r,R}} |g_\gamma(z)|  + \int_{\C_+ \setminus \Omega_{r,R}} \frac{(\Re z)^s}{|z|^{s+1}} |g'_\gamma(z)| \,dS(z) \right).
\]
Letting $r\to0$ and $R \to \infty$, we obtain the assertion \eqref{Se50A}.
\end{proof}

The following result is a convergence lemma for $\H_\psi$, analogous to Lemma \ref{48St2}.

\begin{lemma}\label{H11cl}
Let $\psi\in(0,\pi)$ and $(f_k)_{k=1}^\infty\subset \H_\psi$ be such that
\begin{equation*}\label{Bound50B}
\sup_{k \ge 1}\|f_k\|_{\H_\psi}<\infty,
\end{equation*}
and for every  $z\in \C_{+}$ there exists
\[
f_0(z):=\lim_{k\to\infty}\,f_k(z).
\]
Let $g\in \H_\psi$  satisfy
\[
g(0)=g(\infty)=0.
\]
For $\gamma\in (0,1)$ and $k \in \mathbb N,$  let $f_{k,\gamma}$ and $g_\gamma$ be given by \eqref{fgamma}.
Then
\begin{equation}\label{Se50B}
\lim_{k\to\infty}\,\|f_{k,\gamma}g_\gamma\|_{\H_\psi}=0.
\end{equation}
\end{lemma}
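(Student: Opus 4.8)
The plan is to mimic the proof of Lemma \ref{48St2}, transferring each step from the $\D_s$-setting to the $\H_\psi$-setting. First I would reduce to the case $f_0 = 0$: by Lemma \ref{FatouH}, the hypothesis $\sup_k \|f_k\|_{\H_\psi} < \infty$ together with pointwise convergence gives $f_0 \in \H_\psi$, and replacing $f_k$ by $f_k - f_0$ (whose $\H_\psi$-norms remain bounded, and which still converges pointwise, now to $0$) does not affect the conclusion, since $(f_k - f_0)_\gamma g_\gamma = f_{k,\gamma}g_\gamma - (f_0)_\gamma g_\gamma$ and one would handle the fixed term $(f_0)_\gamma g_\gamma$ separately — actually it is cleaner simply to note $f_{k,\gamma}g_\gamma \to 0$ is what we want and subtract at the end; so assume $f_0 = 0$. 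Next, by Lemma \ref{simple} the map $h \mapsto h_\gamma = h(z^{\gamma})$ sends $\H_\psi$ isometrically into $\H_{\psi/\gamma} \subseteq \H_\psi$ (using $\gamma \in (0,1)$ and that $\H_{\psi'} \subset \H_\psi$ for $\psi' > \psi$ via restriction), and by Theorem \ref{hardy2}(iii) we have $\|f_{k,\gamma}\|_{H^\infty(\Sigma_\psi)} \le \|f_k\|_{\H_\psi}$; hence $B := \sup_k\bigl(\|f_{k,\gamma}\|_{\H_\psi} + \|f_{k,\gamma}\|_{H^\infty(\Sigma_\psi)}\bigr) < \infty$.

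The core estimate splits $\partial\Sigma_\psi$ into an annular middle part and its complement. Fix $0 < r < R < \infty$ and write $\Gamma_{r,R} = \{z \in \partial\Sigma_\psi : r \le |z| \le R\}$. Using
\[
\|f_{k,\gamma}g_\gamma\|_{\H_\psi} = |f_{k,\gamma}(\infty)g_\gamma(\infty)| + \|(f_{k,\gamma}g_\gamma)'\|_{H^1(\Sigma_\psi)},
\]
the constant term vanishes since $g(\infty) = 0$ forces $g_\gamma(\infty) = 0$; for the derivative term, $(f_{k,\gamma}g_\gamma)' = f_{k,\gamma}'g_\gamma + f_{k,\gamma}g_\gamma'$, so
\[
\|(f_{k,\gamma}g_\gamma)'\|_{H^1(\Sigma_\psi)} \le \int_{\partial\Sigma_\psi}|f_{k,\gamma}'(z)||g_\gamma(z)|\,|dz| + \int_{\partial\Sigma_\psi}|f_{k,\gamma}(z)||g_\gamma'(z)|\,|dz|.
\]
On $\Gamma_{r,R}$, Vitali's theorem (applicable since the $f_{k,\gamma}$ are uniformly bounded on $\Sigma_\psi$ and converge pointwise to $0$, hence so do the derivatives, locally uniformly) gives $\sup_{\Gamma_{r,R}}(|f_{k,\gamma}| + |f_{k,\gamma}'|) \to 0$; combined with $\int_{\Gamma_{r,R}}|g_\gamma| \,|dz| < \infty$ and $\|g_\gamma'\|_{H^1(\Sigma_\psi)} < \infty$, the contributions from $\Gamma_{r,R}$ tend to $0$. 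On $\partial\Sigma_\psi \setminus \Gamma_{r,R}$ we estimate crudely:
\[
\int_{\partial\Sigma_\psi \setminus \Gamma_{r,R}}|f_{k,\gamma}'||g_\gamma|\,|dz| \le \Bigl(\sup_{\partial\Sigma_\psi \setminus \Gamma_{r,R}}|g_\gamma|\Bigr)\|f_{k,\gamma}\|_{\H_\psi}
\]
and
\[
\int_{\partial\Sigma_\psi \setminus \Gamma_{r,R}}|f_{k,\gamma}||g_\gamma'|\,|dz| \le \|f_{k,\gamma}\|_{H^\infty(\Sigma_\psi)}\int_{\partial\Sigma_\psi \setminus \Gamma_{r,R}}|g_\gamma'|\,|dz|.
\]
Hence
\[
\limsup_{k\to\infty}\|f_{k,\gamma}g_\gamma\|_{\H_\psi} \le B\Bigl(\sup_{\partial\Sigma_\psi \setminus \Gamma_{r,R}}|g_\gamma(z)| + \int_{\partial\Sigma_\psi \setminus \Gamma_{r,R}}|g_\gamma'(z)|\,|dz|\Bigr).
\]
Finally, letting $r \to 0$ and $R \to \infty$: the integral tends to $0$ by dominated convergence since $g_\gamma' \in H^1(\Sigma_\psi)$, and the supremum tends to $0$ because $g_\gamma$ extends continuously to $\overline{\Sigma_\psi}$ (Theorem \ref{hardy2}(i)) with $g_\gamma(0) = g(0) = 0$ and $g_\gamma(\infty) = g(\infty) = 0$. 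This yields \eqref{Se50B}.

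The main obstacle, and the point needing care, is the behaviour near $0$ and $\infty$ on the boundary: one must know that $g_\gamma$ is actually \emph{continuous} up to $0$ (not merely bounded), so that $\sup_{|z| < a, z \in \partial\Sigma_\psi}|g_\gamma(z)| \to 0$ as $a \to 0$; this is exactly what Theorem \ref{hardy2}(i)--(ii) provides for any element of $\H_\psi$, applied to $g_\gamma$ (which lies in $\H_{\psi/\gamma} \subseteq \H_\psi$). Everything else is a routine transcription of the $\D_s$-argument, with $\partial\Sigma_\psi$ and the $H^1(\Sigma_\psi)$-norm replacing $\C_+$ and the $\mathcal V_s$-norm.
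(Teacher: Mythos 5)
Your proposal is correct and follows essentially the same route as the paper's proof: reduce to $f_0=0$ via Lemma \ref{FatouH}, place $f_{k,\gamma}$ and $g_\gamma$ in $\H_{\psi/\gamma}\subset\H_\psi$ via Lemma \ref{simple}, split $\partial\Sigma_\psi$ into a compact annular part (handled by Vitali) and its complement (handled by the crude sup/integral bounds), and let $r\to0$, $R\to\infty$ using the continuity of $g_\gamma$ at $0$ and $\infty$ with $g_\gamma(0)=g_\gamma(\infty)=0$. The only cosmetic imprecision is writing $\|h\|_{\H_\psi}=|h(\infty)|+\|h'\|_{H^1(\Sigma_\psi)}$ as an equality — this is the equivalent norm $\|\cdot\|'_{\H_\psi}$ of \eqref{hnorms}, not the defining norm — but since the two are equivalent by Theorem \ref{hardy2}(iii), the conclusion is unaffected.
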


\begin{proof}
The proof is similar to Lemma \ref{Se50A}.

By Lemma \ref{FatouH}, $f_0 \in \mathcal H_\psi$. Thus, we will assume that $f_0=0$.
Let $\gamma \in (\psi/\pi,1)$.  By Lemma \ref{simple},
$g_{\gamma}$ and $f_{k, \gamma}$ belong to the algebra $\H_{\psi/\gamma} \subset \H_\psi$, so $f_{k, \gamma}g_{\gamma}\in \H_\psi$.  Moreover
\[
B:=\sup_{k \ge 1}\left(\|f_{k, \gamma}\|_{\H_\psi}+\|f_{k, \gamma}\|_{H^\infty(\Sigma_\psi)}\right)< \infty.
\]

Let $0 < r < R < \infty$ and $I_{r,R} = \{z \in \partial\Sigma_{\psi}: r \le |z| \le R\}$.   By Vitali's theorem,
\[
\lim_{k\to\infty}\,\left(|f_{k, \gamma}(z)|+|f_{k, \gamma}'(z)|\right)=0
\]
uniformly on $I_{r,R}$.    Therefore the integrals
\[
\int_{I_{r,R}} |f'_{k,\gamma}(z) g_\gamma(z)| \,|dz|, \quad \int_{I_{r,R}} |f_{k,\gamma}(z) g'_\gamma(z)| \,|dz|,
\]
tend to $0$ as $k\ \to \infty$.   Moreover,
\[
\int_{\partial\Sigma_\psi \setminus I_{r,R}} |f'_{k,\gamma}(z) g_\gamma(z)| \,|dz| \le \sup_{z \in \partial\Sigma_\psi \setminus I_{r,R}} |g_\gamma(z)| \, \|f_{k,\gamma}\|_{\H_\psi},
\]
and
\[
\int_{\partial\Sigma_\psi \setminus I_{r,R}} |f_{k,\gamma}(z) g'_\gamma(z)| \,|dz| \le \|f_{k,\gamma}\|_{H^\infty(\Sigma_\psi)} \int_{\partial\Sigma_\psi \setminus I_{r,R}} |g'_\gamma(z)| \,|dz|.
\]
Hence
\[
\limsup_{k\to\infty} \| f_{k,\gamma}g_\gamma\|_{\H_\psi} \le B \left( \sup_{\partial\Sigma_\psi \setminus I_{r,R}} |g_\gamma(z)|  + \int_{\partial\Sigma_\psi \setminus I_{r,R}} |g'_\gamma(z)| \,|dz| \right).
\]
Letting $r\to0$ and $R \to \infty$, we obtain \eqref{Se50B}.
\end{proof}

\section{The $\mathcal D$-calculus and its compatibility}\label{defD}

Here we discuss functional calculus for sectorial operators $A$ of angle less than $\pi/2$ and functions $f \in \D_\infty$.  Since $f$ is bounded on a closed sector containing the spectrum of $A$ (Corollary \ref{Repr}), $f(A)$ may be considered via the extended holomorphic (sectorial) calculus.   If $A$ is injective then $f(A)$ can be defined that way as a closed operator, but we will show that $f(A)$ is a bounded operator when $f \in \D_\infty$.   Our methods provide estimates for $\|f(A)\|$, and we will adapt the results in Section \ref{hardy_sobolev} to take account of the angle of sectoriality, by using fractional powers of operators (cf.\ Corollary \ref{gggC}).

Recall that a densely defined operator $A$ on a Banach space $X$ is {\it sectorial of angle} $\theta \in [0,\pi/2)$ if $\sigma(A) \subset \overline \Sigma_\theta$ and, for each $\varphi \in (\theta,\pi]$,
\begin{equation} \label{Aa0}
 M_{\varphi}(A) := \sup \left\{\|z(z+A)^{-1}\| : z \in \Sigma_{\pi-\varphi}\right\} < \infty.
\end{equation}
The {\it sectorial angle} $\theta_A$ of $A$ is the minimal such $\theta$.  Note that $M_\varphi(A)$ is a decreasing function of $\varphi$.

Let $\Sect(\theta)$ stand for the class of all sectorial operators of angle $\theta$ for $\theta \in [0,\pi/2)$ on Banach spaces, and denote $\Sect(\pi/2-) := \bigcup_{\theta \in [0,\pi/2)} \Sect(\theta)$.  Then $A \in \Sect(\pi/2-)$ if and only if  $-A$ generates a (sectorially) bounded holomorphic $C_0$-semigroup on $X$ of angle $(\pi/2)-\theta_A$, in the sense that the semigroup has a holomorphic extension to $\Sigma_{(\pi/2)-\theta_A}$ which is bounded on each smaller subsector.   Note that these semigroups are sometimes called {\it sectorially} bounded holomorphic semigroups in the literature. However, in this paper,  we will adopt the convention that bounded holomorphic semigroups are bounded on sectors.   We will denote the semigroup as $(e^{-tA})_{t\ge0}$, and $e^{-tA}$ then agrees with $e_t(A)$ defined in the Hille-Phillips calculus, where $e_t(z) = e^{-tz}$.  One may consult \cite{HaaseB} for the general theory of sectorial operators, and \cite[Section 3.7]{ABHN} for the theory of holomorphic semigroups. 

Let $A$ be a closed, densely defined operator on a Banach space $X$ such that
\begin{equation} \label{Aa}
\sigma(A) \subset \overline\C_+ \qquad \text{and} \qquad M_A:=  M_{\pi/2}(A) = \sup_{z\in \C_{+}}\|z(z+A)^{-1}\|<\infty.
\end{equation}
Then $\|A(z+A)^{-1}\| \le M_A+1, \,z\in \C_+$, and Neumann series (see \cite[Lemma 1.1]{V1}) imply that $\sigma(A) \subset \Sigma_\theta \cup \{0\}$ and
\begin{equation}\label{Aaa1}
\|z(z+A)^{-1}\|\le 2M_A,\qquad z\in \Sigma_{\pi-\theta},
\end{equation}
where
\[
\theta:=\arccos(1/(2M_A))<\pi/2.
\]
So $A \in \Sect(\theta) \subset \Sect(\pi/2-)$.   Conversely, if $A \in \Sect(\theta)$ where $\theta \in [0,\pi/2)$, then \eqref{Aa} holds.   Thus $-A$ generates a bounded holomorphic semigroup if and only if
(\ref{Aa}) holds.  The constant $M_A$ is a basic quantity associated with $A$,
and we call it the {\it sectoriality constant} of $A$.   Note that $M_{tA} = M_A$ for all $t>0$.

A set $S$ of sectorial operators on the same Banach space $X$ is {\it uniformly sectorial} of angle $\theta$ if $S \subset \Sect(\theta)$ and, for each $\varphi \in (\theta,\pi)$, there exists $C_{\varphi}$ such that $M_\varphi(A) \le C_\varphi$ for all $A \in S$.  Thus $S$ is uniformly sectorial of some angle $\theta<\pi/2$ if and only if each $A \in S$ satisfies \eqref{Aa} and $\sup_{A \in S} M_A < \infty$.

In the presentation of the $\D$-calculus that follows, we assume that the reader is familiar with the holomorphic functional calculus for sectorial operators, as in \cite{HaaseB}, and in particular with the Hille-Phillips calculus for negative generators of bounded $C_0$-semigroups.   We will make extensive use of fractional powers of sectorial operators in the form $(A+z)^{-\gamma}$ where $\gamma >0$.  If $\gamma$ is not an integer, these operators are fractional powers which can be defined in many ways (see \cite{MS}), including using the holomorphic functional calculus (see \cite[Chapter 3]{HaaseB}).   All these approaches are consistent with each other.  
Since $\D_\infty = \bigcup_{n=0}^\infty \D_n$, it is possible to define the $\D$-calculus without using fractional powers, and this would simplify some proofs (for example, Lemma \ref{fractional} becomes trivial, and the formulas \eqref{Sti} and \eqref{s1} would not be needed).  Thus we could define the $\D$-calculus without using fractional powers, and in particular we could define the fractional powers $(z+A)^{-\gamma}$ for all $\gamma > 0$.  
This definition would be consistent with other definitions (see Theorem \ref{Compatible}). Then we could define the $\D_s$-calculus for all $s>-1$ in the way described below, using fractional powers in \eqref{formulaD}.  

The following simple lemma for fractional powers is a version of the moment inequality applied to the sectorial operator $(A+z)^{-1}$.

\begin{lemma}\label{fractional}
Let $A \in \operatorname{Sect}(\pi/2-)$, and $\gamma>0$. Let $\lceil\gamma\rceil$ be the ceiling function of $\gamma$, i.e.\ the smallest integer in $[\gamma,\infty)$.  Then
\[
\|(A+z)^{-\gamma}\|\le M_A^{\lceil\gamma\rceil}/|z|^\gamma,\quad z\in \C_{+}.
\]
\end{lemma}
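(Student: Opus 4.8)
The plan is to reduce the estimate to two ingredients: the basic resolvent bound $\|z(z+A)^{-1}\| \le M_A$ for $z \in \C_+$ (which is exactly \eqref{Aa}), and the standard integral representation of negative fractional powers of a sectorial operator. First I would fix $z \in \C_+$ and write $n = \lceil\gamma\rceil$, so that $0 \le n-\gamma < 1$. If $\gamma$ is itself a positive integer, then $(A+z)^{-\gamma}$ is just the $\gamma$-th power of the bounded operator $(A+z)^{-1}$, and the bound follows immediately from submultiplicativity: $\|(A+z)^{-\gamma}\| \le \|(A+z)^{-1}\|^\gamma \le (M_A/|z|)^\gamma$, which is the claimed inequality with exponent $\gamma = n$ on $M_A$. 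So the only real content is the non-integer case.

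For non-integer $\gamma$, the key step is to use the Balakrishnan-type representation for fractional powers of the bounded, sectorial operator $B := (A+z)^{-1}$. Writing $\gamma = (n-1) + \delta$ with $\delta \in (0,1)$, one has
\[
B^\gamma = B^{n-1} B^{\delta} = \frac{\sin(\pi\delta)}{\pi} B^{n-1}\int_0^\infty t^{\delta-1} B (t + B)^{-1}\, dt,
\]
valid because $B$ is bounded and injective on $\overline{\C_+}$-type sectorial operators; alternatively, since $A \in \Sect(\pi/2-)$, one can represent $(A+z)^{-\gamma}$ directly via the holomorphic functional calculus by a contour integral $\frac{1}{2\pi i}\int_\Gamma \lambda^{-\gamma}(\lambda - (A+z)^{-1})^{-1}\,d\lambda$ around the spectrum of $(A+z)^{-1}$, which lies in a sector of half-angle $< \pi/2$ and inside the disc of radius $M_A/|z|$. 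Either way, I would estimate the integrand using $\|(A+z)^{-1}\| \le M_A/|z|$ together with the sectoriality of $(A+z)^{-1}$, and the homogeneity of the resulting scalar integral in the scale $M_A/|z|$ will pull out exactly the factor $(M_A/|z|)^\gamma$ — except that, because the scalar constant $\frac{\sin\pi\delta}{\pi}\int_0^\infty t^{\delta-1}(1+t)^{-1}\,dt = 1$ only handles a single non-integer power and the extra $B^{n-1}$ factor contributes $(M_A/|z|)^{n-1}$, the combined bound is at worst $(M_A/|z|)^{n-1}\cdot(M_A/|z|)^\delta$ if $M_A \ge 1$, hence $\le M_A^{n}/|z|^\gamma$ as claimed (noting $M_A \ge 1$ always, since $\|z(z+A)^{-1}\| \to 1$ as $z \to \infty$ along the reals when $0 \in \sigma(A)$, and $\ge 1$ in general).

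The main obstacle I anticipate is purely bookkeeping with the constant: one must be careful that the factorization $(A+z)^{-\gamma} = (A+z)^{-(n-1)}(A+z)^{-\delta}$ does not cost more than a single power of $M_A$ per unit of $\gamma$, and that the representation used for $(A+z)^{-\delta}$ is genuinely normalized so that its scalar prefactor equals $1$ (this is where the identity $B\bigl(\delta,1-\delta\bigr) = \pi/\sin(\pi\delta)$, i.e.\ the Beta-function evaluation recalled in Section~\ref{prelims}, enters). There is no functional-analytic difficulty: everything reduces to bounding $\|B\| \le M_A/|z|$ inside a convergent scalar integral, and the consistency of the various definitions of fractional powers (invoked implicitly) has been flagged in the text preceding the lemma. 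A one-line alternative, avoiding fractional powers altogether, is to observe that $(A+z)^{-\gamma}$ can be interpolated between $(A+z)^{-(n-1)}$ and $(A+z)^{-n}$ by the moment inequality for the sectorial operator $(A+z)^{-1}$, giving $\|(A+z)^{-\gamma}\| \le \|(A+z)^{-(n-1)}\|^{n-\gamma}\|(A+z)^{-n}\|^{\gamma-(n-1)} \le (M_A/|z|)^{n-1}(M_A/|z|)^{\gamma-n+1}$ when $M_A \ge 1$, which is exactly the assertion.
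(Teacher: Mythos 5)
Your reduction to the case $\gamma=\delta\in(0,1)$ (integer powers by submultiplicativity, then composition of the two cases) matches the paper's outline, but the core step --- the bound $\|(A+z)^{-\delta}\|\le (M_A/|z|)^{\delta}$, or even $\le M_A/|z|^{\delta}$ --- is not delivered by the argument you sketch, and this is a genuine gap rather than bookkeeping. The normalization $\frac{\sin(\pi\delta)}{\pi}\int_0^\infty t^{\delta-1}(1+t)^{-1}\,dt=1$ produces a prefactor $1$ only when the operator in the Balakrishnan integrand is replaced by a \emph{scalar}; for $B=(A+z)^{-1}$ you must bound $\|B(t+B)^{-1}\|$, and the available estimates (roughly $1+M_B$ for small $t$ and $M_B\|B\|/t$ for large $t$, where $M_B$ is the sectoriality constant of $B$, which is not controlled by $M_A$ alone) give, after the homogeneity rescaling you invoke, a bound of the form $C(\delta,M_B)\,\|B\|^{\delta}$ with $C>1$. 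Since the constant $M_A^{\lceil\gamma\rceil}$ in the lemma has no room to absorb such a factor (take $M_A=1$), this does not prove the statement. The same objection applies to your ``one-line alternative'': the interpolation inequality $\|T^{\gamma}\|\le\|T^{n-1}\|^{n-\gamma}\|T^{n}\|^{\gamma-n+1}$ is not a constant-free fact for Banach-space operators, and the moment inequality always carries a constant depending on $\delta$ and the sectorial bound of $T$.

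The paper obtains the clean constant by a different device. It keeps the Stieltjes representation in terms of the resolvent of $A$ itself, $(A+z)^{-\delta}=\frac{\sin(\pi\delta)}{\pi}\int_0^\infty t^{-\delta}(A+z+t)^{-1}\,dt$ as in \eqref{Sti}, and then rotates the ray of integration from $\R_+$ to $e^{i\varphi}\R_+$ with $\varphi=\arg z$. On that ray $z+se^{i\varphi}=(\rho+s)e^{i\varphi}$ with $\rho=|z|$, so the resolvent bound from \eqref{Aa} applies with exact modulus $\rho+s$, and the scalar integral $\frac{\sin(\pi\delta)}{\pi}\int_0^\infty s^{-\delta}(\rho+s)^{-1}\,ds=\rho^{-\delta}$ evaluates to exactly $|z|^{-\delta}$ by the Beta-function identity you mention. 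That contour rotation --- aligning the integration ray with $z$ so that the triangle inequality becomes an equality --- is the idea missing from your plan; without it one inevitably loses an angle- or $\delta$-dependent constant. Note also that the exponent on $M_A$ produced this way is $1$, not $\delta$: your claimed $(M_A/|z|)^{\delta}$ is stronger than what the method yields and is not needed, since $(M_A/|z|)^{\lceil\gamma\rceil-1}\cdot M_A|z|^{-\delta}=M_A^{\lceil\gamma\rceil}/|z|^{\gamma}$ already gives the assertion.
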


\begin{proof}
When $\gamma \in \N$, the estimate is trivial.   

Let $\gamma \in (0,1)$. By the compatibility of our calculus with the holomorphic functional calculus for fractional powers, we may use the following standard Stiletjes formula (see \cite[(3.52)]{ABHN}, for example):
\begin{equation}\label{Sti}
(A+z)^{-\gamma}=\frac{\sin(\pi\gamma)}{\pi}\int_0^\infty t^{-\gamma}(A+z+t)^{-1}\,dt,\quad z\in \C_{+}.
\end{equation}
Next, let $z=\rho e^{i\varphi}\in \C_{+}$. Then, using Cauchy's theorem,
\begin{align*}
(A+z)^{-\gamma}&=\frac{\sin(\pi\gamma)}{\pi}\int_0^{e^{i\varphi \infty}} w^{-\gamma}(A+\rho e^{i\varphi}+w)^{-1}\,dw\\
&=e^{i(1-\gamma )\varphi}\frac{\sin(\pi\gamma)}{\pi}\int_0^\infty s^{-\gamma}(A+\rho e^{i\varphi}+s e^{i\varphi})^{-1}\,ds.
\end{align*}
So,
\[
\|(A+z)^{-\gamma}\|\le M_A\frac{\sin(\pi\gamma)}{\pi}
\int_0^\infty s^{-\gamma}(\rho+s)^{-1} \,ds = \frac{M_A}{\rho^\gamma}. 
\]

In other cases, $\gamma = (\lceil\gamma\rceil - 1) + \delta$ where $\delta \in (0,1)$, and the estimate follows from the two previous cases.
\end{proof}

Now let $s >-1$ be fixed and let $f\in \mathcal{D}_s$.
We define
\begin{equation}\label{formulaD}
f_{\mathcal{D}_s}(A):=f(\infty)-
\frac{2^s}{\pi}\int_0^\infty \alpha^s\int_{-\infty}^\infty f'(\alpha+i\beta)(A+\alpha-i\beta)^{-(s+1)}\,d\beta\,d\alpha.
\end{equation}
Note that when $s = 1$ and $f \in \Bes$, \eqref{formulaD} coincides with the definition of $f(A)$ as given by
the $\Bes$-calculus in \cite{BGT}, cf.\ \eqref{fcdef}.

This definition is valid as the following simple proposition shows.

\begin{prop}\label{bounded_s}
Let $A \in \Sect(\pi/2-)$, and $s>-1$. 
\begin{enumerate}[\rm1.]
\item The map $f \mapsto f_{\mathcal{D}_s}(A)$ is bounded from $\D_s$ to $L(X)$.
\item For $f \in \D_s$,
\begin{equation}
\lim_{\ep\to0+}\,f_{\mathcal D_s}(A+\ep) =f_{\mathcal D_s}(A),  \label{epsd11}
\end{equation}
in the operator norm topology.
\end{enumerate}
\end{prop}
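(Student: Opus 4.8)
The plan is to establish the two assertions in turn, both resting on Lemma~\ref{fractional} and on the dominated convergence theorem for Bochner integrals.

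For the first assertion, I would begin by recording that the $L(X)$-valued map $w\mapsto(A+w)^{-(s+1)}$ is holomorphic, hence continuous, on $\C_+$ (this is immediate when $s+1\in\N$, and for non-integer exponents it follows by differentiating under the integral in the Stieltjes formula \eqref{Sti}, consistently with all the other definitions of fractional powers discussed above). Since $f'$ is holomorphic on $\C_+$, the integrand $(\a,\b)\mapsto \a^s f'(\a+i\b)(A+\a-i\b)^{-(s+1)}$ in \eqref{formulaD} is a continuous $L(X)$-valued function. Next, Lemma~\ref{fractional} with $\gamma=s+1$ applied to $w=\a-i\b\in\C_+$ gives
\[
\|(A+\a-i\b)^{-(s+1)}\|\le \frac{M_A^{\lceil s+1\rceil}}{(\a^2+\b^2)^{(s+1)/2}},
\]
so the integrand is dominated in norm by $\tfrac{2^s}{\pi}M_A^{\lceil s+1\rceil}\a^s|f'(\a+i\b)|(\a^2+\b^2)^{-(s+1)/2}$, whose integral over $\C_+$ is $\tfrac{2^s}{\pi}M_A^{\lceil s+1\rceil}\|f\|_{\mathcal D_{s,0}}$ by \eqref{norm}. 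Hence the integral in \eqref{formulaD} converges as a Bochner integral in $L(X)$ and
\[
\|f_{\mathcal D_s}(A)\|\le |f(\infty)|+\frac{2^s}{\pi}M_A^{\lceil s+1\rceil}\|f\|_{\mathcal D_{s,0}}\le C_s\|f\|_{\mathcal D_s},\qquad C_s:=\max\Bigl\{1,\tfrac{2^s}{\pi}M_A^{\lceil s+1\rceil}\Bigr\}.
\]
Linearity of $f\mapsto f_{\mathcal D_s}(A)$ is clear from \eqref{formulaD}, so this map is bounded from $\D_s$ to $L(X)$.

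For the second assertion, I would first observe that for each $\ep>0$ the operator $A+\ep$ again satisfies \eqref{Aa} (indeed $|z|\,\|(z+\ep+A)^{-1}\|\le |z|M_A/|z+\ep|\le M_A$ for $z\in\C_+$, since $|z+\ep|\ge|z|$), so $A+\ep\in\Sect(\pi/2-)$ and $f_{\mathcal D_s}(A+\ep)$ is defined by the first part via \eqref{formulaD} with $A$ replaced by $A+\ep$. Then $f(\infty)$ cancels and
\[
f_{\mathcal D_s}(A+\ep)-f_{\mathcal D_s}(A)=-\frac{2^s}{\pi}\int_0^\infty \a^s\int_{-\infty}^\infty f'(\a+i\b)\bigl[(A+\ep+\a-i\b)^{-(s+1)}-(A+\a-i\b)^{-(s+1)}\bigr]\,d\b\,d\a.
\]
For each fixed $(\a,\b)$ the bracketed term tends to $0$ in operator norm as $\ep\to0+$, by continuity of $w\mapsto(A+w)^{-(s+1)}$ on $\C_+$. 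Moreover, applying Lemma~\ref{fractional} to $w=\ep+\a-i\b\in\C_+$ and using $|\ep+\a-i\b|\ge|\a-i\b|$, one gets $\|(A+\ep+\a-i\b)^{-(s+1)}\|\le M_A^{\lceil s+1\rceil}(\a^2+\b^2)^{-(s+1)/2}$ for all $\ep>0$, so the integrand above is dominated, uniformly in $\ep$, by the integrable function $\tfrac{2^{s+1}}{\pi}M_A^{\lceil s+1\rceil}\a^s|f'(\a+i\b)|(\a^2+\b^2)^{-(s+1)/2}$. The dominated convergence theorem for Bochner integrals then yields \eqref{epsd11}.

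There is no genuinely hard step here; the points that require a little care are (i) checking that the integrand in \eqref{formulaD} is a continuous $L(X)$-valued function, so that the Bochner integral really exists, which rests on holomorphy of $w\mapsto(A+w)^{-(s+1)}$, and (ii) producing a dominating function independent of $\ep$ in the second part, for which the elementary inequality $|\ep+\a-i\b|\ge|\a-i\b|$ together with Lemma~\ref{fractional} suffices.
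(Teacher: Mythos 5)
Your proposal is correct and follows essentially the same route as the paper: part (1) is exactly the estimate \eqref{estimateD} obtained from Lemma \ref{fractional} together with the definition of $\|\cdot\|_{\mathcal V_s}$, and part (2) is the same dominated-convergence argument, with operator-norm continuity of $w\mapsto(A+w)^{-(s+1)}$ supplying pointwise convergence and Lemma \ref{fractional} supplying an $\ep$-independent majorant. The only cosmetic difference is that you bound $\|(A+\ep+\a-i\b)^{-(s+1)}\|$ by applying Lemma \ref{fractional} to $A$ at the shifted point $\ep+\a-i\b$ (using $|\ep+\a-i\b|\ge|\a-i\b|$), whereas the paper applies it to the uniformly sectorial operators $A+\ep$ at the point $\a-i\b$; these yield the same dominating function.
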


\begin{proof}
Lemma \ref{fractional} and \eqref{formulaD} imply that
\begin{equation}\label{estimateD}
\|f_{\mathcal{D}_s}(A)\|\le |f(\infty)|+
\frac{2^s M_A^{\lceil{s+1}\rceil}}{\pi}\|f'\|_{\mathcal{V}_s}.
\end{equation}
Thus the boundedness of the map $f \mapsto f_{\mathcal{D}_s}(A)$ from $\D_s$ to $L(X)$ follows.

By a standard Laplace transform representation for negative fractional powers \cite[(3.56)]{ABHN},
\begin{equation} \label{s1}
(A+z)^{-(s+1)} = \frac{1}{\Gamma(s+1)} \int_0^\infty t^s e^{-tz} e^{-tA} \,dt, \qquad z \in \C_+.
\end{equation}
By the dominated convergence theorem, $z \mapsto (A+z)^{-(s+1)}$ is continuous (even holomorphic) on $\C_+$ in the operator norm topology.

The operators $(A+\epsilon)_{\epsilon\ge0}$ are uniformly sectorial of angle $\theta$, more precisely,  $M_{A+\ep} \le M_A$ \cite[Proposition 2.1.1 f)]{HaaseB}.  By Lemma \ref{fractional}, this implies that
\begin{equation}\label{UnEE}
\|(A+\epsilon+z)^{-(s+1)}\|\le M_A^{\lceil{s+1}\rceil} |z|^{-(s+1)}, \qquad  \epsilon\ge 0, \quad z\in \mathbb C_{+}.
\end{equation}
Now \eqref{epsd11} follows from applying \eqref{formulaD} to $A+\epsilon$, letting $\epsilon\to0+$, and using the dominated convergence theorem.
\end{proof}

\begin{rem}  \label{A+e}
The property \eqref{epsd11} can be compared with Corollary \ref{SemDa} where a direct proof is given that the shifts form a bounded holomorphic $C_0$-semigroup on $\D_s$.  To deduce \eqref{epsd11}, one also needs that if $f_\ep(z) = f(z+\ep)$, then $(f_\ep)_{\D_s}(A) = f_{\D_s}(A+\ep)$.  By Theorem \ref{D00}, it suffices that this holds for $f = r_\l, \, \l\in \C_+$, i.e., to show that $(r_\l)_{\D_s}(A) = (\l+A)^{-1}$.   We show this in Theorem \ref{dcalculus}, but the argument uses \eqref{epsd11}.
\end{rem}

Let $f_{\text{HP}}(A)$ stand for a function $f$ of $A$ defined by the Hille-Phillips functional calculus when $f$ is in the Hille-Phillips algebra $\LT$, and let $f_{\operatorname{Hol}}(A)$ denote a function $f$ of $A$ given by the holomorphic functional calculus when $f$ is in the domain of that calculus.   The following statement shows that both calculi are compatible with the $\mathcal D$-calculus,
and moreover that the definitions of $f_{\D_s}(A)$ agree for the various values of $s$ for which $f \in \D_s$.

\begin{thm}\label{Compatible}
Let $A \in \Sect(\pi/2-)$, and let $f \in \D_s,\, s>-1$.
\begin{enumerate}[\rm(i)]
\item If $A$ is injective, then
\begin{equation}\label{Ds11}
f_{\mathcal{D}_s}(A)=f_{\mathrm{Hol}}(A).
\end{equation}
\item  
If $\sigma\ge s$ then
\begin{equation}\label{Ds12}
f_{\mathcal{D}_\sigma}(A)=f_{\mathcal{D}_s}(A).
\end{equation}
\item If $f \in \mathcal{LM} \cap \mathcal{D}_s$, then
\begin{equation}\label{Ds13}
f_{\mathcal{D}_s}(A)=f_{{\rm HP}}(A).
\end{equation}
In particular, \eqref{Ds13} holds if $f \in \mathcal{LM}$ and $s>0$.
\end{enumerate}
\end{thm}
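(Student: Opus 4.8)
First I would prove the three parts in the order (ii), (iii), (i): once (ii) is available, (iii) reduces to the known compatibility of the $\Bes$-calculus with the Hille--Phillips calculus, and (i) follows by a density and convergence argument. Everything hinges on the single identity
\begin{equation*}
(r_\lambda)_{\D_s}(A) = (\lambda+A)^{-1}, \qquad \lambda \in \C_+, \; s>-1,
\end{equation*}
which I would establish for every $s>-1$ at once. Taking $f = r_\lambda$ in \eqref{formulaD} (so $f(\infty)=0$ and $f'(z) = -(z+\lambda)^{-2}$), the double integral is absolutely convergent in $L(X)$ by Lemma \ref{fractional} and Example \ref{resd}. For fixed $\alpha>0$ the $L(X)$-valued map $\beta \mapsto (\alpha+i\beta+\lambda)^{-2}(A+\alpha-i\beta)^{-(s+1)}$ is holomorphic on $\{\Im\beta>0\}$ apart from a double pole at $\beta_0 := i(\alpha+\lambda)$ and is $O(|\beta|^{-(s+3)})$ there by Lemma \ref{fractional}; closing the contour upwards and computing the residue (note $\alpha-i\beta_0 = 2\alpha+\lambda$) yields
\begin{equation*}
\int_{-\infty}^\infty \frac{(A+\alpha-i\beta)^{-(s+1)}}{(\alpha+i\beta+\lambda)^2}\,d\beta = 2\pi(s+1)(A+2\alpha+\lambda)^{-(s+2)}.
\end{equation*}
Substituting this back and putting $v = 2\alpha$ reduces the claim to $(s+1)\int_0^\infty v^s (A+\lambda+v)^{-(s+2)}\,dv = (\lambda+A)^{-1}$, which I would obtain by inserting the Stieltjes representation \eqref{s1} (with exponent $s+2$) for $(A+\lambda+v)^{-(s+2)}$ — Fubini is legitimate here because $\|e^{-t(A+\lambda)}\| = O(e^{-t\Re\lambda})$ — then carrying out the $v$-integration via $\int_0^\infty v^s e^{-tv}\,dv = \Gamma(s+1)t^{-(s+1)}$ and using $\int_0^\infty e^{-t(A+\lambda)}\,dt = (\lambda+A)^{-1}$. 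This computation also exhibits $(r_\lambda)_{\D_s}(A) = \int_0^\infty e^{-\lambda t}e^{-tA}\,dt$, the Hille--Phillips (and holomorphic calculus) value of $r_\lambda$ at $A$.

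Granting the identity, part (ii) is a density argument. For $\sigma\ge s$, both $f\mapsto f_{\D_s}(A)$ and $f\mapsto f_{\D_\sigma}(A)$ are bounded linear maps $\D_s\to L(X)$ — the first by Proposition \ref{bounded_s}, the second because of the continuous embedding $\D_s\embedi\D_\sigma$ of Proposition \ref{Furt}(i) — and they agree on the constant functions (both sending $c$ to $cI$) and on each $r_\lambda$ (both equal to $(\lambda+A)^{-1}$), hence on $\wt{\mathcal{R}}(\C_+)$, which is dense in $\D_s$ by Theorem \ref{D00}; so they agree on all of $\D_s$. For part (iii), let $f\in\mathcal{LM}\cap\D_s$. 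Since $\mathcal{LM}\embedi\Bes\embedi\D_1$, we have $f\in\D_1\cap\D_s$, so $f_{\D_s}(A) = f_{\D_1}(A)$ by (ii); and $f_{\D_1}(A)$ is precisely the value $f_{\Bes}(A)$ furnished by the $\Bes$-calculus, because \eqref{formulaD} with $s=1$ is formula \eqref{fcdef}. As the $\Bes$-calculus of $A\in\Sect(\pi/2-)$ extends the Hille--Phillips calculus \cite{BGT}, we get $f_{\Bes}(A) = f_{\mathrm{HP}}(A)$, which is \eqref{Ds13}. When $s>0$ one has $\mathcal{LM}\subset\Bes\subset\D_s$, so $\mathcal{LM}\cap\D_s = \mathcal{LM}$ and \eqref{Ds13} holds for every $f\in\mathcal{LM}$.

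For part (i), assume $A$ is injective, so that $f_{\mathrm{Hol}}(A)$ is a well-defined closed operator for every bounded holomorphic $f$ on a sector $\Sigma_\psi$ with $\psi>\theta_A$; by Corollary \ref{Cangle} this applies to every $f\in\D_s$. On $\wt{\mathcal{R}}(\C_+)$ one has $f_{\mathrm{Hol}}(A) = f_{\D_s}(A)$, by linearity and the resolvent identity (the holomorphic calculus sends $r_\lambda$ to $(\lambda+A)^{-1}$). Given $f\in\D_s$, I would pick $f_n\in\wt{\mathcal{R}}(\C_+)$ with $\|f_n-f\|_{\D_s}\to0$ (Theorem \ref{D00}); then $(f_n)_{\D_s}(A)\to f_{\D_s}(A)$ in operator norm (Proposition \ref{bounded_s}), while by Corollary \ref{Cangle} $f_n\to f$ uniformly on $\Sigma_\psi$ with $\sup_n\|f_n\|_{H^\infty(\Sigma_\psi)}<\infty$ for each $\psi\in(\theta_A,\pi/2)$. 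Since $\sup_n\|(f_n)_{\mathrm{Hol}}(A)\| = \sup_n\|(f_n)_{\D_s}(A)\|<\infty$, the Convergence Lemma for the holomorphic functional calculus of sectorial operators \cite{HaaseB} gives $f_{\mathrm{Hol}}(A)\in L(X)$ with $(f_n)_{\mathrm{Hol}}(A)\to f_{\mathrm{Hol}}(A)$; comparing limits yields $f_{\mathrm{Hol}}(A) = f_{\D_s}(A)$. I expect the resolvent identity to be the main obstacle: the double integral in \eqref{formulaD} is absolutely convergent, but substituting \eqref{s1} and interchanging integrals produces a divergent expression (logarithmically at $\alpha=0$), so the $\beta$-contour computation must be performed first, as in the proof of Proposition \ref{prim}, and the residue must be handled with operator-valued holomorphy in $\beta$. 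A secondary point is verifying the hypotheses of the Convergence Lemma in part (i), where the crucial uniform bound $\sup_n\|(f_n)_{\mathrm{Hol}}(A)\|<\infty$ is supplied free of charge by boundedness of the $\D_s$-calculus.
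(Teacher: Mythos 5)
Your proposal is correct in substance but organized quite differently from the paper. The paper proves (i) first, by regularizing with $A(1+A)^{-2}$, inserting the reproducing formula \eqref{qnrep} into the Cauchy integral and applying Fubini; it then deduces (ii) by applying (i) to the injective operators $A+\epsilon$ and letting $\epsilon\to0+$ via \eqref{epsd11}; and (iii) exactly as you do. You instead make the identity $(r_\lambda)_{\D_s}(A)=(\lambda+A)^{-1}$ the cornerstone, proving it for every $s>-1$ by an operator-valued residue computation in $\beta$ followed by the Stieltjes formula \eqref{s1}; your computation checks out (the pole $\beta_0=i(\alpha+\lambda)$ lies in the upper half-plane, $\alpha-i\beta_0=2\alpha+\lambda$, the decay $O(|\beta|^{-(s+3)})$ permits closing the contour, and the Beta/Gamma bookkeeping is right), and your observation that a naive Fubini after inserting \eqref{s1} diverges logarithmically at $\alpha=0$ is accurate. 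With that identity, your density argument for (ii) via Theorem \ref{D00} is clean and avoids the paper's detour through (i) and the $\epsilon$-shift; it buys a self-contained proof of the resolvent identity that the paper only obtains later (in Theorem \ref{dcalculus}) as a consequence of (i).

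The one soft spot is the final step of (i). The Convergence Lemma for an injective sectorial operator (e.g.\ \cite[Proposition 5.1.4]{HaaseB}) yields $f_{\mathrm{Hol}}(A)\in L(X)$ but guarantees strong convergence $(f_n)_{\mathrm{Hol}}(A)\to f_{\mathrm{Hol}}(A)$ only on $\overline{D(A)\cap R(A)}$; since the theorem assumes injectivity but not dense range, "comparing limits" does not immediately identify $f_{\mathrm{Hol}}(A)$ with $f_{\D_s}(A)$ on all of $X$. The repair is routine and is essentially the paper's own device: from uniform convergence $f_n\to f$ on $\partial\Sigma_{\psi}$ and the absolutely convergent Cauchy integral one gets $A(1+A)^{-2}(f_n)_{\mathrm{Hol}}(A)\to \bigl[z(1+z)^{-2}f\bigr]_{\mathrm{Hol}}(A)=A(1+A)^{-2}f_{\mathrm{Hol}}(A)$ in norm, while the left side also tends to $A(1+A)^{-2}f_{\D_s}(A)$; injectivity of $A(1+A)^{-2}$ then finishes the proof. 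With that patch your argument is complete.
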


\begin{proof} 
We start by proving \eqref{Ds11}. Assume that $A$ is injective, and $A \in \Sect(\theta)$, where $\theta \in (0, \pi/2)$.  Let $\psi \in (\theta, \pi/2)$.   Let $f \in \D_s, \, s>-1$, and assume (without loss of generality) that $f(\infty)=0$.  By the definition of the holomorphic functional calculus,
\[
A(1+A)^{-2}f_{\operatorname{Hol}}(A)=\frac{1}{2\pi i}\int_{\partial \Sigma_\psi} \frac{\l f(\l)}{(\l+1)^2}(\l-A)^{-1}\,d\l.
\]
Since
\[
f(\l)=-\frac{2^s}{\pi}\int_0^\infty \alpha^s\int_{-\infty}^\infty\frac{f'(\alpha+i\beta)}
{(\l+\alpha-i\beta)^{s+1}}\,d\beta\,d\alpha,\quad \l\in \C_{+},
\]
Fubini's theorem and Cauchy's theorem imply that
\begin{align*}
\lefteqn{A(1+A)^{-2}f_{\operatorname{Hol}}(A) \hskip10pt} \\
&=- \frac{2^s}{\pi} \int_0^\infty \a^s \int_{-\infty}^\infty  f'(\a+i\b) \left(\frac{1}{2\pi i}
\int_{\partial \Sigma_\psi}\,\frac{\l(\l-A)^{-1}}{(\l+1)^2(\l+\alpha-i\beta)^{s+1}}\,d\l\right)
d\beta \, d\alpha\\ 
&= -\frac{2^s}{\pi}A(1+A)^{-2} \int_0^\infty\alpha^s \int_{-\infty}^\infty f'(\alpha+i\beta)(A+\alpha-i\beta)^{-(s+1)}
d\beta \, d\alpha\\
&=A(1+A)^{-2}f_{\mathcal{D}_s}(A).
\end{align*}
Hence (\ref{Ds11}) holds.

Now we no longer assume that $A$ is injective.  We infer by \eqref{Ds11} that
\[
f_{\mathcal{D}_s}(A+\epsilon)=f_{\mathcal{D}_\sigma}(A+\epsilon)
\]
 for all $\epsilon>0$ and $\sigma \ge s$.   
Letting $\epsilon\to 0$ and using Proposition \ref{bounded_s}, we obtain the assertion (\ref{Ds12}).

Finally, if $f\in \mathcal{LM} \cap \mathcal D_s$ for some $s > -1$, then $f\in \mathcal{B}\cap \mathcal D_s$, and using \eqref{Ds12}, we have
\[
f_{\text{HP}}(A)=\Phi_A(f)=f_{\mathcal{D}_1}(A)=f_{\mathcal{D}_s}(A). \qedhere
\]
\end{proof}

\begin{rem}\label{coincid}
If $f$ has zero polynomial limits at zero and at infinity in the sense of \cite[p.27]{HaaseB}, then the proof above does not require the regularisation factor $\lambda(\lambda+1)^{-2}$.  Hence $f_{\mathcal D}(A)=f_{\mathrm{Hol}}(A)$ regardless of the injectivity of $A$.
One can show that $f_{\mathcal D}(A)=f_{\mathrm{Hol}}(A)$  even when $f$ belongs to the extended Riesz-Dunford function class, (for example, $f(z)=e^{-tz},\, t >0$),
but we omit a discussion of this here (cf.\ the proof of Lemma \ref{resap}).
\end{rem}

Recall that $\mathcal D_s \subset \mathcal D_\sigma$ if $-1< s \le \sigma$, and the space 
\[
\mathcal D_\infty:=\bigcup_{s>-1} \mathcal D_s
\] 
is an algebra, by Lemma \ref{Alg1}.   Thus it is a plausible and natural domain for a functional calculus, which we now define.

\begin{thm}\label{dcalculus}
Let $A \in \Sect(\pi/2-)$. The formula \eqref{formulaD} defines an algebra homomorphism:
\begin{eqnarray*}
\Psi_A : \mathcal D_{\infty} \mapsto L(X), \qquad \Psi_A(f)=f_{\D_s}(A),  \qquad f \in \D_s, \, s>-1.
\end{eqnarray*}
Moreover, 
\begin{enumerate}[\rm(i)]
\item $\Psi_A(r_\lambda) = (\lambda+A)^{-1}, \quad \lambda \in \C_+$.
\item $\Psi_A$ is bounded in the sense that for every $s>-1$ there exists $C_s(A)$ such that
\begin{equation}\label{bounded}
\|\Psi_A(f)\|\le |f(\infty)| + C_s(A)\|f\|_{\mathcal D_s}, \qquad f \in \D_s.
\end{equation}
Specifically, \eqref{estimateD} holds.
\end{enumerate}
Moreover, $\Psi_A$ is the unique algebra homomorphism from $\D_\infty$ to $L(X)$ which satisfies {\rm(i)} and {\rm(ii)}.
\end{thm}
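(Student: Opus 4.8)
The plan is to establish the three assertions in turn: that $\Psi_A$ is a well-defined algebra homomorphism, that it satisfies properties (i) and (ii), and that it is the unique such homomorphism. \emph{Well-definedness} is already in hand: Theorem \ref{Compatible}(ii) shows $f_{\D_\sigma}(A) = f_{\D_s}(A)$ whenever $f \in \D_s$ and $\sigma \ge s$, so $\Psi_A(f)$ does not depend on the choice of $s$ with $f \in \D_s$; and Proposition \ref{bounded_s}(1) together with the estimate \eqref{estimateD} gives boundedness on each $\D_s$, which is assertion (ii) with $C_s(A) = 2^s M_A^{\lceil s+1\rceil}/\pi$. For assertion (i), I would compute $(r_\l)_{\D_s}(A)$ directly for a convenient value of $s$, say $s=1$: since $r_\l \in \Bes$ (by Example \ref{hexs}(1) and Lemma \ref{Dh1}, or Proposition \ref{weight1}) and the $\D_1$-formula \eqref{formulaD} coincides with the $\Bes$-calculus \eqref{fcdef}, and the $\Bes$-calculus is a bounded algebra homomorphism with $\Phi_A(r_\l) = (\l+A)^{-1}$ (recalled in the Introduction, from \cite{BGT}), we get $\Psi_A(r_\l) = (\l+A)^{-1}$. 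Alternatively one can verify $(r_\l)_{\D_s}(A) = (\l+A)^{-1}$ by a resolvent-identity computation inside \eqref{formulaD} using \eqref{Qn}-type contour arguments, but routing through the $\Bes$-calculus is cleaner.

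The \emph{multiplicativity} $\Psi_A(fg) = \Psi_A(f)\Psi_A(g)$ is the substantive point. My approach is a density argument. By Lemma \ref{Alg1}, if $f \in \D_s$ and $g \in \D_\sigma$ then $fg \in \D_{s+\sigma+1}$, so all three operators make sense. First, I would prove multiplicativity on the dense subalgebra $\wt{\mathcal R}(\C_+)$ of rational functions plus constants (dense in each $\D_s$ by Theorem \ref{D00}): for products of resolvents $r_\l r_\mu$ this reduces, via partial fractions $r_\l r_\mu = (\mu-\l)^{-1}(r_\l - r_\mu)$ when $\l \ne \mu$ (and a derivative version when $\l = \mu$), to the resolvent identity $(\l+A)^{-1}(\mu+A)^{-1} = (\mu-\l)^{-1}((\l+A)^{-1} - (\mu+A)^{-1})$, using additivity of $\Psi_A$ and property (i); constants are handled by noting $\Psi_A(1) = I$ (from \eqref{formulaD} with $f \equiv 1$, $f' = 0$, $f(\infty) = 1$). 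Then I would pass to the general case: fix $f \in \D_s$, $g \in \D_\sigma$, choose sequences $f_k, g_k \in \wt{\mathcal R}(\C_+)$ with $f_k \to f$ in $\D_s$ and $g_k \to g$ in $\D_\sigma$ (Theorem \ref{D00}); then $f_k g_k \to fg$ in $\D_{s+\sigma+1}$ by the submultiplicative-type estimate \eqref{fgest}, so $\Psi_A(f_k g_k) \to \Psi_A(fg)$ by boundedness of $\Psi_A$ on $\D_{s+\sigma+1}$, while $\Psi_A(f_k) \to \Psi_A(f)$ in $\D_s$-norm and $\Psi_A(g_k) \to \Psi_A(g)$ in $\D_\sigma$-norm give $\Psi_A(f_k)\Psi_A(g_k) \to \Psi_A(f)\Psi_A(g)$ in $L(X)$; since $\Psi_A(f_kg_k) = \Psi_A(f_k)\Psi_A(g_k)$ for all $k$, the identity persists in the limit.

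For \emph{uniqueness}, suppose $\Phi : \D_\infty \to L(X)$ is an algebra homomorphism satisfying (i) and (ii). Properties (i) and linearity force $\Phi$ and $\Psi_A$ to agree on $\mathcal R(\C_+)$; homomorphism property and $\Phi(1) = \Psi_A(1) = I$ (which follows since $1 = r_\l \cdot(\l + \cdot)$... more carefully: $\Phi$ is multiplicative and $\Phi(r_\l)(A+\l) \supseteq I$ forces $\Phi(1) = I$, or one argues $\Phi(1)$ is an idempotent commuting with all resolvents hence with $A$, and equals $I$ using density) give agreement on all of $\wt{\mathcal R}(\C_+)$. Then for fixed $s > -1$, both $\Phi|_{\D_s}$ and $\Psi_A|_{\D_s}$ are bounded linear maps $\D_s \to L(X)$ by hypothesis (ii), agreeing on the dense subspace $\wt{\mathcal R}(\C_+)$ (Theorem \ref{D00}), hence agree on $\D_s$; since $s$ is arbitrary, $\Phi = \Psi_A$ on $\D_\infty$. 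The main obstacle I anticipate is the bookkeeping in the multiplicativity step — specifically ensuring the three different $\D$-indices ($s$, $\sigma$, $s+\sigma+1$) are threaded consistently through the approximation, and verifying $\Psi_A(1) = I$ and the resolvent-identity base case cleanly; none of these is deep, but they must be stated with care since the norms live on different spaces.
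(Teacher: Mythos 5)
Your proposal is correct, and it reaches the paper's conclusions by a route that differs in two places. For property (i), the paper does not go through the $\Bes$-calculus: it uses Theorem \ref{Compatible}(i) to get $(r_\l)_{\D_s}(A+\epsilon)=(\l+\epsilon+A)^{-1}$ for the injective operator $A+\epsilon$ and then lets $\epsilon\to0+$ via \eqref{epsd11}. Your shortcut through the $\Bes$-calculus is legitimate (the $\D_1$-formula does coincide with \eqref{fcdef} on $\Bes$), but it imports the identity $\Phi_A(r_\l)=(\l+A)^{-1}$ from \cite{BGT} rather than deriving it from the machinery developed here. For multiplicativity, the paper approximates $f$ and $g$ by elements of $\LT$ (dense in $\D_s$ for $s>0$ by Corollary \ref{Bdens}, after enlarging the indices to be positive) and uses the product rule of the HP-calculus together with \eqref{Ds13} as the base case; you instead approximate by $\wt{\mathcal R}(\C_+)$ and use partial fractions plus the resolvent identity. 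Your base case is more elementary and works directly for every $s>-1$; the small price, which you partly acknowledge, is that $\wt{\mathcal R}(\C_+)$ is not closed under multiplication ($r_\l^2\notin\wt{\mathcal R}(\C_+)$), so the diagonal case $\l=\mu$ needs the limiting argument via continuity of $\mu\mapsto r_\mu$ into $\D_s$ (Remark \ref{remdr}) and boundedness of $\Psi_A$ on $\D_{2s+1}$ --- a wrinkle the paper's route avoids entirely. The limit-passing with the three indices $s$, $\sigma$, $s+\sigma+1$ is exactly as in the paper. Your uniqueness argument matches the paper's; for $\Phi(1)=I$ the paper cites \cite[Section 6, Observation (2)]{BGT2}, and your second justification (the idempotent $\Phi(1)$ acts as the identity on the dense subspace $D(A)$, the range of $(\l+A)^{-1}$, hence equals $I$) is the standard way to see it, whereas your first one-line justification is too garbled to stand as written.
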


The homomorphism $\Psi_A$ will be called the {\it $\mathcal D$-calculus}.  

\begin{proof}
It follows from \eqref{Ds12} that $\Psi_A$  is well-defined by \eqref{formulaD}, and from \eqref{Ds11} that $(r_\l)_{\D_s}(A+\epsilon) = (\l+\epsilon+A)^{-1}$ for $\ep>0$.  Letting $\epsilon \to 0$ and using \eqref{epsd11} gives (i).  Moreover \eqref{bounded} is a direct consequence of Proposition \ref{bounded_s}.

We will now prove that $\Psi_A$ is a homomorphism. Let $f,g\in \mathcal D_\infty$. Then  $f\in \mathcal D_r$ and $g\in \mathcal D_t$ for some strictly positive $s$ and $t$, hence
$f g\in\mathcal  D_{s+t+1}$ by Lemma \ref{Alg1}.  
Since $\mathcal {LM}$ is dense in $\mathcal D_s$ for every $s>0$ by Corollary \ref{Bdens},
there exist $(f_n)_{n=1}^\infty$ and $(g_n)_{n=1}^\infty$ from $\mathcal {LM}$
such that
\[
f_n \to f \quad \text{in} \quad   \mathcal D_s \qquad \text{and} \qquad
g_n \to g \quad  \text{in}\quad   \mathcal D_t, \qquad n \to \infty,
\]
and then, in view of Lemma \ref{Alg1},
$f_n g_n \to f g$ in $\mathcal D_{s+t+1}$ as $n \to \infty$.

By the product rule for the HP-calculus and \eqref{Ds13},
\[
\Psi_A(f_n g_n)=(f_n g_n)_{\mathrm{HP}}(A)=\Psi_A(f_n)\Psi_A(g_n), \qquad n \ge 1.
\]
Passing to the limit when $n\to\infty$ and using \eqref{bounded}, it follows that
\[
\Psi_A(fg)=\Psi_A(f)\Psi_A(g).
\]

Let $\Psi : \D_\infty \to L(X)$ be an algebra homomorphism satisfying (i) and (ii).  Then $\Psi$ and $\Psi_A$ coincide on $\{r_\lambda : \lambda \in \C_+\}$.  Since $A$ is densely defined, $\Psi(1)=1$ (see \cite[Section 6, Observation (2)]{BGT2}, so $\Psi$ and $\Psi_A$ coincide on the span of these functions which is dense in 
$\D_s$ (Theorem \ref{D00}).   Since $\Psi$ and $\Psi_A$ are both bounded on $\D_s$, it follows that they coincide on each $\D_s$ and hence on $\D_\infty$.
\end{proof}

\begin{remark}  \label{Dsn}
If $A$ is an operator for which a $\D$-calculus exists with the properties (i) and (ii) given in Theorem \ref{dcalculus}, then $A\in\Sect(\pi/2-)$.  This follows from \eqref{r_l} and the properties (i) and (ii).  By combining this with Theorem \ref{dcalculus}, we obtain Theorem \ref{dcalculus_intro}.  
Note also that, if (i) holds for some $\l\in\C_+$, then it holds for all $\l\in\C_+$, by the resolvent identity.
\end{remark}

The Banach algebras $\D_s^\infty, \, s>-1$, are subalgebras of $\D_\infty$, so we obtain the following corollary by restricting the $\D$-calculus.

\begin{cor}\label{Compat}
Let $A \in \Sect(\pi/2-)$ and $s>-1$.  Then there exists a bounded algebra homomorphism $\Psi_A^s: \mathcal D^\infty_s \mapsto L(X)$ such that
\begin{enumerate}[\rm(i)]
\item $\Psi_A^s(r_\l)=(\l+A)^{-1}, \quad \l \in \mathbb C_+$.
\item $\Psi^s_A$ is bounded in the $\D_s$-norm, i.e., there exists $C_s(A)$ such that
\begin{equation*}
\|\Psi_A(f)\|\le C_s(A) \|f\|_{\mathcal D_s}, \qquad f \in \D_s.
\end{equation*}
\end{enumerate}
Moreover $\Psi_A^s$ is the unique algebra homomorphism from $\D_s^\infty$ to $L(X)$ which satisfies {\rm(i)} and {\rm(ii)}.
\end{cor}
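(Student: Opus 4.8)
The plan is to obtain $\Psi_A^s$ simply by restricting the $\D$-calculus $\Psi_A$ of Theorem~\ref{dcalculus} to the Banach subalgebra $\D_s^\infty$. Indeed, $\D_s^\infty = \D_s \cap H^\infty(\C_+) \subset \D_s \subset \D_\infty$, and $\D_s^\infty$ is closed under multiplication by \eqref{algebra}, so $\Psi_A^s := \Psi_A|_{\D_s^\infty}$ is an algebra homomorphism from $\D_s^\infty$ into $L(X)$. Property (i) is immediate: $r_\l \in \D_s^\infty$ for every $\l \in \C_+$ and $s>-1$ (by Example~\ref{resd} together with the boundedness of $r_\l$ on $\C_+$), so $\Psi_A^s(r_\l) = \Psi_A(r_\l) = (\l+A)^{-1}$ by Theorem~\ref{dcalculus}(i). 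Property (ii) follows directly from \eqref{estimateD}: for $f \in \D_s^\infty$,
\[
\|\Psi_A^s(f)\| = \|f_{\D_s}(A)\| \le |f(\infty)| + \frac{2^s M_A^{\lceil s+1\rceil}}{\pi}\,\|f'\|_{\mathcal{V}_s} \le C_s(A)\,\|f\|_{\D_s},
\]
with $C_s(A) = \max\{1,\, 2^s M_A^{\lceil s+1\rceil}/\pi\}$, since $\|f\|_{\D_s} = |f(\infty)| + \|f'\|_{\mathcal{V}_s}$.

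For uniqueness, suppose $\Psi : \D_s^\infty \to L(X)$ is an algebra homomorphism satisfying (i) and (ii). By (i), $\Psi$ and $\Psi_A^s$ agree on $\{r_\l : \l \in \C_+\}$; since $A$ is densely defined, $\Psi(1) = 1$ (as in the proof of Theorem~\ref{dcalculus}, using \cite[Section~6, Observation~(2)]{BGT2}), so $\Psi$ and $\Psi_A^s$ agree on $\wt{\mathcal{R}}(\C_+)$. The subtle point—and the only one requiring care—is that $\wt{\mathcal{R}}(\C_+)$ is \emph{not} dense in $\D_s^\infty$ for its own norm when $s>0$ (Corollary~\ref{Bdens}(4)), so one cannot invoke norm-density in $\D_s^\infty$ directly. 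Instead one uses that (ii) bounds both $\Psi$ and $\Psi_A^s$ with respect to the weaker $\D_s$-norm, in which $\wt{\mathcal{R}}(\C_+)$ \emph{is} dense in $\D_s^\infty$: given $f \in \D_s^\infty \subset \D_s$, Theorem~\ref{D00} provides $f_n \in \wt{\mathcal{R}}(\C_+)$ with $\|f_n - f\|_{\D_s} \to 0$, and $f_n - f \in \D_s^\infty$, so by the triangle inequality and (ii),
\[
\|\Psi(f) - \Psi_A^s(f)\| \le \|\Psi(f-f_n)\| + \|\Psi_A^s(f_n-f)\| \le 2 C_s(A)\,\|f_n - f\|_{\D_s} \to 0.
\]
Hence $\Psi = \Psi_A^s$ on $\D_s^\infty$, which completes the proof.

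Everything here reduces to Theorem~\ref{dcalculus}; the only place that needs attention is precisely this interplay between the two norms on $\D_s^\infty$ in the uniqueness argument, which is the reason property (ii) is formulated in terms of the $\D_s$-norm rather than the (stronger) $\D_s^\infty$-norm. (Alternatively, one could define $\Psi_A^s$ ab initio by the formula \eqref{formulaD} restricted to $f \in \D_s^\infty$ and repeat the relevant parts of the proofs of Proposition~\ref{bounded_s} and Theorem~\ref{dcalculus}, but restriction of the already-constructed $\D$-calculus is the shortest route.)
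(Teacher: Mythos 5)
Your proof is correct and follows the same route as the paper, which obtains the corollary simply by restricting the $\D$-calculus of Theorem~\ref{dcalculus} to the subalgebra $\D_s^\infty$. Your observation that uniqueness must be run in the $\D_s$-norm (where $\wt{\mathcal{R}}(\C_+)$ is dense) rather than the $\D_s^\infty$-norm is exactly the point that makes the restriction argument work, and it matches the uniqueness argument already given for Theorem~\ref{dcalculus}.
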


From now onwards, we will write $f_\D(A)$ instead of $\Psi_A(f)$, for $f \in \D_\infty$.   
When $f(z) = e^{-tz}$, we will continue to use the notation $e^{-tA}$ for $f(A)$, since the $\mathcal D$-calculus agrees with the HP-calculus by Theorem \ref{Compatible}.  So $(e^{-tA})_{t\ge 0}$ form the $C_0$-semigroup generated by $-A$, and it extends to a bounded holomorphic semigroup.
 
Let $g : \C_+ \to \C_+$ be a holomorphic function and assume that $r_\l \circ g \in \D_\infty$ for all $\l \in \C_+$.   Since the functions $(r_\l)_{\l\in\C_+}$ satisfy the resolvent identity, the operators 
\[
((r_\l \circ g)_\D(A))_{\l\in\C_+} \subset L(X)
\]
also satisfy the resolvent identity, i.e., they form a pseudo-resolvent.   In particular their kernels and their ranges are independent of $\l$, and they form the resolvent of an operator $B$ if and only if the common kernel is $\{0\}$, and the domain of $B$ is the common range of the pseudo-resolvent (see \cite[Section VIII.4]{Yos}).

\begin{cor} \label{dcomp}
Let $A, B \in \Sect(\pi/2-)$,
and let $g : \C_+ \to \C_+$ be holomorphic.  Assume that, for each $s>-1$, there exists $\sigma>-1$ such that 
\begin{enumerate}[\rm(a)]
\item For all $f \in \D_s$, $f\circ g \in \D_\sigma$, and
\item For all $\l\in\C_+$, $(r_\l \circ g)_\D(A) = (\l+B)^{-1}$.
\end{enumerate}
Then  $(f \circ g)_\D(A) = f_\D(B)$ for all $f \in \D_\infty$.
\end{cor}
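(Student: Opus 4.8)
The plan is to show that the map $\Phi\colon\mathcal D_\infty\to L(X)$ defined by $\Phi(f):=(f\circ g)_{\mathcal D}(A)$ is exactly the $\mathcal D$-calculus $\Psi_B$ of $B$, which exists since $B\in\Sect(\pi/2-)$; then Theorem~\ref{dcalculus} gives $\Phi(f)=f_{\mathcal D}(B)$ for every $f\in\mathcal D_\infty$, which is the assertion.

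First I would check that $\Phi$ is well defined. Given $f\in\mathcal D_\infty$, choose $s>-1$ with $f\in\mathcal D_s$; by hypothesis~(a) there is $\sigma=\sigma(s)>-1$ with $f\circ g\in\mathcal D_\sigma\subset\mathcal D_\infty$, so $(f\circ g)_{\mathcal D}(A)$ is meaningful, and if $f$ also lies in $\mathcal D_{s'}$ with associated $\sigma'$, then $f\circ g\in\mathcal D_\sigma\cap\mathcal D_{\sigma'}$ and Theorem~\ref{Compatible}(ii) shows $(f\circ g)_{\mathcal D_\sigma}(A)=(f\circ g)_{\mathcal D_{\sigma'}}(A)$, so $\Phi(f)$ is unambiguous. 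Next, boundedness of $\Phi$ on each $\mathcal D_s$: by~(a) the composition operator $C_g\colon f\mapsto f\circ g$ is a linear map $\mathcal D_s\to\mathcal D_\sigma$, and it is closed, for if $f_n\to f$ in $\mathcal D_s$ and $f_n\circ g\to h$ in $\mathcal D_\sigma$, then (point evaluations being continuous on $\mathcal D_s$ and on $\mathcal D_\sigma$ by Corollary~\ref{Cangle}) $f_n(g(z))\to f(g(z))$ and $f_n(g(z))\to h(z)$ for each $z\in\C_+$, whence $h=f\circ g=C_gf$. By the closed graph theorem $C_g\in L(\mathcal D_s,\mathcal D_\sigma)$; since $\Psi_A$ is bounded on $\mathcal D_\sigma$ (Theorem~\ref{dcalculus}(ii)) and $\Phi|_{\mathcal D_s}=\Psi_A\circ C_g$, we get $\Phi|_{\mathcal D_s}\in L(\mathcal D_s,L(X))$.

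It remains to identify $\Phi$ with $\Psi_B$ on a dense subspace. For $\lambda\in\C_+$, hypothesis~(b) and Theorem~\ref{dcalculus}(i) give
\[
\Phi(r_\lambda)=(r_\lambda\circ g)_{\mathcal D}(A)=(\lambda+B)^{-1}=\Psi_B(r_\lambda),
\]
while for a constant function $c$ one has $c\circ g=c$ and $\Psi_A(1)=\Psi_B(1)=I$ (both $A$ and $B$ are densely defined), so $\Phi=\Psi_B$ on $\wt{\mathcal R}(\C_+)$. By Theorem~\ref{D00}, $\wt{\mathcal R}(\C_+)$ is dense in $\mathcal D_s$, and both $\Phi|_{\mathcal D_s}$ and $\Psi_B|_{\mathcal D_s}$ are bounded (Theorem~\ref{dcalculus}(ii)), so they agree on all of $\mathcal D_s$; letting $s$ range over $(-1,\infty)$ yields $\Phi=\Psi_B$ on $\mathcal D_\infty$, i.e.\ $(f\circ g)_{\mathcal D}(A)=f_{\mathcal D}(B)$ for all $f\in\mathcal D_\infty$.

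The one genuinely non-formal step is the boundedness of $C_g$: assumption~(a) is only a pointwise statement, so one really needs the closed graph theorem together with continuity of point evaluations to promote it to a norm bound; everything else is bookkeeping with the uniqueness and boundedness of the $\mathcal D$-calculus already recorded in Theorem~\ref{dcalculus}. An alternative to the density argument would be to verify directly that $\Phi$ is an algebra homomorphism satisfying properties (i) and (ii) of Theorem~\ref{dcalculus} — using $(fh)\circ g=(f\circ g)(h\circ g)$, Lemma~\ref{Alg1}, and the multiplicativity of $\Psi_A$ — and then to invoke the uniqueness clause of that theorem.
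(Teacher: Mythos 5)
Your proof is correct and follows essentially the same route as the paper: boundedness of $f\mapsto f\circ g$ via the closed graph theorem and the continuity of point evaluations (Corollary \ref{Cangle}), composition with the bounded $\D$-calculus for $A$, identification on resolvents via hypothesis (b), and then uniqueness. The only cosmetic difference is that you unwind the uniqueness clause of Theorem \ref{dcalculus} into its underlying density argument (Theorem \ref{D00}) rather than first verifying that $\Phi$ is multiplicative and citing uniqueness directly, which is the paper's phrasing and which you correctly note as the alternative.
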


\begin{proof}
By assumption (a), Corollary \ref{Cangle} and the Closed Graph Theorem, $f \mapsto f \circ g$ is a bounded map from $\D_s$ to $\D_\sigma$.   Moreover the  $\D$-calculus for $A$ is a bounded map from $\D_\sigma$ to $L(X)$.   Hence the composition is a bounded map from $\D_s$ to $L(X)$, and by assumption (b) it sends $r_\l$ to $(\l+B)^{-1}$ for all $\l \in \C_+$.  Moreover the maps collectively form an algebra homomorphism from $\D_\infty$ to $L(X)$.   By the uniqueness in Theorem \ref{dcalculus}, this map is the $\D$-calculus for $B$.
\end{proof}

In the context of Corollary \ref{dcomp},  the operator  $B$  is sometimes written as $g(A)$, but the precise meaning depends on the specific situation.

There is also a version of Corollary \ref{dcomp} for fixed values of $s$ and $\sigma$, using the $\D_s^\infty$- and $\D_\sigma^\infty$-calculi.

\begin{exas} \label{gcomp}
Examples of functions $g$ and operators $B$ which satisfy the conditions of Corollary \ref{dcomp} include the following.
\begin{enumerate}[\rm1.]
\item $g(z) = z^{-1}$, if $A$ is injective (with dense range); $\sigma=s$, $B=A^{-1}$.    Then $f \circ g$ is the function $\tilde f \in \D_s$ as in Lemma \ref{leminv}.  Note that $\tilde f_\D(A)$ is defined as a bounded operator on $X$, even if $A$ is not injective.   If $A_0$ is the restriction of $A$ to $X_0$, the closure of the range of $A$, then $\tilde f_\D(A)$ acts as $A_0^{-1}$ on $X_0$ and as the sectorial limit $f(\infty)$ on the kernel of $A$.  If $X$ is reflexive this determines $\tilde f_\D(A)$ on $X$.
\item $g(z) = tz$, where $t>0$; $\sigma=s$, $B = tA$.   See Lemma \ref{leminv}.
\item $g(z) = z + \eta$, where $\eta \in \C_+$; $\sigma=s$, $B = A +\eta$.  See Remark \ref{A+e}.
\item $g(z) = z^\gamma$, where $\gamma \in (0,1)$; $s>\-1,\sigma>-1$; $B = A^\gamma$ (as defined in the holomorphic functional calculus).   See Corollary \ref{gggC} for the assumption (a) in Corollary \ref{dcomp}, and Corollary \ref{fpD} below for the assumption (b). The result also holds for $\gamma \in (1, \pi/(2\theta))$.
\item Examples (2), (3) for $\eta \in \R_+,$ and (4) above are Bernstein functions.  By Lemma \ref{bsds}, $r_\l \circ g \in \D_s$ for all Bernstein functions $g$ and $s>2$.   We will show in the proof of Theorem \ref{BF1} that $(r_\l\circ g)_D(A) = (\l+g(A))^{-1}$, where $g(A)$ is a sectorial operator.   
\end{enumerate}
\end{exas}

In Example \ref{gcomp}(4) above, we have introduced a fractional power $A^\gamma$, where $\gamma>0$.   These operators are defined in various ways, including the extended holomorphic functional calculus.  
To justify the example, we need the following lemma about fractional powers, which is probably known at least in simpler form.  For $\gamma \in (0,1)$ and $\nu\in\N$, it follows easily from a standard result \cite[Proposition 5.1.4]{MS}.   
We give a proof that uses the holomorphic functional calculus for fractional powers as in \cite[Section 3.1]{HaaseB}.

\begin{lemma} \label{resap}
Let $A \in \Sect(\pi/2-)$, $\gamma \in (0,1)$, and $\nu>0$.  In the operator norm topology,
\[
\lim_{\epsilon\to0+} ((A+\epsilon)^\gamma + z)^{-\nu} = (A^\gamma+z)^{-\nu}, \quad z\in \C_+.
\]
\end{lemma}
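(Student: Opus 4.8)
We prove that $\lim_{\epsilon\to0+}((A+\epsilon)^\gamma+z)^{-\nu}=(A^\gamma+z)^{-\nu}$ in the operator-norm topology, reducing matters to the case $\nu=1$. First, the uniform bounds that will be used throughout. Since $M_{A+\epsilon}\le M_A$ for $\epsilon\ge0$ (\cite[Proposition~2.1.1~f)]{HaaseB}, as in Proposition~\ref{bounded_s}) and $\gamma\theta_A<\pi/2$, the operators $(A+\epsilon)^\gamma$ with $\epsilon\ge0$ all belong to $\Sect(\gamma\theta_A)\subset\Sect(\pi/2-)$ and are uniformly sectorial (this follows from $M_\varphi(A+\epsilon)\le M_\varphi(A)$ together with the standard resolvent estimates for fractional powers, see \cite[Section~3.1]{HaaseB}); in particular Lemma~\ref{fractional} supplies a constant $\tilde M$, independent of $\epsilon$, with $\|((A+\epsilon)^\gamma+w)^{-\mu}\|\le\tilde M^{\lceil\mu\rceil}|w|^{-\mu}$ for all $w\in\C_+$, $\mu>0$ and $\epsilon\ge0$.

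For $\nu=1$ and real $z=x>0$ the plan is to use the classical integral representation of the resolvent of a fractional power: for $B\in\Sect(\pi/2-)$ and $\gamma\in(0,1)$,
\[
(B^\gamma+x)^{-1}=\frac{\sin\pi\gamma}{\pi}\int_0^\infty\frac{s^\gamma}{s^{2\gamma}+2xs^\gamma\cos\pi\gamma+x^2}\,(s+B)^{-1}\,ds,\qquad x>0,
\]
obtained by collapsing the Cauchy integral of $\lambda\mapsto(x+\lambda^\gamma)^{-1}$, which is holomorphic on $\C\setminus(-\infty,0]$ (principal branch), onto the negative real axis; alternatively it may be quoted from \cite[Chapter~5]{MS}, the non-injective case of $B$ being reduced to $B+\eta$, $\eta\to0+$, via the standard continuity of fractional powers under strong resolvent convergence. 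The kernel is nonnegative and independent of $B$, and inserting $\|(s+B)^{-1}\|\le M_B/s$ bounds the integrand by an integrable function, of order $s^{\gamma-1}$ near $0$ and $s^{-\gamma-1}$ near infinity. Applying the formula with $B=A+\epsilon$ and with $B=A$, and noting that $(s+A+\epsilon)^{-1}\to(s+A)^{-1}$ in operator norm for each fixed $s>0$ while the integrand is dominated uniformly in $\epsilon$ (using $M_{A+\epsilon}\le M_A$), the dominated convergence theorem gives $((A+\epsilon)^\gamma+x)^{-1}\to(A^\gamma+x)^{-1}$ in operator norm for every $x>0$.

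To pass from $x>0$ to an arbitrary $z\in\C_+$ (still with $\nu=1$) I would invoke Vitali's theorem. For each $\epsilon\ge0$ the map $z\mapsto((A+\epsilon)^\gamma+z)^{-1}$ is holomorphic from $\C_+$ into $L(X)$, and by the bound in the first paragraph these maps are bounded, uniformly in $\epsilon$, on every compact subset of $\C_+$. Since they converge in operator norm on the ray $(0,\infty)$ by the previous step, the vector-valued Vitali theorem (\cite[Theorem~A.5]{ABHN}) yields convergence in operator norm, locally uniformly on $\C_+$; the limit is holomorphic and coincides with $(A^\gamma+z)^{-1}$ on $(0,\infty)$, hence on all of $\C_+$ by the identity theorem.

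For general $\nu>0$, write $\nu=(n-1)+\delta$ with $n=\lceil\nu\rceil$ and $\delta\in(0,1]$. When $\delta=1$ we have $((A+\epsilon)^\gamma+z)^{-n}=\bigl(((A+\epsilon)^\gamma+z)^{-1}\bigr)^{n}$, a product of norm-convergent and uniformly bounded factors, hence convergent in norm to $(A^\gamma+z)^{-n}$. When $\delta\in(0,1)$, applying \eqref{Sti} to the operator $(A+\epsilon)^\gamma\in\Sect(\pi/2-)$ (and to $A^\gamma$) gives
\[
((A+\epsilon)^\gamma+z)^{-\delta}=\frac{\sin\pi\delta}{\pi}\int_0^\infty t^{-\delta}\bigl((A+\epsilon)^\gamma+z+t\bigr)^{-1}\,dt ,
\]
where by the previous step the integrand converges in operator norm for each $t>0$ and is dominated by $t^{-\delta}\tilde M/|z+t|\le t^{-\delta}\tilde M/\max(t,\Re z)$, which is integrable on $(0,\infty)$ (being $O(t^{-\delta})$ near $0$ with $\delta<1$, and $O(t^{-1-\delta})$ near infinity); hence $((A+\epsilon)^\gamma+z)^{-\delta}\to(A^\gamma+z)^{-\delta}$ in norm, and multiplying by the norm-convergent, uniformly bounded sequence $((A+\epsilon)^\gamma+z)^{-(n-1)}$ finishes the argument. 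The one non-routine ingredient is the integral representation in the second paragraph; once that is available, everything else is dominated convergence together with a single application of Vitali's theorem, and the main care needed is the branch of $\lambda^\gamma$ and the convergence of the collapsed contour integral.
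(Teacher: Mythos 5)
Your proof is correct, but it takes a genuinely different route from the paper's. The paper works with the single function $f(\lambda)=(\lambda^\gamma+z)^{-\nu}$ of the variable $\lambda$: it checks that $f$ has polynomial limits at $0$ and $\infty$ (so lies in the extended Riesz--Dunford class), that the shifted functions $f(\cdot+\epsilon)$ obey the same polynomial bounds uniformly for $\epsilon\in(0,1)$, and then expresses $f(A+\epsilon)-f(A)$ as one contour integral over $\partial\Sigma_\psi$ of $\bigl(g_\epsilon(\lambda)-g_0(\lambda)\bigr)(\lambda-A)^{-1}$, where $g_\epsilon(\lambda)=f(\lambda+\epsilon)-f(\epsilon)(1+\lambda)^{-1}$ is the regularization; a single dominated-convergence argument then handles all $\nu>0$ and all $z\in\C_+$ at once. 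You instead dismantle the operator: Balakrishnan's kernel representation of $(B^\gamma+x)^{-1}$ for real $x>0$ plus dominated convergence, Vitali's theorem to reach complex $z$ (necessary, since the Balakrishnan kernel is only available for $z$ outside $-\overline{\Sigma}_{\gamma\pi}$), and the Stieltjes formula \eqref{Sti} applied to $(A+\epsilon)^\gamma$ to bootstrap from $\nu=1$ to general $\nu$. Your route avoids the extended holomorphic calculus entirely and uses only classical integral formulas, at the cost of being longer and of requiring the uniform sectoriality of the family $((A+\epsilon)^\gamma)_{\epsilon\ge0}$, which is standard but should be cited explicitly (e.g.\ via the bounds in \cite[Propositions 5.1 and 5.2]{BGTad}). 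Two small cautions: quote the Balakrishnan formula for general, possibly non-injective, sectorial $B$ directly from \cite{MS} rather than via the regularization $B+\eta$, $\eta\to0+$, since the continuity in $\eta$ of that regularization is essentially the statement under proof; and note that the exponent $-\gamma$ on the right-hand side of the lemma as stated is a typo for $-\nu$, which is what your argument (correctly) establishes.
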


\begin{proof}
Let $z \in \C_+$ be fixed, and let
\[
f(\l) = \left(\l^\gamma + z\right)^{-\nu}, \qquad \l\in \overline{\C}_+.
\]
Then $f \in H^\infty(\C_+)$, and, by considering the derivative of $\mu \mapsto (\mu+z)^{-\nu}$, we see that there exists a constant $C$ (depending on $z$) such that
\begin{equation*}\label{f2}
|f(\l)-f(0)|  \le C |\l|^{\gamma}, \quad 
|f(\l)| \le C |\l|^{-\gamma\nu}, \qquad \l\in\C_+.
\end{equation*}
Thus $f$ has polynomial limits at $0$ and $\infty$, and so $f$ belongs to the extended Riesz-Dunford class defined in \cite[Lemma 2.2.3]{HaaseB}.   In other words,
\[ 
g_0(\l) := f(\l) - z^{-\nu}(1+\l)^{-1}
\]
has polynomial decay at $0$ and $\infty$. 
Moreover, there exists a constant $C'$ (independent of $\ep$) such that
\begin{equation}\label{f3}
|f(\l+\ep)-f(\ep)|  \le C' |\l|^{\gamma}, \quad |f(\l+\ep)| \le C' |\l|^{-\gamma\nu}, \qquad \l\in\C_+, \, \ep \in(0,1).
\end{equation}
Let
\[
g_\ep(\l) = f(\l+\ep) - f(\ep)(1+\l)^{-1}.
\]
Using the definition of the primary functional calculus \cite[Section 2.3.1]{HaaseB}, 
we have 
\begin{align*}
f(A+\ep)-f(A)&= g_\ep(A)-g_0(A) + (f(\ep) -f(0))(I+A)^{-1}, \\
g_\ep(A)-g_0(A)&=\frac{1}{2\pi i} \int_{\Sigma_\psi}  \left(g_\ep(\l)-g_0(\l)\right)
 (\l-A)^{-1} \,d\l,
\end{align*}
where $\psi \in (\theta,\pi/2)$.   By the dominated convergence theorem,
\[
\lim_{\ep \to 0+} \|(g_\ep(A) - g_0(A)\|=0.
\]
The pointwise convergence of $g_\ep -g_0$ to zero is clear, and the existence of an integrable majorant follows easily from \eqref{f3}. 
 \end{proof}

\begin{cor} \label{fpD}
Let $A\in\Sect(\pi/2-)$, and $\gamma \in (0,1)$.  Let $f \in \D_\infty$ and $f_\gamma(z) =  f(z^\gamma)$.  Then
\begin{enumerate}[\rm1.]
\item
In operator norm,
\[
\lim_{\ep\to0+} f_{\D}((A+\ep)^\gamma) = f_{\D}(A^\gamma).
\]
\item
 $(f_\gamma)_{\D}(A) = f_{\D}(A^\gamma)$.
\end{enumerate}
\end{cor}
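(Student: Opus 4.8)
The plan is to obtain part~2 from the composition principle of Corollary~\ref{dcomp} (applied with $g(z)=z^\gamma$ and $B=A^\gamma$), and then to deduce part~1 from part~2 together with the continuity property \eqref{epsd11}. First I would record the elementary facts that $g(z):=z^\gamma$ maps $\C_{+}$ holomorphically into $\C_{+}$ (since $\gamma\in(0,1)$ contracts arguments) and that $A^\gamma\in\Sect(\gamma\theta_A)\subset\Sect(\pi/2-)$, so that, for $f\in\D_\infty$, both $f_\D(A^\gamma)$ and $h_\D(A)$ with $h=f\circ g$ are well defined; here $h\in\D_\sigma$ for every $\sigma>-1$ by Corollary~\ref{gggC}.

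The key step is to verify hypothesis~(b) of Corollary~\ref{dcomp}, namely that $(r_\l\circ g)_\D(A)=(\l+A^\gamma)^{-1}$ for every $\l\in\C_{+}$. Fix such a $\l$. By Corollary~\ref{gggC}, $r_\l\circ g\in\D_\sigma^\infty$ for every $\sigma>-1$. For $\ep>0$ the operator $A+\ep$ is injective, so by the compatibility of the $\D_\sigma$-calculus with the holomorphic calculus (Theorem~\ref{Compatible}(i)) and the composition rule for fractional powers in the holomorphic functional calculus (see \cite[Chapter~3]{HaaseB}; compare the computation in the proof of Lemma~\ref{resap}),
\[
(r_\l\circ g)_\D(A+\ep)=(r_\l\circ g)_{\mathrm{Hol}}(A+\ep)=\bigl(\l+(A+\ep)^\gamma\bigr)^{-1}.
\]
Letting $\ep\to0+$, the left-hand side converges to $(r_\l\circ g)_\D(A)$ in operator norm by \eqref{epsd11} (Proposition~\ref{bounded_s}), while the right-hand side converges to $(\l+A^\gamma)^{-1}$ by Lemma~\ref{resap} with $\nu=1$. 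This establishes hypothesis~(b).

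Hypothesis~(a) of Corollary~\ref{dcomp} holds as well, with (say) $\sigma=s$ for each $s>-1$, by Corollary~\ref{gggC}. Hence Corollary~\ref{dcomp} applies and gives $(f\circ g)_\D(A)=f_\D(A^\gamma)$ for every $f\in\D_\infty$, which is part~2. For part~1, apply part~2 with $A$ replaced by $A+\ep\in\Sect(\pi/2-)$ to get $f_\D\bigl((A+\ep)^\gamma\bigr)=h_\D(A+\ep)$; since $h=f\circ g\in\D_\sigma$ for some $\sigma>-1$ (Corollary~\ref{gggC}, using $f\in\D_s$ for some $s$), \eqref{epsd11} yields $h_\D(A+\ep)\to h_\D(A)=f_\D(A^\gamma)$ as $\ep\to0+$, which is part~1.

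The main obstacle is the middle identity $(r_\l\circ g)_{\mathrm{Hol}}(A+\ep)=(\l+(A+\ep)^\gamma)^{-1}$: one must justify that composing $z\mapsto z^\gamma$ inside $r_\l$ and applying the holomorphic calculus of $A+\ep$ reproduces the resolvent of the fractional power. The function $r_\l\circ g=(\l+z^\gamma)^{-1}$ decays polynomially at infinity but tends to the constant $\l^{-1}$ at $0$, so it is not in the primary Dunford--Riesz class and has to be regularised at $0$, exactly as is done in the proof of Lemma~\ref{resap}; modulo that point the argument is a routine passage to the limit $\ep\to0+$.
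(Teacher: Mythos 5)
Your proposal is correct, but it runs the argument in the opposite direction to the paper and leans on a different composition principle. The paper proves part~1 \emph{first}, directly from the defining integral \eqref{formulaD} applied to $(A+\ep)^\gamma$: Lemma~\ref{resap} (for general $\nu=s+1$) gives operator-norm convergence of the fractional resolvents $((A+\ep)^\gamma+z)^{-(s+1)}$, and the dominated convergence theorem does the rest, exactly as in the last paragraph of Proposition~\ref{bounded_s}. It then obtains part~2 by applying Theorem~\ref{Compatible}(i) and the composition theorem of the holomorphic calculus to the \emph{injective} operator $A+\ep$ for a general $f\in\D_\infty$, and letting $\ep\to0+$ using \eqref{epsd11} on one side and part~1 on the other. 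You instead prove part~2 first via Corollary~\ref{dcomp}, reducing the composition theorem to the single function $r_\l$ (where it is the standard resolvent identity for fractional powers), verifying hypothesis~(b) by the same $\ep$-regularisation but needing Lemma~\ref{resap} only with $\nu=1$; part~1 then falls out of part~2 applied to $A+\ep$ together with \eqref{epsd11}. Both routes are non-circular (Corollary~\ref{dcomp} depends only on the density result Theorem~\ref{D00} and the uniqueness in Theorem~\ref{dcalculus}, not on Corollary~\ref{fpD}); yours buys a weaker use of Lemma~\ref{resap} and of the holomorphic-calculus composition theorem, at the price of invoking the density/uniqueness machinery behind Corollary~\ref{dcomp}. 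Your closing remark about regularising $(\l+z^\gamma)^{-1}$ at $0$ is exactly the point handled in the paper's Lemma~\ref{resap}, so the step you flag as the main obstacle is already covered.
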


\begin{proof}
The proof of (1) follows from Lemma \ref{resap} in essentially the same way as the last paragraph of the proof of Proposition \ref{bounded_s}.

By Corollary \ref{gggC}, $h \in \D_\infty$.  By \eqref{Ds11} and the Composition Theorem for the holomorphic functional calculus \cite[Theorem 2.4.2]{HaaseB}, we have $h_{\D}(A+\epsilon) = f_{\D}((A+\epsilon)^\gamma)$.   Letting $\epsilon\to0+$ and using \eqref{epsd11} and (1), we obtain (2).
\end{proof}

\section{The calculus on Hardy-Sobolev algebras}\label{hardy_sobolev}

Given the negative generator $A$ of a bounded holomorphic $C_0$-semigroup on a Banach space $X$, the $\mathcal D$-calculus allows us to extend the $\mathcal B$-calculus to a much larger class of functions. A drawback of the $\mathcal D$-calculus is that it does not respect
the sectoriality angle of $A$, so the results within the $\mathcal D$-calculus are independent of the sectoriality angle and confined to holomorphic functions on $\C_+$.    To remedy that problem, we introduce in this section a version of the $\mathcal D$-calculus adjusted to an appropriate Hardy-Sobolev algebra on a sector in the right half-plane.  While the Hardy-Sobolev algebra has a ``stronger'' norm, it appears to be an adequate substitute for $\mathcal D_\infty$ in the setting of sectors, and it has significant applications, as we will see in Section \ref{norm_estimates}.  

The basic idea is a very simple change of variable in the $\mathcal D$-calculus.   If $\Psi^s_A$ is the $\mathcal D_s^\infty$-calculus for a sectorial operator  $A$, then one sets 
$\Upsilon_A(f):= \Psi^s_{A^\gamma}(f_{1/\gamma})$ for appropriate values of  $\gamma$, determined by the sectoriality angle $\theta_A$ of $A$.  This definition does not depend on the precise choice of $\gamma$, by Corollary \ref{fpD}.  The definitions also agree for different $s>-1$ by Theorem \ref{Compatible}(ii), and we set $s=0$ for convenience.  As we show below this eventually leads to a new calculus for Hardy-Sobolev algebras on sectors.

Throughout this section, we assume that
  $A\in\Sect(\theta)$, $0<\theta<\psi<\pi$, and we let
$\gamma:=\pi/(2\psi)$, and, as before,
\[
M_\psi(A):=\sup_{z\in \Sigma_{\pi-\psi}}\,\|z(A+z)^{-1}\|.
\]
Then $A^\gamma\in \Sect(\pi/2-)$, and $\theta_{A^\gamma} =\gamma\theta_A \in (\theta_A/2,\pi/2)$ \cite[Proposition 3.1.2]{HaaseB}.   We are particularly interested in cases where $\psi$ is close to $\theta$, so that $\H_\psi$ is as large as possible.

\subsection{The operator $f(A)$ for $f\in \H_\psi$}

Recall from Lemmas \ref{simple} and \ref{Dh1}(i) that if $f\in \H_\psi$ and $f_{1/\gamma}(z)=f(z^{1/\gamma}),$ then
\[
f'_{1/\gamma}\in H^1(\C_{+}),\qquad  f_{1/\gamma}(\infty)=f(\infty),
\]
and consequently $f_{1/\gamma}\in \mathcal{D}_0$.    Together with Proposition \ref{FrepH} this motivates the following definition of the operator $f_{\mathcal H}(A)$ by means of the $\mathcal{D}$-calculus applied to $A^{\gamma}$:  
\begin{equation}\label{sigma_def}
f_{\mathcal H}(A):=f_{1/\gamma}(\infty)-\frac{1}{\pi}\int_0^\infty\int_{-\infty}^\infty f'_{1/\gamma}(\a+i\b)
(A^{\gamma}+\a-i\b)^{-1}\,d\b\,d\a.
\end{equation}
The right-hand side of \eqref{sigma_def} converges in the uniform operator topology, and, by \eqref{estimateD} and \eqref{embedh},
\begin{equation}\label{A2Dop}
\|f_{\mathcal H}(A)\|\le |f(\infty)|+\frac{M_{A^\gamma}}{\pi}\|f_{1/\gamma}\|_{\mathcal{D}_0,0}
\le  |f(\infty)|+ M_{A^\gamma} \|f'\|_{H^1(\psi)} \le M_{A^\gamma} \|f\|'_{\H_\psi}.
\end{equation}

If $f\in \H_\psi$ and $A$ is injective, then $f_{\operatorname{Hol}}(A)$ can also be defined using the holomorphic functional calculus and the Composition Rule within it \cite[Theorem 2.4.2]{HaaseB}:
\[
f_{\operatorname{Hol}}(A)=f_{1/\gamma,\operatorname{Hol}}(A^\gamma) :=\frac{1}{2\pi i}[A^\gamma(1+A^{\gamma})^{-2}]^{-1}
\int_{\partial \Sigma_\omega}\,\frac{\l f(\l^{1/\gamma})}{(\l+1)^2}(\l-A^\gamma)^{-1}\,d\l,
\]
for $0<\theta<\omega<\psi$.   The following proposition shows that our definition \eqref{sigma_def} of $f_{\mathcal H}(A)$ coincides with $f_{\operatorname{Hol}}(A)$, when $A$ is injective, and various other properties  
are easily deduced from the definition above and corresponding properties of the $\D$-calculus.

\begin{prop} \label{Hprops}
Let $f \in \H_\psi$, and $A \in \Sect(\theta)$, where $0 \le \theta < \psi < \pi$.
\begin{enumerate}[\rm(i)]
\item $f_\H(A)$ does not depend on the choice of $\psi$.
\item If $\nu \in (0,1)$ and $f_\nu(z) = f(z^\nu)$, then $(f_\nu)_\H(A) = f_\H(A^\nu)$.
\item If $f \in \D_\infty$ and $\theta < \pi/2$, then $f_\H(A) = f_\D(A)$.
\item If $A$ is injective, then $f_\H(A) = f_{\operatorname{Hol}}(A)$.
\item In the operator norm topology, $\lim_{\epsilon\to0+} f_\H(A+\epsilon) = f_\H(A)$. 
\end{enumerate}
\end{prop}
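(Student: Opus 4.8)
By construction $f_\H(A)=(f_{1/\gamma})_\D(A^\gamma)$, the $\D$-calculus of $A^\gamma\in\Sect(\pi/2-)$ applied to $f_{1/\gamma}(z)=f(z^{1/\gamma})$, where $\gamma=\pi/(2\psi)$; indeed $f_{1/\gamma}\in\D_0\subset\D_\infty$ by Lemmas \ref{simple} and \ref{Dh1}(i), and $\gamma\theta_A<\gamma\psi=\pi/2$. The plan is to deduce all five assertions from this identity together with properties of the $\D$-calculus from Section \ref{defD} --- chiefly Theorem \ref{Compatible} and Corollary \ref{fpD} --- the law of exponents $(B^\alpha)^\beta=B^{\alpha\beta}$, the composition rule for the holomorphic calculus, and the mapping properties of fractional powers (see \cite[Proposition 3.1.2, Theorem 2.4.2]{HaaseB}). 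Throughout, call $\psi$ \emph{admissible} if $\theta<\psi<\pi$ and $f\in\H_\psi$; since $\H_\psi$ grows as $\psi$ decreases, the admissible $\psi$ form an interval with left endpoint $\theta$.

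\emph{Parts (i) and (iii).} For (i), take admissible $\psi_1<\psi_2$, put $\gamma_j=\pi/(2\psi_j)$ and $\delta=\gamma_2/\gamma_1\in(0,1)$. One checks $f_{1/\gamma_1}(z)=f_{1/\gamma_2}(z^\delta)$, i.e.\ $f_{1/\gamma_1}=(f_{1/\gamma_2})_\delta$ in the notation $g_\delta(z)=g(z^\delta)$. Applying Corollary \ref{fpD}(2) to $f_{1/\gamma_2}\in\D_\infty$, the power $\delta$, and $A^{\gamma_1}\in\Sect(\pi/2-)$, and using $(A^{\gamma_1})^\delta=A^{\gamma_1\delta}=A^{\gamma_2}$, gives $(f_{1/\gamma_1})_\D(A^{\gamma_1})=(f_{1/\gamma_2})_\D(A^{\gamma_2})$, which is (i). For (iii), assume $\theta<\pi/2$ and pick admissible $\psi\in(\theta,\pi/2)$, so $\gamma>1$ and $\delta:=1/\gamma\in(0,1)$ with $f_{1/\gamma}=f_\delta$. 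Corollary \ref{fpD}(2), applied to $f\in\D_\infty$, the power $\delta$ and $A^\gamma$, together with $(A^\gamma)^{1/\gamma}=A$, yields $f_\H(A)=(f_\delta)_\D(A^\gamma)=f_\D((A^\gamma)^{1/\gamma})=f_\D(A)$.

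\emph{Parts (ii) and (iv).} For (ii), recall $g(z)=f(z^\nu)$; then $g\in\H_{\psi'}$ for suitable $\psi'$ by Lemma \ref{simple}, so $g_\H(A)$ is defined, and by (i) we may choose the admissible angle $\psi_f$ used for $f_\H(A^\nu)$ so small that $\psi_g:=\psi_f/\nu$ is admissible for $g_\H(A)$; this is possible since $\nu\theta_A<\min(\psi,\nu\pi)$. With $\gamma_f=\pi/(2\psi_f)$ and $\gamma_g=\pi/(2\psi_g)=\nu\gamma_f$ one computes $g_{1/\gamma_g}(z)=f(z^{1/\gamma_f})=f_{1/\gamma_f}(z)$, while $A^{\gamma_g}=A^{\nu\gamma_f}=(A^\nu)^{\gamma_f}$; hence $g_\H(A)=(f_{1/\gamma_f})_\D((A^\nu)^{\gamma_f})=f_\H(A^\nu)$. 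For (iv), if $A$ is injective so is $A^\gamma$, and the composition rule for the holomorphic calculus gives $f_{\operatorname{Hol}}(A)=(f_{1/\gamma})_{\operatorname{Hol}}(A^\gamma)$ (as already recorded before the proposition). Since $f_{1/\gamma}\in\D_0$ and $A^\gamma\in\Sect(\pi/2-)$ is injective, Theorem \ref{Compatible}(i) gives $(f_{1/\gamma})_{\operatorname{Hol}}(A^\gamma)=(f_{1/\gamma})_{\D}(A^\gamma)=f_\H(A)$.

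\emph{Part (v) and the main obstacle.} Here I would argue exactly as in the proof of Proposition \ref{bounded_s}(2). Using (i), fix an admissible $\psi\in(\max(\theta,\pi/2),\pi)$, so $\gamma=\pi/(2\psi)\in(0,1)$. Subtracting the defining integrals \eqref{sigma_def} for $A+\epsilon$ and for $A$ (the term $f_{1/\gamma}(\infty)$ cancels) gives
\[
f_\H(A+\epsilon)-f_\H(A)=-\frac1\pi\int_0^\infty\int_{-\infty}^\infty f'_{1/\gamma}(\a+i\b)\Big[\big((A+\epsilon)^\gamma+\a-i\b\big)^{-1}-\big(A^\gamma+\a-i\b\big)^{-1}\Big]\,d\b\,d\a.
\]
The bracket tends to $0$ in operator norm for each $\a+i\b\in\C_+$ by Lemma \ref{resap} with $\nu=1$. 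For the majorant, since $\{A+\epsilon:\epsilon\ge0\}$ is uniformly sectorial of angle $\theta$, so is $\{(A+\epsilon)^\gamma\}$ of angle $\gamma\theta<\pi/2$; hence $C:=\sup_{\epsilon\ge0}M_{(A+\epsilon)^\gamma}<\infty$, and Lemma \ref{fractional} gives $\|((A+\epsilon)^\gamma+\a-i\b)^{-1}\|\le C(\a^2+\b^2)^{-1/2}$, while $(\a,\b)\mapsto|f'_{1/\gamma}(\a+i\b)|(\a^2+\b^2)^{-1/2}$ is integrable on $\C_+$ because $f_{1/\gamma}\in\D_0$. The dominated convergence theorem then yields $\|f_\H(A+\epsilon)-f_\H(A)\|\to0$. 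The one point requiring a little care is that Lemma \ref{resap} is stated for $\theta<\pi/2$; its proof, however, applies whenever $\theta<\pi$ and $\gamma<\pi/(2\theta)$ --- which holds here since $\psi>\theta$ --- after replacing the contour $\partial\Sigma_\psi$, $\psi\in(\theta,\pi/2)$, by $\partial\Sigma_\omega$ with $\omega\in(\theta,\pi/(2\gamma))$. More generally, the main obstacle in all of (i)--(v) is purely the fractional-power bookkeeping: verifying that the auxiliary angles $\psi,\psi_f,\psi_g$ stay in ranges for which $A^\gamma$, $(A^\nu)^{\gamma_f}$ and the like are sectorial of angle $<\pi/2$ and the law of exponents is valid.
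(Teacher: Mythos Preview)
Your argument is correct and follows the same route as the paper: items (i)--(iii) via Corollary~\ref{fpD}(2) and Lemma~\ref{simple}, item (iv) via the composition rule for the holomorphic calculus combined with Theorem~\ref{Compatible}(i) applied to $A^\gamma$ (exactly the paper's ``alternative direct proof''), and item (v) via a dominated-convergence argument with Lemma~\ref{resap} supplying the pointwise convergence of resolvents.

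One small presentational slip in (v): you write ``fix an admissible $\psi\in(\max(\theta,\pi/2),\pi)$'', but such a $\psi$ need not exist --- it may be that $f\in\H_\psi$ only for $\psi$ in some interval contained in $(\theta,\pi/2]$ (in which case $\theta<\pi/2$). Fortunately your own concluding observation resolves this: once you note that the proof of Lemma~\ref{resap} goes through for any $\gamma\in(0,\pi/(2\theta))$ with contour $\partial\Sigma_\omega$, $\omega\in(\theta,\pi/(2\gamma))$, the restriction $\gamma<1$ (equivalently $\psi>\pi/2$) becomes unnecessary. So simply take \emph{any} admissible $\psi$; then $\gamma=\pi/(2\psi)<\pi/(2\theta)$, and your dominated-convergence argument applies verbatim. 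The uniform bound $\sup_{\epsilon\ge0}M_{(A+\epsilon)^\gamma}<\infty$ that you use is indeed standard (the sectoriality constants of $B^\gamma$ are controlled by those of $B$; see \cite[Proposition~3.1.2]{HaaseB} or the remark in the paper after \eqref{NewEs}).

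Your write-up is in fact more careful than the paper's one-line ``(v) follows from Proposition~\ref{bounded_s}'', which glosses over the fact that $(A+\epsilon)^\gamma$ is not of the form $A^\gamma+\epsilon'$ and hence really requires the extension of Lemma~\ref{resap}/Corollary~\ref{fpD}(1) beyond $\Sect(\pi/2-)$ that you spell out.
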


\begin{proof} Statements (i), (ii) and (iii) follow from Corollary \ref{fpD}(ii) and Lemma \ref{simple}.   Statement (iv) follows from (iii) and Theorem \ref{Compatible}(i).    Statement (v) follows from Proposition \ref{bounded_s}.

An alternative direct proof of (iv) can be given as follows.
We may assume that $f(\infty)=0$.    Since $f_{1/\gamma}\in \mathcal{D}_0$, Corollary \ref{Repr} gives
\[
f_{1/\gamma}(z)= -\frac{1}{\pi}\int_0^\infty \int_{-\infty}^\infty\frac{f_{1/\gamma}'(\a+i\b)}{z+\a-i\b}\,d\b\,d\a,\qquad z\in \C_{+}.
\]
Using Fubini's theorem and some basic properties of the holomorphic functional calculus,
we obtain
\begin{align*}
&\lefteqn{\null\hskip-5pt A^\gamma(1+A^\gamma)^{-2}f_{\Hol}(A)}\\
&=-\frac{1}{\pi}\int_0^\infty \int_{-\infty}^\infty f_{1/\gamma}'(\a+i\b)
\left(\frac{1}{2\pi i}
\int_{\partial \Sigma_\omega}\,\frac{\l(\l-A^\gamma)^{-1}}{(\l+1)^2(\l+\a-i\b)}\,d\l\right)
d\b \,d\a\\
&= -A^\gamma(1+A^\gamma)^{-2}\left(
\frac{1}{\pi}\int_0^\infty \int_{-\infty}^\infty f_{1/\gamma}'(\a+i\b)(A^\gamma+\a-i\b)^{-1}
d\b\, d\a\right)\\
&=A^\gamma(1+A^\gamma)^{-2}f_{\mathcal{H}}(A),
\end{align*}
and (iv) follows.
\end{proof}

Now we can formally define the $\mathcal H$-calculus.

\begin{thm}\label{sigma_cal}
Let $A \in \Sect(\theta)$, where $\theta \in (0,\pi/2)$.   For any $\psi \in  (\theta, \pi)$ the formula \eqref{sigma_def}
defines a bounded algebra homomorphism:
\begin{eqnarray*}
\Upsilon_A : \mathcal \H_\psi \mapsto L(X), \qquad \Upsilon_A(f)=f_\H(A).
\end{eqnarray*}
The homomorphism $\Upsilon_A$ satisfies
 $\Upsilon_A(r_\l)=(\l+A)^{-1}$ for all $\l \in \Sigma_{\pi-\psi}$, and it is the unique homomorphism with these properties.
\end{thm}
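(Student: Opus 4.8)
The plan is to reduce the whole statement to the $\mathcal D$-calculus already constructed for the fractional power $A^\gamma$, via the substitution $f\mapsto f_{1/\gamma}$, and then to read off each assertion from Section~\ref{defD} together with the density theorem of Section~\ref{density}. Set $\gamma=\pi/(2\psi)$; since $\theta<\psi$ we have $\gamma\theta<\pi/2$, so $A^\gamma\in\Sect(\gamma\theta)\subset\Sect(\pi/2-)$ and the $\mathcal D$-calculus $\Psi_{A^\gamma}:\mathcal D_\infty\to L(X)$ of Theorem~\ref{dcalculus} is available. By Lemma~\ref{simple} the map $j_\gamma:f\mapsto f_{1/\gamma}$, where $f_{1/\gamma}(z)=f(z^{1/\gamma})$, is an isometric algebra isomorphism of $\mathcal H_\psi$ onto $\mathcal H_{\pi/2}=H^{1,1}(\C_+)$, and by Lemma~\ref{Dh1}(i) the inclusion $H^{1,1}(\C_+)\embedi\mathcal D_0^\infty\subset\mathcal D_\infty$ is continuous. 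Comparing \eqref{sigma_def} with \eqref{formulaD} for $s=0$ gives $\Upsilon_A(f)=f_{\mathcal H}(A)=\Psi_{A^\gamma}(j_\gamma f)$, so $\Upsilon_A=\Psi_{A^\gamma}\circ j_\gamma$ is a composition of algebra homomorphisms, hence an algebra homomorphism, with $\Upsilon_A(1)=I$ (immediate from \eqref{sigma_def}). Boundedness is the estimate \eqref{A2Dop} combined with the equivalence of the norms $\|\cdot\|_{\mathcal H_\psi}$ and $\|\cdot\|'_{\mathcal H_\psi}$ noted at \eqref{hnorms}, and independence of the auxiliary parameter $\psi$ is Proposition~\ref{Hprops}(i). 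Thus $\Upsilon_A$ is a bounded Banach algebra homomorphism.

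Next I would verify the resolvent identity. Fix $\lambda\in\Sigma_{\pi-\psi}$; its pole $-\lambda$ satisfies $|\arg(-\lambda)|>\psi$, so $-\lambda\notin\overline{\Sigma_\psi}$ and $r_\lambda\in\mathcal H_\psi$ (cf.\ Example~\ref{hexs}(1)). For $\epsilon>0$ the operator $A+\epsilon$ is injective, lies in $\Sect(\theta)$, and has $\sigma(A+\epsilon)\subset\epsilon+\overline{\Sigma_\theta}\subset\overline{\Sigma_\theta}$, a set not containing $-\lambda$; hence by Proposition~\ref{Hprops}(iv) and the corresponding property of the holomorphic functional calculus, $(r_\lambda)_{\mathcal H}(A+\epsilon)=(r_\lambda)_{\operatorname{Hol}}(A+\epsilon)=(\lambda+A+\epsilon)^{-1}$. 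Letting $\epsilon\to0+$, the left-hand side converges in operator norm to $(r_\lambda)_{\mathcal H}(A)$ by Proposition~\ref{Hprops}(v), while the right-hand side converges to $(\lambda+A)^{-1}$ because $-\lambda\in\rho(A)$ and resolvents are norm-continuous. Thus $\Upsilon_A(r_\lambda)=(\lambda+A)^{-1}$. (When $\psi\ge\pi/2$ one may instead invoke Proposition~\ref{Hprops}(iii) and Theorem~\ref{dcalculus}(i) directly, since then $\Sigma_{\pi-\psi}\subseteq\C_+$ and $r_\lambda\in\mathcal D_\infty$.)

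For uniqueness, let $\Phi:\mathcal H_\psi\to L(X)$ be any bounded algebra homomorphism with $\Phi(r_\lambda)=(\lambda+A)^{-1}$ for all $\lambda\in\Sigma_{\pi-\psi}$. Then $\Phi$ and $\Upsilon_A$ agree on $\mathcal R(\Sigma_\psi)=\operatorname{span}\{r_\lambda:\lambda\in\Sigma_{\pi-\psi}\}$. The element $\Phi(1)$ is idempotent and satisfies $\Phi(1)(\lambda+A)^{-1}=\Phi(1)\Phi(r_\lambda)=\Phi(r_\lambda)=(\lambda+A)^{-1}$, so $\Phi(1)$ acts as the identity on $D(A)=\operatorname{ran}(\lambda+A)^{-1}$; since $A$ is densely defined and $\Phi(1)$ is bounded, $\Phi(1)=I=\Upsilon_A(1)$. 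Hence $\Phi$ and $\Upsilon_A$ agree on $\mathcal R(\Sigma_\psi)+\C\cdot 1$. By Theorem~\ref{DM122}, $\mathcal R(\Sigma_\psi)$ is dense in $\mathcal H_{\psi,0}$ (whose norm is equivalent to that of $\mathcal H_\psi$ by \eqref{bound_for_sup}), and since $\mathcal H_\psi=\mathcal H_{\psi,0}\oplus\C\cdot 1$ the subspace $\mathcal R(\Sigma_\psi)+\C\cdot 1$ is dense in $\mathcal H_\psi$; both maps being bounded, $\Phi=\Upsilon_A$ on all of $\mathcal H_\psi$.

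I expect the only genuinely delicate point to be the resolvent identity for those $\lambda\in\Sigma_{\pi-\psi}$ with $\Re\lambda\le0$, which occur precisely when $\psi<\pi/2$: for such $\lambda$ the function $r_\lambda$ is not in $\mathcal D_\infty$, so one cannot route the computation through the $\mathcal D$-calculus of $A$ itself, and the argument must pass through the holomorphic functional calculus for the injective approximants $A+\epsilon$ together with the norm-continuity statement of Proposition~\ref{Hprops}(v). Everything else is a formal consequence of the $\mathcal D$-calculus, the change-of-variable isomorphism of Lemma~\ref{simple}, and Theorem~\ref{DM122}, apart from the routine checks that the relevant translated sectors and spectra are disjoint.
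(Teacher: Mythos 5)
Your proposal is correct and follows essentially the same route as the paper: the homomorphism and boundedness assertions are read off from the $\mathcal D_0^\infty$-calculus for $A^\gamma$ via the isometric substitution $f\mapsto f_{1/\gamma}$ together with \eqref{A2Dop}, and uniqueness comes from the density of rational functions (Theorem \ref{DM122}). You supply slightly more detail than the paper on two points it leaves implicit — the verification of $\Upsilon_A(r_\lambda)=(\lambda+A)^{-1}$ via Proposition \ref{Hprops}(iv)--(v) applied to the injective approximants $A+\epsilon$, and the identification $\Phi(1)=I$ in the uniqueness step — and both of these details check out.
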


The homomorphism $\Upsilon_A$ will be called the \emph{$\mathcal H$-calculus} for $A$.

\begin{proof}
The boundedness of $\Upsilon_A$ follows from either \eqref{NewEs} or \eqref{A2Dop}.
The homomorphism property is implied by Theorem \ref{Compat}.
Indeed, employing the functional calculus $\Psi^0_A$ on $\mathcal D^\infty_0$ given by Corollary \ref{Compat}, 
 one has 
\begin{align*}
\Upsilon_A(fg) &= \Psi^0_A((fg)_{1/\alpha}) = \Psi^0_A(f_{1/\alpha}g_{1/\alpha}) \\
 &= \Psi^0_A(f_{1/\alpha}) \Psi^0_A(g_{1/\alpha}) = \Upsilon_A(f)\Upsilon_A(g).
\end{align*}
The uniqueness follows from Theorem \ref{DM122}.
\end{proof}

\begin{remark}
If $A$ is any operator for which there is an $\H_\psi$-calculus as in Theorem \ref{sigma_cal}, then $A \in \Sect(\theta)$ for some $\theta\in(0,\psi)$.   This follows from \eqref{res_sigma_cal_intro}, and in combination with Theorem \ref{sigma_cal} this yields the proof of Theorem \ref{sigma_cal_intro}.   If $\Upsilon_A(r_\l) = (\l+A)^{-1}$ for some $\l\in \Sigma_{\pi-\psi}$ then this holds for all $\l \in \Sigma_{\pi-\psi}$, by the resolvent identity.
\end{remark}

\subsection{The operator $\arccot(A^\gamma)$ and the $\arccot$-formula}\label{alternative}

In this section, we derive an alternative to the formula \eqref{sigma_def} for the $\H$-calculus,  
in the form of an operator counterpart of  Proposition \ref{L7.4} for scalar functions.  In addition to its intrinsic interest, it helps us to compare our approach with the approach developed by Boyadzhiev \cite{Boy}, as we do at the end of this section.

We introduce as an operator kernel the function
\[
g(z) :=\arccot(z) = \frac{1}{2i} \log \left(\frac{z+i}{z-i}\right), \quad z \in \C_+,
\]
already considered in Example \ref{arc}.   Note that  $g \in \mathcal{D}_0$, $ g(\infty)=0$, $g'(z)=-(z^2+1)^{-1}$,
and \eqref{ArcN} holds:
\[
\arccot(z)=\frac{1}{\pi}
\int_0^\infty\int_{-\infty}^\infty\frac{(z+\a-i\b)^{-1}}{(\a+i\b)^2+1}\,d\b\,d\a\qquad z\in \C_{+}.
\]

Let $A \in \Sect(\theta)$, where $\theta \in [0,\pi)$.  Let $\psi \in (\theta,\pi)$ and $\gamma=\pi/(2\psi)$.
By the $\mathcal{D}$-calculus in \eqref{formulaD} and \eqref{estimateD},
\begin{equation}\label{defcot}
\arccot(A^\gamma) := \arccot_{\mathcal{D}}(A^\gamma)=\frac{1}{\pi}
\int_0^\infty\int_{-\infty}^\infty\frac{(A^\gamma+\a-i\b)^{-1}}{(\a+i\b)^2+1}\,d\b\,d\a,
\end{equation}
where the integral converges in the operator norm, and
\begin{equation}\label{NormA}
\|\arccot(A^\gamma)\|\le\frac{M_{A^\gamma}}{\pi}\|\arccot z\|_{\mathcal{D}_0} < 3M_{A^\gamma},\;\;
M_{A^\gamma}:=\sup_{z\in \C_{+}}\,\|z(A^\gamma+z)^{-1}\|.
\end{equation}

We will provide an alternative estimate for the operator $\arccot (A^\gamma/s^\gamma)$.   
Following (\ref{rew}), we may formally write
\begin{align} \label{acint}
\arccot_{{\rm int}}(A^{\gamma})&:=\frac{1}{4\pi}\int_0^\infty
\log\left|\frac{1+t^\gamma}{1-t^\gamma}\right|
\left((t-e^{i\psi}A)^{-1}
+(t-e^{-i\psi}A)^{-1}\right)\,dt\\
&\null\hskip30pt +\frac{1}{4 i}\int_{|\l|=1,\,\arg\l\in (\psi,2\pi-\psi)}\,
(\l-A)^{-1}\,d\l. \notag
\end{align}

\begin{lemma}\label{arc_bound}
Let $A \in \Sect(\theta)$ and $\gamma = \pi/(2\psi)$, where $0 \le \theta < \psi < \pi$.   The operator $\arccot_{{\rm int}}(A^{\gamma})$  is well-defined and
\begin{equation}\label{NesT}
\|\arccot_{{\rm int}}(A^{\gamma})\|\le M_\psi(A)\frac{\pi}{2}.
\end{equation}
\end{lemma}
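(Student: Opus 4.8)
The plan is to estimate the two pieces of $\arccot_{\mathrm{int}}(A^\gamma)$ in \eqref{acint} separately, using the scalar identity \eqref{remA} from Remark \ref{Value} together with the resolvent bound $M_\psi(A)$. First I would handle the line integral over $\Gamma_\psi$. Writing $\l = e^{i\omega}$ with $\omega \in (\psi, 2\pi-\psi)$, we have $|d\l| = d\omega$, and since $e^{i\omega} \in \Sigma_{\pi-\psi}$ precisely when $\omega \in (\psi, 2\pi-\psi)$ (interpreting the argument mod $2\pi$), the sectoriality bound gives $\|(\l - A)^{-1}\| = \|(A + (-\l))^{-1}\| \le M_\psi(A)/|{-\l}| = M_\psi(A)$ for each such $\l$ — here one uses that $-e^{i\omega} \in \Sigma_{\pi-\psi}$ for $\omega$ in that range. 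Hence
\[
\left\| \frac{1}{4i}\int_{\Gamma_\psi} (\l - A)^{-1}\,d\l \right\| \le \frac{1}{4}\int_\psi^{2\pi-\psi} M_\psi(A)\,d\omega = \frac{M_\psi(A)(\pi - \psi)}{2}.
\]

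Next I would bound the integral term. For $t > 0$, the points $te^{\pm i\psi}$ lie on $\partial\Sigma_\psi$, so $-te^{\pm i\psi}$ lies in $\Sigma_{\pi-\psi}$ (on its boundary rays), giving $\|(t - e^{\pm i\psi}A)^{-1}\|$... wait — more carefully, $\|(te^{\mp i\psi} - A)^{-1}\cdot e^{\mp i\psi}\| $; the cleanest route is to note $t - e^{i\psi}A = e^{i\psi}(te^{-i\psi} - A) = -e^{i\psi}(A + (-te^{-i\psi}))$, and $-te^{-i\psi} = te^{i(\pi-\psi)} \in \overline{\Sigma_{\pi-\psi}}$, whence $\|(t - e^{i\psi}A)^{-1}\| \le M_\psi(A)/t$, and similarly for the conjugate. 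Therefore
\[
\left\| \frac{1}{4\pi}\int_0^\infty \log\left|\frac{1+t^\gamma}{1-t^\gamma}\right| \bigl((t - e^{i\psi}A)^{-1} + (t - e^{-i\psi}A)^{-1}\bigr)\,dt \right\| \le \frac{M_\psi(A)}{2\pi}\int_0^\infty \log\left|\frac{1+t^\gamma}{1-t^\gamma}\right| \frac{dt}{t},
\]
and by \eqref{remA} (with $\nu$ there equal to the present $\gamma$, since $\gamma\psi = \pi/2$) the last integral equals $\psi/2 \cdot 2\pi/(2\pi) = \psi/2$; more precisely \eqref{remA} states $\frac{1}{2\pi}\int_0^\infty \log\bigl|\frac{1+t^\gamma}{1-t^\gamma}\bigr|\frac{dt}{t} = \frac{\psi}{2}$, so this piece is bounded by $M_\psi(A)\,\psi/2$. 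Adding the two contributions gives $\|\arccot_{\mathrm{int}}(A^\gamma)\| \le M_\psi(A)(\pi-\psi)/2 + M_\psi(A)\psi/2 = M_\psi(A)\,\pi/2$, which is \eqref{NesT}.

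Before the estimate, I must check that $\arccot_{\mathrm{int}}(A^\gamma)$ is well-defined, i.e.\ that both integrals converge in the operator norm. The $\Gamma_\psi$-integral is over a compact arc with a norm-continuous (indeed holomorphic) integrand $\l \mapsto (\l-A)^{-1}$, so it converges as a Bochner integral with no difficulty. For the integral over $(0,\infty)$, the integrand is norm-continuous in $t$ on $(0,\infty)$ (again by norm-continuity of the resolvent), and the scalar bound just derived shows $\int_0^\infty \log\bigl|\frac{1+t^\gamma}{1-t^\gamma}\bigr|\,\|(t-e^{i\psi}A)^{-1} + (t-e^{-i\psi}A)^{-1}\|\,dt \le \frac{2M_\psi(A)}{t}\cdot(\text{integrable weight})$, and since $\int_0^\infty \log\bigl|\frac{1+t^\gamma}{1-t^\gamma}\bigr|\frac{dt}{t} < \infty$ by \eqref{remA}, the Bochner integral converges absolutely. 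The only mild subtlety — the main thing to get right — is the bookkeeping of which rotations of the unit circle and of the rays $\{te^{\pm i\psi}\}$ actually land in $\Sigma_{\pi-\psi}$, so that $M_\psi(A)$ is the correct constant to invoke; this is where one should be careful with the range of arguments, but it is routine once set up, and I expect no genuine obstacle beyond that.
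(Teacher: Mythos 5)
Your proof is correct and follows essentially the same route as the paper: split $\arccot_{\rm int}(A^\gamma)$ into its two pieces, bound the resolvents by $M_\psi(A)/t$ (resp.\ $M_\psi(A)$ on the unit arc) after the rotation bookkeeping, and evaluate the scalar integrals via \eqref{remA} to get $M_\psi(A)(\psi/2+(\pi-\psi)/2)$. The paper's proof is just a terser version of the same computation; your extra care about which points land in $\Sigma_{\pi-\psi}$ and about absolute convergence is welcome but not a departure.
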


\begin{proof}
Using (\ref{remA}), one notes that
\begin{align*}
\|\arccot(A^{\gamma})\|&\le
\frac{M_\psi(A)}{2\pi}\int_0^\infty
\log\left|\frac{1+t^\gamma}{1-t^\gamma}\right|
\,\frac{dt}{t}
+\frac{M_\psi(A)}{4}\int_{|\l|=1,\; \arg \l\in (\psi,2\pi-\psi)}\,\frac{|d\l|}{|\l|}\\
&=M_\psi(A)\left(\frac{\psi}{2}+
\frac{(\pi-\psi)}{2}\right)
=M_\psi(A)\frac{\pi}{2}. \qedhere
\end{align*}
\end{proof}

The next lemma shows that the formula $\arccot_{{\rm int}}(A^{\gamma})$ coincides 
with the definition of $\arccot (A^{\gamma})$ by the $\mathcal D$-calculus.  When $A$ is injective, $\arccot(A^\gamma)$ is defined in the holomorphic functional calculus by
\begin{equation}\label{archol}
\arccot_{\Hol}(A^\gamma):=\frac{1}{2\pi i}[A(1+A)^{-1}]^{-1}
\int_{\partial\Sigma_\omega}\,\frac{\l\arccot(\l^\gamma)}{\l+1}(\l-A)^{-1}\,d\l,
\end{equation}
where $0<\theta<\omega<\psi<\pi$.

\begin{lemma}\label{Ss2}
Under the assumptions above, $\arccot_{{\rm int}}(A^\gamma)=\arccot_{\mathcal{D}}(A^\gamma)$. If $A$ is injective, then
$\arccot_{{\rm int}}(A^{\gamma})=\arccot_{\Hol}(A^{\gamma})$.
\end{lemma}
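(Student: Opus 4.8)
The plan is to prove the two identities by reducing both sides to a common integral representation and invoking the uniqueness/consistency results already available. For the first identity $\arccot_{\rm int}(A^\gamma) = \arccot_{\mathcal D}(A^\gamma)$, I would start from the scalar identity \eqref{rew} in Lemma \ref{log}, which expresses $\arccot(z^\nu)$ (with $\nu = 1/\gamma$, i.e.\ the roles of $\gamma$ and $\nu$ matched to the present notation where $\gamma = \pi/(2\psi)$ and $\nu\psi = \pi/2$) as the sum of a line integral against $\log|(1+t^\nu)/(1-t^\nu)|$ and a contour integral over the arc $\Gamma_\psi$. The key point is that \eqref{rew} is an identity between two functions in $\mathcal D_0$: the left side is $\arccot\circ(\text{power})$, which lies in $\mathcal D_\infty$ by Corollary \ref{gggC} (or directly), and each term on the right is, after the change of variable, a superposition of resolvent functions $r_\lambda$ with $\lambda$ on $\partial\Sigma_\psi \cup \Gamma_\psi$, hence lies in $\H_\psi \embedi \D_s$ by Example \ref{hexs}(1) and Lemma \ref{Dh1}. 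Since the $\mathcal D$-calculus $\Psi_A$ is a bounded algebra homomorphism (Theorem \ref{dcalculus}), and in particular a bounded linear map, I can apply $\Psi_{A^\gamma}$ — or rather, exploit that the change of variable $z \mapsto z^\gamma$ intertwines the $\mathcal H$-calculus for $A$ with the $\mathcal D$-calculus for $A^\gamma$ (Proposition \ref{Hprops}(iii), Corollary \ref{fpD}) — term by term to \eqref{rew}. The line integral and the arc integral commute with $\Psi_A$ because they are Bochner integrals of continuous $L(X)$-valued integrands (using $\|r_\lambda\|_{\H_\psi} \le C/|\lambda|$ from \eqref{res_sigma_cal_intro} and the convergence of the scalar integrals, as in the estimate in Lemma \ref{arc_bound}), and $\Psi_A(r_\lambda) = (\lambda+A)^{-1}$. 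This produces exactly the right-hand side of \eqref{acint}, so $\arccot_{\rm int}(A^\gamma)$ equals $\Psi_A(\arccot(\cdot^\gamma))$, which by definition \eqref{defcot} is $\arccot_{\mathcal D}(A^\gamma)$.

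For the second identity, under the additional hypothesis that $A$ is injective, I would simply combine the first identity with Proposition \ref{Hprops}(iv) (equivalently Theorem \ref{Compatible}(i) applied to $A^\gamma$ and the function $f_{1/\gamma} = \arccot\circ(\cdot^\gamma) \in \D_0$): since $A$ injective implies $A^\gamma$ injective, the $\mathcal D$-calculus for $A^\gamma$ agrees with the extended holomorphic functional calculus, i.e.\ $\arccot_{\mathcal D}(A^\gamma) = (\arccot\circ(\cdot^\gamma))_{\rm Hol}(A^\gamma)$. The latter is, by the Composition Rule for the holomorphic calculus (\cite[Theorem 2.4.2]{HaaseB}), precisely $\arccot_{\rm Hol}(A^\gamma)$ as given by \eqref{archol} — one checks that the regularisers $\lambda(1+\lambda)^{-1}$ versus $A(1+A)^{-1}$ match up in the usual way, and the polynomial-decay behaviour of $\arccot(\lambda^\gamma)$ at $0$ and $\infty$ (it has sectorial limit $0$ at $\infty$ and is bounded near $0$) legitimises the contour manipulation. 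Chaining these equalities gives $\arccot_{\rm int}(A^\gamma) = \arccot_{\rm Hol}(A^\gamma)$.

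The main obstacle I expect is the careful justification of interchanging $\Psi_A$ with the scalar integrals in \eqref{rew} — that is, verifying that the $L(X)$-valued functions $t \mapsto \log|(1+t^\nu)/(1-t^\nu)|\,t^{-1}\big((t-e^{i\psi}A)^{-1} + (t-e^{-i\psi}A)^{-1}\big)$ and $\lambda \mapsto (\lambda - A)^{-1}$ on $\Gamma_\psi$ are Bochner integrable and that the resulting operator-valued integrals genuinely represent $\Psi_A$ applied to the corresponding scalar-valued integrals, rather than merely agreeing formally. This requires one to recognise that the scalar integral in \eqref{rew} is itself a Bochner integral in $\D_s$ (or in $\H_\psi$) — using that the integrand, as a $\D_s$-valued function of $t$ or $\lambda$, has norm controlled by $\|r_\lambda\|_{\D_s} \le C/|\lambda|$ and that the scalar weights are integrable against $d\lambda/|\lambda|$, which is exactly the content of the estimate appearing in the proof of Lemma \ref{arc_bound}. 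Once this is set up, boundedness of $\Psi_A$ lets it pass through the integral, and the rest is bookkeeping. A secondary, purely notational, point of care is keeping straight the relation $\nu = 1/\gamma$ between the exponents in Lemma \ref{log} and the exponent $\gamma = \pi/(2\psi)$ used in the operator definitions, since both letters are in play.
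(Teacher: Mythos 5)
Your proposal is correct in substance but takes a genuinely different route from the paper. The paper argues in the opposite direction: it assumes first that $A$ is injective, inserts the scalar identity \eqref{rew} into the contour integral defining $\arccot_{\Hol}(A^\gamma)$ in \eqref{archol}, applies Fubini and Cauchy's theorem to evaluate the inner integrals (picking up the regularizer $A(1+A)^{-1}$), and thereby proves the second identity directly; the first identity in the injective case then follows from Theorem \ref{Compatible}, and the non-injective case is handled by replacing $A$ with $A+\ep$ and passing to the limit using dominated convergence in \eqref{acint} and Proposition \ref{bounded_s}. You instead lift \eqref{rew} to a Bochner-integral identity in a Hardy--Sobolev space and push the bounded homomorphism $\Upsilon_A$ (equivalently $\Psi_{A^\gamma}$) through the integral, which proves the $\mathcal D$-calculus identity for all $A$ at once, with no $\ep$-regularization and no explicit contour computation; the injective-case identity then drops out of compatibility. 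Your route is arguably cleaner and is exactly the technique the paper itself uses for \eqref{hbochner} in the proof of Theorem \ref{Sovp}, whereas the paper's route is more computational but self-contained at that point. One point you should tighten: the kernels $(z-te^{\pm i\psi})^{-1}$ have poles on $\partial\Sigma_\psi$ and the endpoints of $\Gamma_\psi$ lie on $\partial\Sigma_\psi$, so these functions (and $\arccot(\cdot^\gamma)$ itself) belong to $\H_\varphi$ only for $\varphi<\psi$ strictly, not to $\H_\psi$; likewise the blanket claim ``$\H_\psi\embedi\D_s$'' is only literally true for $\psi=\pi/2$. The fix is merely to fix $\varphi\in(\theta,\psi)$ and run the Bochner-integral argument in $\H_\varphi$, using \eqref{res_sigma_cal_intro} to get $\|r_{-te^{\pm i\psi}}\|_{\H_\varphi}\le C_{\varphi,\psi}/t$ and \eqref{remA} for integrability of the logarithmic weight, exactly as you anticipate in your ``main obstacle'' paragraph.
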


\begin{proof} Assume first that $A$ is injective.  Using \eqref{archol}, (\ref{rew}) and \eqref{defcot}
we obtain
\begin{align*}
\lefteqn{A(1+A)^{-1} \arccot_{\rm{Hol}} (A^\gamma)} \\
&=  \frac{1}{2\pi i}
\int_{\partial\Sigma_\omega}\,\frac{z\arccot(z^\gamma)}{z+1}(z-A)^{-1}\,dz\\
&=\frac{1}{4\pi}\int_0^\infty \log\left|\frac{1+t^\gamma}{1-t^\gamma}\right|
\left(\frac{1}{2\pi i}
\int_{\partial\Sigma_\omega}\,\frac{z}{z+1}(z-A)^{-1}\,
V_\psi(z, t)\,dz \right)\,\frac{dt}{t}\\
&\hskip30pt + \frac{1}{4i}\int_{|\l|=1,\,\arg \l\in (\psi,2\pi-\psi)}\,\left(
\frac{1}{2\pi i}\int_{\partial\Sigma_\omega} \frac{z (z-A)^{-1}}{(z+1)(\l-z)}
dz \right) d \l \\
&=A(A+1)^{-1}\frac{1}{4\pi}\int_0^\infty \log\left|\frac{1+t^\gamma}{1-t^\gamma}\right|
\left((t-e^{i\psi}A)^{-1}+(t-e^{-i\psi}A)^{-1}\right)\,dt\\
&\null\hskip30pt +A(A+1)^{-1}\frac{1}{4i}\int_{|\l|=1,\,\arg \l\in (\psi,2\pi-\psi)}\,
(\l-A)^{-1}\,
d \l\\
&=A(A+1)^{-1}\arccot_{{\rm int}}(A^{\gamma}).
\end{align*}
Thus the second statement holds. Moreover, 
$\arccot_{\Hol}(A^\gamma)=\arccot_{\mathcal{D}}(A^\gamma)$, by Theorem \ref{Compatible}, and the first statement follows.

If $A$ is not injective, we have $\arccot_{\rm int}((A+\ep)^\gamma) = \arccot_\D((A+\ep)^\gamma)$ from the case above.  When $\ep\to0+$, the left-hand side converges in operator norm to $\arccot_{\rm int}(A^\gamma)$ by applying the dominated convergence theorem in \eqref{acint}, and the right-hand side converges to $\arccot_{\D}(A^\gamma)$ by Proposition \ref{bounded_s}.
\end{proof}

\begin{thm}\label{Sovp}
Let $A \in \Sect(\theta)$ and $\gamma=\pi/(2\psi)$, where $0 \le \theta < \psi < \pi$.  If $f \in \H_\psi$, and $f_\psi$ is given by \eqref{fpsi}, then
\begin{equation}\label{formula_a}
f_{\mathcal H}(A)= f(\infty)-\frac{2}{\pi}\int_0^\infty
f_\psi'(t) \,\arccot(A^\gamma/t^{\gamma})\,dt
\end{equation}
where the integral converges in the uniform operator topology, and
\begin{equation}\label{NewEs1}
\|f_{\mathcal H}(A)\|
 \le |f(\infty)| + \frac{M_\psi(A)}{2} \|f'\|_{H^1(\Sigma_\psi)} \le {M_\psi(A)} \|f\|_{\H_\psi}.
\end{equation} 
Moreover, if $f(\infty)=0$, then
\begin{equation}\label{NewEs2}
\|f_{\mathcal H}(A)\| \le \frac{M_\psi(A)}{2} \|f\|_{\H_\psi}.
\end{equation}
\end{thm}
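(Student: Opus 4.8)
The plan is to reduce \eqref{formula_a} to the scalar reproducing formula of Proposition \ref{L7.4} by feeding it through the $\D$-calculus for $A^\gamma$, and then read off the norm bounds from the integral representation of the $\arccot$-kernel established in Lemma \ref{log} together with its operator counterpart in Lemma \ref{arc_bound}. First I would recall from Section \ref{hardy_sobolev} that $A^\gamma \in \Sect(\pi/2-)$ with $\theta_{A^\gamma}=\gamma\theta < \pi/2$, so the $\D_0$-calculus $\Psi^0_{A^\gamma}$ is available, and that, by definition, $f_\H(A) = (f_{1/\gamma})_\D(A^\gamma)$ where $f_{1/\gamma}(z)=f(z^{1/\gamma}) \in \D_0$ by Lemmas \ref{simple} and \ref{Dh1}. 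The key computation is to verify that the scalar function $z \mapsto f(z)$ can be written, via Proposition \ref{L7.4}, as $f(\infty) - \tfrac{2}{\pi}\int_0^\infty f_\psi'(t)\,\arccot(z^\gamma/t^\gamma)\,dt$ (with $\nu = 1/\gamma$ there playing the role of $\gamma$ here after the substitution $\gamma = \pi/(2\psi)$, so that $\nu\psi = \pi/2$); equivalently $f_{1/\gamma}(z) = f(\infty) - \tfrac{2}{\pi}\int_0^\infty f_\psi'(t)\,\arccot(z/t^\gamma)\,dt$. Applying $\Psi^0_{A^\gamma}$ to both sides, the constant $f(\infty)$ maps to $f(\infty)I$, and the main task is to interchange $\Psi^0_{A^\gamma}$ with the outer integral $\int_0^\infty f_\psi'(t)\,(\cdot)\,dt$. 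This interchange is justified because $t\mapsto \arccot((\cdot)/t^\gamma) \in \D_0$ is continuous and bounded in $\D_0$-norm (from Lemma \ref{leminv}(iii), the $\D_0$-norm of $z\mapsto\arccot(z/t^\gamma)$ is independent of $t$), $\Psi^0_{A^\gamma}$ is bounded, and $f_\psi' \in L^1(\R_+)$ with $\|f_\psi'\|_{L^1(\R_+)} \le \tfrac12\|f'\|_{H^1(\Sigma_\psi)}$ by \eqref{iso} applied to $f'$ (the factor $\tfrac12$ coming from averaging the two boundary rays). Each summand $\Psi^0_{A^\gamma}(\arccot((\cdot)/t^\gamma)) = \arccot_\D(A^\gamma/t^\gamma)$, which by Lemma \ref{Ss2} equals $\arccot_{\rm int}(A^\gamma/t^\gamma)$, whence \eqref{formula_a} with convergence in operator norm.

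For the norm estimates, I would use \eqref{formula_a} directly:
\[
\|f_\H(A)\| \le |f(\infty)| + \frac{2}{\pi}\int_0^\infty |f_\psi'(t)|\, \|\arccot(A^\gamma/t^\gamma)\|\,dt.
\]
Now $\arccot(A^\gamma/t^\gamma) = \arccot_{\rm int}(A^\gamma/t^\gamma)$, and applying Lemma \ref{arc_bound} to the operator $A/t$ (noting $M_\psi(A/t) = M_\psi(A)$ since $M_\psi$ is scale-invariant) gives $\|\arccot(A^\gamma/t^\gamma)\| \le \tfrac{\pi}{2} M_\psi(A)$ uniformly in $t>0$. Hence $\|f_\H(A)\| \le |f(\infty)| + M_\psi(A)\int_0^\infty |f_\psi'(t)|\,dt = |f(\infty)| + M_\psi(A)\|f_\psi'\|_{L^1(\R_+)} \le |f(\infty)| + \tfrac{M_\psi(A)}{2}\|f'\|_{H^1(\Sigma_\psi)}$, which is the first inequality of \eqref{NewEs1}; the second follows since $|f(\infty)| \le \|f\|_{\H_\psi}$, $\|f'\|_{H^1(\Sigma_\psi)}\le\|f\|_{\H_\psi}$, and $M_\psi(A)\ge1$. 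For \eqref{NewEs2}, when $f(\infty)=0$ one has $|f(\infty)| = 0$, so the bound reads $\|f_\H(A)\| \le \tfrac{M_\psi(A)}{2}\|f'\|_{H^1(\Sigma_\psi)} \le \tfrac{M_\psi(A)}{2}\|f\|_{\H_\psi}$.

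The main obstacle I anticipate is the rigorous justification of the interchange of $\Psi^0_{A^\gamma}$ with the outer integral $\int_0^\infty f_\psi'(t)\,(\cdot)\,dt$ in $\D_0$, i.e. checking that $t \mapsto \arccot((\cdot)/t^\gamma)$ is a continuous, Bochner-integrable $\D_0$-valued map against the measure $f_\psi'(t)\,dt$ so that $\Psi^0_{A^\gamma}$ passes under the integral. This is where one must combine the scale-invariance of the $\D_0$-norm (Lemma \ref{leminv}(iii)), continuity of $t\mapsto \arccot((\cdot)/t^\gamma)$ in $\D_0$ (which follows from \eqref{DerAnE}-type estimates or directly from the dominated convergence theorem applied in the $\D_{0,0}$-seminorm), and the $L^1$-bound on $f_\psi'$; the boundedness of point evaluations on $\D_0$ (Corollary \ref{Cangle}, Remark \ref{remdr}) then licenses the use of the vector-valued integration framework from Section \ref{prelims}. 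Everything else is a bookkeeping exercise combining Proposition \ref{L7.4}, Lemma \ref{Ss2}, Lemma \ref{arc_bound}, and the identity \eqref{iso}.
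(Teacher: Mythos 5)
Your proposal is correct and follows essentially the same route as the paper: rewrite $f$ via Proposition \ref{L7.4} as a Bochner integral of scaled $\arccot$-kernels in a space on which the calculus is a bounded operator, push the calculus through the integral, and bound each $\arccot(A^\gamma/t^\gamma)$ by $\tfrac{\pi}{2}M_\psi(A)$ using Lemmas \ref{Ss2} and \ref{arc_bound} together with $\|f_\psi'\|_{L^1(\R_+)}\le\tfrac12\|f'\|_{H^1(\Sigma_\psi)}$ from \eqref{iso}. The only (cosmetic) difference is that the paper takes the Bochner integral in $\H_\varphi$ for an auxiliary $\varphi\in(\theta,\psi)$ and applies $\Upsilon_A$ — settling the continuity of $t\mapsto\arccot(\cdot^\gamma/t^\gamma)$, which you rightly flag as the one technical point, via scale-invariance and Lemma \ref{duren21} — whereas you work in $\D_0$ after the substitution $z\mapsto z^{1/\gamma}$ and apply $\Psi^0_{A^\gamma}$, which is what $\Upsilon_A$ reduces to anyway.
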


\begin{proof}
Let $\varphi \in (\theta,\psi)$ be fixed.
Let $g(z)=\arccot(z)$ and $g_\gamma(z)=\arccot(z^\gamma)$.   Since $g \in \D_0 \subset \H_{\gamma \varphi}$, $g_\gamma \in \H_\varphi$.   Now
\begin{align}\label{arccotes}
\int_0^\infty |g'_\gamma(\rho e^{\pm i\varphi})|\,d\rho &=
 \gamma\int_0^\infty \rho^{\gamma-1}|g'(\rho^\gamma e^{\pm i\gamma\varphi})|\,d\rho \\
&=\int_0^\infty |g'(te^{\pm i\gamma\varphi})|\,dt
=\int_0^\infty \frac{dt}{|1+t^2e^{\pm 2i\gamma\varphi}|} \notag
\end{align}
and
\begin{align*}
\int_0^\infty \frac{d\rho}{|1+\rho^2e^{2i\psi}|} 
&= 2 \int_1^\infty \frac{d\rho}{((\rho^2-1)^2 + 4\rho^2\cos^2\psi)^{1/2}} \\
&\le 2\sqrt{2}\int_1^\infty \frac{d\rho}{(\rho+\cos\psi)^2-(1+\cos^2\psi)}\\
&=\frac{\sqrt{2}}{(1+\cos^2\psi)^{1/2}}
\log \left(\frac{1+\cos\psi+\sqrt{1+\cos^2\psi}}{1+\cos\psi-\sqrt{1+\cos^2\psi}}\right) \\
&\le \sqrt{2}
\log\left(\frac{6}{\cos\psi}\right).
\end{align*}
Let $q_t(z) = \arccot(z^\gamma/t^\gamma), \, t>0$.   By scale-invariance (Lemma \ref{simple}), $\|q_t\|_{\H_\varphi} = \|g_\gamma\|_{\H_\varphi}$. It follows from Lemma \ref{duren21} that $t \mapsto q_t$ is continuous from $(0,\infty)$ to $\H_\varphi$.
Hence, in view of Proposition \ref{L7.4}, we have
\begin{equation} \label{hbochner}
f=f(\infty)-\frac{2}{\pi}\int_0^\infty
f_\psi'(t)\, q_t \,dt,
\end{equation}
where the integral is understood as a Bochner integral in $\H_\varphi$.
By applying the bounded operator $\Upsilon_A$ to both sides of \eqref{hbochner},
we obtain (\ref{formula_a}).   The estimate \eqref{NewEs1} follows from \eqref{formula_a}, \eqref{NewEs2}, and Lemmas \ref{arc_bound} and \ref{Ss2}. 
\end{proof}

\begin{rems}\label{rem_h}
1.  If $M_\psi(A)=1$ in \eqref{NewEs1}, that is, if $-A$ generates a holomorphic $C_0$-semigroup which is contractive on $\Sigma_{(\pi/2)-\psi}$,  then the $\mathcal H$-calculus is contractive. This seems to be a new feature which has not been present in constructions of other calculi in the literature. 

\noindent 2. An alternative to the estimate \eqref{NewEs1} is 
\begin{equation}\label{NewEs}
\|f_{\mathcal H}(A)\|\le |f(\infty)|+ 3 M_{A^\gamma}\|f_\psi'\|_{L^1(\R_+)}.
\end{equation}
This is obtained from \eqref{formula_a}, using the estimates \eqref{estimateD} (with $s=0$) and \eqref{arccot_D} to obtain the estimate $\|\arccot(A^\gamma)\| \le 3\pi M_{A^\gamma}$.  The constant $3$ is not optimal.
It is possible to provide explicit bounds for $M_{A^\gamma}$ in terms of $M_A$. However we refrain from doing so in this paper, and we refer the interested reader to \cite[Propositions 5.1 and 5.2]{BGTad}. 
\end{rems}

Finally in this section, we discuss the relations between \cite{Boy} and the present work.
For $\psi \in (0,\pi)$, as in \cite{Boy}, let
\begin{align*}
k_\psi(t)
=\frac{1}{\pi^2}\log\left|\coth\left(\frac{\pi t }{4 \psi}\right)
\coth\left(\frac{\pi t}{4(\pi-\psi)}\right)\right|.
\end{align*}
Note that $k_\psi$ is an even function on $\mathbb R\setminus \{0\}$, and $\|k_\psi\|_{L^1(\mathbb R)}=1.$

For any $f \in L^\infty (\mathbb R_+),$ let 
\[
(f \circ k_\psi) (t):=\int_{0}^{\infty}f (s)k_\psi \left(\log (t/s) \right)\, \frac{ds}{s}, \qquad t > 0,
\]
and for $A \in \Sect(\theta)$, $\theta \in [0,\psi)$,  define
\begin{equation}\label{boya}
W_\psi(A,t)=-\frac{A}{2}\left(e^{-i\psi} (A-e^{-i\psi}t)^{-2}
+ e^{i\psi}(A-e^{i\psi} t)^{-2}\right).
\end{equation}
It was proved in \cite[Theorem 3.1]{Boy} that if $A \in \Sect(\theta)$,  
$A$ has dense range, and
\begin{equation}\label{absolute}
\int_{0}^{\infty}|\langle W_{\psi}(A,t)x, x^*\rangle |\, dt <\infty, \qquad  x \in X,\, x^* \in X^*,
\end{equation}
then $A$ admits
a bounded  $H^\infty(\Sigma_\psi)$-calculus given by
\begin{equation}\label{formula}
\langle f(A)x, x^*\rangle=\int_{0}^{\infty}\langle W_\psi(A,t)x, x^*\rangle (f_\psi \circ k_\psi)(t)\, dt, \quad x \in X,\, x^* \in X^*,
\end{equation}
where the integral converges absolutely (in the weak sense).
 Conversely, if $\psi \in (\theta,\pi)$ and $\varphi \in (\theta, \psi)$ are such that $A$ has a bounded  $H^\infty(\Sigma_{\varphi})$-calculus,
then \eqref{absolute} holds.   
(Note that in this situation $A$ has a bounded $H^\infty(\Sigma_{\psi})$-calculus given by \eqref{formula}, by the uniqueness of the calculus.) 
The formula \eqref {formula} is obtained in \cite{Boy} by rather involved Fourier analysis, and some technical details are omitted in \cite{Boy}.

In \cite[Proposition 5.1]{Boy} it is observed that, if $f \in H^\infty(\Sigma_\psi)$ and is holomorphic in a larger sector, and $f'_\psi \in L^1(\mathbb R_+)$, $f'_\psi(\infty)=0$ (this  assumption is not relevant), then one can integrate by parts and rewrite \eqref{formula} as 
\begin{equation}\label{boyad_f}
f(A)=\int_0^\infty V_\psi(A,t)
(f_\psi'\circ k_\psi)(t)\,\frac{dt}{t},
\end{equation}
where
\[
V_\psi(A,t)=-\frac{t}{2}
\left(e^{-i\psi}
(A-t e^{-i\psi})^{-1}
+e^{i\psi}(A-t e^{i\psi})^{-1}\right),
\]
and the integral converges absolutely. This formally leads to the estimate \eqref{NewEs1}.
While our reproducing $\arccot$-formula \eqref{formula_a} was inspired by \eqref{boyad_f}, it is not easy to put formal considerations in \cite{Boy} into the theory of functional calculi considered in this paper. One can relate \eqref{boyad_f} to \eqref{formula_a} and show that the formulas are essentially equivalent within the $\mathcal H$-calculus. This requires a number of technicalities and we intend to communicate them elsewhere.  Here we note only that  
$f'_\psi \in L^1(\mathbb R_+)$ and $f \in H^\infty(\Sigma_\psi)$ (for $\psi=\pi/2$)  do not imply that $f \in \H_\psi$ in general, 
as shown by an intricate example kindly communicated to us by A. Borichev.

\section{Convergence Lemmas and Spectral Mapping Theorems}\label{SMT_sec}

\subsection{Convergence Lemmas}

Given a negative semigroup generator $A$, a Convergence Lemma for the holomorphic functional calculus is a useful result allowing one to deduce the convergence of $(f_k(A))_{k=1}^\infty$ to $f(A)$ from rather weak assumptions on convergence of $(f_k)_{k=1}^\infty$ to $f$; see \cite[Lemma 2.1]{Cowling}, \cite[Proposition 5.1.4]{HaaseB} and \cite[Theorem 3.1]{BHM}, for example.

The following result is similar to a Convergence Lemma for the $\mathcal B$-calculus in \cite[Theorem 4.13 and Corollary 4.14]{BGT} (see also \cite[Section 8.1]{BGT2}).   However, the different Convergence Lemmas deal with different classes of functions.   To adjust the Convergence Lemma from \cite{BGT} to the current setting, we apply the change of variables method used in previous sections, and derive a variant of the Convergence Lemma for the $\mathcal D$-calculus.

In the following result, $f(A)$ refers to the $\D$-calculus.

\begin{thm}\label{convop_lemma}
Let $A \in \Sect(\pi/2-)$.  
Let $s >-1$ and let $(f_k)_{k\ge 1} \subset \mathcal{D}_s$ be such that
\begin{equation}
\sup_{k \ge 1}\|f_k\|_{\mathcal{D}_s} <\infty,
\end{equation}
and there exists
\[
f(z):=\lim_{k\to\infty}\,f_k(z),\qquad z\in \C_{+}.
\]
Let $g\in \mathcal{D}_s$  satisfy
\[
g(0)=g(\infty)=0.
\]
Then
\begin{equation}\label{conv_uni}
\lim_{k\to\infty}\,\|(f(A)-f_k(A))g(A)\|=0.
\end{equation}
In particular, if $A$ has dense range, then
\begin{equation}\label{conv_st}
\lim_{k\to\infty}\,\|f(A)x-f_k(A)x\|=0,
\end{equation}
for all $x\in X$.
\end{thm}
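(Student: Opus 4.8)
The plan is to reduce the operator convergence statement \eqref{conv_uni} to the scalar convergence lemma already established, namely Lemma~\ref{48St2}, by applying the $\D$-calculus homomorphism and exploiting its boundedness on each $\D_s$. First I would invoke Corollary~\ref{Fatou} to see that the pointwise limit $f$ belongs to $\D_s$, so that $f(A)$ is well-defined via the $\D$-calculus and $\sup_{k\ge1}\|f_k - f\|_{\D_s} < \infty$; replacing $f_k$ by $f_k - f$, we may assume $f = 0$, so that the $f_k$ converge pointwise to $0$ on $\C_+$. The key mechanism is the change of variables: fix $\gamma \in (0,1)$ (so that $\gamma\cdot(\pi/2) < \pi/2$ accommodates the sectoriality angle), and recall from Corollary~\ref{gggC} that $f_{k,\gamma}(z) = f_k(z^\gamma)$ and $g_\gamma(z) = g(z^\gamma)$ all lie in $\D_0^\infty \subset \D_\infty$, with $\|f_{k,\gamma}\|_{\D_\sigma}$ uniformly bounded for suitable $\sigma$.

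Next I would use the homomorphism property of $\Psi_{A^{1/\gamma}}$ (Theorem~\ref{dcalculus}) together with the composition identity $h_\D(A^\gamma) = (h\circ(z\mapsto z^\gamma))_\D(A^{1/\gamma})$ — more precisely, Corollary~\ref{fpD}(2) — to rewrite $f_k(A)g(A)$. The cleanest route: apply the $\D$-calculus for the operator $B := A^{1/\gamma} \in \Sect(\pi/2-)$, for which Corollary~\ref{fpD} gives $f_k(A) = (f_{k,\gamma})_\D(B)$ and $g(A) = (g_\gamma)_\D(B)$, hence
\[
f_k(A)g(A) = (f_{k,\gamma})_\D(B)\,(g_\gamma)_\D(B) = (f_{k,\gamma}\,g_\gamma)_\D(B).
\]
Now the boundedness of the $\D$-calculus (estimate \eqref{estimateD}, or \eqref{bounded}) gives
\[
\|f_k(A)g(A)\| = \|(f_{k,\gamma}g_\gamma)_\D(B)\| \le C_\sigma(B)\,\|f_{k,\gamma}g_\gamma\|_{\D_\sigma}
\]
for an appropriate $\sigma > -1$, and Lemma~\ref{48St2} asserts exactly that $\|f_{k,\gamma}g_\gamma\|_{\D_\sigma} \to 0$ (note $g(0) = g(\infty) = 0$ is precisely the hypothesis of that lemma, and it transfers to $g_\gamma$ since $g_\gamma(0) = g(0)$, $g_\gamma(\infty) = g(\infty)$). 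This yields \eqref{conv_uni}.

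For the final assertion \eqref{conv_st}, I would argue as follows. Since $A$ has dense range, the set $\{g(A)x : g \in \D_s,\ g(0)=g(\infty)=0,\ x\in X\}$ is dense in $X$; a convenient choice is $g(z) = z(1+z)^{-2}$, for which $g(A) = A(1+A)^{-2}$ has dense range (its range contains $D(A) \cap \operatorname{ran}(A)$, which is dense when $A$ is densely defined with dense range). Given $y = g(A)x$, \eqref{conv_uni} shows $\|(f(A) - f_k(A))y\| \to 0$; since $\sup_k \|f_k(A)\| < \infty$ by \eqref{estimateD} and the uniform bound on $\|f_k\|_{\D_s}$, and $\|f(A)\| < \infty$, a standard $3\epsilon$-argument extends the convergence from the dense set to all of $X$. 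The main obstacle I anticipate is bookkeeping the index $\sigma$ versus $s$ correctly through the change of variables — i.e., verifying that $f_{k,\gamma}, g_\gamma$ land in a \emph{common} space $\D_\sigma$ on which the $\D$-calculus for $B$ is bounded, with a bound uniform in $k$ — but Corollary~\ref{gggC} supplies exactly this (it gives $\|f_{k,\gamma}\|_{\D_\sigma^\infty} \le C_{s,\sigma,\gamma}\|f_k\|_{\D_s}$ for \emph{every} $\sigma > -1$), so the estimate $\sup_k\|f_{k,\gamma}g_\gamma\|_{\D_\sigma} < \infty$ needed to start Lemma~\ref{48St2} is in hand. A secondary point to check is that Corollary~\ref{fpD}(2) genuinely applies to $B = A^{1/\gamma}$ with exponent $\gamma$: here $(A^{1/\gamma})^\gamma = A$ by the composition rule for fractional powers, so $f_k(A) = f_k((A^{1/\gamma})^\gamma) = (f_{k,\gamma})_\D(A^{1/\gamma})$ as required.
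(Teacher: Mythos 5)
Your proposal is correct and follows essentially the same route as the paper: reduce to $f\equiv 0$ via Corollary~\ref{Fatou}, transfer everything to the operator $B=A^{1/\gamma}$ via Corollary~\ref{fpD}, use the homomorphism property and the boundedness estimate \eqref{estimateD} of the $\D$-calculus, invoke Lemma~\ref{48St2}, and finish with $g(z)=z(1+z)^{-2}$ and a density argument (the paper writes the same argument with $\gamma$ replaced by $1/\gamma$, taking $\gamma\in(1,\pi/(2\theta))$ and working with $A^\gamma$). The one point to tighten is the choice of $\gamma$: it is not enough that $\gamma\in(0,1)$ — you need $\gamma>2\theta_A/\pi$ so that $B=A^{1/\gamma}$ has sectoriality angle $\theta_A/\gamma<\pi/2$ and hence lies in $\Sect(\pi/2-)$, which is what makes Corollary~\ref{fpD} and the $\D$-calculus for $B$ available; your parenthetical justification does not capture this, but it is a one-line fix.
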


\begin{proof}
By assumption there exists $\theta \in (0,\pi/2)$ such that $A \in \Sect(\theta)$.
By Corollary \ref{Fatou} we have $f \in \mathcal D_s$.
Thus, without loss of generality, we can assume that $f\equiv 0$.

Let $\gamma\in (1, \pi/(2\theta))$. 
Then by Corollary \ref{fpD}, using the notation of \eqref{fgamma},
\[
f_k(A)=f_{k,1/\gamma}(A^{\gamma}) \qquad \text{and} \qquad g(A)=g_{1/\gamma}(A^{\gamma}).
\]
Since $g_{1/\gamma} \in \mathcal{D}_0^\infty$, $f_{k,1/\gamma}\in \mathcal{D}_0^\infty$ and $f_{k,1/\gamma} g_{1/\gamma}\in \mathcal{D}_0^\infty$ (see Corollary \ref{gggC}), and the $\D$-calculus is an algebra homomorphism, we have
\[
f_k(A)g(A)=f_{k,1/\gamma}(A^{\gamma})g_\gamma(A^{\gamma})=(f_{k,1/\gamma} g_{1/\gamma})(A^{\gamma}).
\]
Now \eqref{conv_uni} follows from Lemma \ref{48St2} and the continuity of the $\mathcal D$-calculus given by Proposition \ref{bounded_s}(1).

Let $g(z)=z(1+z)^{-2}$ and note that $g \in \mathcal D_s$, and $g$ vanishes at zero and at infinity. If $A$ has dense range, 
then the range of 
$g(A)=A(1+A)^{-2}$ is dense as well (see \cite[Proposition 9.4]{KW}, for example).  Since $\sup_{k \ge 1}\|f_k(A)\|<\infty$,  \eqref{conv_uni} implies  \eqref{conv_st}.
\end{proof}

In the following result, $f(A)$ refers to the $\H$-calculus.

\begin{thm}\label{convop_h}
Let $A \in \Sect(\theta),$ and let $\psi \in (\theta,\pi)$. 
Let $(f_k)_{k\ge 1} \subset \H_\psi$ be such that
\begin{equation*}
\sup_{k \ge 1}\|f_k\|_{\H_\psi} <\infty,
\end{equation*}
and there exists
\[
f(z):=\lim_{k\to\infty}\,f_k(z),\qquad z\in \C_{+}.
\]
Let $g\in\H_\psi$  satisfy
\[
g(0)=g(\infty)=0.
\]
Then
\begin{equation*}
\lim_{k\to\infty}\,\|(f(A)-f_k(A))g(A)\|=0.
\end{equation*}
In particular, if $A$ has dense range, then
\begin{equation*}
\lim_{k\to\infty}\,\|f(A)x-f_k(A)x\|=0,
\end{equation*}
for all $x\in X$.
\end{thm}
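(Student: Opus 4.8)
The plan is to mirror the proof of Theorem~\ref{convop_lemma}, transferring the question to the $\mathcal D$-calculus via the change of variables $z\mapsto z^\gamma$, and then invoking the Convergence Lemma of Fatou type for $\H_\psi$ already established in Lemma~\ref{H11cl}. First I would note that by Lemma~\ref{FatouH}, the pointwise limit $f$ lies in $\H_\psi$ and $\sup_k\|f_k\|_{\H_\psi}<\infty$ extends to include $f$; so after replacing $f_k$ by $f_k-f$ and using that $\Upsilon_A$ is a bounded linear map, we may assume $f\equiv0$. The target then becomes $\lim_{k\to\infty}\|f_k(A)g(A)\|=0$, where $f_k(A)=\Upsilon_A(f_k)$ and $g(A)=\Upsilon_A(g)$.

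Next, fix $\gamma\in(\psi/\pi,1)$ so that $\psi/\gamma\le\pi$ is admissible and set $\nu=1/\gamma$. By Lemma~\ref{simple} the maps $h\mapsto h_\gamma$, $h_\gamma(z)=h(z^\gamma)$, are isometric algebra isomorphisms $\H_\psi\to\H_{\psi/\gamma}\subset\H_\psi$; moreover $f_{k,\gamma}(\infty)=f_k(\infty)$ and $g_\gamma(\infty)=g(\infty)=0$. Since $\Upsilon_A$ is an algebra homomorphism (Theorem~\ref{sigma_cal}) and by Proposition~\ref{Hprops}(ii) $\Upsilon_A(h_\gamma)=\Upsilon_{A^\gamma}(h)$ whenever this makes sense, I would write
\[
f_k(A)g(A)=\Upsilon_A(f_k)\Upsilon_A(g)=\Upsilon_A(f_k g)=\Upsilon_A\!\left((f_{k,\gamma}g_\gamma)_{1/\gamma}\right),
\]
or, more directly, use that $f_{k,\gamma}$, $g_\gamma$, and $f_{k,\gamma}g_\gamma$ all belong to $\H_{\psi}$ (indeed to the smaller algebra $\H_{\psi/\gamma}$), so the homomorphism property gives $f_k(A)g(A)=(f_{k,\gamma}g_\gamma)_{\mathcal H}(A)$ up to the appropriate reinterpretation through $A^\gamma$. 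The cleanest route is to apply the $\D$-calculus: since $(f_{k,\gamma})_{1/\gamma'}$ and $(g_\gamma)_{1/\gamma'}$ lie in $\mathcal D_0^\infty$ for a suitable secondary exponent, one reduces exactly as in Theorem~\ref{convop_lemma} to controlling $\|(f_{k,\gamma}g_\gamma)_{\mathcal H}(A)\|$ by the $\H_\psi$-norm via \eqref{NewEs1} or \eqref{A2Dop}, namely $\|(f_{k,\gamma}g_\gamma)_{\mathcal H}(A)\|\le M_\psi(A)\|f_{k,\gamma}g_\gamma\|_{\H_\psi}$.

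Now I would apply Lemma~\ref{H11cl} directly to the sequence $(f_k)_{k\ge1}$ and the function $g$: its hypotheses are precisely $\sup_k\|f_k\|_{\H_\psi}<\infty$, pointwise convergence of $f_k$ (to $0$ here), and $g(0)=g(\infty)=0$, and its conclusion is $\lim_{k\to\infty}\|f_{k,\gamma}g_\gamma\|_{\H_\psi}=0$. Combining this with the boundedness estimate above yields $\lim_{k\to\infty}\|f_k(A)g(A)\|=0$, which is \eqref{conv_uni} in this setting. For the final ``in particular'' clause, I would take $g(z)=z(1+z)^{-2}$, which lies in $\H_\psi$ (it is rational with a pole at $-1\notin\overline\Sigma_\psi$, or simply note $g\in\D_\infty\subset\H_\psi$) and vanishes at $0$ and $\infty$; when $A$ has dense range, $g(A)=A(1+A)^{-2}$ has dense range (as cited from \cite[Proposition~9.4]{KW}), and since $\sup_k\|f_k(A)\|\le M_\psi(A)\sup_k\|f_k\|_{\H_\psi}<\infty$, an $\varepsilon/3$ argument on the dense set $\operatorname{ran}g(A)$ promotes \eqref{conv_uni} to strong convergence $f_k(A)x\to f(A)x$ for all $x\in X$.

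The main obstacle I anticipate is purely bookkeeping: making sure that the composition of the two changes of variable (one to pass from $\H_\psi$ into a strictly smaller sector so that the product stays inside $\H_\psi$ without needing an algebra structure on $\H_\psi$ itself—which in fact exists by Theorem~\ref{hardy2}—and one implicit in $\Upsilon_A(h_\gamma)=\Upsilon_{A^\gamma}(h)$) is handled with the correct admissibility inequalities $\theta<\psi<\pi$ and $\gamma\in(\psi/\pi,1)$, exactly as in the proof of Lemma~\ref{H11cl}. No new analytic difficulty arises; the estimates \eqref{NewEs1}/\eqref{A2Dop} and Lemmas~\ref{FatouH} and~\ref{H11cl} do all the work, and the argument is formally parallel to Theorem~\ref{convop_lemma} with $\mathcal D_s$ replaced by $\H_\psi$ throughout.
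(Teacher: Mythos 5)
Your proposal is correct and follows essentially the same route as the paper, which itself proves this theorem by transcribing the proof of Theorem \ref{convop_lemma} with the substitutions you identify (Lemma \ref{FatouH} for Corollary \ref{Fatou}, Lemma \ref{simple} for Corollary \ref{gggC}, Lemma \ref{H11cl} for Lemma \ref{48St2}, and Proposition \ref{Hprops}/Corollary \ref{fpD} for the fractional-power compatibility). The only bookkeeping point to tighten, which you already flag, is that the operator to which $f_{k,\gamma}g_\gamma$ is applied is $A^{1/\gamma}$ rather than $A$ (choosing $\gamma\in(\max\{\psi/\pi,\theta/\psi\},1)$ so that $A^{1/\gamma}\in\Sect(\theta/\gamma)$ with $\theta/\gamma<\psi$), after which \eqref{NewEs1} and Lemma \ref{H11cl} give the conclusion exactly as you describe.
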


\begin{proof}
The proof is very similar to Theorem \ref{convop_lemma}.  Corollary \ref{Fatou} is replaced by Lemma \ref{FatouH}, the compatibility with fractional powers follows from the definitions and Proposition \ref{Hprops}(i), Corollary \ref{gggC} is replaced by Lemma \ref{simple}, Lemma \ref{48St2} is replaced by Lemma \ref{H11cl}, and \eqref{epsd11} is replaced by Corollary \ref{fpD}.
\end{proof}

\subsection{Spectral mapping theorems}

Given a semigroup generator $-A$, a spectral mapping theorem for a functional calculus $\Xi_A$  signifies informally
that $\Xi_A$ is associated to $A$ in a ``natural'' way.
However, in general the spectral ``mapping'' theorem
 states only the inclusion $f(\sigma(A))\subset \sigma(\Xi_A(f))$.
Equality may fail here even for functions such as  $e^{-tz}$
and for rather simple operators $A$; see \cite[Section IV.3]{EN}, for example.
While one may expect only the spectral inclusion as above,
 the equality $f(\sigma(A))\cup \{f(\infty)\} = \sigma(\Xi_A(f))\cup \{f(\infty)\}$ sometimes holds if $A$ inherits some properties of bounded operators such as strong resolvent estimates.
Note that the spectral mapping theorem may not hold
even for bounded operators if the functional calculus possesses only  weak
continuity properties, as discussed in \cite{Bercovici}.

The statement below shows that the $\mathcal D$-calculus
possesses the standard spectral mapping properties.
It is similar to \cite[Theorem 4.17]{BGT}, with the addition of a statement about approximate eigenvalues.  Recall that for $f \in \D_\infty$, its values $f(\infty)$ at infnity and $f(0)$ at $0$ are defined by \eqref{finf} and \eqref{fzero}.
This convention is used below.

\begin{thm}  \label{SMT}
 Let $A \in \Sect(\pi/2-)$, $f \in\mathcal D_\infty$, and $\l \in \C$.
\begin{enumerate} [\rm1.]
\item  If $x \in D(A)$ and $Ax = \l x$, then $f_{\mathcal D}(A)x = f(\l)x$.
\item  If $x^* \in D(A^*)$ and $A^*x^* = \l x^*$, then $f_{\mathcal D}(A)^*x^* = f(\l)x^*$.
\item  If $(x_n)_{n\ge1}$ are unit vectors in $D(A)$ and $\lim_{n\to\infty}\|Ax_n - \l x_n\| = 0$, then $\lim_{n\to\infty}\|f_{\D}(A)x_n - f(\l)x_n\| = 0$.
\item  One has $\sigma(f_{\mathcal D}(A)) \cup \{f(\infty)\} = f(\sigma(A)) \cup \{f(\infty)\}$.
\end{enumerate}
\end{thm}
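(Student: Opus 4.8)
The plan is to prove the four parts in order, since each builds naturally on its predecessor. For part (1), suppose $Ax = \lambda x$ with $x \in D(A)$. Then necessarily $\lambda \in \overline{\C}_+$ (in fact $\lambda \in \overline{\Sigma}_\theta \cup \{0\}$ where $A \in \Sect(\theta)$), and $(z+A)^{-1}x = (z+\lambda)^{-1}x$ for all $z \in \rho(A)$, so $(A+\alpha-i\beta)^{-(s+1)}x = (\lambda+\alpha-i\beta)^{-(s+1)}x$. Applying this inside the defining integral \eqref{formulaD} and using that the scalar integral reproduces $f(\lambda)$ via \eqref{qnrep} (Corollary \ref{Repr}), we get $f_{\mathcal D}(A)x = f(\infty)x + (Q_s f')(\lambda)x = f(\lambda)x$. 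The only subtlety is justifying that $\lambda$ lies in the region where \eqref{qnrep} is valid — but $\lambda \in \overline{\Sigma}_\theta \subset \overline{\Sigma}_\psi$ for suitable $\psi < \pi/2$, and Corollary \ref{Repr} gives the formula on $\overline{\Sigma}_\psi$ (including the sectorial limit point and $0$), so this is fine. Part (2) is identical, working with $A^*$ and using that $((z+A)^{-1})^* = (z+A^*)^{-1}$ together with the fact that the adjoint of the Bochner integral \eqref{formulaD} is the corresponding integral of adjoints.

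For part (3), let $(x_n)$ be unit vectors with $\|Ax_n - \lambda x_n\| \to 0$. The standard fact is that then $\lambda \in \sigma(A)$ (an approximate eigenvalue), so again $\lambda \in \overline{\Sigma}_\theta$, and moreover $\|(z+A)^{-1}x_n - (z+\lambda)^{-1}x_n\| \to 0$ for each fixed $z \in \rho(A)$, with the quantitative bound $\|(z+A)^{-1}x_n - (z+\lambda)^{-1}x_n\| \le |z+\lambda|^{-1}\|(z+A)^{-1}\|\,\|Ax_n - \lambda x_n\|$. Iterating (or differentiating the resolvent), one obtains an analogous estimate for $(z+A)^{-(s+1)}$, of the form $\|((A+z)^{-(s+1)} - (\lambda+z)^{-(s+1)})x_n\| \le C_s M_A^{\lceil s+1\rceil} |z|^{-(s+1)} \|Ax_n - \lambda x_n\|$ for $z$ in the relevant half-plane, uniformly. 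Plugging into \eqref{formulaD} and using dominated convergence (the factor $\|Ax_n - \lambda x_n\| \to 0$ pulls out, and the remaining integral converges by the $\D_s$-norm bound exactly as in \eqref{estimateD}), we get $\|f_{\D}(A)x_n - f(\lambda)x_n\| \to 0$. I expect this to be the most technical of the three, but it is routine once the resolvent power estimate is set up.

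For part (4), the inclusion $f(\sigma(A)) \cup \{f(\infty)\} \subseteq \sigma(f_{\D}(A)) \cup \{f(\infty)\}$ follows from the spectral decomposition of $\sigma(A)$ into approximate point spectrum and residual spectrum: if $\lambda \in \sigma(A)$ then either $\lambda$ is an approximate eigenvalue, handled by part (3) (which shows $f(\lambda) - f_{\D}(A)$ is not bounded below, hence $f(\lambda) \in \sigma(f_{\D}(A))$), or $\overline\lambda$ is an eigenvalue of $A^*$, handled by part (2). The reverse inclusion is the main obstacle. The clean route is via the functions $r_\mu = (z+\mu)^{-1}$: since $\Psi_A$ is an algebra homomorphism with $\Psi_A(r_\mu) = (\mu+A)^{-1}$, and since for $\mu \in \C_+$ one has the algebraic identity $(f - f(-\mu))(z) = (z+\mu) h(z)$ with $h \in \D_\infty$ whenever $-\mu \notin \overline{\Sigma}_\theta$, we can argue as follows. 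Suppose $\zeta \notin f(\sigma(A)) \cup \{f(\infty)\}$; we must show $\zeta - f_{\D}(A)$ is invertible. The function $(\zeta - f(z))^{-1}$ is holomorphic and bounded on a sector $\Sigma_\psi \supset \sigma(A) \setminus\{0\}$ containing the spectrum, with a limit at $\infty$, and one checks it lies in $\D_\infty$ (using Lemma \ref{leminv}(ii) type reasoning after composing with a Möbius map sending the sector to a half-plane, or directly via the reproducing formula and the lower bound $|\zeta - f(z)| \ge \delta > 0$). Then $(\zeta - f)\cdot(\zeta - f)^{-1} = 1$ in $\D_\infty$, so applying $\Psi_A$ gives $(\zeta - f_{\D}(A))\Psi_A((\zeta-f)^{-1}) = I$, proving invertibility. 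The delicate point — and the step I expect to cost the most effort — is verifying rigorously that $(\zeta - f)^{-1} \in \D_\infty$; this requires controlling the derivative $f'(z)(\zeta - f(z))^{-2}$ in the $\mathcal V_\sigma$-norm for some $\sigma$, which follows from $f' \in \mathcal V_s$ together with the uniform lower bound on $|\zeta - f|$ on each subsector (from $\zeta \notin \overline{f(\Sigma_\psi)}$, using that $f$ has a sectorial limit at infinity and is continuous up to the boundary of the sector), analogously to the argument that $\D_s^\infty$ is inverse-closed noted after \eqref{algebra}.
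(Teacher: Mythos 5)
Parts (1) and (2) of your proposal are exactly the paper's argument (plug the eigenvector relation into \eqref{formulaD} and invoke Corollary \ref{Repr}), so nothing to add there. For part (3) you take a genuinely different route: the paper reduces (3) to (1) by passing to Derndinger's $F$-product of the semigroup $(e^{-tA})_{t\ge0}$, where the approximate eigenvector sequence becomes a genuine eigenvector of the induced generator, whereas you estimate $\|((A+z)^{-(s+1)}-(\l+z)^{-(s+1)})x_n\|$ directly. Your route can be made to work, but as written it has two flaws: the claimed bound $C_sM_A^{\lceil s+1\rceil}|z|^{-(s+1)}\|Ax_n-\l x_n\|$ is off by a power of $|z|$ (already for $s+1=1$ the correct bound is $M_A|z|^{-1}|z+\l|^{-1}\|Ax_n-\l x_n\|$), and "pulling out the factor $\|Ax_n-\l x_n\|$" then requires $\int_{\C_+}(\Re z)^s|f'(z)|\,|z|^{-(s+1)}|z+\l|^{-1}dS<\infty$, which fails when $\l=0$ (it is not controlled by $\|f\|_{\D_{s,0}}$). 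The fix is to use dominated convergence in $n$ directly: for each fixed $z$ the integrand tends to $0$, and $2M_A^{\lceil s+1\rceil}(\Re z)^s|f'(z)||z|^{-(s+1)}$ is an $n$-independent integrable majorant. With that repair, part (3) is fine, and your deduction of the inclusion $f(\sigma(A))\subseteq\sigma(f_\D(A))\cup\{f(\infty)\}$ from (2) and (3) via the approximate-point/residual decomposition of $\sigma(A)$ is a valid alternative to the paper's argument for that direction.

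The genuine gap is in the reverse inclusion of part (4). Your plan is: if $\zeta\notin f(\sigma(A))\cup\{f(\infty)\}$, show $(\zeta-f)^{-1}\in\D_\infty$ (possibly after the substitution $z\mapsto z^{1/\gamma}$) and apply the homomorphism $\Psi_A$. But the hypothesis $\zeta\notin f(\sigma(A))$ gives a lower bound on $|\zeta-f|$ only on (a neighbourhood of) $\sigma(A)$, not on $\C_+$ or even on a sector $\Sigma_\psi\supset\sigma(A)\setminus\{0\}$: since $\sigma(A)$ can be a tiny subset of $\overline{\Sigma}_\theta$ (e.g.\ $\sigma(A)=\{1\}$), the set $f(\overline{\Sigma}_\psi)$ is in general much larger than $f(\sigma(A))$, and $(\zeta-f)^{-1}$ may have poles inside $\Sigma_\psi$. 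Your argument therefore only yields $\sigma(f_\D(A))\subseteq\bigcap_{\psi>\theta_A}\bigl(f(\overline{\Sigma}_\psi)\cup\{f(\infty)\}\bigr)$, a strictly weaker spectral \emph{inclusion} theorem, not the asserted equality with $f(\sigma(A))\cup\{f(\infty)\}$. The paper closes exactly this gap with Gelfand theory: it works in the bicommutant $\mathcal A$ of the resolvents (so that spectra in $\mathcal A$ and in $L(X)$ agree), observes that every character $\chi$ of $\mathcal A$ satisfies either $\chi((z+A)^{-1})\equiv0$, whence $\chi(f_\D(A))=f(\infty)$, or $\chi((z+A)^{-1})=(z+\l)^{-1}$ for some $\l\in\sigma(A)$ (this is where the localization to the actual spectrum happens), and then applies $\chi$ under the integral in \eqref{formulaD} and uses \eqref{qnrep} to get $\chi(f_\D(A))=f(\l)$. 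You would need this character argument, or some equivalent device that sees only $\sigma(A)$ rather than the whole sector, to complete part (4).
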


\begin{proof}
The statements (1) and (2) are direct corollaries of \eqref{formulaD} and the reproducing formula for the $\mathcal D_s$-spaces given in Corollary \ref{Repr}. 

For (3), we use the $F$-product of the semigroup $(e^{-tA})_{t\ge0}$, as introduced in \cite{Dern}.  Let $Y$ be the Banach space of all bounded sequences $\mathbf y := (y_n)_{n\ge1}$ in $X$ such that $\lim_{t\to0+} \|e^{-tA}y_n - y_n\| = 0$ uniformly in $n$, where $(e^{-tA})_{t\ge0}$ is the bounded holomorphic $C_0$-semigroup generated by $-A$.   Let $Z$ be the closed subspace of $Y$ consisting of the sequences $\mathbf y$ such that $\lim_{n\to \infty} \|y_n\|=0$, and let  $\wt Y = Y/Z$ and $Q: Y \to \wt Y$ be the quotient map.   Then $(e^{-tA})_{t\ge0}$ induces a bounded holomorphic $C_0$-semigroup $(e^{-t\tilde A})_{t\ge0}$ on $\wt Y$, whose negative generator $\wt A$ is given by 
\[
D(\wt A) = \{Q(\mathbf y) :  y_n \in D(A), (Ay_n)  \in Y\},  \qquad \wt A(Q\mathbf y) = Q((Ay_n)).
\]
Then $\mathbf x := (x_n) \in Y$, $Q\mathbf x \in D(\wt A)$ and $\wt A Q(\mathbf x) = \l Q(\mathbf x)$.  It follows from  (1) that $f_{\D}(\wt A) Q\mathbf x = \l Q \mathbf x$.   However it is very easy to see that $f_{\D}(\wt A) Q\mathbf x = Q((f_{\D}(A)x_n))$ (see \cite[Theorem 1.7(i)]{Dern}), and this establishes (3).   

To prove the spectral mapping theorem in (4),
 we follow the Banach algebra method used in \cite{BGT} for similar purposes and inspired by \cite[Section 16.5]{HP} and \cite[Section 2.2]{Dav80}.   We may assume without loss of generality that $f(\infty)=0$.  Let $\mathcal A$ be the bicommutant of $\{(z+A)^{-1} : -z \in \rho(A)\}$ in $\B(X)$, so $\mathcal A$ is a commutative Banach algebra and the spectrum of $f_\D(A)$ in $\mathcal{A}$ coincides with the spectrum in $\B(X)$.
Observe that $\sigma (A) \subset \mathbb C_+\cup \{0\}$.

 Let $\chi$ be any character of $\mathcal{A}$.  
If $\chi((1+A)^{-1})=0$, then $\chi((z+A)^{-1}) = 0$ for all $z \in \C_+$, and hence $\chi(f(A)) = 0 = f(\infty)$.  Otherwise,
 by the resolvent identity, $\chi((z+A)^{-1})=(z+\l)^{-1}$ for some $\l \in \sigma(A)$ and all $z \in \mathbb C_+$. Let $s>-1$ be such that $f \in \D_s$.  Noting that the Stieltjes representation \eqref{Sti} converges in the uniform operator topology, we infer that  $\chi((z+A)^{-(s+1)}) = (z+\l)^{-(s+1)}, z \in \mathbb C_+$.
Applying $\chi$ to \eqref{formulaD} gives
\[
\chi (f_{\mathcal{D}}(A))=-
\frac{2^s}{\pi}\int_0^\infty \alpha^s\int_{-\infty}^\infty f'(\alpha+i\beta)(\l+\alpha-i\beta)^{-(s+1)}\,d\beta\,d\alpha,
\]
 and then, by the reproducing formula \eqref{qnrep} for $\mathcal D_s$-functions (valid on $\mathbb C_+\cup\{0\}$), we obtain
\[
\chi(f_{\mathcal{D}}(A)) = f(\l) \in f(\sigma(A)).
\]
Hence $\sigma(f_{\mathcal D}(A)) \cup \{0\} \subset f(\sigma(A)) \cup \{0\}$.
To prove the opposite inclusion, note that if $\l \in \sigma (A)$ is fixed, then there is a character $\chi$ such that $\chi ((z+A)^{-1})=(z+\l)^{-1}$, so the above argument can be reversed yielding
$\chi(f_{\mathcal{D}}(A))=f(\l)$, and thus finishing the proof.
\end{proof}

Next, using the same approach via Banach algebras,  we prove the analogous spectral result for the $\mathcal H$-calculus.

\begin{thm}
Let $A \in \Sect(\theta)$ and $f \in \mathcal \H_\psi$ for some $\theta <\psi < \pi$, and let $\l \in \C$.
\begin{enumerate} [\rm1.]
\item  If $x \in D(A)$ and $Ax = \l x$, then $f_{\mathcal H}(A)x = f(\l)x$.
\item  If $x^* \in D(A^*)$ and $A^*x^* = \l x^*$, then $f_{\mathcal H}(A)^*x^* = f(\l)x^*$.
\item  If $(x_n)_{n\ge1}$ are unit vectors in $D(A)$ and $\lim_{n\to\infty}\|Ax_n - \l x_n\| = 0$, then $\lim_{n\to\infty}\|f_{\H}(A)x_n - f(\l)x_n\| = 0$.
\item  One has $\sigma(f_{\mathcal H}(A)) \cup \{f(\infty)\} = f(\sigma(A)) \cup \{f(\infty)\}$.
\end{enumerate}
\end{thm}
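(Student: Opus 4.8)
The plan is to parallel the proof of Theorem \ref{SMT}, reducing each assertion for the $\mathcal H$-calculus to the corresponding statement for the $\mathcal D$-calculus applied to the fractional power $A^\gamma$. Fix $\gamma = \pi/(2\psi)$, so that $A^\gamma \in \Sect(\gamma\theta) \subset \Sect(\pi/2-)$, and recall from Proposition \ref{Hprops}(ii) (or directly from \eqref{sigma_def}) that $f_{\mathcal H}(A) = (f_{1/\gamma})_{\mathcal D}(A^\gamma)$, where $f_{1/\gamma}(z) = f(z^{1/\gamma}) \in \mathcal D_0 \subset \D_\infty$. The key elementary link is that eigenvectors and approximate eigenvectors transfer between $A$ and $A^\gamma$: if $Ax = \l x$ with $x \in D(A)$, then $x \in D(A^\gamma)$ and $A^\gamma x = \l^\gamma x$ (and similarly for $A^*$), by the standard properties of fractional powers and the holomorphic functional calculus. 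Since $f_{1/\gamma}(\l^\gamma) = f(\l)$, statements (1) and (2) then follow immediately from Theorem \ref{SMT}(1),(2) applied to $A^\gamma$ and $f_{1/\gamma}$.

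For (3), I would again pass to $A^\gamma$. The only real point is to check that approximate eigenvectors are preserved under taking fractional powers: if $\|Ax_n - \l x_n\| \to 0$ with $\|x_n\| = 1$, then $\|A^\gamma x_n - \l^\gamma x_n\| \to 0$. For $\l \notin (-\infty,0]$ this follows from the fact that $z \mapsto z^\gamma$ is, in a neighbourhood of $\l$ within $\overline{\Sigma_\theta}$, given by a function in the extended holomorphic calculus whose ``difference quotient'' $(z^\gamma - \l^\gamma)/(z-\l)$ is bounded and holomorphic on $\Sigma_\psi$, so that $A^\gamma x_n - \l^\gamma x_n = h(A)(Ax_n - \l x_n) + (\text{terms}\to0)$ for a bounded $h(A)$; a cleaner route, and the one I would actually write, is to use the $F$-product of the holomorphic semigroup exactly as in the proof of Theorem \ref{SMT}(3): the induced operator $\wt A$ on $\wt Y$ satisfies $\wt A Q\mathbf x = \l Q\mathbf x$, hence $\widetilde{A}^\gamma Q\mathbf x = \l^\gamma Q\mathbf x$, hence $f_{\mathcal H}(\wt A)Q\mathbf x = f(\l)Q\mathbf x$ by part (1), and $f_{\mathcal H}(\wt A)Q\mathbf x = Q((f_{\mathcal H}(A)x_n))$ because $f_{\mathcal H}(A) = (f_{1/\gamma})_{\mathcal D}(A^\gamma)$ commutes with the semigroup and \cite[Theorem 1.7(i)]{Dern} applies. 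This gives (3) with no fractional-power gymnastics beyond $\wt A{}^\gamma = \widetilde{A^\gamma}$, which is routine.

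For the spectral mapping theorem (4), I would use the Banach algebra argument verbatim as in Theorem \ref{SMT}(4), but now starting from \eqref{sigma_def} rather than \eqref{formulaD}. Let $\mathcal A$ be the bicommutant of the resolvents $\{(z+A)^{-1}: z \in \Sigma_{\pi-\psi}\}$ in $L(X)$; since $\sigma(A) \subset \overline{\Sigma_\theta}$, for each character $\chi$ of $\mathcal A$ either $\chi$ annihilates all the resolvents, forcing $\chi(f_{\mathcal H}(A)) = f(\infty)$, or $\chi((z+A)^{-1}) = (z+\l)^{-1}$ for a unique $\l \in \sigma(A)$. In the latter case, the Stieltjes representation \eqref{s1} (which converges in operator norm, with $A$ replaced by $A^\gamma$) shows $\chi((z+A^\gamma)^{-1}) = (z+\l^\gamma)^{-1}$ for $z \in \C_+$; applying $\chi$ to \eqref{sigma_def} and then using the reproducing formula \eqref{qnrep} for $f_{1/\gamma} \in \mathcal D_0$ evaluated at $\l^\gamma$ gives $\chi(f_{\mathcal H}(A)) = f_{1/\gamma}(\l^\gamma) = f(\l) \in f(\sigma(A))$. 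This yields $\sigma(f_{\mathcal H}(A)) \cup \{f(\infty)\} \subseteq f(\sigma(A)) \cup \{f(\infty)\}$, and the reverse inclusion follows by reversing the argument: for each fixed $\l \in \sigma(A)$ there is a character $\chi$ with $\chi((z+A)^{-1}) = (z+\l)^{-1}$, whence $\chi(f_{\mathcal H}(A)) = f(\l)$.

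The main obstacle I anticipate is purely bookkeeping rather than conceptual: one must be careful that $\sigma(A^\gamma) = \{\l^\gamma : \l \in \sigma(A)\}$ (the spectral mapping theorem for fractional powers, available from \cite{HaaseB}) is used only where it is actually needed, and that the character computation in (4) is run directly on $A$ and its resolvents — extracting the value $\l^\gamma$ from $\chi$ via the operator-norm-convergent Stieltjes integral for $(z+A^\gamma)^{-1}$ — so that no separate identification of $\sigma(f_{\mathcal H}(A))$ with a spectrum computed ``at the level of $A^\gamma$'' is required. Handling the case $\l = 0$ (equivalently $\l^\gamma = 0$) and the value $f(0)$ needs the observation, from Proposition \ref{L7.4} and Lemma \ref{Dh1}, that $f$ and $f_{1/\gamma}$ extend continuously to $0$ with $f(0) = f_{1/\gamma}(0)$, so the reproducing formula \eqref{qnrep} is valid there; this is the one spot to state explicitly.
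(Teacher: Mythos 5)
Your proposal follows essentially the same route as the paper: parts (1) and (2) come down to plugging an eigenvector of $A^\gamma$ into \eqref{sigma_def} and invoking the reproducing formula \eqref{RepA1}, part (3) uses the $F$-product exactly as in Theorem \ref{SMT}(3), and part (4) is the same bicommutant/character argument ending with \eqref{RepA1} evaluated at $\l^\gamma$.

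The one place you deviate, and where your bookkeeping slips, is in (4): to pass from $\chi((z+A)^{-1})=(z+\l)^{-1}$ to $\chi((z+A^\gamma)^{-1})=(z+\l^\gamma)^{-1}$ you cite \eqref{s1}, but that formula represents $(A+z)^{-(s+1)}$ via the semigroup, not the resolvent of $A^\gamma$ via the resolvents of $A$; applying it "with $A$ replaced by $A^\gamma$" just moves the problem to evaluating $\chi(e^{-tA^\gamma})$. The paper sidesteps this by working with the characters directly on the pseudo-resolvent $((z+A^\gamma)^{-1})_{z\in\C_+}$ and using the spectral mapping theorem for fractional powers in the holomorphic calculus (\eqref{alpha_spec}) to identify the possible values as $(z+\l^\gamma)^{-1}$, $\l\in\sigma(A)$; this also immediately supplies the character needed for the reverse inclusion. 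Your variant is salvageable if you replace \eqref{s1} by an operator-norm convergent Balakrishnan-type representation of $(z+A^\gamma)^{-1}$ in terms of $(t+A)^{-1}$ (with some care when $\gamma>1$, i.e.\ $\psi<\pi/2$), but citing \eqref{alpha_spec} is the cleaner and intended tool. Everything else, including the treatment of the degenerate character and of $f(0)=f_{1/\gamma}(0)$, is in order.
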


\begin{proof}
The proofs of (1) and (2) are straightforward consequences of the reproducing formula \eqref{RepA1}.  Moreover, (3) is deduced from (1) in the same way as in Theorem \ref{SMT}.

The proof of (4) is similar to the corresponding proof in Theorem \ref{SMT}, based on the formula \eqref{sigma_def} which converges in the uniform operator topology.
Let $\gamma=\pi/(2\psi)$.   By the spectral mapping theorem for the holomorphic functional calculus \cite[Theorem 2.7.8]{HaaseB}, or \cite[Theorem 5.3.1]{MS}, one has
\begin{equation}\label{alpha_spec}
\sigma ((A^{\gamma}+t-i\b)^{-1})=\{(\l^\gamma+t-i\b)^{-1}:\l \in \sigma(A) \}\cup \{0\}, \quad t>0, \, \b \in \mathbb R.
\end{equation}
As in the proof of Theorem \ref{SMT}, let $\mathcal A$ be the bicommutant of $\{(z+A)^{-1} : -z \in \rho(A)\}$ in $\B(X)$.   Then $f_{\mathcal H}(A) \in \mathcal{A}$ and the spectrum of $f_{\mathcal H}(A)$ in $\mathcal A$  coincides with the spectrum in $\B(X)$.
Let $\chi$ be any character of $\mathcal{A}$, and let $f \in \mathcal \H_\psi$ be such that $f(\infty)=0$.   If $\chi((1+A^\gamma)^{-1})=0$, then, as above, $\chi((z+A^\gamma)^{-1}) = 0$ for all $z \in \C_+$, hence
$\chi(f_\H(A)) = 0 = f(\infty)$.  Otherwise $\chi ((z+A^{\gamma})^{-1})=(\l^{\gamma}+z)^{-1}$ for some $\l \in \sigma(A)$ and all $z \in \mathbb C_+$.  Applying $\chi$ to \eqref{sigma_def} 
and using the representation \eqref{RepA1} for $\H_\psi$-functions, one gets
\[
\chi (f_{\mathcal H}(A))= -\frac{1}{\pi}\int_0^\infty\int_{-\infty}^\infty f'_{1/\gamma}(t+i\b)
(\l^{\gamma}+t-i\b)^{-1}\,d\b\, dt=f(\l).
\]
Hence $\sigma (f_{\mathcal H}(A))\cup \{0\} \subset f(\sigma(A)) \cup\{0\}$.

On the other hand, if $\l \in \sigma(A)$, then by \eqref{alpha_spec} there is a character $\chi$ such that $\chi ((z+A^{\gamma})^{-1})
=(z+\l^\gamma)^{-1}$. So using \eqref{RepA1} again, we infer that  $f(\l) \in \sigma (f_{\mathcal H}(A))$.

Combining the two paragraphs above yields (4).
\end{proof}

Our spectral mapping theorems differ from known spectral mapping theorems for the holomorphic functional calculus (see \cite{Haase_Sp} or \cite[Section 2.7]{HaaseB}) in at least three respects.   We do not assume that $A$ is injective, we cover a wider class of functions including some with a mild singularity at zero (for example, $e^{-1/z}$), and our proofs are completely different.

\section{Some applications to norm-estimates}\label{norm_estimates}

In this section we apply directly the $\mathcal D$- and $\mathcal H$-calculi that we have constructed
to obtain some operator norm-estimates. In particular, 
we obtain uniform bounds on the powers of Cayley transforms
and on the semigroup generated by the inverse of a semigroup generator.
We then compare the results to known estimates in the literature.
We also revisit the theory of holomorphic $C_0$-semigroups and obtain several basic estimates
along with some slight generalizations.

\subsection{Norm-estimates via the $\mathcal D$-calculus}

Let $A \in \Sect(\pi/2 -)$, and $V(A)$ be the Cayley transform $(A-I)(A+I)^{-1}$ of $A$.
We now review several important estimates from the literature in the framework of the constructed $\D$- and $\H$-calculi.

Recall that $-A$ is the generator of a bounded holomorphic semigroup $(e^{-tA})_{t\ge0}$.  Let $e_t(z) = e^{-tz}, \, t \ge 0,\,z\in \C_+$.  Then $e_t \in \LT \subset \D_\infty$ and 
\[
e^{-tA} = (e_t)_{\mathrm{HP}}(A) = (e_t)_{\D}(A).
\]

\begin{cor}\label{cayley_l}
Let $A \in \Sect(\pi/2 -)$, so that \eqref{Aaa1} holds.
\begin{enumerate}[{\rm(i)}]
\item
 One has
\[
\|V(A)^n\|\le 1+32(1+(\sqrt2\pi)^{-1})M_A^2, \qquad n \in \mathbb N.
\]
\item
One has
\[
\|e^{-tA}\|\le 2 M_A^2, \qquad t \ge 0.
\]
\item For every $\nu>0$ one has
\[
\|A^\nu e^{-tA}\| \le 2^{\nu+2} t^{-\nu} \Gamma(\nu+1) M_A^{\lceil\nu\rceil+2}, \qquad t\ge0. 
\]
\end{enumerate}
If, in addition, the inverse $A^{-1}$ exists and is densely defined, then $A^{-1}$ generates
a bounded holomorphic $C_0$-semigroup $(e^{-tA^{-1}})_{t \ge 0}$ satisfying
\[
\|e^{-tA^{-1}}\|\le 1 + 2 M_A^2, \qquad t \ge 0,
\]
and, for every $\nu>0$,
\[
\|A^{-\nu} e^{-tA^{-1}}\| \le 2^{\nu+2} t^{-\nu} \Gamma(\nu+1) M_A^{\lceil\nu\rceil+2}, \qquad t\ge0.
\]
\end{cor}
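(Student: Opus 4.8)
The plan is to realise every operator in the statement as $f_{\D}(A)$ (or, for the last two assertions, as $f_{\D}(A^{-1})$) for an explicit $f\in\D_\infty$, and then invoke the norm estimate \eqref{estimateD}, namely $\|f_{\D_s}(A)\|\le |f(\infty)|+\tfrac{2^s M_A^{\lceil s+1\rceil}}{\pi}\|f'\|_{\mathcal V_s}$. Everything then reduces to computing one $\mathcal V_s$-norm: the choice $s=1$ (so $\lceil s+1\rceil=2$) produces the $M_A^2$ in (i), (ii) and the inverse-semigroup bound, and $s=\nu+1$ (so $\lceil s+1\rceil=\lceil\nu\rceil+2$) produces the exponent in (iii). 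The only norm inputs needed are from Example~\ref{dm}: with $f_\nu(z)=z^\nu e^{-z}$, taking $s=\nu+1$ there gives $\|f_\nu\|_{\D_{\nu+1,0}}\le 2B(\tfrac12,\tfrac12)\Gamma(\nu+1)=2\pi\Gamma(\nu+1)$ (this is \eqref{frac}), while the same computation for $\nu=0$ gives the exact value $\|e_1\|_{\D_{1,0}}=B(\tfrac12,\tfrac12)=\pi$, where $e_1(z)=e^{-z}$. I shall also use the scale-invariance $\|f(t\,\cdot)\|_{\D_{s,0}}=\|f\|_{\D_{s,0}}$ and the inversion identity $\|\widetilde f{\,}'\|_{\mathcal V_s}=\|f'\|_{\mathcal V_s}$, $\widetilde f(z):=f(1/z)$, both from Lemma~\ref{leminv}.

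For (ii): $e^{-tA}=(e_t)_{\D}(A)$ with $e_t\in\LT$ (compatibility with the Hille--Phillips calculus, as recalled just before the Corollary), $e_t$ has sectorial limit $0$ at infinity, and $\|e_t\|_{\D_{1,0}}=\|e_1\|_{\D_{1,0}}=\pi$ by scale-invariance; \eqref{estimateD} with $s=1$ gives $\|e^{-tA}\|\le(2M_A^2/\pi)\cdot\pi=2M_A^2$. For (iii): the operator attached to $g(z)=z^\nu e^{-tz}=t^{-\nu}f_\nu(tz)$ is $A^\nu e^{-tA}$ (for injective $A$ by \eqref{Ds11} and the product rule of the holomorphic calculus; in general by Remark~\ref{coincid}, $g$ lying in the extended Riesz--Dunford class, together with the $\varepsilon$-shift \eqref{epsd11}). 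Since $g(\infty)=0$ and $\|g\|_{\D_{\nu+1,0}}=t^{-\nu}\|f_\nu\|_{\D_{\nu+1,0}}\le 2\pi t^{-\nu}\Gamma(\nu+1)$, \eqref{estimateD} with $s=\nu+1$ gives exactly $\|A^\nu e^{-tA}\|\le 2^{\nu+2}t^{-\nu}\Gamma(\nu+1)M_A^{\lceil\nu\rceil+2}$.

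For the two assertions about $A^{-1}$: the hypothesis forces $A$ to be injective with dense range, so $A^{-1}\in\Sect(\theta)$ and $-A^{-1}$ generates a bounded holomorphic $C_0$-semigroup (standard; cf.\ the characterisation recalled at the start of Section~\ref{defD}). Rather than estimating $M_{A^{-1}}$ I would pass these semigroups through the $\D$-calculus of $A$ itself: by Example~\ref{gcomp}(1)/Corollary~\ref{dcomp} with $g(z)=z^{-1}$ one has $(\widetilde f)_{\D}(A)=f_{\D}(A^{-1})$, hence $e^{-tA^{-1}}=(\widetilde{e_t})_{\D}(A)$ with $\widetilde{e_t}(\infty)=e_t(0)=1$ and $\|(\widetilde{e_t})'\|_{\mathcal V_1}=\|e_t'\|_{\mathcal V_1}=\|e_1\|_{\D_{1,0}}=\pi$, so \eqref{estimateD} gives $\|e^{-tA^{-1}}\|\le 1+(2M_A^2/\pi)\cdot\pi=1+2M_A^2$; similarly $A^{-\nu}e^{-tA^{-1}}=(\widetilde h)_{\D}(A)$ for $h(z)=z^\nu e^{-tz}$, with $\widetilde h(\infty)=h(0)=0$ and $\|(\widetilde h)'\|_{\mathcal V_{\nu+1}}=\|h'\|_{\mathcal V_{\nu+1}}\le 2\pi t^{-\nu}\Gamma(\nu+1)$, reproducing the bound of (iii).

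The only genuine computation, and what I expect to be the main obstacle, is (i): the uniform bound $\sup_n\|V^n\|_{\D_{1,0}}<\infty$. First, $V=\mathbf 1-2r_1\in\wt{\mathcal R}(\C_+)\subset\D_1^\infty$, and since $\Psi_A$ is a homomorphism with $\Psi_A(\mathbf 1)=I$ (as $A$ is densely defined) and $\Psi_A(r_1)=(1+A)^{-1}$ (Theorem~\ref{dcalculus}), we get $\Psi_A(V)=I-2(1+A)^{-1}=V(A)$ and $\Psi_A(V^n)=V(A)^n$, so $\|V(A)^n\|\le 1+(2M_A^2/\pi)\|V^n\|_{\D_{1,0}}$. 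To bound $\|V^n\|_{\D_{1,0}}=\int_{\C_+}\tfrac{\Re z}{|z|^2}|(V^n)'(z)|\,dS(z)$ uniformly in $n$, I would push this integral forward under the conformal map $w=V(z)=\tfrac{z-1}{z+1}\colon\C_+\to\mathbb D$, where $(V^n)'(z)=\tfrac n2 w^{n-1}(1-w)^2$, $dS(z)=\tfrac{4}{|1-w|^4}dS(w)$ and $\tfrac{\Re z}{|z|^2}=\tfrac{1-|w|^2}{|1+w|^2}$; using $\int_0^{2\pi}|1-\rho^2e^{2i\theta}|^{-2}\,d\theta=2\pi/(1-\rho^4)$ this collapses to
\[
\|V^n\|_{\D_{1,0}}=2n\int_{\mathbb D}\frac{(1-|w|^2)|w|^{n-1}}{|1-w^2|^2}\,dS(w)=4\pi n\int_0^1\frac{\rho^n}{1+\rho^2}\,d\rho\le\frac{4\pi n}{n+1}<4\pi.
\]
It is precisely the weight $\Re z/|z|^2$ --- absent from the Besov norm $\|\cdot\|_\Bes$, along which $\|V^n\|_\Bes$ only grows like $\log n$ --- that makes this uniform; cruder routes (e.g.\ estimating through $\|(V^n)'\|_{\mathcal V_0}$) fail, that quantity growing like $\sqrt n$. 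This already yields $\|V(A)^n\|\le 1+8M_A^2$; the constant $32(1+(\sqrt2\,\pi)^{-1})$ quoted in the statement simply absorbs coarser bookkeeping of $\|V^n\|_{\D_{1,0}}$ and is not meant to be sharp. The remaining points are routine: that $A^\nu$ and the operators $A^\nu e^{-tA}$, $A^{-\nu}e^{-tA^{-1}}$ are the bounded operators one expects and agree with their $\D$-calculus expressions, the scale- and inversion-invariance of $\|\cdot\|_{\D_{s,0}}$, and the elementary integral $\int_0^{2\pi}|1-ae^{i\phi}|^{-2}\,d\phi=2\pi/(1-a^2)$ for $|a|<1$.
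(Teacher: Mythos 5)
Your proof is correct, and for parts (ii), (iii) and the two assertions about $A^{-1}$ it is essentially the paper's argument: everything is routed through \eqref{estimateD} with $s=1$ (resp.\ $s=\nu+1$), the exact value $\|e_1\|_{\D_{1,0}}=B(1/2,1/2)=\pi$ from Example \ref{dm}, the scale- and inversion-invariance of Lemma \ref{leminv}, and the identification of $(z^\nu e^{-tz})_{\D}(A)$ with $A^\nu e^{-tA}$ via Remark \ref{coincid}. (You are right, incidentally, that one needs the exact value of $\|e_1\|_{\D_{1,0}}$ rather than the bound \eqref{frac} to land on $2M_A^2$ rather than $4M_A^2$.) Where you genuinely diverge is part (i). The paper delegates the uniform bound on $\|V^n\|_{\D_s}$ to Lemma \ref{cayley} in the appendix, which is a real-variable estimate valid for all $s>0$: the integral is split into three regions $J_{1,n},J_{2,n},J_{3,n}$ and each is controlled by monotonicity tricks involving $g_{n+2}$, yielding $\|V^n\|_{\D_s}\le 1+16(B(s/2,1/2)+2^{-s/2})$ and hence the constant $32(1+(\sqrt2\pi)^{-1})$. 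You instead exploit that for $s=1$ the weight $(\Re z)/|z|^2\,dS(z)$ transforms cleanly under the conformal map $w=V(z)$ onto the disc, and you compute $\|V^n\|_{\D_{1,0}}=4\pi n\int_0^1 \rho^n(1+\rho^2)^{-1}\,d\rho<4\pi$ exactly; this is shorter, gives the sharper bound $1+8M_A^2$ (so the stated inequality follows a fortiori), but is tied to $s=1$, whereas the paper's lemma covers every $s>0$. One small inaccuracy in a side remark: the quantity $\|(V^n)'\|_{\mathcal V_0}=\|V^n\|_{\D_{0,0}}$ grows like $\log n$ (see \eqref{ApLog1}), not like $\sqrt n$ — the $\sqrt n$ rate belongs to $\|V^n\|_{\mathrm{HP}}$; this does not affect your argument.
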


\begin{proof}
By Lemma \ref{cayley} and Theorem \ref{dcalculus}, for every $s >0$,
\[
\|V(A)^n\|\le 1+  2^{s+4} \pi^{-1} (B(s/2, 1/2)+2^{-s/2}) M_A^{\lceil{s}\rceil+1}, \qquad n \in \mathbb N.
\]
Setting $s=1$ we get the assertion (i).

By Proposition \ref{BDs} or by Example \ref{dm} and Lemma \ref{leminv}(iii), the function $e^{-tz} \in \D_s$ for $s > 0$ and $t>0$, and by \eqref{frac} and \eqref{estimateD},
\[
\|e^{-tA}\|\le 2^s \pi^{-1}B(s/2, 1/2)M_A^{\lceil{s}\rceil+1}, \qquad t \ge 0.
\]
So the estimate (ii) follows by setting $s=1$ above. 

If $f_\nu(z):=z^\nu e^{-tz}$, $\nu>0$, then $f_\nu \in\D_s$ if and only if $s >\nu$, and in that case $\|f_\nu\|_{\D_s} = 2t^{-\nu} B((s-\nu)/2,1/2) \Gamma(\nu+1)$ (see Example \ref{dm}).  
Since  $f_\nu$ has zero polynomial limits at zero and at infinity, $(f_{\nu})_{\mathcal D}(A)$ coincides with $A^{\nu}e^{-tA}$
as defined by the holomorphic functional calculus (see Remark \ref{coincid}).
Using \eqref{frac}, it follows that, for every $s >\nu$,
\[
\|A^\nu e^{-tA}\|\le \frac{2^{s+1}t^{-\nu}}{\pi} B\left(\frac{s-\nu}{2},\frac{1}{2}\right)\Gamma(\nu+1) M_A^{\lceil s\rceil+1}.
\]
Setting $s=\nu+1$, the first assertion in (iii) follows.  The other two estimates are consequences of Lemma \ref{leminv}(i) and the estimates for $e^{-tA}$ and $A^\nu e^{-tA}$ obtained above.  
\end{proof}

The results in Corollary \ref{cayley_l} are not new, and it serves as an ilustration of the utility of the $\D$-calculus.  We have not aimed at finding the best possible estimates, but it seems that the $\H$-calculus provides bounds that are fairly precise whenever it is applicable.    
The power-boundedness of $V(A)$ was shown in \cite{Cr93} and \cite{Palencia}, using different methods.   In \cite[Corollary 5.9]{BGT}, a weaker result was shown using the $\Bes$-calculus (so all operators satisfying \eqref{8.1}).  Corollary \ref{cayley_l} shows how the $\D$- and $\H$-calculi can give a sharper estimate than the $\Bes$-calculus in the case of sectorial operators.  
Part (ii) above is one of many estimates for the bound on a bounded holomorphic semigroup in terms of its sectorial bound, and it is clearly not sharp.  
A careful estimation in \cite[Lemma 4.7]{BGT} of the bound obtained via the $\Bes$-calculus gave a bound of order $M_A \log M_A$ when $M_A$ is large. See also \cite[Theorem 5.2]{Sch1} where the result was established for the first time. Estimates of the form given in part (iii) have been known for a long time, but usually without showing the dependence on $M_A$.

Next we consider estimates similar to Corollary \ref{cayley_l}(iii).  In Lemma \ref{ders} and Theorem \ref{AnalF} $f(A)$ refers to the $\D$-calculus.

\begin{lemma}  \label{ders}
Let $A\in \Sect(\pi/2-).$ If $f \in \D_s$, $s>-1,$ and $n \in\N$, then
\begin{equation}\label{formulaDD1}
(z^nf^{(n)})(A)=  C_{s,n} \int_0^\infty \alpha^s \int_{-\infty}^\infty f'(\alpha+i\beta)A^n (A+\alpha-i\beta)^{-(s+n+1)}\,d\beta\,d\alpha,
\end{equation}
where
\[
C_{s,n} = {(-1)}^{n+1}
2^s \frac{\Gamma(s+n+1)}{\Gamma(s+1)}.
\]
\end{lemma}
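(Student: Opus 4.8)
The plan is to start from the defining formula \eqref{formulaD} for $f_{\mathcal{D}_s}(A)$ and differentiate it suitably, exploiting the identity $z^n f^{(n)} \in \D_\infty$ established in Corollary \ref{D00n}. First I would observe that by Corollary \ref{D00n} the function $z^n f^{(n)}$ lies in $\D_\infty$ (in fact, tracking the proof of Lemma \ref{D001}, $z^n f^{(n)} \in \D_{s+n}$), so $(z^nf^{(n)})_{\mathcal D}(A)$ is a well-defined bounded operator, and by Theorem \ref{Compatible}(ii) its value does not depend on which $\D_\sigma$ we compute it in. The key tool is Corollary \ref{AlgDer}: the map $t \mapsto G(t)$, $G(t)(z) = f(tz)$, is $n$-times differentiable from $(0,\infty)$ into $\D_{s+n}$ with $G^{(n)}(t)(z) = z^n f^{(n)}(tz)$; evaluating at $t=1$ gives $G^{(n)}(1) = z^nf^{(n)}$ in $\D_{s+n}$.

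Next I would transport this differentiability through the $\D$-calculus. Since the $\D_{s+n}$-calculus $\Psi_A^{s+n}$ is a bounded linear map from $\D_{s+n}$ into $L(X)$ (Theorem \ref{dcalculus}(ii) / Corollary \ref{Compat}), it commutes with the Fréchet derivative, so $t \mapsto \Psi_A(G(t)) = f(tA)$ is $n$-times norm-differentiable on $(0,\infty)$ with $\frac{d^n}{dt^n}\big|_{t=1} f(tA) = (z^nf^{(n)})_{\mathcal D}(A)$. On the other hand, I can differentiate the integral representation directly: for $t$ in a neighbourhood of $1$,
\[
f(tA) = f(\infty) - \frac{2^s}{\pi} \int_0^\infty \alpha^s \int_{-\infty}^\infty f'(\alpha+i\beta)(tA + \alpha - i\beta)^{-(s+1)} \, d\beta \, d\alpha,
\]
using $\Psi_A(G(t)) = \Psi_{tA}(f)$ together with $M_{tA} = M_A$ and Lemma \ref{fractional} to get a $t$-uniform integrable majorant. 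Differentiating $n$ times under the integral sign via $\frac{d^n}{dt^n}(tA + w)^{-(s+1)} = (-1)^n \frac{\Gamma(s+n+1)}{\Gamma(s+1)} A^n (tA+w)^{-(s+n+1)}$ (justified by the resolvent identity / holomorphic functional calculus for the sectorial operator $(A+w/t)^{-1}$, exactly as in the proof of Lemma \ref{fractional}) and setting $t=1$ yields \eqref{formulaDD1} with the stated constant $C_{s,n} = (-1)^{n+1} 2^s \Gamma(s+n+1)/\Gamma(s+1)$. To keep the differentiation-under-the-integral rigorous I would fix a small interval $t \in [1/2, 2]$, note $A^n(tA+\alpha-i\beta)^{-(s+n+1)} = [A(tA+\alpha-i\beta)^{-1}]^n (tA+\alpha-i\beta)^{-(s+1)}$, and bound $\|A(tA+w)^{-1}\| \le 1 + M_A$ and $\|(tA+w)^{-(s+1)}\| \le M_A^{\lceil s+1\rceil} |w|^{-(s+1)}$ uniformly in $t$, so the $n$-th difference quotients are dominated by $\alpha^s (\alpha^2+\beta^2)^{-(s+1)/2}$ times an $L^1$-integrable factor, after which the dominated convergence theorem applies.

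The main obstacle I anticipate is the bookkeeping needed to rigorously justify interchanging the $n$-fold differentiation with the double integral and simultaneously with the operator-valued calculus $\Psi_A$ — in particular, matching the two a priori different objects "$\frac{d^n}{dt^n} f(tA)$ computed via Corollary \ref{AlgDer}" and "the integral with $n$ derivatives pushed inside". The cleanest route is to argue both sides equal $\frac{d^n}{dt^n}\big|_{t=1} \Psi_{tA}(f)$: the left via the bounded linearity of $\Psi_A^{s+n}$ applied to the $\D_{s+n}$-valued curve $G$, the right via termwise differentiation of the strongly convergent integral. A secondary, lesser obstacle is confirming the derivative formula for fractional powers $\frac{d}{dt}(tA+w)^{-(s+1)} = -(s+1)A(tA+w)^{-(s+2)}$ when $s+1 \notin \N$; this follows by writing $(tA+w)^{-(s+1)} = w^{-(s+1)} h_s(A/w)$ where $h_s(\zeta) = (1+t\zeta)^{-(s+1)}$ and differentiating inside the holomorphic functional calculus, or alternatively by invoking the Stieltjes representation \eqref{s1} $(A+z)^{-(s+1)} = \Gamma(s+1)^{-1}\int_0^\infty \tau^s e^{-\tau z} e^{-\tau A}\,d\tau$ and differentiating in $z$. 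Once these pieces are assembled, \eqref{formulaDD1} follows and the constant comes out as claimed.
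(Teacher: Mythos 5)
Your proposal is correct and follows essentially the same route as the paper's proof: identify $(z^nf^{(n)})(A)$ with $\frac{d^n}{dt^n}\big|_{t=1}f(tA)$ via Corollary \ref{AlgDer} and the boundedness of the $\D_{s+n}$-calculus, then differentiate the representation \eqref{formulaD} for $f(tA)$ repeatedly under the integral sign with the uniform majorants you describe. One small point: your computation in fact yields the constant $(-1)^{n+1}2^s\Gamma(s+n+1)/(\pi\Gamma(s+1))$, i.e.\ with an extra factor $1/\pi$ relative to the displayed $C_{s,n}$; that factor is present in the paper's subsequent use of the formula in Theorem \ref{AnalF}, so the printed constant is a typo which your derivation silently corrects.
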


\begin{proof}
By Corollary \ref{AlgDer} and the boundedness of the $\D_{s+n}$-calculus, \linebreak[4] $(z^nf^{(n)})(A)$ coincides with the derivative of order $n$ of the function $t \mapsto f(tA)$ evaluated at $t=1$.   The formula \eqref{formulaD} for $f(tA) \in \mathcal D_s$ can be differentiated repeatedly with respect to $t$ by a standard method, and putting $t=1$ then gives the formula \eqref{formulaDD1}.
\end{proof}

\begin{thm}\label{AnalF}
Let $A \in \Sect(\pi/2-)$ and $n \in \N$.  Let $f \in \mathcal{D}_\infty$, and assume that $f^{(k)} \in \D_\infty$ for $k=1,\dots,n$.  Then
\begin{equation} \label{anzn}
 (z^nf^{(n)})(A) = A^n f^{(n)}(A).
\end{equation}
Moreover, if $f \in \D_s, \, s>-1$, then
\begin{equation}\label{tFG}
\|t^n A^nf^{(n)}(tA)\|\le \frac{2^s\Gamma(s+n+1)}{\pi \Gamma(s+1)}(M_A+1)^n M_A^{\lceil{s}\rceil+1}\|f\|_{\mathcal{D}_s},\quad t>0.
\end{equation}
In particular, for $f(z)=e^{-z}\in \mathcal{D}_1$,
\begin{equation}\label{tFGA}
\|t^n A^n e^{-tA}\|\le 2(n+1)!(M_A+1)^n M_A^2,\qquad t>0,\quad n\in \N.
\end{equation}
\end{thm}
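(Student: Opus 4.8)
The plan is to prove \eqref{anzn} first, and then read off \eqref{tFG} and \eqref{tFGA} by combining it with Lemma~\ref{ders}. Since $f^{(j)}\in\mathcal D_\infty$ for $0\le j\le n$, Corollary~\ref{D00n} applied to $f^{(n-m)}$ gives $z^mf^{(n)}=z^m\bigl(f^{(n-m)}\bigr)^{(m)}\in\mathcal D_\infty$ for every $m\in\{0,\dots,n\}$; in particular all the operators $(z^mf^{(n)})(A)$, $0\le m\le n$, are defined and bounded. The identity \eqref{anzn} then follows once one establishes, by induction on $k$, the assertion
\[
\text{(C}_k\text{):}\qquad f^{(n)}(A)X\subseteq D(A^k),\qquad A^kf^{(n)}(A)=(z^kf^{(n)})(A),\qquad 0\le k\le n .
\]
Here (C$_0$) is trivial. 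Assuming (C$_k$) with $k\le n-1$, set $g:=z^kf^{(n)}\in\mathcal D_\infty$, so that $zg=z^{k+1}f^{(n)}\in\mathcal D_\infty$ as well; granting the statement of the next paragraph for the pair $(g,zg)$ one gets $g(A)X\subseteq D(A)$ and $Ag(A)=(zg)(A)$. By (C$_k$) one has $g(A)=A^kf^{(n)}(A)$, hence $A^kf^{(n)}(A)X\subseteq D(A)$, so $f^{(n)}(A)X\subseteq D(A^{k+1})$ and $A^{k+1}f^{(n)}(A)=A\,A^kf^{(n)}(A)=Ag(A)=(z^{k+1}f^{(n)})(A)$, which is (C$_{k+1}$). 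Taking $k=n$ gives \eqref{anzn}.

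It thus suffices to prove: \emph{if $g\in\mathcal D_\infty$ and $zg\in\mathcal D_\infty$, then $g(A)X\subseteq D(A)$ and $Ag(A)=(zg)(A)$.} For $\epsilon>0$ the operator $A+\epsilon$ is injective, so by Theorem~\ref{Compatible}(i) its $\mathcal D$-calculus coincides with its holomorphic functional calculus. Applying the multiplicativity of the holomorphic functional calculus \cite{HaaseB} to the functions $z$ (whose value at $A+\epsilon$ is $A+\epsilon$) and $g$, one gets $(A+\epsilon)g(A+\epsilon)\subseteq(zg)(A+\epsilon)$; since $g(A+\epsilon)$ commutes with the resolvents of $A+\epsilon$ it maps $D(A+\epsilon)=D(A)$ into itself, and as $(zg)(A+\epsilon)\in L(X)$ a standard closedness argument (using density of $D(A)$) upgrades this to the equality $(A+\epsilon)g(A+\epsilon)=(zg)(A+\epsilon)$, so in particular $g(A+\epsilon)X\subseteq D(A)$. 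Letting $\epsilon\to0+$, \eqref{epsd11} gives $g(A+\epsilon)\to g(A)$ and $(zg)(A+\epsilon)\to(zg)(A)$ in operator norm, and for every $x\in X$
\[
A\bigl(g(A+\epsilon)x\bigr)=(A+\epsilon)g(A+\epsilon)x-\epsilon\,g(A+\epsilon)x=(zg)(A+\epsilon)x-\epsilon\,g(A+\epsilon)x\longrightarrow(zg)(A)x ,
\]
while $g(A+\epsilon)x\to g(A)x$; since $A$ is closed, $g(A)x\in D(A)$ and $Ag(A)x=(zg)(A)x$. I expect this step — passing between the unbounded factor $A$ and the bounded operator $(zg)(A)$ via the injective regularisation $A+\epsilon$ and a limit — to be the main obstacle; the remaining two displays are bookkeeping.

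For \eqref{tFG}, fix $f\in\mathcal D_s$ and $t>0$, and apply Lemma~\ref{ders} to $tA$ (which lies in $\Sect(\pi/2-)$ with $M_{tA}=M_A$), followed by \eqref{anzn} for $tA$, to get
\[
t^nA^nf^{(n)}(tA)=(z^nf^{(n)})(tA)=C_{s,n}\int_0^\infty\alpha^s\int_{-\infty}^\infty f'(\alpha+i\beta)\,(tA)^n\bigl(tA+\alpha-i\beta\bigr)^{-(s+n+1)}\,d\beta\,d\alpha .
\]
For $\mu\in\C_+$ one has $(tA)^n(tA+\mu)^{-n}=\bigl(I-\mu(tA+\mu)^{-1}\bigr)^n$, of norm at most $(1+M_A)^n$ by \eqref{Aa}, while $\|(tA+\mu)^{-(s+1)}\|\le M_A^{\lceil s\rceil+1}|\mu|^{-(s+1)}$ by Lemma~\ref{fractional} (recall $\lceil s+1\rceil=\lceil s\rceil+1$); hence $\|(tA)^n(tA+\mu)^{-(s+n+1)}\|\le(1+M_A)^nM_A^{\lceil s\rceil+1}|\mu|^{-(s+1)}$. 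Substituting $\mu=\alpha-i\beta$ and using $|C_{s,n}|=\dfrac{2^s\,\Gamma(s+n+1)}{\pi\,\Gamma(s+1)}$ leaves exactly the integral $\|f\|_{\mathcal D_{s,0}}\le\|f\|_{\mathcal D_s}$, which is \eqref{tFG}. Finally, for $f(z)=e^{-z}$ one has $f^{(k)}(z)=(-1)^ke^{-z}\in\mathcal D_1\subset\mathcal D_\infty$ for all $k$ (Example~\ref{dm}), with $f(\infty)=0$ and, directly from \eqref{vintro},
\[
\|f\|_{\mathcal D_1}=\|f\|_{\mathcal D_{1,0}}=\int_{-\pi/2}^{\pi/2}\cos\varphi\int_0^\infty e^{-\rho\cos\varphi}\,d\rho\,d\varphi=\int_{-\pi/2}^{\pi/2}d\varphi=\pi .
\]
Since $f^{(n)}(tA)=(-1)^ne^{-tA}$, applying \eqref{tFG} with $s=1$ gives $\|t^nA^ne^{-tA}\|\le\dfrac{2\,\Gamma(n+2)}{\pi\,\Gamma(2)}(1+M_A)^nM_A^{2}\cdot\pi=2(n+1)!\,(1+M_A)^nM_A^2$, which is \eqref{tFGA}.
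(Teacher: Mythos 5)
Your proof is correct, and the second half of your argument — Lemma \ref{ders} combined with the factorization $\|(tA)^n(tA+\mu)^{-(s+n+1)}\|\le(1+M_A)^nM_A^{\lceil s\rceil+1}|\mu|^{-(s+1)}$ for \eqref{tFG}, and the exact value $\|e^{-z}\|_{\D_1}=\pi$ for \eqref{tFGA} — is essentially the paper's. Where you genuinely diverge is \eqref{anzn}. The paper stays entirely inside the $\D$-calculus: from the function identity $\frac{zf'}{1+z}+\frac{f'}{1+z}=f'$ and the homomorphism property of $\Psi_A$ it obtains $(1+A)^{-1}(zf')(A)+(1+A)^{-1}f'(A)=f'(A)$, which immediately yields $f'(A)x\in D(A)$ and $Af'(A)=(zf')(A)$ with no regularization or limiting; it then runs an induction on the order $n$ of the derivative, with some Leibniz-type bookkeeping. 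You instead isolate the single lemma ``$g,zg\in\D_\infty\Rightarrow g(A)X\subseteq D(A)$ and $Ag(A)=(zg)(A)$'' and prove it by passing to the injective operator $A+\epsilon$, invoking the product rule of the extended holomorphic calculus for the unbounded symbol $z$, and closing up via $\epsilon\to0$, \eqref{epsd11} and the closedness of $A$; your induction on the power $k$ of $A$ applied to the fixed operator $f^{(n)}(A)$ is then cleaner than the paper's induction on $n$. The trade-off: your route imports external machinery (regularizability of the symbol $z$ and the product rule in Haase's calculus) plus a limit argument, where the paper's one-line algebraic identity does the same job internally; in exchange your induction scheme is more transparent. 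One small point worth flagging: the value $|C_{s,n}|=2^s\Gamma(s+n+1)/(\pi\Gamma(s+1))$ you use is the one actually produced by differentiating \eqref{formulaD} $n$ times and the one required to land on \eqref{tFG}; the constant displayed in Lemma \ref{ders} omits the factor $1/\pi$, which appears to be a typo there rather than an error on your side.
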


\begin{proof}
We will prove, by induction on $n$, that \eqref{anzn} holds for all functions $f \in  \D_\infty$ such that $f^{(k)} \in \D_\infty$ for $k=1,\dots,n$. .  First, assume that $f,f' \in \D_\infty$.   Then
\[
(1+A)^{-1} (zf')(A) + (1+A)^{-1}f'(A) = f'(A),
\]
This implies that $(zf')(A) = Af'(A)$. 

Now assume that, for some $n\ge1$, $(z^kg^{(k)})(A) = A^kg^{(k)}(A)$ for $k=1,\dots,n$, for all functions $g$ such that $g^{(k)} \in \D_\infty$ for $k=0,\dots,n$.  Let $f^{(k)} \in \D_\infty$ for $k=0,1,\dots,n+1$.  Applying the base case ($k=1$) to the function $z^nf^{(n)}$ (noting that this function and its first derivative are in $\D_\infty$, by Corollary \ref{D00n}), we obtain
\[
(z(z^nf^{(n)})')(A) = A (z^nf^{(n)})'(A).
\]
Then applying the inductive hypothesis with $k=n$ to the function $f$, and with  $k=n-1$ and $k=n$ to the function $f'$, we obtain
\begin{align*}
(z^{n+1}f^{(n+1)})(A) &=  A \left(n(z^{n-1}f^{(n)}) + (z^nf^{(n+1)})\right)(A) - n (z^nf^{(n)})(A) \\
&= A^{n+1}f^{(n+1)}(A).
\end{align*}
This completes the proof of the inductive hypothesis for all $n\in\N$, and hence proves \eqref{anzn}

Since $M_A = M_{tA}$ for all $t>0$, it suffices to prove (\ref{tFG}) for $t=1$.  From Lemmas \ref{fractional} and \ref{ders},
we obtain
\begin{align*}
\lefteqn{\null\hskip-15pt \|A^n\,f^{(n)}(A)\|}\\
&\le \frac{2^s\Gamma(s+n+1)}{\pi \Gamma(s+1)}(M_A+1)^n M_A^{\lceil{s}\rceil+1} \int_0^\infty\alpha^s\int_{-\infty}^\infty
\frac{|f'(\alpha+i\beta)|}{|\alpha-i\beta|^{s+1}}\,d\b\,d\alpha\\
&=\frac{2^s \Gamma(s+n+1)}{\pi\Gamma(s+1)}(M_A+1)^n M_A^{\lceil{s}\rceil+1}\|f\|_{\mathcal{D}_s}.
  \qedhere
\end{align*}
\end{proof}

\begin{rem}  \label{AFrem}
In Theorem \ref{AnalF}, the assumption that $f \in \D_\infty$ and $f^{(k)} \in \D_\infty, k=1,2,\dots,n$, can be replaced by the assumption that $f \in \D_\infty$ and $f^{(n)} \in \D_\infty$, by using a result of Lyubich \cite{Lyu}. See Corollary \ref{lyubich}.  
\end{rem}

\subsection{Norm-estimates via the $\H$-calculus}  \label{Bernstein}

Now we use the $\H$-calculus to provide a new proof that holomorphy of operator semigroups generated by $-A$ is preserved for subordinate semigroups generated by $-g(A)$ where $g$ is a Bernstein function.  This was proved for the first time in \cite{GT15}.

If $-A$ is the generator of a bounded $C_0$-semigroup $(e^{-tA})_{t \ge 0}$
on a Banach space $X$, and $g$ is a Bernstein function given by
\eqref{bsfn},
then the operator 
\begin{equation}\label{DBer}
g_0(A)x:=ax+bAx+\int_{(0, \infty)}\left(x-e^{-tA}x\right)\, d\mu(t), \quad x \in D(A),
\end{equation}
is closable, and $g(A)$ can be defined as the closure of $g_0(A)$.  Thus $D(A)$ is a core for $g(A)$, and one can prove that $-g(A)$ generates a contraction $C_0$-semigroup on $X$.
Several equivalent definitions of $g(A)$ are possible, and we refer the reader to \cite{Schill}, \cite{GHT} and \cite{GT15}.
If $A$ is injective, then $g(A)$ is well-defined within the (extended) holomorphic functional calculus
and is given by \eqref{DBer} as above; see Proposition \ref{Hprops} and \cite[Propositions 3.3 and 3.6]{GT15}.

The next statement shows that the so-called semigroup subordination preserves the holomorphy of $C_0$-semigroups along with the holomorphy angles. It was one of the main results of \cite{GT15}, settling a question raised by Kishimoto and Robinson \cite{KR}.
See also \cite{BGTad} and \cite{BGT} for generalizations and other proofs.

\begin{thm}\label{BF1}
Let $A \in \Sect(\theta)$, where $\theta\in [0,\pi/2)$,
and let $g$ be a Bernstein function as in
\eqref{bsfn}.
Then $g(A) \in \Sect(\theta)$.   More precisely, for all  $\psi\in (\theta,\pi/2)$, $\varphi\in (\psi,\pi)$, and
$\lambda\in{\Sigma}_{\pi-\varphi}$,
\begin{equation}\label{bound_berns}
\|\lambda (\lambda + g(A))^{-1}\| \le  2M_\psi (A)\left(\frac{1}{\sin(\min(\varphi,\pi/2))}+\frac{2}{\cos\psi \sin^2((\varphi-\psi)/2)}\right).
\end{equation}
\end{thm}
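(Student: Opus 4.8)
The strategy is to realise $(\lambda+g(A))^{-1}$ as $f(\cdot,\lambda)_{\mathcal H}(A)$ for the ``resolvent'' function $f(z,\lambda)=(\lambda+g(z))^{-1}$, and then to read off the norm bound directly from the estimate already available for the $\mathcal H$-calculus. First I would fix $\psi\in(\theta,\pi/2)$ and $\varphi\in(\psi,\pi)$ and $\lambda\in\Sigma_{\pi-\varphi}$. By Corollary~\ref{BF}, the function $f(\cdot,\lambda)$ lies in $\mathcal H_\psi$, with
\[
\|f(\cdot,\lambda)\|_{\mathcal H_\psi}\le 2\left(\frac{1}{\sin(\min(\varphi,\pi/2))}+\frac{2}{\cos\psi\,\sin^2((\varphi-\psi)/2)}\right)\frac{1}{|\lambda|}.
\]
Applying the $\mathcal H$-calculus $\Upsilon_A$ of Theorem~\ref{sigma_cal} (valid since $\psi>\theta$), we obtain a bounded operator $f(\cdot,\lambda)_{\mathcal H}(A)\in L(X)$ with
\[
\|f(\cdot,\lambda)_{\mathcal H}(A)\|\le M_\psi(A)\,\|f(\cdot,\lambda)\|_{\mathcal H_\psi},
\]
by the bound $\|\Upsilon_A\|\le M_\psi(A)$ from Theorem~\ref{Sovp} (equivalently \eqref{NewEs1}). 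Combining these two inequalities gives precisely \eqref{bound_berns} once we identify $f(\cdot,\lambda)_{\mathcal H}(A)$ with $\lambda^{-1}\cdot\lambda(\lambda+g(A))^{-1}$.

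\textbf{The identification step.} The crux is therefore to prove that $f(\cdot,\lambda)_{\mathcal H}(A)=(\lambda+g(A))^{-1}$, where $g(A)$ is the operator defined (by closure of \eqref{DBer}, or equivalently via the holomorphic calculus when $A$ is injective). I would handle this in two stages. \emph{Injective case:} if $A$ is injective then $g(A)$ is defined within the extended holomorphic functional calculus and is itself sectorial of angle $\le\theta_A$ by \cite[Proposition 3.6]{GT15} (or the composition rule \cite[Theorem 2.4.2]{HaaseB}); then by Proposition~\ref{Hprops}(iv), $f(\cdot,\lambda)_{\mathcal H}(A)=f(\cdot,\lambda)_{\operatorname{Hol}}(A)$, and the composition rule for the holomorphic calculus gives $f(\cdot,\lambda)_{\operatorname{Hol}}(A)=(\lambda+g(A))^{-1}$ since $w\mapsto(\lambda+w)^{-1}$ composed with $g$ equals $f(\cdot,\lambda)$. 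This already yields the estimate \eqref{bound_berns} for injective $A$, and in particular shows $\lambda\in\rho(-g(A))$ for all $\lambda\in\Sigma_{\pi-\varphi}$, i.e.\ $g(A)\in\Sect(\varphi)$; letting $\varphi\downarrow\psi\downarrow\theta$ gives $g(A)\in\Sect(\theta)$. \emph{General case:} for non-injective $A$, apply the injective case to the operators $A+\epsilon$, $\epsilon>0$, which are injective, sectorial of the same angle, with $M_\psi(A+\epsilon)\le M_\psi(A)$ (and $g(A+\epsilon)$ making sense). One checks that $f(\cdot,\lambda)_{\mathcal H}(A+\epsilon)=(\lambda+g(A+\epsilon))^{-1}$ from the injective case; then let $\epsilon\to0+$. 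The left-hand side converges in operator norm to $f(\cdot,\lambda)_{\mathcal H}(A)$ by Proposition~\ref{Hprops}(v), and one needs that $(\lambda+g(A+\epsilon))^{-1}\to(\lambda+g(A))^{-1}$ strongly (or in norm), which follows from the definition \eqref{DBer} and dominated convergence applied to $e^{-t(A+\epsilon)}x=e^{-\epsilon t}e^{-tA}x\to e^{-tA}x$, together with the uniform bound just obtained. Taking norms and passing to the limit preserves \eqref{bound_berns}.

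\textbf{Conclusion and main obstacle.} Once $f(\cdot,\lambda)_{\mathcal H}(A)=(\lambda+g(A))^{-1}$ is established for all $\lambda\in\Sigma_{\pi-\varphi}$, the norm estimate \eqref{bound_berns} is immediate from the two displayed inequalities in the first paragraph, and since $\varphi\in(\psi,\pi)$ and $\psi\in(\theta,\pi/2)$ are arbitrary, letting them decrease to $\theta$ gives $g(A)\in\Sect(\theta)$ with the stated resolvent bound (note $M_\psi(A)$ decreases as $\psi$ increases, so the bound is uniform). The main obstacle I anticipate is the identification step in the \emph{non-injective} case: one must be careful that the various definitions of $g(A)$ (via closure of \eqref{DBer}, via approximation by $g(A+\epsilon)$, and via the $\mathcal H$-calculus composed with the resolvent of $g$) all agree, and that the convergence $g(A+\epsilon)\to g(A)$ is strong enough to pass limits through resolvents; this is where \cite[Propositions 3.3 and 3.6]{GT15} and the core property of $D(A)$ for $g(A)$ are needed. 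The injective case and the final norm bookkeeping are routine given Corollary~\ref{BF}, Theorem~\ref{sigma_cal}, Theorem~\ref{Sovp} and Proposition~\ref{Hprops}.
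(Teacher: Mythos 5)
Your overall route is the same as the paper's: Corollary~\ref{BF} to place $f(\cdot,\lambda)=(\lambda+g(\cdot))^{-1}$ in $\H_\psi$ with the explicit bound, the $\H$-calculus estimate \eqref{NewEs1} to convert that into an operator bound, the holomorphic-calculus composition rule to identify $f(\cdot,\lambda)_{\H}(A)=(\lambda+g(A))^{-1}$ when $A$ is injective, and the shift $A+\epsilon$ with Proposition~\ref{Hprops}(v) to treat the general case. The injective case and the bookkeeping are fine.

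There is, however, a genuine gap in your limiting argument for non-injective $A$. You assert that $(\lambda+g(A+\epsilon))^{-1}\to(\lambda+g(A))^{-1}$ ``follows from the definition \eqref{DBer} and dominated convergence together with the uniform bound''. This presupposes that $\lambda+g(A)$ is invertible — but for $\varphi<\pi/2$ the sector $\Sigma_{\pi-\varphi}$ contains $\lambda$ with $\Re\lambda<0$, and for such $\lambda$ the existence of $(\lambda+g(A))^{-1}$ is precisely what the theorem is asserting (a priori one only knows invertibility for $\Re\lambda>0$, since $-g(A)$ generates a contraction semigroup). So as written the step is circular exactly in the range of $\lambda$ that carries the content of the sectoriality claim. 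The fix is to argue in the opposite direction, as the paper does: one already knows $f_\H(A+\epsilon)\to f_\H(A)$ in norm, and one shows that the \emph{limit operator} $f_\H(A)$ is a two-sided inverse of $\lambda+g(A)$. Concretely, for $x\in D(A)$ one writes
\[
x-(\lambda+g(A))f_\H(A+\epsilon)x=\bigl[g(A+\epsilon)-g(A)\bigr]f_\H(A+\epsilon)x
=\epsilon b\,f_\H(A+\epsilon)x-\int_{(0,\infty)}(1-e^{-\epsilon t})e^{-tA}f_\H(A+\epsilon)x\,d\mu(t),
\]
which tends to $0$ as $\epsilon\to0$ by dominated convergence and the uniform bound $\sup_{\epsilon\in(0,1]}\|f_\H(A+\epsilon)\|<\infty$; closedness of $\lambda+g(A)$ then gives $(\lambda+g(A))f_\H(A)x=x$ on $D(A)$, hence on $X$ by density, and the core property of $D(A)$ for $g(A)$ yields $f_\H(A)(\lambda+g(A))x=x$ on $D(g(A))$. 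This simultaneously establishes invertibility and identifies the inverse, which is the step your sketch takes for granted.
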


\begin{proof}
Let $\psi\in (\theta,\pi/2)$, $\varphi\in (\psi,\pi)$, and
$\lambda\in{\Sigma}_{\pi-\varphi}$. If
\[
f(z)=f(z;\lambda):=(\lambda+g(z))^{-1},\qquad z\in \Sigma_\psi,
\]
then by Corollary \ref{BF} we have $f \in \mathcal \H_\psi$.  
We will show that $f_\H(A) = (\lambda+g(A))^{-1}$.  It then follows from  \eqref{NewEs1} and \eqref{bernst}  that \eqref{bound_berns} holds.  Since the choice of $\psi\in (\theta,\pi)$ and
$\varphi\in (\psi,\pi)$ is arbitrary, this shows that the operator $g(A)$ is  sectorial
of angle $\theta$.

If $A$ is injective, then $f(A)$ and $\lambda + g(A)$ are consistently defined in the holomorphic functional calculus,  
and therefore $f_{\mathcal H}(A) = f(A) = (\lambda+g(A))^{-1}$ (see \cite[Theorem 1.3.2f)]{HaaseB}).

When $A$ is not injective, we follow the approach proposed in the proof of \cite[Theorem 4.8]{BGTad}.

Since $A+\epsilon$ is invertible, we have
\[
(\lambda+g(A+\epsilon))^{-1}=f_\H(A+\epsilon).
\]
By Proposition \ref{Hprops}(v), 
\begin{equation} \label{epsD}
\lim_{\ep\to0+} \|f_\H(A+\ep) - f_\H(A)\|=0.
\end{equation}

Let $x \in D(A)$.  Since $f_\H(A+\epsilon)$ commutes with $(1+A)^{-1}$, 
we have $f_\H(A+\epsilon)x\in D(A)$, and 
by \eqref{DBer},
\begin{gather} \label{first}
x-(\lambda+g(A))f_\H(A+\epsilon)x
=[g(A+\epsilon)-g(A)]f_\H(A+\epsilon)x\\
 \null{\hskip30pt} =\epsilon bf_\H(A+\epsilon)x
-\int_{(0,\infty)} (1-e^{-\epsilon t})e^{-tA} f_\H(A+\epsilon)x\,d\mu(t).\notag
\end{gather}
It follows from \eqref{epsD} that 
\[
C_\lambda := \sup_{\ep\in(0,1]} \|f_\H(A+\ep)\| < \infty,
\] 
and hence 
\begin{align}\label{second}
\|x - (\lambda+g(A))f_\H(A+\epsilon)x\| &\le
\epsilon b C_\lambda\|x\|
+C_\lambda K_A \int_{(0,\infty)} (1-e^{-\epsilon t})\,d\mu(t)\,\|x\| \\
&\to 0,\qquad \epsilon\to 0,  \notag
\end{align}
where $K_A := \sup_{t>0} \|e^{-tA}\|$.  Since $\lambda+g(A)$ is closed, it follows firstly that
\[
(\lambda+g(A))f_\H(A)x=x,\quad x\in D(A).
\]
Since $D(A)$ is dense in $X$ and $f_\H(A)$ is bounded, it follows secondly that
\[
(\lambda+g(A))f_\H(A)x=x,\qquad x\in X.
\]
Since $\l + g(A)$ and $f_\H(A)$ commute on $D(A)$,
\[
f_\H(A)(\lambda+g(A))x=x,\qquad x\in D(A).
\]
Since $D(A)$ is a core for $g(A)$, it follows that this holds for all $x \in D(g(A))$.   Thus, $f_\H(A)=(\lambda+g(A))^{-1}$, as required.
\end{proof}

\begin{rems}
1.  A new feature of Theorem \ref{BF1} is an explicit sectoriality constant for $g(A)$, given by the right hand-side of \eqref{bound_berns}. 
This could be valuable when applying the result to families of sectorial operators.  
Thus \eqref{bound_berns} offers an improvement over similar estimates in \cite{BGTad}, \cite{BGT} and \cite{GT15}, where the sectoriality constants for $g(A)$ are rather implicit.

\noindent 2.  
We take this opportunity to correct a parsing misprint in the proof \cite[Theorem 4.9]{BGTad}.
One should replace $f(A)$ with $f(A)+z$ in the third and fourth displays on \cite[p.932]{BGTad} 
(see \eqref{first} and \eqref{second} above for similar formulas).
\end{rems}

Finally, as an illustration, we show how the holomorphy of $C_0$-semigroups generated by operators $-A^\gamma$ fits within the $\H$-calculus, and how estimates of similar type to Corollary \ref{cayley_l}(ii) can be obtained from the representation of the $\H$-calculus and the function $\arccot$, as in Theorem \ref{Sovp}.   
The following result is similar to \cite[Corollary 5.2]{Boy}, 
and a generalization of the main result in \cite{DeLa87} to non-integer $\gamma$. See also \cite[Remark 2, p.83]{Cr93}.

\begin{cor} \label{better}
Let $A \in \Sect(\theta)$,  $\theta\in (0,\pi)$, and $\gamma  \in (0,\pi/(2\theta))$.  Then $(e^{-tA^\gamma})_{t\ge0}$ is a bounded holomorphic $C_0$-semigroup of angle $(\pi/2) - \gamma\theta$.  More precisely, if $\psi \in (\theta,\pi/(2\gamma))$ and $\lambda = |\l| e^{i\varphi} \in \Sigma_{(\pi/2)-\gamma\psi}$, then
\begin{equation} \label{est_gamma}
\|e^{-\lambda A^{\gamma}}\| \le \frac{1}{2}\left(\frac{1}{\cos(\gamma\psi+\varphi)}+\frac{1}{\cos(\gamma\psi-\varphi)}\right) M_{\gamma\psi}(A).
\end{equation}
\end{cor}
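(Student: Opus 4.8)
The plan is to recognise that the function $e_\lambda(z) := e^{-\lambda z}$ is essentially a member of the Hardy--Sobolev algebra after the appropriate change of variable, and then to read off the estimate from Theorem~\ref{Sovp} (the $\arccot$-formula for the $\mathcal H$-calculus) applied to $A$ with the relevant sector. Concretely, set $\psi \in (\theta,\pi)$ as in the statement, and let $f(z) := e^{-\lambda z^{1/(\gamma)}}$... wait, more cleanly: we want $e^{-\lambda A^\gamma} = f_{\mathcal H}(A)$ for $f(z) = e_{\gamma,\lambda}(z) = e^{-\lambda z^\gamma}$, which by Examples~\ref{hexs}(2) lies in $\H_\psi$ provided $\gamma \in (0, \pi/(2\psi))$ and $|\arg\lambda|$ is small enough that $\Re(\lambda z^\gamma) > 0$ on $\Sigma_\psi$. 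First I would check that $f_{\mathcal H}(A) = e^{-\lambda A^\gamma}$: by Proposition~\ref{Hprops}(ii) (with $\nu = \gamma$) this reduces to $e_{1,\lambda,\mathcal H}(A^\gamma) = e^{-\lambda A^\gamma}$, which in turn follows from Proposition~\ref{Hprops}(iii) and Theorem~\ref{Compatible}(i)--(iii) (or directly from compatibility with the Hille--Phillips calculus, since $e_{1,\lambda} \in \LT$).

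Next I would compute $f_\psi$ for $f = e_{\gamma,\lambda}$. By definition \eqref{fpsi},
\[
f_\psi(t) = \frac{e^{-\lambda (te^{i\psi})^\gamma} + e^{-\lambda(te^{-i\psi})^\gamma}}{2} = \frac{e^{-\lambda t^\gamma e^{i\gamma\psi}} + e^{-\lambda t^\gamma e^{-i\gamma\psi}}}{2},
\]
so, writing $\lambda = |\lambda|e^{i\varphi}$,
\[
f_\psi'(t) = -\frac{\gamma\lambda t^{\gamma-1}}{2}\left(e^{i\gamma\psi} e^{-\lambda t^\gamma e^{i\gamma\psi}} + e^{-i\gamma\psi} e^{-\lambda t^\gamma e^{-i\gamma\psi}}\right),
\]
and hence
\[
|f_\psi'(t)| \le \frac{\gamma|\lambda| t^{\gamma-1}}{2}\left(e^{-|\lambda| t^\gamma \cos(\gamma\psi+\varphi)} + e^{-|\lambda| t^\gamma \cos(\gamma\psi-\varphi)}\right).
\]
Integrating over $t \in (0,\infty)$ with the substitution $s = t^\gamma$ gives
\[
\|f_\psi'\|_{L^1(\R_+)} \le \frac{1}{2}\left(\frac{1}{\cos(\gamma\psi+\varphi)} + \frac{1}{\cos(\gamma\psi-\varphi)}\right),
\]
which requires precisely $\cos(\gamma\psi \pm \varphi) > 0$, i.e.\ $|\varphi| < (\pi/2) - \gamma\psi$, the hypothesis on $\lambda$. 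Then Theorem~\ref{Sovp}, in the form of the bound $\|f_{\mathcal H}(A)\| \le |f(\infty)| + M_\psi(A)\|f_\psi'\|_{L^1(\R_+)}$, together with $f(\infty) = e_{\gamma,\lambda}(\infty) = 0$, yields \eqref{est_gamma}.

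The remaining point is the qualitative claim that $(e^{-tA^\gamma})_{t\ge0}$ is a bounded holomorphic $C_0$-semigroup of angle $(\pi/2) - \gamma\theta$. Since $A \in \Sect(\theta)$, Proposition~3.1.2 of \cite{HaaseB} (already invoked in Section~\ref{hardy_sobolev}) gives $A^\gamma \in \Sect(\gamma\theta)$ with $\gamma\theta < \pi/2$, so $-A^\gamma$ generates a bounded holomorphic semigroup of angle $(\pi/2) - \gamma\theta$ by the standard correspondence recalled in Section~\ref{defD}; the estimate \eqref{est_gamma} then exhibits an explicit bound on each subsector and recovers the angle by letting $\psi \downarrow \theta$. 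I do not expect a genuine obstacle here: the only things to be careful about are (i) verifying that $e_{\gamma,\lambda} \in \H_\psi$ with $f(\infty) = 0$ in the stated range of $\lambda$ (immediate from Examples~\ref{hexs}(2) once one notes $\Re(\lambda z^\gamma) \ge |\lambda||z|^\gamma \min(\cos(\gamma\psi+\varphi),\cos(\gamma\psi-\varphi)) > 0$), and (ii) the identification $f_{\mathcal H}(A) = e^{-\lambda A^\gamma}$ via Proposition~\ref{Hprops}. The mild subtlety is bookkeeping the two possible ``$\psi$'': the $\psi$ in the hypothesis plays the role of the sector exponent, so one should fix $\gamma = \psi/\theta$ only for the angle statement and otherwise treat $\gamma$ and $\psi$ as linked by $\gamma\psi < \pi/2$, exactly as in the displayed constant.
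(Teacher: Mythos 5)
Your proposal is correct and follows essentially the same route as the paper: identify $e_{\gamma,\lambda}\in\H_\psi$, use the homomorphism property and compatibility of the $\H$-calculus to get $e^{-\lambda A^\gamma}=(e_{\gamma,\lambda})_{\H}(A)$, and read off \eqref{est_gamma} from the $\arccot$-formula estimate of Theorem \ref{Sovp}. The only cosmetic difference is that you bound $\|f_\psi'\|_{L^1(\R_+)}$ by a direct computation, whereas the paper cites the equivalent bound on $\|e_{\gamma,\lambda}'\|_{H^1(\Sigma_\psi)}$ from Example \ref{hexs}(2) and then applies \eqref{NewEs2}.
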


\begin{proof}
Let $\psi \in(\theta,\pi)$.  Since the $\H$-calculus $\Upsilon_A$ is a homomorphism and $\Upsilon_A(e^{-\l z^\gamma}) = e^{- \lambda A^\gamma}$ for every $\l \in \Sigma_{(\pi/2)-\gamma\psi}$, the family $(e^{-\l A^\gamma})_{\l \in \Sigma_{(\pi/2)-\gamma\psi}}$ is an operator semigroup.  
By Theorem \ref{hardy2} and the argument in Section \ref{prelims}, the map $\l \mapsto e^{-\l z^\gamma}$ is holomorphic from $\Sigma_{(\pi/2)-\gamma\psi}$ to $\H_\psi$, so $\lambda \mapsto e^{-\l A^\gamma}$ is also holomorphic. 
The estimate \eqref{est_gamma} follows from Example \ref{hexs}(2) and \eqref{NewEs2}, and it shows boundedness of the semigroup on each relevant sector.
\end{proof}

Note that, for $\gamma=1$, Corollary \ref{better} provides a sharper bound than Corollary \ref{cayley_l}(ii).

\section{Appendix: Shifts on  $\mathcal{D}_s$ and $\H_\psi$} \label{shifts} 

The shift semigroups on the space $\Bes$ had an important role in the study of the $\Bes$-calculus in \cite{BGT} and \cite{BGT2}.   While the semigroups are not essential in this paper, we think they will be important for further research, and so we describe their properties on the spaces $\D_s$ and $\H_\psi$.  In this appendix  we prove that the shifts $(T(\tau))_{\tau\in\C_+}$ given by
\[
(T(\tau)f)(z):=f(z+\tau),\quad z\in \C_{+},\quad \tau\in \C_{+},
\]
form a holomorphic $C_0$-semigroup on $\D_s$ for each $s>-1$. We also show that a similar statement holds for shifts on $\H_\psi$ for each $\psi \in(0,\pi)$.

We consider first the space $\D_s$, and we begin by proving that the semigroup $(T(\tau))_{\tau \in \Sigma_\psi}$ of operators is uniformly bounded on $\D_s$, for each $s>-1$ and $\psi \in (0,\pi/2)$.

\begin{thm}\label{21T}
Let $s>-1$, $\psi \in (0,\pi/2),$ and $a=\tan\psi$.   For all $\tau\in \Sigma_\psi$, we have
\begin{equation}\label{Taa}
\|T(\tau)f\|_{\mathcal{D}_s}\le C_{a,s}\|f\|_{\mathcal{D}_s},\quad f\in \mathcal{D}_s,
\end{equation}
where
\begin{align*}
C_{a,s} &:=
\frac{(s+1)2^{s}B((s+1)/2,1/2)}{\pi \cos\psi \cos^{s+2}\psi_a}
+2^{s+1}, \label{const_a}\\
 \psi_a &:=\arctan \big(a+\sqrt{1+a^2}\big).
\end{align*}
\end{thm}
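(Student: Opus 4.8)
The plan is to bound $\|T(\tau)f\|_{\mathcal D_s} = |f(\tau+\infty)| + \|(T(\tau)f)'\|_{\mathcal V_s}$, where the boundary term is controlled by Corollary \ref{Cangle}: since $\tau \in \Sigma_\psi$ lies in a sector, $|f(\tau)| \le \max(1, 2^s/(\pi\cos^{s+1}\psi))\|f\|_{\mathcal D_s}$, and more relevantly the sectorial limit $f(\infty)$ at infinity is unchanged by a shift. So the real work is estimating $\|(T(\tau)f)'\|_{\mathcal V_s} = \int_{\C_+} \frac{(\Re z)^s}{|z|^{s+1}} |f'(z+\tau)| \,dS(z)$. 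The natural device is to use the reproducing formula from Corollary \ref{Repr}: write $f = f(\infty) + Q_s f'$, so that $f'(z+\tau)$ is expressed via the kernel of $Q_s$ differentiated once, i.e.\ in terms of the integral \eqref{qder} evaluated at $z+\tau$. This reduces everything to a kernel estimate of exactly the type appearing in the proof of Proposition \ref{Furt} and Lemma \ref{D001}: one gets a double integral $\int_{\C_+}\int_{\C_+} \frac{(\Re z)^s}{|z|^{s+1}} \frac{(\Re\l)^s|f'(\l)|}{|z+\tau+\bar\l|^{s+2}|\l|^{s+1}} \,dS(\l)\,dS(z)$, and one wants to do the $z$-integral first, uniformly in $\tau \in \Sigma_\psi$ and $\l \in \C_+$.

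The key step is therefore to estimate, for fixed $\l \in \C_+$ and $\tau\in\Sigma_\psi$,
\[
J(\tau,\l) := \int_{\C_+} \frac{(\Re z)^s}{|z|^{s+1} |z+\tau+\bar\l|^{s+2}} \,dS(z) = \|r_{\tau+\bar\l}^{s+1}\|_{\mathcal D_{s,0}} \Big/ (s+1),
\]
recognizing the inner integral as (a multiple of) the $\mathcal D_{s,0}$-norm of a resolvent power, computed in Example \ref{resd}. Since $\tau \in \Sigma_\psi$ and $\bar\l \in \C_+ = \Sigma_{\pi/2}$, the point $\tau+\bar\l$ lies in $\Sigma_{\pi/2}$ with $|\arg(\tau+\bar\l)| \le$ something controlled; more precisely $|\tau+\bar\l| \ge c |\bar\l|$ for a constant depending only on $\psi$, by Lemma \ref{trig}\eqref{Fa}. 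Feeding the bound from \eqref{rtl} (the case $\gamma - 1 = s > -1$ with $\gamma = s+1$) gives $J(\tau,\l) \le \frac{\tilde K}{|\tau+\bar\l|^{s+1} \cos^{0}(\arg(\tau+\bar\l))}$ — wait, one must check which case of \eqref{rtl} applies: with $\gamma = s+1$ we have $\gamma - s - 1 = 0$, so in fact we land on the boundary between the two cases and get a clean bound $J(\tau,\l) \le C_s / |\tau+\bar\l|^{s+1} \le C_s'/|\l|^{s+1}$, with $C_s'$ depending only on $s$ and $\psi$ (through the angle constant from Lemma \ref{trig}). After the $z$-integration, the remaining integral is $\int_{\C_+} \frac{(\Re\l)^s |f'(\l)|}{|\l|^{s+1}} \cdot (\text{bounded}) \,dS(\l) = \|f\|_{\mathcal D_{s,0}}$ times a constant, which is exactly the desired form \eqref{Taa}.

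The main obstacle I anticipate is getting the constant $C_{a,s}$ in the precise stated form \eqref{const_a}, with $\psi_a = \arctan(a + \sqrt{1+a^2})$ and the additive $2^{s+1}$ term. The appearance of $\psi_a$ strongly suggests that the authors do not simply apply Lemma \ref{trig}\eqref{Fa} crudely but instead split the $z$-integral geometrically: the region where $z+\tau$ stays in a fixed subsector $\Sigma_{\psi_a}$ (handled by the Besov-type kernel bound, producing the first term with $\cos^{-(s+2)}\psi_a$ and the Beta-function factor $B((s+1)/2,1/2)$ from integrating $\cos^s$ over a half-period), versus a complementary region handled more directly — the $2^{s+1}$ term likely coming from a triangle-inequality split $f'(z+\tau)$ against a translate already inside a good sector, or from using the half-plane part of the kernel $(z+\bar\l)^{-(s+1)}$ with a factor of $2$ from $\Re(z+\tau) \ge \Re z$. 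The identity $\tan\psi_a = a + \sqrt{1+a^2} = \tan(\pi/4 + \psi/2)$ (since $a = \tan\psi$) means $\psi_a = \pi/4 + \psi/2$, i.e.\ $\psi_a$ is the midpoint-angle between $\psi$ and $\pi/2$ — this is precisely the kind of intermediate sector used in the proof of Lemma \ref{Derin}, so I would set $\psi_a = (\psi + \pi/2)/2$ from the start and use that $\tau \in \Sigma_\psi$ implies $z + \tau$ has real part bounded below by $(\cos\psi)|\tau|$ and the whole translated kernel sits comfortably inside $\Sigma_{\psi_a}$. Pinning down the bookkeeping so that the two contributions assemble into exactly \eqref{const_a} is the delicate part; the qualitative boundedness is routine given Example \ref{resd} and Lemma \ref{trig}.
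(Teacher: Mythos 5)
Your argument is correct, but it is not the route the paper takes. You push the shift through the reproducing kernel: writing $f'(z+\tau)=\frac{(s+1)2^s}{\pi}\int_{\C_+}(\Re\l)^s f'(\l)(z+\tau+\bar\l)^{-(s+2)}\,dS(\l)$, applying Fubini, and recognising the inner $z$-integral as $\|r_{\tau+\bar\l}^{s+1}\|_{\D_{s,0}}/(s+1)$, which Example \ref{resd} (the boundary case $\gamma=s+1$, where both alternatives of \eqref{rtl} coincide since $2\beta-s-2=0$) bounds by $K_s|\tau+\bar\l|^{-(s+1)}$; combined with $|\tau+\bar\l|\ge\cos(\psi_a)|\l|$ from Lemma \ref{trig}(i), this closes the estimate and yields uniform boundedness of $T(\tau)$ on $\Sigma_\psi$ with a constant depending only on $s$ and $\psi$. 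The paper instead works directly on $J(\tau)=\int_{\Re z\ge t}(\Re z-t)^s|z-\tau|^{-(s+1)}|f'(z)|\,dS(z)$ and splits the domain not into subsectors of $z+\tau$ (as you guessed in your second paragraph) but by distance from $\tau$: on the complement of the half-disk $W(\tau)=\{\Re z\ge t,\ |z-\tau|\le|\tau|\}$ one has $|z|\le 2|z-\tau|$, giving the additive $2^{s+1}\|f'\|_{\mathcal V_s}$ term; on $W(\tau)$ one pulls out $\sup_{W(\tau)}|f'|$, which Corollary \ref{Cangle} controls by $\frac{(s+1)2^s}{\pi t\cos^{s+2}\psi_a}\|f'\|_{\mathcal V_s}$ since $W(\tau)\subset\Sigma_{\psi_a}\cap\{|z|\ge t\}$, while the leftover kernel integral over $W(\tau)-\tau$ evaluates exactly to $|\tau|B((s+1)/2,1/2)\le tB((s+1)/2,1/2)/\cos\psi$. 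That is how the two summands of \eqref{const_a} arise. The trade-off: the paper's decomposition produces the explicit constant $C_{a,s}$ asserted in the statement, whereas your Fubini argument delivers the same qualitative conclusion (which is all that Corollary \ref{SemDa} needs) with a constant expressed only through $K_{h,(s+2)/2,s}$ from Lemma \ref{D0012}; so as written your proof establishes \eqref{Taa} with a different, less explicit constant rather than with \eqref{const_a} itself. You correctly identified $\psi_a=(\pi/2+\psi)/2$, and your reading of where the $2^{s+1}$ and the Beta factor come from is close in spirit, but the operative split is near/far from $\tau$, not a sector decomposition of the translated kernel.
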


\begin{proof}
Let $\tau \in \Sigma_\psi$ and $f \in \D_s$.    We have
\begin{align*}
\|T(\tau)f\|_{\mathcal{D}_s}&\le |f(\infty)|+
\int_{\C_+}  \frac{(\Re z)^s}{|z|^{s+1}} |f'(z+\tau)| \, dS(z) = |f(\infty)|+J(\tau),
\end{align*}
where $dS$ denotes area measure on $\C_+$ and
\[
J(\tau):=\int_{\Re z \ge \Re\tau} \frac{(\Re z-\Re\tau)^s}
{|z-\tau|^{s+1}} |f'(z)|\,dS(z).
\]

Let
\begin{align*}
W(\tau)&:= \left\{z\in \C: \Re z\ge \Re\tau,\;|z-\tau| \le |\tau| \right\},\\
W_0(\tau):&= W(\tau)-\tau= \left\{z\in \C_{+}:\;|z| \le |\tau| \right\}.
\end{align*}
If $\Re z \ge \Re\tau$ and $z \notin W(\tau)$, then   $|z| \le |z-\tau|+|\tau| \le 2|z-\tau|$.      Hence
\begin{align} \label{JSo}
J(\tau) &\le \int_{W(\tau)}
\frac{(\Re z - \Re\tau)^s}{|z-\tau|^{s+1}}  |f'(z)|\,dS(z)\\
& \null \hskip30pt + 2^{s+1}\int_{\Re z \ge \Re\tau} \frac{(\Re z)^s}{|z|^{s+1}} |f'(z)| \,dS(z)
 \notag \\
&\le \max_{z\in W(\tau)}\,|f'(z)| \int_{W_0(\tau)}
\frac{(\Re z)^s}{|z|^{s+1}}\,dS(z) +2^{s+1}\|f'\|_{\mathcal{V}_s}.\notag
\end{align}
Moreover,
\begin{equation}\label{sts}
\int_{W_0(\tau)} \frac{(\Re z)^s}{|z|^{s+1}}\,dS(z)
=  \int_{-\pi/2}^{\pi/2}\int_0^{|\tau|}  \cos^{s}\varphi \,d\rho\,d\varphi = |\tau| B((s+1)/2,1/2).
\end{equation}
For $z\in W(\tau)$, we also have $\Re z \ge \Re\tau$ and
\[
|\Im z| \le |\Im\tau| + |\tau|\le \left(a + \sqrt{1+a^2}\right) \Re\tau.
\]
Hence $z\in \Sigma_{\psi_a}$ and $|z|\ge \Re\tau$, so by Corollary \ref{Cangle},
\begin{equation}\label{der}
\max_{z\in W(\tau)}\,|f'(z)| \le
\max_{z\in \Sigma_{\psi_a},\,|z|\ge \Re\tau}\,|f'(z)|
\le \frac{(s+1)2^s}{\pi \Re\tau\cos^{s+2}\psi_a}\|f'\|_{\mathcal{V}_s}.
\end{equation}
Inserting the estimates \eqref{sts} and \eqref{der}  into (\ref{JSo}) and using
$|\tau|\le \Re\tau/\cos\psi$,  we obtain
\[
J(\tau)\le \left(
\frac{(s+1)2^s B((s+1)/2,2)}{\pi\cos\psi \cos^{s+2}\psi_a}
+2^{s+1}\right)
\|f'\|_{\mathcal{V}_s},
\]
and (\ref{Taa}) follows.
\end{proof}

\begin{cor}\label{SemDa}
For any $s>-1$,  the family $T:=(T(\tau))_{\tau\in \C_{+}}$ is
a bounded holomorphic $C_0$-semigroup on $\mathcal{D}_s$ of angle $\pi/2$.  The generator of the semigroup is $-A_{\D_s}$, where
\[
D(A_{\D_s}) = \{f \in \D_s : f' \in \D_s\}, \qquad A_{\D_s}f = - f'.
\]
\end{cor}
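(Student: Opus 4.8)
The plan is to deduce Corollary~\ref{SemDa} from Theorem~\ref{21T} together with the general theory of $C_0$-semigroups. First I would observe that $T(\tau_1)T(\tau_2) = T(\tau_1+\tau_2)$ and $T(0) = I$ are immediate from the definition $(T(\tau)f)(z) = f(z+\tau)$, so $(T(\tau))_{\tau\in\C_+}$ is a semigroup of operators; Theorem~\ref{21T} shows each $T(\tau)$ maps $\D_s$ into itself boundedly, and that on each subsector $\Sigma_\psi$, $\psi\in(0,\pi/2)$, the family is uniformly bounded. Next I would establish holomorphy of $\tau\mapsto T(\tau)f$ as a map from $\C_+$ into $\D_s$ for each fixed $f\in\D_s$: since point evaluations $\delta_z$ are continuous on $\D_s$ (Remark~\ref{remdr}), and $\tau\mapsto f(z+\tau) = (T(\tau)f)(z)$ is holomorphic for each $z\in\C_+$, while $T(\cdot)f$ is locally bounded by Theorem~\ref{21T}, the principle recalled in Section~\ref{prelims} (via \cite[Corollary A.7]{ABHN}) gives that $T(\cdot)f:\C_+\to\D_s$ is holomorphic. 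Combined with the uniform boundedness on subsectors, this makes $T$ a bounded holomorphic semigroup of angle $\pi/2$ in the sense adopted in this paper, provided strong continuity at $0$ holds.

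The one genuinely substantive point is strong continuity, i.e. $\|T(\tau)f - f\|_{\D_s}\to 0$ as $\tau\to 0$ in $\Sigma_\psi$. The standard route is to prove it on a dense subset and then use uniform boundedness. I would take the dense subset to be $\wt{\mathcal R}(\C_+)$, which is dense in $\D_s$ by Theorem~\ref{D00}. For a constant function strong continuity is trivial, and for $f = r_\l$ one computes $T(\tau)r_\l - r_\l = r_{\l+\tau} - r_\l$ directly; using the resolvent-type identity $r_{\l+\tau}(z) - r_\l(z) = -\tau\, r_{\l+\tau}(z) r_\l(z)$ and the scaling/norm estimates of Example~\ref{resd} (specifically $\|r_\mu\|_{\D_{s}}\le C_s(1 + |\mu|^{-1})$-type bounds, cf.\ \eqref{r_l} and the algebra inequality \eqref{fgest} for $\D_s^\infty$, or more simply a direct dominated-convergence argument on $\int_{\C_+}\frac{(\Re z)^s}{|z|^{s+1}}|r_{\l+\tau}'(z) - r_\l'(z)|\,dS(z)$), one gets $\|r_{\l+\tau}-r_\l\|_{\D_s}\to 0$ as $\tau\to 0$. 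The dominated convergence argument is cleanest: the integrand converges pointwise to $0$ and is dominated uniformly for small $\tau$ by an integrable function, since $\l+\tau$ stays in a compact subset of $\C_+$. Then for general $f\in\D_s$, given $\ep>0$ pick $g\in\wt{\mathcal R}(\C_+)$ with $\|f-g\|_{\D_s}<\ep$ and estimate $\|T(\tau)f-f\|_{\D_s}\le \|T(\tau)(f-g)\|_{\D_s} + \|T(\tau)g-g\|_{\D_s} + \|g-f\|_{\D_s} \le (C_{a,s}+1)\ep + \|T(\tau)g-g\|_{\D_s}$, and let $\tau\to 0$.

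Finally I would identify the generator. Writing $-A_{\D_s}$ for the generator, the general formula $A_{\D_s}f = -\lim_{\tau\to0+}\tau^{-1}(T(\tau)f - f)$ (limit in $\D_s$) shows that on the dense set $\wt{\mathcal R}(\C_+)$ one has $A_{\D_s}r_\l = r_\l' = -r_\l^2$, consistent with $A_{\D_s}f = -f'$. To pin down the domain, I would argue: if $f\in\D_s$ and $f'\in\D_s$, then $\tau^{-1}(T(\tau)f - f)(z) = \tau^{-1}\int_0^\tau f'(z+\sigma)\,d\sigma$, and since $\sigma\mapsto T(\sigma)f'$ is continuous from $[0,\delta]$ into $\D_s$ (by the strong continuity just proved, applied to $f'\in\D_s$), this difference quotient converges in $\D_s$ to $f'$; hence $f\in D(A_{\D_s})$ and $A_{\D_s}f = -f'$. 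Conversely, if $f\in D(A_{\D_s})$ with $A_{\D_s}f = h\in\D_s$, then $\delta_z$-continuity forces $\tau^{-1}(f(z+\tau)-f(z))\to h(z)$ for each $z$, so $f' = h\in\D_s$; thus $D(A_{\D_s}) = \{f\in\D_s: f'\in\D_s\}$. The main obstacle is the strong continuity on the dense set, and within that the only slightly delicate estimate is dominating $\int_{\C_+}\frac{(\Re z)^s}{|z|^{s+1}}|r_{\l+\tau}'(z)-r_\l'(z)|\,dS(z)$ uniformly for small $\tau$, which follows from $|r_\mu'(z)| = |z+\mu|^{-2}$ and Lemma~\ref{trig}\eqref{Fa}; everything else is routine.
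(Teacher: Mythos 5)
Your proposal is correct and follows essentially the same route as the paper: uniform boundedness on subsectors from Theorem~\ref{21T}, strong continuity obtained on the dense set $\wt{\mathcal{R}}(\C_+)$ (Theorem~\ref{D00}) and extended by the $3\ep$-argument — the paper gets continuity of $\tau\mapsto r_{\l+\tau}$ directly from the holomorphy of $\l\mapsto r_\l$ noted in Remark~\ref{remdr} rather than your dominated-convergence computation, but the two are interchangeable — holomorphy via the continuity of point evaluations, and the standard difference-quotient identification of the generator, which the paper simply delegates to \cite[Lemma 2.6]{BGT}. Only a harmless sign slip: with $A_{\D_s}f=-f'$ one has $A_{\D_s}r_\l=-r_\l'=r_\l^2$, not $r_\l'$.
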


\begin{proof}
By Theorem \ref{21T}, $T$ is bounded on $\Sigma_\psi$ for each $\psi \in (0,\pi/2)$, and as noted in Remark \ref{remdr} the function $\l \mapsto r_\l$ is a holomorphic function from $\C_+$ to $\D_s$, so $\tau \mapsto T(\tau)r_\l$ is holomorphic.  Since $\widetilde{\mathcal{R}}(\C_+)$ is dense in $\D_s$ (Theorem \ref{D00}), it follows that $T$ is strongly continuous on $\Sigma_\psi \cup \{0\}$, and moreover, for $f \in \D_s$, the map $\tau \mapsto T(\tau)f$  is holomorphic on $\C_+$.

The proof of the statement about the generator is almost identical to the proof for the space $\Bes$ in \cite[Lemma 2.6]{BGT}.
\end{proof}

The following corollary justifies Remark \ref{AFrem} about the assumptions in Theorem \ref{AnalF}.

\begin{cor} \label{lyubich}
Let $f \in \D_s$, $s>-1,$ and assume that $f^{(n)} \in \D_s$ for some $n \in \N.$   Then $f^{(k)} \in \D_s$ for $k=1,2,\dots,n-1$.
\end{cor}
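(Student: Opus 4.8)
The plan is to exploit the shift semigroup $T=(T(\tau))_{\tau\in\C_+}$ on $\D_s$ provided by Corollary \ref{SemDa}, and to reduce the statement to a Vandermonde argument. The underlying idea is that the hypothesis $f^{(n)}\in\D_s$ controls, in the $\D_s$-norm, the Taylor remainder of the orbit $\tau\mapsto T(\tau)f$, which in turn forces the ``Taylor polynomial'' $\sum_{k=0}^{n-1}\tfrac{\tau^k}{k!}f^{(k)}$ to lie in $\D_s$ for every $\tau>0$; letting $\tau$ range over enough values then isolates each $f^{(k)}$. Note that one cannot simply invoke $f\in D(A_{\D_s}^n)$, since by Corollary \ref{SemDa} membership in $D(A_{\D_s}^n)$ is exactly the conclusion we are after.

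Concretely, I would first fix $\tau>0$ and apply Taylor's formula with integral remainder to the holomorphic function $f$ along the segment $[z,z+\tau]$, for $z\in\C_+$:
\[
f(z+\tau)=\sum_{k=0}^{n-1}\frac{\tau^k}{k!}f^{(k)}(z)+\frac{1}{(n-1)!}\int_0^\tau(\tau-\sigma)^{n-1}f^{(n)}(z+\sigma)\,d\sigma.
\]
Since $f^{(n)}\in\D_s$ and $T$ is a $C_0$-semigroup on $\D_s$ (Corollary \ref{SemDa}), the map $\sigma\mapsto(\tau-\sigma)^{n-1}T(\sigma)f^{(n)}$ is continuous from $[0,\tau]$ into $\D_s$, hence Bochner integrable there; denote the corresponding integral by $R_\tau\in\D_s$. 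Because the point evaluations $\delta_z$ are bounded on $\D_s$ (Corollary \ref{Cangle}), they commute with the Bochner integral, so $\delta_z(R_\tau)=\frac{1}{(n-1)!}\int_0^\tau(\tau-\sigma)^{n-1}f^{(n)}(z+\sigma)\,d\sigma$; that is, $R_\tau$ is precisely the remainder term above, viewed as an element of $\D_s$. As $T(\tau)f\in\D_s$ as well, the displayed identity rearranges to $\sum_{k=0}^{n-1}\tfrac{\tau^k}{k!}f^{(k)}=T(\tau)f-R_\tau\in\D_s$, and subtracting the known term $f^{(0)}=f\in\D_s$ yields $\sum_{k=1}^{n-1}\tfrac{\tau^k}{k!}f^{(k)}\in\D_s$ for every $\tau>0$.

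Finally I would choose $n-1$ distinct values $\tau_1,\dots,\tau_{n-1}>0$ and read the resulting $n-1$ relations as a linear system with unknowns $f^{(1)},\dots,f^{(n-1)}$ and coefficient matrix $M=(\tau_j^k/k!)_{j,k=1}^{n-1}$. Since $M$ is the product of the Vandermonde-type matrix $(\tau_j^k)_{j,k=1}^{n-1}$, whose determinant equals $\prod_j\tau_j\prod_{i<j}(\tau_j-\tau_i)\neq0$, and the diagonal matrix $\mathrm{diag}(1,1/2!,\dots,1/(n-1)!)$, it is invertible; inverting it expresses each $f^{(k)}$ with $1\le k\le n-1$ as a finite linear combination of elements of $\D_s$, so $f^{(k)}\in\D_s$. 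The only step requiring care — and the natural place for the argument to break if one is careless — is the identification of the pointwise Taylor remainder with a genuine $\D_s$-valued Bochner integral, which is exactly where Corollaries \ref{SemDa} and \ref{Cangle} enter; everything past that point is elementary linear algebra.
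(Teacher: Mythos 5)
Your proof is correct, but it takes a genuinely different route from the paper. The paper disposes of this corollary in two lines by invoking a theorem of Lyubich \cite[Theorem 1]{Lyu} applied to the generator $-A_{\D_s}$ of the shift semigroup from Corollary \ref{SemDa}: one checks that each $A_{\D_s}+m$, $m=1,\dots,n$, is surjective on $\D_s$ (since $m\in\rho(A_{\D_s})$ for a bounded semigroup) and that $g=0$ is the only solution in $\D_s$ of $\prod_{m=1}^n(A+m)g=0$ (the solutions $e^{mz}$ of the ODE are not in $\D_s$), and the abstract result then yields $f\in D(A_{\D_s}^n)$ together with membership of all intermediate derivatives. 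You instead give a self-contained argument: the Taylor formula with integral remainder, the identification of the remainder as a $\D_s$-valued Bochner integral of $\sigma\mapsto(\tau-\sigma)^{n-1}T(\sigma)f^{(n)}$ (legitimate, since $f^{(n)}\in\D_s$, $T$ is strongly continuous on $\D_s$, and the point evaluations $\delta_z$ are bounded and hence commute with the integral), and a Vandermonde inversion over distinct $\tau_1,\dots,\tau_{n-1}>0$. Both proofs rest on the same semigroup input (Corollary \ref{SemDa}); yours trades the paper's brevity for independence from the external reference \cite{Lyu}, and in effect reproves the special case of Lyubich's theorem that is needed here. All the individual steps — the validity of Taylor's formula along the real segment $[z,z+\tau]\subset\C_+$, the continuity of the integrand into $\D_s$, the identification of the pointwise remainder with the Bochner integral, and the nonvanishing of $\det(\tau_j^k)_{j,k=1}^{n-1}=\prod_j\tau_j\prod_{i<j}(\tau_j-\tau_i)$ — check out.
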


\begin{proof}
Consider the operator $Ag = -g'$ on $\Hol(\C_+)$, and its part $A_{\D_s}$ in the subspace $\D_s$.  The operators $A_{\D_s}+m, \, m=1,2,\dots,n$, are surjective on $\D_s$, and $g=0$ is the only solution in $\D_s$ to $\prod_{m=1}^n(A+m)g=0$.  The statement follows
from \cite[Theorem 1]{Lyu}.
\end{proof}

\begin{rem} 
The space $\Bes$ is invariant under vertical shifts: $f(z) \mapsto f(z+i\sigma)$ for $\sigma \in \R$.   However the spaces $\D_s$ and $\D_s^\infty$ are not invariant under vertical shifts. See Example \ref{eac}.
\end{rem}

Now we will show that the family of shifts $T$ also forms a bounded holomorphic $C_0$-semigroup on $\H_\psi$ for every $\psi\in(0,\pi)$.   If $\psi>\pi/2$, then $T(\tau)$ are defined for $\tau \in \Sigma_{\pi-\psi}$.
 For this aim, we will recall the Gabriel inequality for holomorphic functions.

Let $\Omega$ be a bounded convex domain in $\C$ and let $\Gamma \subset \overline{\Omega}$ be a convex curve.
Then there exists a universal constant $K>0$ (not depending on $f, \Omega$ and $\Gamma$) such that, for all $f \in \Hol(\Omega) \cap C(\overline\Omega)$,
\begin{equation}\label{gabriel}
\int_{\Gamma} |f(z)|\, |dz| \le K \int_{\partial \Omega} |f(z)|\, |dz|.
\end{equation}
Clearly $K \ge 1.$ Moreover, it can be shown that if $\Gamma$ is closed, then $2 <  K < 3.7$ (see \cite[p.457]{Beurling}, for example). The inequality was conjectured by J. Littlewood and first proved by Gabriel in \cite[Theorem I]{Gabriel_35}.  It is thoroughly discussed in \cite[Selected Seminars, 2, 4 and 5]{Beurling} and \cite[Section 5]{Granados} providing simpler proofs, more general versions and additional insights.   

\begin{thm}\label{Hsg}
Let $\psi \in (0,\pi)$ and $\psi_0=\min\{\psi,\pi-\psi\}$. The 
 family $T=(T(\tau))_{\tau \in \Sigma_{\psi_0}}$ is
a bounded holomorphic $C_0$-semigroup of angle $\psi_0$ on each of the spaces $\mathcal H_\psi$. 
The generator $-A_{\mathcal H_\psi}$ of $T$ on $\mathcal H_\psi$ is given by 
\[
D(A_{\mathcal H_{\psi}}) = \{f \in \mathcal H_\psi : f' \in \mathcal H_\psi \}, \qquad A_{\mathcal H_{\psi}}f = - f'.
\]
\end{thm}

\begin{proof}
We will show first that the family of shifts $T$ is uniformly bounded on $\mathcal H_\psi$ 
for every $\psi \in (0,\pi).$ Then the result 
follows quickly by a density argument.

Let $g \in \mathcal H_\psi$ so that for $f=g'$ one has $f \in \in H^1(\Sigma_\psi).$ 
Assume first that $\psi \in (0,\pi/2]$, and let $\psi' \in (0,\psi)$  and $\a:=\sin(\psi -\psi')$.  
By the mean value inequality, for any $r>0$ and $\varphi \in (-\psi',\psi')$ we have
\begin{align}\label{limit1}
r |f(re^{i\varphi})| &\le \frac{1}{\pi \a^2 r}\int_{|z-re^{i\varphi}|\le \a r} |f(z)| \, dS(z)\\
&\le \frac{2(1+\a)}{\pi \a} \int_{-\psi'}^{\psi'} \int_{(1-\a)r}^{(1+\a)r} |f(\rho e^{i\varphi})|\, d\rho\,d\varphi \notag\\
&\to 0,\notag
\end{align}
as $r \to 0$ or $r \to \infty$, by the dominated convergence theorem.

Now let $\tau \in \Sigma_\psi$ and $\varphi \in (0,\psi).$ 
Let 
\begin{equation}\label{gamma_t}
\Gamma_{\tau, \varphi}=\{\tau+te^{i\varphi}: t \ge 0\} \cup \{\tau+te^{-i\varphi} :  t \ge 0 \}. 
\end{equation}
Let $\psi'>\max(|\arg\tau|,\varphi)$, and take $r\in (0,1)$ such that  $0< r <|\tau| <  1/r$.   We now apply Gabriel's inequality \eqref{gabriel} with  
\[
\Omega_{r} := \{z  \in \C: \Re z > r \cos\psi',  |z| < 1/r, |\arg z| < \psi'\}
\]
and
\[
\Gamma_{\tau,\varphi,r} := \{z \in \Gamma_{\tau,\varphi} : |z| \le 1/r\} \cup \{z \in \C: |z|=1/r, \, |\arg (z-\tau)| \le \varphi\}.
\]
We obtain
\begin{align*}\label{gabr}
&\\
\lefteqn{K^{-1} \int_{\Gamma_{\tau,\varphi,r}} |f(z)|\, |dz| 
\le \int_{\partial\Omega_{r}} |f(z)|\, |dz|} \notag\\
&\le \int_{r}^{1/r}|f(\rho e^{i\psi'})|\, d\rho 
+\int_{r}^{1/r}|f(\rho e^{-i\psi'})|\, d\rho 
+ 2 \pi r^{-1} \sup_{\varphi \in (-\psi', \psi')} |f(e^{i\varphi}/r)| \notag \\
&\null\hskip20pt +2 r\sin\psi' \sup_{\varphi \in (-\psi', \psi')}
 \left|f\left(\frac{r\cos \psi'}{\cos \varphi} e^{i\varphi}\right)\right|,\notag
\end{align*}
where $K>0$ is given by \eqref{gabriel}.

 By \eqref{limit1},
the last two terms converge to zero as $r \to 0.$ 
From Theorem \ref{hardy0}(iv) it follows that 
\begin{equation*} \label{acute}
K^{-1} \int_{\Gamma_{\tau,\varphi}} |f(z)|\, |dz| \le  \|f\|_{H^1(\Sigma_{\psi'})}\le \|f\|_{H^1(\Sigma_\psi)}.
\end{equation*}
Thus, since the choice of $\varphi \in (0, \psi)$ was arbitrary, we have   
 \begin{equation}\label{hardy1}
\|T(\tau)f\|_{H^1(\Sigma_\psi)}\le K \|f\|_{H^{1}(\Sigma_\psi)}.
\end{equation}

Now we consider the case when $\psi \in (\pi/2,\pi)$ and $\tau \in \Sigma_{\pi-\psi}.$ 
Let for $\varphi \in (0,\psi)$  
the path $\Gamma_{\tau, \varphi}$ be given by \eqref{gamma_t},
and $\Gamma^{\pm}_{\tau,\varphi}:= \{\tau+te^{\pm i\varphi}: t \ge 0\}.$
Define the half-planes 
$\C_+^{\psi}:= -ie^{i\psi} \Sigma_{\pi/2}$
and $\C_-^{\psi}:=ie^{-i\psi} \Sigma_{\pi/2}$
so that $\Gamma^{\pm}_{\tau,\varphi}\subset \C_{\pm}^{\psi}.$
Letting $\widetilde f (z)=f(ie^{-i\psi}z), z \in \C_+^{\psi},$  
 and applying \eqref{hardy1} with $\psi=\pi/2,$   $\widetilde f\in H^1(\Sigma_{\pi/2})$ in place of $f,$
and $\widetilde \tau=ie^{-i\psi}\tau$ in place of  $\tau,$
we obtain that
\begin{align*}
K \int_{e^{i\psi}\R} |f(z)|\, |dz| = K \|\widetilde f\|_{H^1(\Sigma_{\pi/2})} 
\ge \|T(\widetilde \tau)\widetilde f\|_{H^{1}(\Sigma_{\pi/2})}\ge \int_{\Gamma^+_{\tau,\varphi}} |f(z)|\, |dz|.
\end{align*}
Similarly 
\[
K \int_{e^{-i\psi}\R} |f(z)|\, |dz| \ge \int_{\Gamma^-_{\tau,\varphi}} |f(z)|\, |dz|.
\]
Thus, taking  into account Theorem \ref{hardy0}(iv),
we infer that
\begin{equation*}
2K\|f\|_{H^1(\Sigma_{\psi})}\ge K \left(\int_{e^{i\psi}\R} |f(z)|\, |dz| +\int_{e^{-i\psi}\R}|f(z)|\, |dz|\right) \ge 
 \int_{\Gamma_{\tau,\varphi}} |f(z)|\, |dz|.
\end{equation*}
Since as above the choice  of $\varphi \in (0,\psi)$ is arbitrary, we then have 
\[
\|T(\tau)f\|_{H^1(\Sigma_\psi)}\le 2K \|f\|_{H^{1}(\Sigma_\psi)}
\]
if $\psi \in (\pi/2,\pi),$ and then if $\psi \in (0,\pi)$ in view of \eqref{hardy1}.

Hence for all  $\tau \in \Sigma_\psi,$ 
 \begin{equation}\label{hardy3}
\|T(\tau)g\|_{\mathcal H_\psi}\le \max(1,2K) \|g\|_{\mathcal H_\psi}\le 2K \|g\|_{\mathcal H_\psi}.
\end{equation}

A direct verification shows that $\|T(\tau)r_\l-r_\l\|_{\H_\psi} \to 0, \tau \to 0$,  for every $\l \in \C \setminus \overline{\Sigma}_\psi$.   
It now follows from \eqref{hardy3} and Theorem \ref{DM122} that $(T(\tau))_{\tau\in \Sigma_{\psi_0}}$ is a bounded $C_0$-semigroup on $\H_{\psi,0}$ and then on $\mathcal H_{\psi}$. The holomorphy of  $(T(\tau))_{\tau\in \Sigma_{\psi_0}}$ on $\mathcal H_\psi$ follows from Theorem \ref{hardy2}(iii) and the method discussed in Section \ref{prelims}.

The claim about the generator $-A_{\mathcal H_{\psi}}$ can be justified 
along the lines of the proof of a similar fact for the space $\Bes$ in \cite[Lemma 2.6]{BGT}.
\end{proof}
\begin{rem}
Following a more conventional approach, one may try to prove Theorem \ref{Hsg} by reducing the estimates to the half-plane case and applying Carleson's embedding theorem for $H^1(\mathbb C_+)$.
However, the technical details become rather cumbersome, so we prefer to use Gabriel's inequality allowing for a more transparent argument.
\end{rem}

\section{Appendix: The $\D$-calculus vs the HP- and the $\mathcal B$-calculi}\label{VS}

It is natural to compare the strength of the $\mathcal D$-calculus with some other functional calculi, such as the recently constructed $\mathcal B$-calculus and the well-known HP-calculus. 
To show the advantages of our $\mathcal D$-calculus  with respect to the $\mathcal B$-calculus and the HP-calculus, 
as an illustrative example, we consider the family of functions $\{f_n: n \ge 1\}$ given by 
\[
f_n(z)=\left(\frac{z-1}{z+1}\right)^n,\qquad z\in \C_{+},\quad n\in \N.
\]

This family is contained in $\mathcal{LM}$ (see \cite[Section 6]{BGT2}), and it arises naturally in the study of asymptotics for powers of Cayley transforms of semigroup generators.
It is shown in \cite[Lemma 3.7]{BGT} and \cite[Lemmas 5.1 and 5.2]{BGT2}  that
\[
\|f_n\|_\Bes \asymp  \log n \qquad \text{and} \qquad \|f_n\|_{{\rm HP}} \asymp  n^{1/2}, \qquad n\to\infty.
\]
We will show that the  $\mathcal D$-calculus provides sharper estimates for the corresponding operator functions. 
 To this aim we need the next lemma.

\begin{lemma}\label{cayley}
For $s>0$,
\[
\|f_n\|_{\mathcal{D}_s}\le 1+16\left(B(s/2,1/2)+2^{-s/2}\right),\qquad n\in \N.
\]
\end{lemma}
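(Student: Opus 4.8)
The goal is a bound on $\|f_n\|_{\mathcal D_s}$ that is uniform in $n$, where $f_n(z) = \bigl((z-1)/(z+1)\bigr)^n$. Since $f_n(\infty)=1$, it suffices to bound $\|f_n\|_{\mathcal D_{s,0}} = \|f_n'\|_{\mathcal V_s}$ by $16(B(s/2,1/2)+2^{-s/2})$. The starting point is the explicit derivative
\[
f_n'(z) = \frac{2n}{(z+1)^2}\left(\frac{z-1}{z+1}\right)^{n-1},
\]
so that $|f_n'(z)| = \dfrac{2n}{|z+1|^2}\,\left|\dfrac{z-1}{z+1}\right|^{n-1}$. The plan is to pass to polar coordinates $z = \rho e^{i\varphi}$, as in the third expression for $\|\cdot\|_{\mathcal V_s}$ in \eqref{norm}, giving
\[
\|f_n'\|_{\mathcal V_s} = 2n\int_{-\pi/2}^{\pi/2}\cos^s\varphi \int_0^\infty \frac{1}{|\rho e^{i\varphi}+1|^2}\left|\frac{\rho e^{i\varphi}-1}{\rho e^{i\varphi}+1}\right|^{n-1}d\rho\,d\varphi,
\]
and then to control the inner integral by a change of variable that linearises the integrand.

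\textbf{Key steps.} First I would substitute $u = \left|\dfrac{\rho e^{i\varphi}-1}{\rho e^{i\varphi}+1}\right|^2 = \dfrac{\rho^2 - 2\rho\cos\varphi + 1}{\rho^2 + 2\rho\cos\varphi + 1}$, which for fixed $\varphi \in (-\pi/2,\pi/2)$ is a smooth bijection of $\rho \in (0,\infty)$ onto $(0,1)$ (it equals $1$ at $\rho=0$, decreases to a minimum, then increases back to $1$ as $\rho\to\infty$; more precisely $\rho\mapsto u$ is not monotone, so I would split $(0,\infty)$ at the point $\rho=1$ where $|z-1|$ is minimised along the ray, and treat the two monotone branches separately, or equivalently use the symmetry $\rho \leftrightarrow 1/\rho$ which fixes $u$). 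Computing $du/d\rho$ and simplifying, one finds $\dfrac{d\rho}{|\rho e^{i\varphi}+1|^2} = \dfrac{d\rho}{\rho^2+2\rho\cos\varphi+1}$ is comparable, up to the factor $4\rho\cos\varphi/(\rho^2+2\rho\cos\varphi+1)^{?}$ arising from the Jacobian, to $\dfrac{du}{1-u}$ on each branch — the precise identity to extract is
\[
\frac{du}{u} \;=\; \frac{-4\rho\cos\varphi\,(\rho^2-1)}{(\rho^2-2\rho\cos\varphi+1)(\rho^2+2\rho\cos\varphi+1)}\,\frac{d\rho}{\rho},
\]
from which, after the split at $\rho=1$, the inner $\rho$-integral is bounded by a constant multiple of $\displaystyle\int_0^1 u^{(n-1)/2}\,\frac{du}{\sqrt{1-u}\,\cos\varphi}$ or similar. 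The exact bookkeeping of constants is the routine-but-delicate part.

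\textbf{Assembling the estimate.} After the change of variable the $\varphi$-integral decouples: one is left with something of the shape
\[
\|f_n'\|_{\mathcal V_s} \;\le\; C\, n \int_{-\pi/2}^{\pi/2}\cos^{s-1}\varphi\,d\varphi \int_0^1 u^{(n-1)/2}\frac{du}{\sqrt{1-u}} \;=\; C\, n\, B\!\left(\tfrac{s}{2},\tfrac12\right) B\!\left(\tfrac{n+1}{2},\tfrac12\right),
\]
and since $B\!\left(\tfrac{n+1}{2},\tfrac12\right) = \dfrac{\sqrt\pi\,\Gamma((n+1)/2)}{\Gamma(n/2+1)} \sim \sqrt{2\pi/n}$, the factor $n$ is cancelled and one gets a uniform bound — this is exactly the mechanism behind the $\log n$ vs $n^{1/2}$ vs $O(1)$ hierarchy mentioned after the lemma. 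The term $2^{-s/2}$ presumably comes from isolating a small neighbourhood of $\varphi = \pm\pi/2$ (or of $\rho$ near the minimiser) where $\cos^{s-1}\varphi$ is not integrable when $s\le 1$ and must be handled by a cruder pointwise bound on $|f_n'|$ rather than by the change of variable. The main obstacle, and the only genuinely non-routine point, is making the split of $(0,\infty)$ into monotone branches clean enough that the Jacobian estimate is valid with explicit constants; once the substitution $u = |(z-1)/(z+1)|^2$ is set up correctly, everything else is Beta-function arithmetic of the type already used in Examples \ref{resd} and \ref{dm}.
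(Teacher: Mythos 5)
Your setup is the right one — the identity
\[
\frac{\partial}{\partial\rho}\,u^{(n+1)/2}
=\frac{2(n+1)\cos\varphi\,(\rho^2-1)}{(\rho^2+2\rho\cos\varphi+1)^2}\,u^{(n-1)/2},
\qquad u=\Bigl|\tfrac{\rho e^{i\varphi}-1}{\rho e^{i\varphi}+1}\Bigr|^{2},
\]
is exactly what the paper's proof exploits (there phrased as an integration by parts in $\rho$ rather than a substitution). But your final assembling step contains a fatal arithmetic error: $B\bigl(\tfrac{n+1}{2},\tfrac12\bigr)\sim\sqrt{2\pi/n}$, so $n\,B\bigl(\tfrac{n+1}{2},\tfrac12\bigr)\sim\sqrt{2\pi n}$, which is \emph{unbounded}. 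The factor $n$ is not cancelled; your chain of estimates, even if every intermediate inequality held, would only give $\|f_n\|_{\mathcal D_s}\lesssim n^{1/2}$, i.e.\ nothing better than the trivial Hille--Phillips bound. To get a uniform bound you need the $u$-integral to be $O(1/n)$, not $O(n^{-1/2})$, and that is only possible on regions where the weight $w(u)$ coming from the Jacobian is bounded (giving $\int u^{(n-1)/2}\,du=O(1/n)$), not where it has an inverse square-root singularity.

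This points to the second problem: the singularity of $d\rho/(\rho^2+2\rho\cos\varphi+1)$ in the $u$-variable sits at $u=u_{\min}(\varphi)=\tan^2(\varphi/2)$ (i.e.\ at $\rho=1$, where $du/d\rho$ vanishes), not at $u=1$, so the comparison with $\int_0^1 u^{(n-1)/2}(1-u)^{-1/2}\,du$ is not what the substitution produces. Away from $\rho=1$ (say $\rho\ge2$) the Jacobian is controlled and the mechanism works, yielding $O(1/(n\cos\varphi))$ and hence the $B(s/2,1/2)$ term after integrating $\cos^{s-1}\varphi$ (which is integrable for \emph{every} $s>0$, so your explanation of the $2^{-s/2}$ term via non-integrability of $\cos^{s-1}\varphi$ is also off the mark). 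The genuinely delicate region is $\rho$ near $1$ with $\varphi$ near $\pi/2$, where $u_{\min}$ is close to $1$ and $u^{(n-1)/2}$ provides no decay; there the paper switches to the analogous monotonicity of $u^{(n+1)/2}$ in the \emph{angular} variable $\psi$ and integrates by parts in $\psi$ instead, which is where the $2^{-s/2}$ actually comes from. Your sketch is missing both this regional decomposition and a correct final Beta-function computation, so as it stands it does not prove the lemma.
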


\begin{proof}
Let $s>0$ and $n \in \mathbb N$ be fixed. We have
\[
f_n(\infty)=1,\quad f'_n(z)=2n\frac{(z-1)^{n-1}}{(z+1)^{n+1}},
\]
and then
\[
\|f_n\|_{\mathcal{D}_s}=1+8n \int_0^{\pi/2} \cos^s\psi\int_1^\infty \frac{g_n(\rho,\psi)\,d\rho}{\rho^2+2\rho\cos\psi+1}\,d\psi =1+8n J_n,
\]
where
\[
 g_n(\rho,\psi)=\left(\frac{\rho^2-2\rho\cos\psi+1}{\rho^2+2\rho\cos\psi+1}\right)^{(n-1)/2}.
\]

Let $J_n= J_{1, n} + J_{2,n}+J_{3,n}$,
where
\begin{align*}
J_{1,n}&=\int_0^{\pi/2} \cos^s\psi\int_2^\infty \frac{g_n(\rho,\psi)\,d\rho}{\rho^2+2\rho\cos\psi+1}\,d\psi,\\
J_{2,n}&=\int_{\pi/4}^{\pi/2} \cos^s\psi\int_1^2\frac{g_n(\rho,\psi)\,d\rho}{\rho^2+2\rho\cos\psi+1}\,d\psi,\\
J_{3,n}&=\int_0^{\pi/4}\cos^s\psi\int_1^2\frac{g_n(\rho,\psi)\,d\rho}{\rho^2+2\rho\cos\psi+1}\,d\psi.
\end{align*}

We estimate each of the summands $J_{1, n}$, $J_{2,n}$, and $J_{3,n}$ separately.
First,
\begin{align*}
\frac{\partial}{\partial \rho}\,g_{n+2}(\rho,\psi)&=2(n+1)\frac{(\rho^2-1)\cos\psi}{(\rho^2+2\rho\cos\psi+1)^2}g_{n}(\rho,\psi)\\
&\ge \frac{2(n+1)}{3} \frac{\cos\psi\,g_n(\rho,\psi)}{(\rho^2+2\rho\cos\psi+1)}>0,\quad \rho>2.
\end{align*}
Hence
\begin{align*}
J_{1,n}&\le \frac{3}{2(n+1)}\int_0^{\pi/2}\cos^{s-1}\psi
\int_2^\infty
\frac{\partial}{\partial \rho}\,g_{n+2}(\rho,\psi)\,d\rho\,d\psi\\
&\le \frac{3}{2(n+1)}\int_0^{\pi/2}\cos^{s-1}\psi\,d\psi<\frac{B(s/2,1/2)}{n}.
\end{align*}

Next, for all $\rho\in (1,2)$ and $\psi\in(\pi/4,\pi/2)$,
\begin{align*}
\frac{\partial}{\partial \psi}\,g_{n+2}(\rho,\psi)&=2(n+1)\frac{\rho(\rho^2+1)\sin\psi}{(\rho^2+2\rho\cos\psi+1)^2}g_{n}(\rho,\psi)\\
&\ge \frac{(n+1)}{2} \frac{\,g_n(\rho,\psi)}{(\rho^2+2\rho\cos\psi+1)}>0,
\end{align*}
so
\begin{align*}
J_{2,n}&\le \frac{2}{(n+1)}
\int_1^2
\int_{\pi/4}^{\pi/2}\cos^s\psi
\frac{\partial}{\partial \rho}\,g_{n+2}(\rho,\psi)\,d\psi\,d\rho\\
&\le \frac{2}{(n+1)} \int_{\pi/4}^{\pi/2}(\cos^s\psi)'\,d\psi
=\frac{2}{2^{s/2}(n+1)}.
\end{align*}

Finally, since
\begin{align*}
\sup_{\rho\in (1,2),\,\psi\in (0,\pi/4)} g_n(\rho, \psi)
&=g_n(2,\pi/4)
=\left(\frac{5-2\sqrt{2}}{5+2\sqrt{2}}\right)^{(n-1)/2}
<\frac{2}{n},
\end{align*}
we have
\[
J_{3,n}\le
\frac{2}{n}
\int_0^{\pi/4}\cos^s\psi\,d\psi
<\frac{B(s/2,1/2)}{n}.
\]
Summing up the above estimates above, the assertion of the lemma
follows.
\end{proof}

The following corollary showing sharpness of the $\mathcal D$-calculus is immediate.
\begin{cor}
For any $s>0$,
\[
\frac{\|f_n\|_\Bes}{\|f_n\|_{\mathcal{D}_s}} \asymp \log n, \qquad \frac{\|f_n\|_{{\rm HP}}}{\|f_n\|_{\mathcal{D}_s}} \asymp n^{1/2}, \quad n\to\infty.
\]
\end{cor}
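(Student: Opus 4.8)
The plan is to deduce the corollary by combining the two-sided asymptotics for the $\Bes$- and $\mathrm{HP}$-norms of $f_n$ recalled just above the statement with Lemma~\ref{cayley} and a trivial lower bound for $\|f_n\|_{\D_s}$. There is essentially no obstacle beyond bookkeeping here, since the substantive work has already been carried out in the proof of Lemma~\ref{cayley}.

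First I would record the known input. Since $f_n \in \mathcal{LM} \subset \D_\infty$, the quantities involved all make sense, and by \cite[Lemma 3.7]{BGT} together with \cite[Lemmas 5.1 and 5.2]{BGT2},
\[
\|f_n\|_\Bes \asymp \log n, \qquad \|f_n\|_{\mathrm{HP}} \asymp n^{1/2}, \qquad n\to\infty.
\]
In particular, each of these is bounded above and below by constant multiples of an explicit non-constant function of $n$.

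Next I would pin $\|f_n\|_{\D_s}$ between two positive constants independent of $n$. For the upper bound, Lemma~\ref{cayley} gives, for each fixed $s>0$,
\[
\|f_n\|_{\D_s} \le 1 + 16\bigl(B(s/2,1/2)+2^{-s/2}\bigr) =: C_s, \qquad n\in\N.
\]
For the lower bound, note that $f_n$ has the full limit $f_n(\infty)=\lim_{\Re z\to\infty}\bigl((z-1)/(z+1)\bigr)^n = 1$, so the definition of the $\D_s$-norm in Corollary~\ref{Ban_D} yields
\[
\|f_n\|_{\D_s} = |f_n(\infty)| + \|f_n\|_{\D_{s,0}} \ge 1, \qquad n\in\N.
\]
Hence $1 \le \|f_n\|_{\D_s} \le C_s$ for all $n$, i.e.\ $\|f_n\|_{\D_s} \asymp 1$ as $n\to\infty$.

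Finally, dividing the asymptotics from the first step by the uniform bounds from the second step gives
\[
\frac{\|f_n\|_\Bes}{\|f_n\|_{\D_s}} \asymp \log n, \qquad \frac{\|f_n\|_{\mathrm{HP}}}{\|f_n\|_{\D_s}} \asymp n^{1/2}, \qquad n\to\infty,
\]
which is precisely the assertion. The only ``hard part'', such as it is, is the uniform boundedness $\sup_{n}\|f_n\|_{\D_s}<\infty$, that is, Lemma~\ref{cayley}, whose proof exploits the explicit form of $f_n'$ and the angular representation of the $\mathcal V_s$-norm; once that estimate is in hand the corollary is immediate.
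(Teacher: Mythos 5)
Your proposal is correct and matches the paper's intended argument: the paper declares the corollary ``immediate'' from Lemma~\ref{cayley} together with the recalled asymptotics $\|f_n\|_\Bes \asymp \log n$ and $\|f_n\|_{\mathrm{HP}} \asymp n^{1/2}$, which is exactly the combination you carry out. Your explicit lower bound $\|f_n\|_{\D_s}\ge |f_n(\infty)|=1$ is the right (and only) extra observation needed to make the division legitimate.
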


\begin{rem}
Curiously, for $s=0$ the asymptotics of $\|f_n\|_{\mathcal D_0}$ match those of $\|f_n\|_{\mathcal B}$,
and one does not get any advantages using the $\mathcal D$-calculus in this case.  Specifically, one can show that, for $n \in \N$,
\begin{equation}\label{ApLog1}
1+2 \log (n+1)\le \|f_n\|_{\mathcal{D}_0}\le 8(4+\log(n+1)).
\end{equation}
\end{rem}

\end{document}